\newcolumntype{L}[1]{>{\raggedright\let\newline\\\arraybackslash\hspace{0pt}}m{#1}}
\newcolumntype{R}[1]{>{\raggedleft\let\newline\\\arraybackslash\hspace{0pt}}m{#1}}
\newcommand{\ignore}[1]{}
\DeclareRobustCommand\onedot{\futurelet\@let@token\@onedot}
\def\@onedot{\ifx\@let@token.\else.\null\fi\xspace}
\definecolor{MyBlue}{rgb}{0.46, 0.50, 0.61}
\definecolor{MyDarkBlue}{rgb}{0,0.08,0.8}
\definecolor{MyDarkGreen}{RGB}{45,155,45}
\definecolor{MyDarkRed}{rgb}{0.8,0.02,0.02}
\definecolor{MyOrange}{rgb}{1.0, 0.4, 0.2}
\definecolor{MyPurple}{RGB}{111,0,255}
\definecolor{MyRed}{rgb}{0.8,0.0,0.0}
\definecolor{MyGold}{rgb}{0.75,0.6,0.12}
\definecolor{MyDarkgray}{rgb}{0.66, 0.66, 0.66}
\definecolor{MyBrown}{rgb}{0.65, 0.16, 0.16}
\definecolor{MyMutedRose}{rgb}{0.58, 0.29, 0.35}
\definecolor{JiayuanColor}{rgb}{0.60,0.43,0.48}
\definecolor{erranColor}{rgb}{24, 40, 113}
\definecolor{citecolor}{HTML}{696FAD}
\newif\ifpropositionfirstitem
\definecolor{bggray}{HTML}{F5F5F5}
\definecolor{pvdblue}{HTML}{DAE8FC}
\definecolor{RoseQuartzBg}{HTML}{F7CAC9}
\definecolor{RoseQuartz}{HTML}{F5A798}
\definecolor{Serenity}{HTML}{92A8D1}
\definecolor{OrangeRed}{rgb}{1.0, 0.27, 0.0}
\definecolor{RoyalBlue}{cmyk}{1, 0.50, 0, 0}
\definecolor{Turquoise}{HTML}{0F4C81}
\definecolor{mint}{rgb}{0.24, 0.71, 0.54}
\definecolor{green}{rgb}{0.0, 0.120, 0.0}
\newdimen\abovecrulesep
\newdimen\belowcrulesep
\patchcmd{\@@@cmidrule}{\aboverulesep}{\abovecrulesep}{}{}
\patchcmd{\@xcmidrule}{\belowrulesep}{\belowcrulesep}{}{}
\definecolor{mybluetitle}{HTML}{4B527E} 
\def\@BTrule[#1]{%
  \ifx\longtable\undefined
    \let\@BTswitch\@BTnormal
  \else\ifx\hline\LT@hline
    \nobreak
    \let\@BTswitch\@BLTrule
  \else
     \let\@BTswitch\@BTnormal
  \fi\fi
  \global\@thisrulewidth=#1\relax
  \ifnum\@thisruleclass=\tw@\vskip\@aboverulesep\else
  \ifnum\@lastruleclass=\z@\vskip\@aboverulesep\else
  \ifnum\@lastruleclass=\@ne\vskip\doublerulesep\fi\fi\fi
  \@BTswitch}
\definecolor{mygray}{gray}{.8}
\newtheorem{theorem}{Theorem}[section] 
                        \newtheorem{definition}[theorem]{Definition}
\newtheorem{lemma}[theorem]{Lemma}
\newtheorem{corollary}[theorem]{Corollary}
\newtheorem{remark}[theorem]{Remark}
\newtheorem{assumption}[theorem]{Assumption}
\newtheorem{fact}[theorem]{Fact}
\newtcbox{\alertinline}[1][red]
  {on line, arc = 0pt, outer arc = 0pt,
    colback = #1!20!white, colframe = #1!50!black,
    boxsep = 0pt, left = 1pt, right = 1pt, top = 2pt, bottom = 2pt,
    boxrule = 0pt, bottomrule = 0pt, toprule = 0pt}
\newtcolorbox{mybox}{colframe = red!50!black}
\newtcolorbox{boxH}{
breakable,
    colframe = black, 
    boxrule = 0pt, 
    leftrule = 6pt 
}
\colorlet{shadecolor}{orange!15}
\newcommand{\1}{{\mathbbm{1}}}
\providecommand{\N}{{{\mathbbm{N}}}}
\providecommand{\Z}{{{\mathbbm{Z}}}}
\providecommand{\R}{{{\mathbbm{R}}}}
\providecommand{\F}{{{\mathbb{F}}}}
\renewcommand{\P}{{{\mathbb{P}}}}
\newcommand{\funcF}{\breve{F}}
\newcommand{\LConst}{\breve{L}}
\newcommand{\KConst}{\breve{K}}
\newcommand{\unif}{{\mathfrak{r}}}
\newcommand{\uniform}{{\mathcal{R}}}
\title{\centering Physics-Informed Inference Time Scaling for Solving High-Dimensional PDE {via Defect Correction}}
\author[*]{
Zexi Fan$^1$, Yan Sun $^2$, Shihao Yang$^3$, Yiping Lu*$^4$
\\
\small $^1$ Peking University~~~
$^2$ Visa Inc.~~~
$^3$ Georgia Institute of Technology~~~
$^4$ Northwestern University~~~\\

\small
\vspace{5pt}
\allowdisplaybreaks[4]

 \vspace{-0.05in}
 \texttt{{\small fanzexi\_francis@stu.pku.edu.cn,yansun414@gmail.com,\\ \small shihao.yang@isye.gatech.edu,yiping.lu@northwestern.edu}} \\
 \vspace{-0.2in}
}
\renewcommand{\phi}{\varphi}
\renewcommand{\leq}{\leqslant}
\renewcommand{\geq}{\geqslant}
\renewcommand{\epsilon}{\varepsilon}
\renewcommand{\imath}{\mathrm{i}}
\newlength{\restsubwidth}
\newlength{\restsubheight}
\newlength{\restsubmoreheight}
\newcommand{\rest}[2]{%
        \settowidth{\restsubwidth}{\ensuremath{#2}}
        \settoheight{\restsubheight}{\ensuremath{{}_{#2}}}
        \ensuremath{{#1\hskip 0.5pt}_{\vrule\kern2pt\parbox[b][%
        4pt][b]{\the\restsubwidth}{%
                        \ensuremath{{}_{#2}}}}}
        }
\begin{abstract}
\vspace{-0.3in}
Solving high-dimensional partial differential equations (PDEs) is a critical challenge where modern data-driven solvers often lack reliability and rigorous error guarantees. We introduce Simulation-Calibrated Scientific Machine Learning (SCaSML), a framework that systematically improves pre-trained PDE solvers at inference time without any retraining. Our core idea is to { use defect correction method that} derive a new PDE, termed \texttt{Structural-preserving Law of Defect}, that precisely describes the error of a given surrogate model. Since it retains the structure of the original problem, we can solve it efficiently with traditional stochastic simulators and correct the initial machine-learned solution. We prove that SCaSML achieves a faster convergence rate, with a final error bounded by the \textit{product} of the surrogate and simulation errors. On challenging PDEs up to 160 dimensions, SCaSML reduces the error of various surrogate models, including PINNs and Gaussian Processes, by 20-80\%. Code is available at \url{https://github.com/Francis-Fan-create/SCaSML}.\\
\end{abstract}
\begin{document}
\begin{CJK*}{UTF8}{gbsn}

\maketitle
\vspace{-0.1in}
\begin{figure}[h]
  \centering
  \begin{minipage}[t]{0.6\textwidth}
  \vspace{0.2in}
    \textbf{a)}\par
    \includegraphics[width=\linewidth]{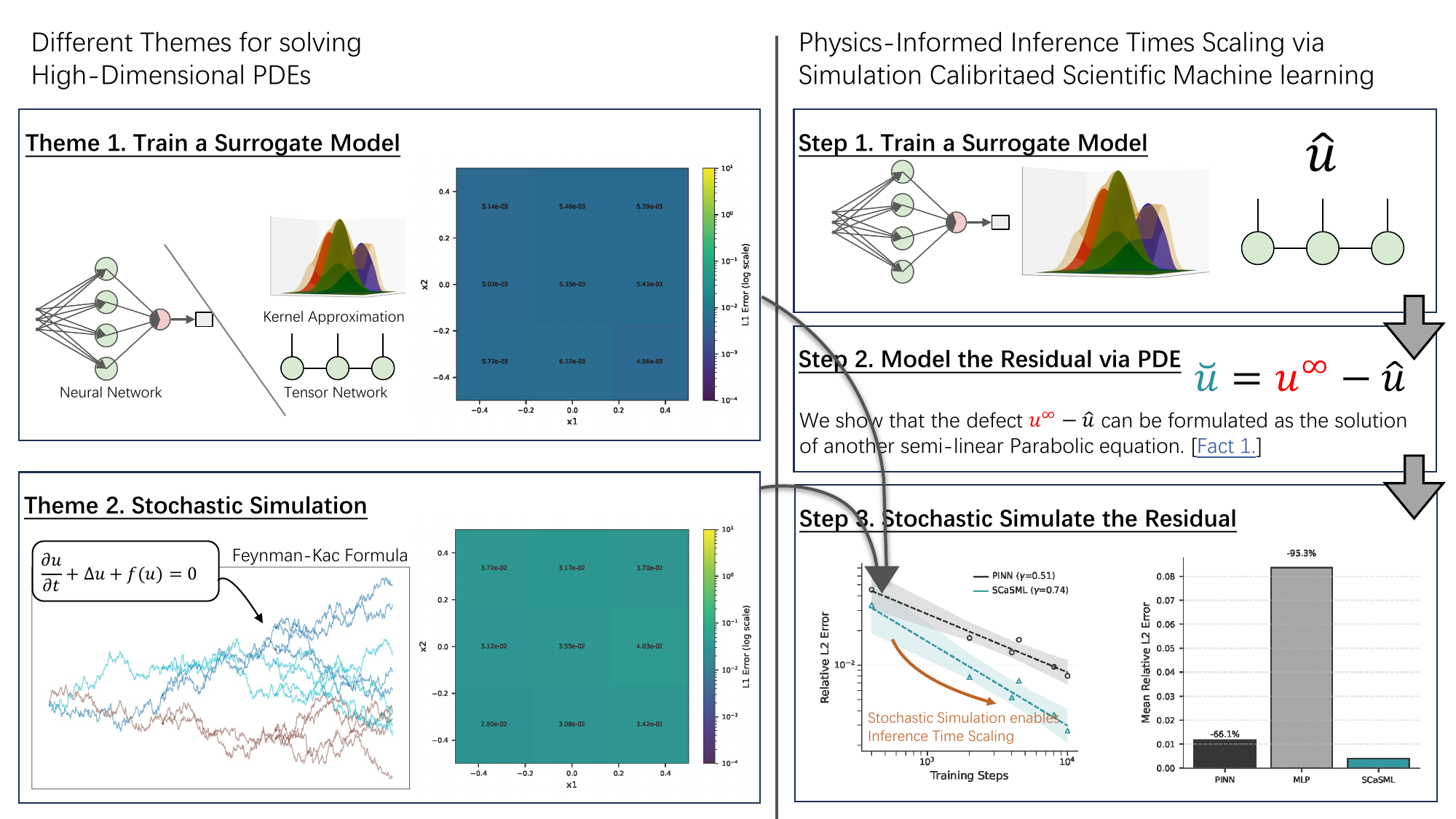}

    \hrule
    \vspace{0.1in}

    \textbf{b)}\par
    \includegraphics[width=\linewidth]{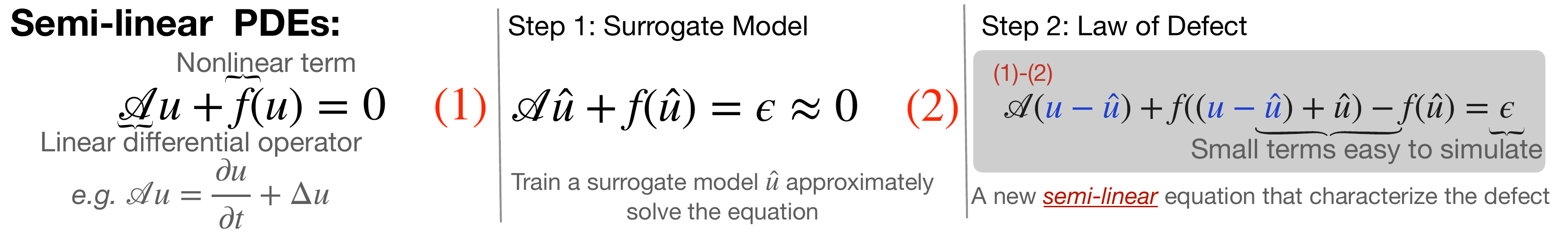}
  \end{minipage}%
  \hfill
  \begin{minipage}[t]{0.35\textwidth}
    \captionsetup{type=figure}%
    \caption{\textbf{SCaSML framework pipeline.} 
      \textbf{a)} SCaSML aims to allocate compute at inference time to further improve the accuracy of a surrogate model. It first fits a surrogate model $\hat u$ as an initial estimate of the PDE solution $u$, then leverages stochastic simulation algorithms to approximate the \texttt{defect} $\breve u = u - \hat u$ at inference time via formulating $\breve{u}$ as the solution to the structural-preserving law of defect. 
      \textbf{b)} Method for deriving this law. Using an approximate solution of a semi‑linear equation, we derive a new differential equation that characterizes the error, which inherently preserves the semi‑linear structure.}
    \label{fig:scasml}
  \end{minipage}
\end{figure}

\newpage

\begin{spacing}{0.9}
\tableofcontents
\end{spacing}

\end{CJK*}

\section{Introduction}

Solving high-dimensional partial differential equations (PDEs) is a fundamental challenge across science and engineering. Many critical phenomena are modeled by semi-linear parabolic PDEs whose dimensionality scales with the number of underlying components, a challenge often termed the \textit{curse of dimensionality}. Key examples include the {imaginary-time} Schr\"{o}dinger equation in quantum many-body systems, nonlinear Black--Scholes equations in finance, and the Hamilton--Jacobi--Bellman equation in optimal control \citep{Bellman}. Traditional numerical methods, such as finite element and finite difference schemes, become computationally intractable in high dimensions \citep{larsson2003partial}. While stochastic simulation methods can be effective, they often suffer from high variance \citep{BriandLabart2014}. In response, Scientific Machine Learning (SciML) has emerged as a powerful alternative, using neural networks and other data-driven models to approximate PDE solutions \citep{karniadakis2021physics, han2018solving, raissi2017physicsinformeddeeplearning}. However, the "black-box" nature of these models can introduce subtle biases, and they often lack the rigorous error guarantees of their traditional counterparts, raising concerns about their reliability for safety-critical applications.

Recent breakthroughs in large language models (LLMs) have shown that allocating additional computational resources at \emph{inference time} can dramatically improve output quality, a phenomenon known as inference-time scaling \citep{snell2024scaling, wei2022chain}. This success inspires our central research question: 
\begin{shaded}
{\emph{Can we leverage additional computation at inference time to systematically refine and provably improve a pre-trained surrogate model—allocating more compute to harder PDE states just as LLMs spend more search or planning on harder queries—without any retraining or fine-tuning?}}
\end{shaded}

In this work, we provide an affirmative answer by introducing \textbf{S}imulation-\textbf{Ca}librated \textbf{S}cientific \textbf{M}achine \textbf{L}earning (\textbf{SCaSML}), a novel physics-informed framework for improving SciML solvers at inference time. We focus on a broad class of semi-linear parabolic PDEs of the form:
\begin{align}
\label{eq:pde_original}
\begin{cases}
    \frac{\partial u}{\partial r} + \mathcal{L}u + F(u, \sigma^\top \nabla u) = 0, & \text{on } [0,T) \times \mathbb{R}^d \\
    u(T,\bm{y}) = g(\bm{y}), & \text{on } \mathbb{R}^d,
\end{cases}
\end{align}
where $\mathcal{L}u := \langle\mu, \nabla u\rangle + \frac{1}{2} \text{Tr}(\sigma^\top \text{Hess}(u) \sigma)$ is a second-order linear differential operator. SCaSML operates in two stages. First, a standard SciML {solver} $\hat{u}$ (e.g., a PINN \citep{raissi2017physicsinformeddeeplearning}, Gaussian Process \citep{chen2021solving}, or Tensor Network \citep{richter2021solving}) is trained to find an approximate solution. {At inference time, rather than directly accepting $\hat{u}$, 
we invoke a defect--correction method \citep{bank1985some,stetter1978defect,bohmer1984defect} to derive a governing 
equation for the approximation error---its \textbf{defect}---defined as 
$\breve{u} := u - \hat{u}$.}
 We term this the \texttt{Structural-preserving Law of Defect} (Figure \ref{fig:scasml}). Crucially, {unlike classical grid-based defect correction which is intractable in high dimensions,} we show that this new PDE describing {the exact defect} inherits the semi-linear structure of the original problem. {This structural preservation allows} us to solve it efficiently using well-established stochastic simulation algorithms based on the Feynman--Kac formula. This simulation step acts as a targeted correction, leveraging additional compute to refine the initial surrogate prediction.

Our main contributions are summarized as follows:
\begin{itemize}[leftmargin=0.1in]
\vspace{-0.1in}
\setlength{\itemsep}{0pt}
\setlength{\parsep}{0pt}
\setlength{\parskip}{0pt}
    \item We propose \textbf{SCaSML}, the first physics-informed inference-time scaling framework that \textbf{improves a pre-trained Scisurrogate model at inference-time, without any retraining or fine-tuning}. {SCaSML uses defect correction method that corrects a pre-trained surrogate SCiML model by deriving and solving a new PDE via a branching Monte Carlo Simulation that approximates its error}, which we call the \texttt{Structural-preserving Law of Defect} (\ref{eq:law_of_defect_semilinear}). {Notably, this characterization of the defect is, to our knowledge, the first derivation that preserves the semi-linear structure essential for high-dimensional Monte Carlo solvers.}
    \item We theoretically prove that the final error of SCaSML is bounded by the \textit{product} of the surrogate model's error and the simulation error. {Analogous to the classical defect-correction literature—where each correction step systematically improves the convergence rate— \textbf{we establish an analogous result for our Monte–Carlo defect-correction procedure at the first time}. } The improved convergence rate is corroborated \textbf{empirically} in Section \ref{sec:results}, with further comprehensive findings presented in Appendix \ref{appendix-section:aux-results- subsection:train curve}.
    \item We conduct extensive numerical experiments on challenging high-dimensional PDEs (up to 160 dimensions). Our results show that SCaSML significantly reduces approximation errors by 20-80\% across various surrogate models {with high statistical significance ($p \ll 0.001$)}, demonstrating its flexibility, practical efficacy and potential to mitigate the curse of dimensionality. { We also demonstrate that, with inference-time scaling, a smaller base PINN can outperform a larger PINN under the same inference-time compute budget by spending its additional computation on targeted refinement rather than parameter count.} {This enables \textit{elastic compute}: users can trade inference time for accuracy on demand.}
    \vspace{-0.2in}
\end{itemize}

\subsection{Related Works and Preliminaries}

\paragraph{Hybrid Paradigm} Hybrid approaches combining scientific computing and machine learning have been explored—such as learning numerical schemes \citep{long2018pde,long2019pde} and iterative method preconditioners \citep{hsieh2019learning,zhang2022hybrid}. However, SCaSML is the first framework to achieve faster convergence than both traditional numerical solvers and machine learning surrogates. Unlike recent studies \citep{zhou2023neural,suo2023novel,joseph2015engineering} that debias solutions over the entire domain using finite difference techniques, SCaSML leverages a Monte Carlo algorithm to refine a single state-space solution, yielding rapid convergence improvements. In contrast to learning the PDE solution in whole-domain, whose convergence rate is theoretically optimal using SCiML models \citep{lu2022machine,lu2022sobolev,nickl2020convergence}, our targeted debiasing strategy enables further convergence gains. Our approach also differs from that of \cite{li2023neural}, which—after a fixed number of Walk-on‑Spheres Monte Carlo steps—simply returns the cached neural‐network approximation; as a result, any bias in the network is carried through to the final estimate.

\paragraph{High-Dimensional PDE Solvers}  
Many fundamental problems in science and engineering are modeled using high-dimensional partial differential equations (PDEs). Examples include:
\begin{itemize}
\setlength{\itemsep}{0pt}
\setlength{\parsep}{0pt}
\setlength{\parskip}{0pt} 
    \item \emph{Imaginary-time Schr\"{o}dinger equation} in quantum many-body systems, where the dimension scales with the number of particles.
    \item \emph{Nonlinear Black–Scholes equation} for pricing financial derivatives, with dimensionality depending on the number of assets.
    \item \emph{Hamilton–Jacobi–Bellman equation} in dynamic programming, where the dimension grows with the number of agents or resources.
\end{itemize} 
Notably, all these equations are semilinear parabolic PDEs \citep{hutzenthaler2019multilevel,han2018solving,weinan2021algorithms} where Monte Carlo systems are being explored to overcome the "curse of dimension", such as BSDE-based methods with discretized expectations\citep{BriandLabart2014,geiss2016simulation},
branching diffusion methods \citep{HenryLabordereOudjaneTanTouziWarin2016,BouchardTanWarinZou2017}, and the full-history multilevel Picard Iteration (MLP)
\citep{Grohs_2022,Hutzenthaler_2020}. Neural networks in BSDE handle a
variety of semilinear equations but presents significant challenges due to both extended training durations and
sensitivity to parameters\citep{Beck_2019}. Branching diffusion techniques
are rather effective in the case of certain nonlinear PDEs, yet
its error analysis only applies to limited time scope and small
initial condition\citep{HenryLabordereOudjaneTanTouziWarin2016}. MLP alleviate
the computational challenges associated with high-dimensional
gradient-independent PDEs \citep{Hutzenthaler_2021}, theoretically achieving a
linear rate of convergence in relation to dimensionality, across any time frame.


\section{Warm Up:High-Dimensional Linear  Parabolic Equations}
\label{warmup}
\begin{tcolorbox}[mybox]
\begin{minipage}[t]{0.45\textwidth}
\vspace{-0.5in}
  Consider the scenario of organizing a trip wherein an AI assistant proposes a route, but you corroborate it through a map. Likewise, in this study, we enhance the predictions of neural networks employing a Feynman path simulation approach.
\end{minipage}%
\hfill
\begin{minipage}[h]{0.5\textwidth}
  \centering
  \includegraphics[width=\linewidth]{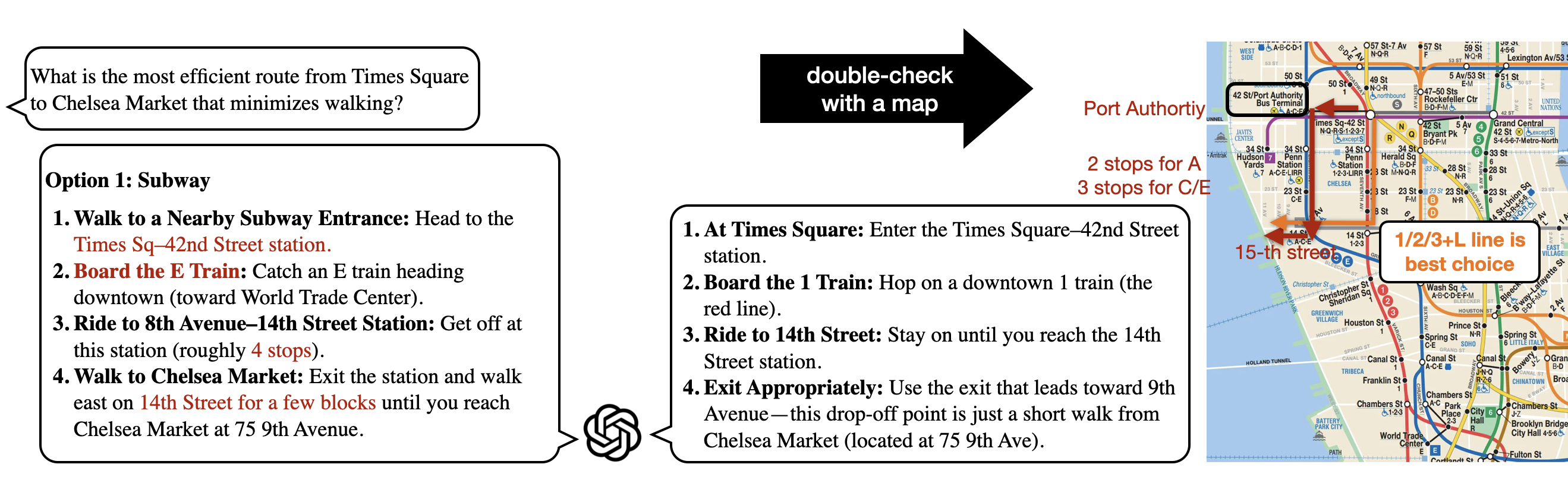}
\end{minipage}
\end{tcolorbox}

Directly learning solutions for partial differential equations (PDEs) with a neural network is often unreliable and difficult to validate because of inherent approximation errors and a lack of convergence guarantees. To address these challenges, we propose a novel \textbf{S}imulation-\textbf{Ca}librated \textbf{S}cientific \textbf{M}achine \textbf{L}earning (SCaSML) framework. First, the neural network generates an initial, approximate solution that serves as a guiding estimate. We then refine this estimate by incorporating the underlying physical principles. Specifically, we derive an equation governing the discrepancy, \(\hat{u} - u\), where \(\hat{u}\) represents the neural network solution and \(u\) denotes the true solution. We term this equation the ``Structural-preserving Law of Defect'' and analyze its behavior using stochastic simulation methods to enhance the reliability and robustness of the final solution.

Let us begin with a high-dimensional linear parabolic PDE, a simpler setting that clarifies our core idea:
\begin{equation}
\label{eq:pde_linear}
\begin{cases}
    \frac{\partial u}{\partial r} + \langle \mu, \nabla_y u \rangle + \frac{1}{2} \text{Tr}\Bigl(\sigma^\top \text{Hess}_y u \, \sigma\Bigr) = f(r,\bm{y}), & \text{on } [0,T) \times \mathbb{R}^d, \\
    u(T,\bm{y}) = g(\bm{y}), & \text{on } \mathbb{R}^d.
\end{cases}
\end{equation}
Suppose we have a pre-trained surrogate model $\hat{u}$ that approximates the true solution $u$. This surrogate is inevitably imperfect, producing a {\textbf{residual}} when plugged into the PDE. {Our goal is to run a defect-correction method at inference time. The defect–correction method \citep{bohmer1984defect,stetter1978defect} is a classical numerical strategy that improves an approximate solution by formulating and solving an equation for its residual-induced error. The first step is to find a new equation that describes the \textbf{defect} $\breve{u}(r,\bm{y}) := u(r,\bm{y}) - \hat{u}(r,\bm{y})$, which represents the true, unknown error.} To achieve this, we define this residual as:
\begin{align}
    \epsilon(r,\bm{y}) := f(r,\bm{y}) - \left( \frac{\partial \hat{u}}{\partial r} + \langle \mu, \nabla_y \hat{u} \rangle + \frac{1}{2}\operatorname{Tr}\Bigl(\sigma^\top \operatorname{Hess}_y \hat{u} \, \sigma\Bigr) \right).
\end{align}
By subtracting the equation for $\hat{u}$ from (\ref{eq:pde_linear}), we arrive at the following governing law.

\begin{definition}[Structural-preserving Law of Defect for Linear PDEs]
The defect $\breve{u} := u - \hat{u}$ is the solution to the linear parabolic PDE:
\begin{equation} \label{eq:law_of_defect_linear}
\begin{cases}
    \displaystyle \frac{\partial \breve{u}}{\partial r} + \langle \mu, \nabla_y \breve{u} \rangle + \frac{1}{2} \operatorname{Tr}\Bigl(\sigma^\top \operatorname{Hess}_y \breve{u} \, \sigma\Bigr) = \epsilon(r,\bm{y}), & \text{on } [0,T) \times \mathbb{R}^d, \\
    \breve{u}(T,\bm{y}) = g(\bm{y}) - \hat{u}(T,\bm{y}), & \text{on } \mathbb{R}^d.
\end{cases}
\end{equation}
\end{definition}
This \texttt{Structural-preserving Law of Defect} allows us to solve for the error $\breve{u}$ directly. Since (\ref{eq:law_of_defect_linear}) is a linear PDE, its solution can be expressed probabilistically via the Feynman--Kac formula:
\begin{equation}
\label{eq:feynman_kac_error}
\breve{u}(s,x) = \mathbb{E}\Bigl[ \bigl(g(X_T^{s,x}) - \hat{u}(T,X_T^{s,x})\bigr) + \int_s^T \epsilon(t,X_t^{s,x}) dt \Bigr],
\end{equation}
where $\{X_t^{s,x}\}_{t \in [s,T]}$ is the stochastic process associated with the PDE's linear operator. This representation allows us to estimate the defect $\breve{u}$ using Monte Carlo simulation.
{\begin{remark}[Regards Training and Inference Separation]
    Training corresponds to solving the PDE globally on the entire domain, learning a map that approximates the solution everywhere. In contrast, inference-time correction solves the PDE only at a specific, user-specified state. This separation is natural and parallels standard practices in machine learning: a base model is trained once to answer all queries, while computationally intensive refinement (like beam search or planning) is invoked at inference time only when high precision is required for a specific input. This separation enables ''elastic compute,'' allowing users to trade inference time for accuracy on demand without incurring the massive fixed cost of retraining the global model.
\end{remark}}
\vspace{-0.15in}
\paragraph{Intuition for Faster Convergence.}
Why does this two-step process converge faster? The variance of the Monte Carlo estimator for $\breve{u}$ in (\ref{eq:feynman_kac_error}) depends on the magnitude of the integrand, which is primarily the surrogate's residual $\epsilon$. A more accurate surrogate (i.e., smaller $\epsilon$) leads to lower simulation variance. If the surrogate achieves an error of $e(\hat{u}) \sim m^{-\gamma}$ using $m$ training points, the residual $\epsilon$ will be of a similar order, and the variance of our Monte Carlo estimator will be of order $m^{-2\gamma}$. By averaging over $m$ new Monte Carlo paths at inference time, the final statistical error becomes $\sqrt{m^{-2\gamma} / m} = m^{-\gamma - 1/2}$. Thus, for a total budget of $2m$ function evaluations, SCaSML achieves a faster convergence rate than both the surrogate ($m^{-\gamma}$) and a naive Monte Carlo solver ($m^{-1/2}$).
\vspace{-0.15in}
\paragraph{Why Use Monte Carlo for Correction?}
Neural networks and other common surrogates exhibit a \emph{spectral bias}, preferentially learning low-frequency (smooth) components of the solution first \citep{rahaman2019spectral}. Consequently, the residual error $\epsilon$ is often a high-frequency, irregular function. While challenging for many function approximators, Monte Carlo methods are perfectly suited for this scenario, as their convergence rate is independent of the integrand's smoothness. This makes Monte Carlo an ideal choice for the correction step, as it can efficiently average out the complex error signal left behind by the surrogate model.

\bigskip

\section{SCaSML for Semilinear Parabolic Equations}

We now extend the {Monte-Carlo based inference time defect-correction procedure} to the general semi-linear PDE in (\ref{eq:pde_original}). Let $\hat{u}$ be a surrogate solution. We define its residual with respect to the PDE dynamics and the terminal condition as:
\begin{equation}
\label{eq:pde_error_semilinear}
\begin{cases}
    \epsilon(r,\bm{y}) := \frac{\partial \hat{u}}{\partial r} + \mathcal{L}\hat{u} + F(\hat{u}, \sigma^\top \nabla_y \hat{u}), \\
    \breve{g}(y) := g(\bm{y}) - \hat{u}(T,\bm{y}).
\end{cases}
\end{equation}
By subtracting (\ref{eq:pde_error_semilinear}) from the original PDE in (\ref{eq:pde_original}), we obtain the governing law for the defect $\breve{u} = u - \hat{u}$.

\begin{fact}[Structural-preserving Law of Defect for Semi-linear PDEs]
\label{fact:law_of_defect}
The defect $\breve{u}(r,\bm{y}) := u(r,\bm{y}) - \hat{u}(r,\bm{y})$ is the solution to the following semi-linear parabolic equation:
\begin{equation}
\label{eq:law_of_defect_semilinear}
\begin{cases}
    \frac{\partial \breve{u}}{\partial r} + \mathcal{L}\breve{u} + \breve{F}(\breve{u}, \sigma^\top \nabla_y \breve{u}) = 0, & \text{on } [0,T) \times \mathbb{R}^d, \\
    \breve{u}(T,\bm{y}) = \breve{g}(y), & \text{on } \mathbb{R}^d,
\end{cases}
\end{equation}
where the modified nonlinear term $\breve{F}$ is given by
$\breve{F}(\breve{u}, \sigma^\top \nabla_y \breve{u}) := F(\hat{u}+\breve{u}, \sigma^\top(\nabla_y \hat{u}+\nabla_y\breve{u})) - F(\hat{u}, \sigma^\top \nabla_y \hat{u}) + \epsilon$.
\end{fact}

Notably, the  \texttt{Structural-preserving Law of Defect} (\ref{eq:law_of_defect_semilinear}) \textbf{retains a semi-linear structure}. This is the key property that allows us to apply powerful stochastic solvers, such as the Multilevel Picard (MLP) iteration \citep{hutzenthaler2019multilevel}, to estimate the defect $\breve{u}$ and correct the initial surrogate $\hat{u}$.
\vspace{-0.1in}
\paragraph{{How does the \texttt{Structural-preserving Law of Defect} differ from classical defect-correction methods?}} {Classical finite element methods admit a well-characterized asymptotic error expansion \cite{strang1973analysis}, which enables defect-correction schemes to systematically remove the leading error term and improve convergence rates \citep{zienkiewicz1992superconvergent,zienkiewicz1992superconvergent2,bank1985some}. In contrast, no such asymptotic structure is available for neural networks: NN approximations lack any mesh-refinement hierarchy, their errors do not exhibit a polynomial expansion with respect to a single resolution parameter, and the optimization-induced approximation error provides no perturbative decomposition. A different family of debiasing techniques in numerical PDEs relies on iterative solvers such as Newton methods \citep{stetter1978defect,dutt2000spectral,xu1994novel,bohmer1981discrete}  and quasi-Newton methods \citep{jameson1974iterative,heinrichs1996defect}. However, these methods present two fundamental limitations in our setting. First, iterative updates produce only approximate corrections, whereas our law of defect is an exact analytical identity that delivers a closed-form unbiased correction in a single step. Second, embedding iterative methods into a Monte–Carlo or Feynman–Kac framework is highly inefficient: each iteration requires recomputing residuals and Jacobian actions through additional Monte–Carlo estimators, producing a nested simulation hierarchy whose convergence rate rapidly deteriorates—from the standard $\mathcal{O}(N^{-1/2})$ rate for a single Monte–Carlo level, to $\mathcal{O}(N^{-1/4})$ for a second iteration, $\mathcal{O}(N^{-1/8})$ for a third, and so on as more levels are introduced. Practitioners are therefore forced to balance early-termination errors against the rapidly declining statistical efficiency of nested Monte–Carlo estimates, making these approaches both computationally expensive and numerically unstable.}

\vspace{-0.1in}
{\paragraph{Practical Scenarios} In many applications, the quantity of interest is required only at a single state rather than across the full domain. For example, in optimal control and financial pricing (e.g., nonlinear Black–Scholes \citep{eskiizmirliler2021solution,santos2024neural}), practitioners need the value function and its gradient only at the current state to determine the next action or hedge; forward simulations can then be used to compute the Bellman error and correct the current decision. In rare-event analysis and committor problems in molecular dynamics, neural committor estimators \citep{khoo2019solving,li2019computing,hua2024accelerated,lucente2019machine}  can be refined using a small number of targeted simulations initiated from designated configurations. In goal-oriented estimation \citep{becker1996feed,becker2001optimal}, the objective is often a specific functional of the solution rather than the full field. In all such settings, training a surrogate to high global accuracy is computationally wasteful. Our method uses the surrogate for a fast initial approximation and then applies a targeted Monte Carlo refinement at inference time, allocating computational effort precisely where accuracy is needed.}
\vspace{-0.1in}
\subsection{Simulating \texttt{Structural-preserving Law of Defect} using Multilevel Picard Iteration}
\label{subsec:mlp_for_defect_short_consistent}
\vspace{-0.1in}
The defect PDE (\ref{eq:law_of_defect_semilinear}) is a semi-linear parabolic equation of the
same structural form as (\ref{eq:PDE}) with a different closed nonlinear term. Under the standard regularity assumptions, the pair
$\breve{\mathbf{u}}^\infty=(\breve u^\infty,\,[\sigma]^\top \nabla_y \breve u^\infty)$ admits the Feynman--Kac and
Bismut--Elworthy--Li representations presented in
(\ref{eq:feymann-kac})--(\ref{eq:elworth-li}). Hence $\mathbf{u}^\infty$ is the fixed point of
the expectation operator $\Phi$ on $\mathrm{Lip}([0,T]\times\mathbb{R}^d,\mathbb{R}^{1+d})$:
\begin{align}
\breve{\mathbf{u}}^\infty=\Phi(\breve{\mathbf{u}}^\infty),
\end{align}
with $\Phi$ given exactly as in the appendix at (\ref{eq:elworth-li}).  {Intuitively, the operator $\Phi$ is a Feynman--Kac--type backward propagator. 
Given an approximation of the solution at a future time, $\Phi$ maps it back to 
the present by running a stochastic simulation forward in time and averaging over 
all resulting trajectories. Concretely, for each initial state $x$, it computes
the expected terminal payoff together with the accumulated contribution of 
the nonlinearity $\breve{F}$ along the simulated path. The exact solution $u^\star$ is therefore characterized as the fixed point of 
this propagation: inserting $u^\star$ into the simulation leaves it unchanged, 
i.e., $\Phi u^\star = u^\star$.} Standard Picard iteration
$\breve{\mathbf{u}}_{k+1}=\Phi(\breve{\mathbf{u}}_k)$ converges to $\mathbf{u}^\infty$ under standard regularity assumptions \citep[Theorem 3.4]{Yong1999StochasticCH}.

Multilevel Picard (MLP) method \citep{E_2021,Hutzenthaler_2020}, uses Multilevel Monte Carlo (MLMC) \citep{giles2008multilevel,giles2015multilevel} to simulate the telescoping formulation $\mathbb{E}[\breve{\mathbf{u}}_n] = \mathbb{E}[\Phi(\breve{\mathbf{u}}_0)] + \sum_{l=1}^{n-1} \mathbb{E}[\Phi(\breve{\mathbf{u}}_l) - \Phi(\breve{\mathbf{u}}_{l-1})]$. The MLMC method exploits a hierarchy of approximations
\(\Phi(\breve{\mathbf{u}}_0), \Phi(\breve{\mathbf{u}}_1), \dots, \Phi(\breve{\mathbf{u}}_n)\), 
ranging from the coarsest to the finest resolution. 
Crucially, consecutive approximations 
\((\Phi(\breve{\mathbf{u}}_l))^{(i)}\) and 
\((\Phi(\breve{\mathbf{u}}_{l-1}))^{(i)}\) 
are generated using the same underlying sample path \(i\), 
which induces a strong positive correlation between them. 
As a result, the variance of their difference is significantly reduced. 
Moreover, as the level \(l\) increases, the iterates converge linearly
\(\breve{\mathbf{u}}_l - \breve{\mathbf{u}}_{l-1}\to 0\),  and the variance of the difference decreases linearly toward zero. As a consequence, the required number of samples $M^{n-l}$ can decrease as $l$ increases, meaning very few expensive samples are needed at the finest levels. The majority of the computational cost is thereby shifted to the coarser levels, significantly reducing the overall complexity of the estimation.

 Another factor affecting the variance is how the time integral is computed; we used two MLP variants to simulate \texttt{Structural-preserving Law of Defect}:
\begin{itemize}[leftmargin=*]
  \item \textbf{Quadrature MLP:}~\citep{E_2021}  Simulate the time integrals by the Gauss–Legendre quadrature. 
  \item \textbf{Full-history MLP:}~\citep{Hutzenthaler_2021} Simulate the time integrals by Monte Carlo.
\end{itemize}
We leave all the preliminaries and implementation details of the  MLP methods in Appendix ~\ref{appendix-section:prelim-subsection:MLP}.  The overall SCaSML procedure, which involves first training a surrogate model and then solving the \texttt{Structural-preserving Law of Defect} with MLP methods to correct it, is summarized in Algorithm~\ref{appendix-section:algo}.

\begin{figure}
    \centering
\begin{tikzpicture}[node distance=5cm]

\tikzset{
    step/.style={
        rectangle,
        rounded corners=5pt,
        minimum width=2cm,
        minimum height=1.2cm,
        text centered,
        draw=black!60,
        fill=blue!20,
        font=\sffamily\scriptsize,
        align=center
    },
    arrow/.style={
        thick,->,>=stealth
    }
}

\node[step={blue!30}] (step1) {Build a surrogate \\ solution $\hat u$};
\node[step={orange!30}, right of=step1] (step2) {Formulate \texttt{Structural-preserving} (\ref{eq:law_of_defect_semilinear})\\ \texttt{Law of Defect} describing $u-\hat u$};
\node[step={green!30}, right of=step2] (step3) {Use MLP to simulate (\ref{eq:law_of_defect_semilinear})  \\and at inference correct $\hat u$};

\draw[arrow] (step1) -- (step2);
\draw[arrow] (step2) -- (step3);

\end{tikzpicture}
    \caption{\textbf{Flow diagram of SCaSML.}  We formulate the error $u-\hat u$ of surrogate solution $\hat u$ as the solution \texttt{Structural-preserving Law of Defect} (\ref{eq:law_of_defect_semilinear}), a new \textbf{semi-linear} PDE. At inference time, we approximate  $u-\hat u$ via solving  \texttt{Structural-preserving Law of Defect} using Multilevel Picard (MLP) iteration. The generated estimation of $u-\hat u$ helps us to calibrate the surrogate solution $\hat u$.}
    \label{fig:placeholder}
\end{figure}
\vspace{-0.1in}

\begin{figure*}[!h]
    \centering
    \vspace{-0.05in}
    \textbf{(a) Illustration of SCaSML's Improved Scaling Law}
    \includegraphics[width=\textwidth]{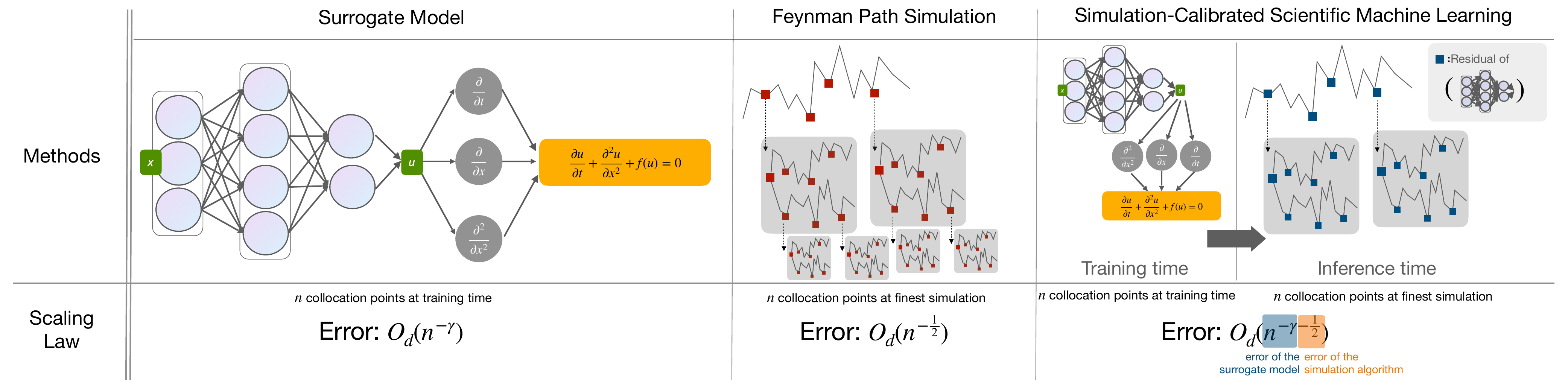}
    \hrule
    \vspace{0.05in}
    \textbf{(b) Empirical Verification on the Viscous Burgers Equation}
    
    \begin{subfigure}[b]{0.24\textwidth}
        \centering
        \includegraphics[width=\textwidth]{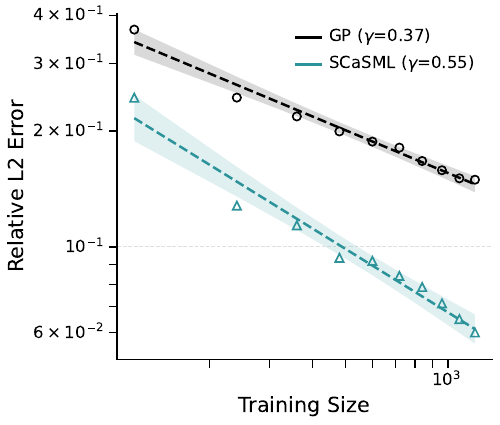}
        \caption{$d=20$}
    \end{subfigure}
    \begin{subfigure}[b]{0.24\textwidth}
        \centering
        \includegraphics[width=\textwidth]{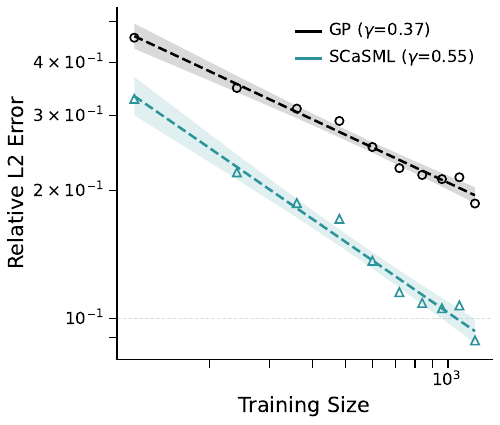}
        \caption{$d=40$}
    \end{subfigure}
    \begin{subfigure}[b]{0.24\textwidth}
        \centering
        \includegraphics[width=\textwidth]{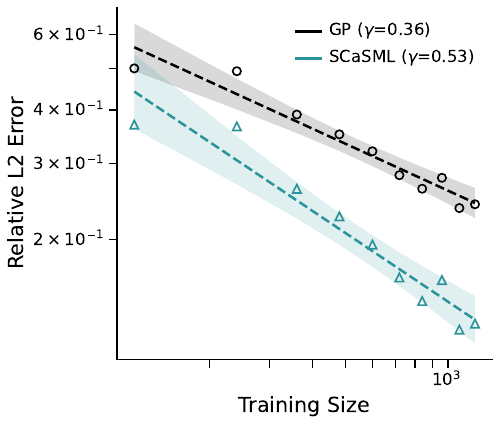}
        \caption{$d=60$}
    \end{subfigure}
    \begin{subfigure}[b]{0.24\textwidth}
        \centering
        \includegraphics[width=\textwidth]{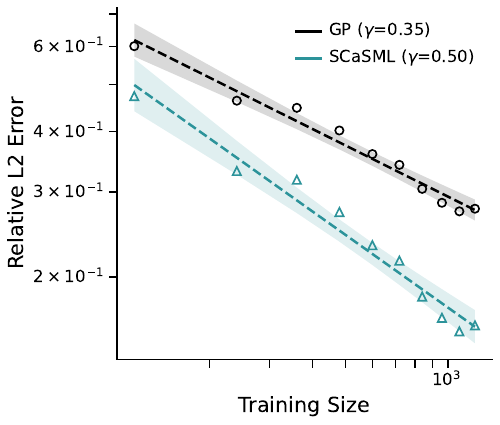}
        \caption{$d=80$}
    \end{subfigure}
    \caption{\textbf{Empirical verification of the improved scaling law for SCaSML.} \textbf{(a)} Conceptual diagram. The final SCaSML error is a product of the surrogate error and the simulation error (Theorem \ref{thm:l2_error_bound}). By balancing the computational budget between training the surrogate and performing inference-time simulation, SCaSML achieves a faster overall convergence rate (Corollary \ref{cor:scaling_law_main}). \textbf{(b)} Numerical results. We plot the $L^2$ error versus the number of collocation points ($m$) on a log-log scale for a GP surrogate and SCaSML. The slope of the line corresponds to the convergence rate $\gamma$. SCaSML consistently exhibits a steeper slope than the base surrogate, empirically confirming its accelerated convergence.}
    \label{improved-scaling-law-main}
\end{figure*}

\section{Provable Improved Convergence Rate of Simulation-Calibrated Scientific Machine Learning}
\label{section:theory}
We now provide theoretical guarantees for SCaSML, showing that it achieves a provably faster convergence rate. For simplicity, we present results for the case $\mu=\mathbf{0}$ and $\sigma=s\mathbf{I}_d$. Our analysis relies on the assumption that the pre-trained surrogate is reasonably accurate.
\vspace{-0.1in}
\paragraph{Why SCaSML Enjoys Provable Faster Convergence Rate.}
The Monte Carlo error in MLP methods depends on the scale of the terminal defect \(\breve g\) and the modified nonlinearity \(\breve F\), which depends on the error of the surrogate model. A more accurate surrogate model yields smaller $\breve{g}$ and $\breve{F}$, resulting in reduced variance during inference. If the surrogate achieves an error of
\(e(\hat u)\sim m^{-\gamma}\) (Assumption~\ref{assump:surrogate_accuracy}) from \(m\) training points, then the variance is \(O(m^{-2\gamma})\). During inference, we average over $m$ additional Monte Carlo paths, which reduces the statistical error $\sqrt{\frac{m^{-2\gamma}}{m}}=m^{-\gamma -\frac{1}{2}}$ \citep{blanchet2023can}.  With a total computation cost of $2m$ function evaluations, SCaSML therefore attains a convergence rate that surpasses both the surrogate method $m^{-\gamma}$ and the standard MLP / Monte-Carlo estimator $m^{-1/2}$. 
Full constant-tracking and rigorous proofs are in Appendices~\ref{appendix-section:quadproof} and~\ref{appendix-section:fullhistproof}.

\begin{assumption}[Surrogate Model Accuracy]
\label{assump:surrogate_accuracy}
Let the true defect be well-behaved such that $\sup_{t\in[0,T]}\|\breve{u}(t,\cdot)\|_{W^{1,\infty}} < \infty$. We assume the surrogate error is bounded by a measure $e(\hat{u})$, such that for constants $C_{F,1}, C_{F,2} > 0$:
\begin{itemize}
    \item \textbf{$L^\infty$ Residual:} $\sup_{r,\bm{y}}|\epsilon(r,\bm{y})| \leq C_{F,1} \, e(\hat{u})$.
    \item \textbf{$W^{1,\infty}$ Error:} $\sup_{r}\|\breve{u}(r,\cdot)\|_{W^{1,\infty}} \leq C_{F,2} \, e(\hat{u})$.
\end{itemize}
\end{assumption}

\textbf{Proof Sketch.} Our main theoretical result stems from the observation that the computational complexity of the MLP solver depends on the Lipschitz constant of the nonlinearity $\breve{F}$ and the magnitude of the "source terms".  { They appear multiplicatively because nonlinearities—through their Lipschitz bounds—propagate and magnify variance at every Picard iteration. The "source term" driving the Multilevel Picard simulation for the defect is the residual $\epsilon$, already reduced by the surrogate. At the same time, We show that the regularity in the law of defect is no worse than that of the original PDE, ensuring that the refinement introduces no additional smoothness requirements. Combining the previous fact,  a more accurate surrogate makes the defect PDE "easier" to solve.} This leads to our main error bound.

\begin{theorem}[Global $L^2$ Error Bound]
\label{thm:l2_error_bound}
Under standard regularity assumptions on the PDE coefficients (Assumptions \ref{fullhist proof- settings-asp:decay}--\ref{PDE to SDE- asp:Lip of surro terminal}), the global $L^2$ error of the SCaSML estimator using a full-history MLP approximation ${\bf \breve U}_{N,M}$ with $N$ levels and use $M^l$ Monte Carlo samples at $l-$th level is bounded by:
\begin{align}
\sup_{(t,\bm{x})\in[0,T]\times\R^d} \left\| {\bf \breve U}_{N,M}(t,\bm{x}) - \breve{\mathbf{u}}(t,\bm{x}) \right\|_{L^2} \le \textcolor{gray}{E(M,N)} \cdot \textcolor{teal}{\left(C_F \, e(\hat{u})\right)},
\end{align}
where $\breve{\mathbf{u}} = (\breve{u}, \sigma\nabla_{\bm x}\breve{u})$ is the true defect and its gradient, and $E(M,N)$ represents the error term of the underlying MLP solver, which depends on $M$ and $N$ but is independent of the surrogate.
\end{theorem}

Theorem \ref{thm:l2_error_bound} shows that the final error is the product of the MLP simulation error and the surrogate model error. This synergistic relationship implies that the computational cost to reach a global $L^2$ error of $\varepsilon$ is reduced from $O(d\,\varepsilon^{-(2+\delta)})$ for a naive MLP solver to $O(d\,\varepsilon^{-(2+\delta)}\,e(\hat{u})^{2+\delta})$ for SCaSML (see Corollary~\ref{cor:M to N} in Appendix). {This means the cost of our correction step \textit{decreases} as the quality of the initial surrogate improves.} This directly leads to an improved scaling law.

\begin{corollary}[Improved Scaling Law]
\label{cor:scaling_law_main}
Under the assumptions of Theorem \ref{thm:l2_error_bound}, suppose the surrogate model's error scales as $e(\hat{u}) = O(m^{-\gamma})$ with $m$ training points. By allocating an additional $m$ samples for the inference-time simulation, the total error of the SCaSML procedure improves from $O(m^{-\gamma})$ to $O(m^{-\gamma - 1/2 + o(1)})$.
\end{corollary}

\section{Numerical Experiments}
\label{sec:results}
We now empirically validate the SCaSML framework across a suite of challenging high-dimensional PDEs. In each experiment, we first train a baseline surrogate model $\hat{u}$ (either a Physics-Informed Neural Network or a Gaussian Process) to obtain an approximate solution. Then, at inference time, we apply a full-history Multilevel Picard (MLP) solver to the \texttt{Structural-preserving Law of Defect} (Fact \ref{fact:law_of_defect}) to compute a correction term $\breve{u}$. The final SCaSML solution is the sum $u_{\text{SCaSML}} = \hat{u} + \breve{u}$.

{The primary goal of these experiments is to demonstrate the value added by the correction step. Thus, our key comparison is between the baseline surrogate model (SR) and the final corrected solver (SCaSML). We also include the naive MLP solver for reference, to show that the hybrid approach succeeds where pure simulation often fails.} Our implementation leverages JAX \citep{jax2018github} and DeepXDE \citep{Lu_2021} for efficient, parallelized computation.

As shown in Figure \ref{fig:fullfigure}a, SCaSML consistently tightens the error distribution compared to the base surrogate. {Refer to Appendix \ref{appendix-section:aux-results-subsection:pointwise} for detailed pointwise error maps.} Figure \ref{fig:fullfigure}b demonstrates SCaSML's effective inference-time scaling: as more computational resources (i.e., Monte Carlo samples) are allocated, the accuracy of the solution progressively improves. A comprehensive comparison of error metrics and timings is provided in Table \ref{tab:combined_results}, and the empirical validation of our theoretical scaling law is shown in Figure \ref{improved-scaling-law-main}. \textbf{More experiments, including statistical significance tests ($p \ll 0.001$, Appendix \ref{app:stats}) and fixed-budget efficiency comparisons (Appendix \ref{appendix-section:aux-results-subsection:budget}), are shown in the Appendix \ref{appendix-section:aux-results}}.

\subsection{Linear Convection-Diffusion Equation}
\label{LCD}
The following example provides a numerical demonstration for the efficacy of our method on linear parabolic equations.
\paragraph{Problem Formulation.}
We investigate a linear convection-diffusion equation given by $\frac{\partial}{\partial r} u(r,\bm{y}) + \left\langle -\frac{1}{d}\mathbf{1}, \nabla_y u(r,\bm{y}) \right\rangle + \Delta_y u(r,\bm{y}) = 0, \quad (r,\bm{y}) \in [0,T) \times \mathbb{R}^d,$ with the terminal condition $u(T,\bm{y}) = \sum_{i=1}^d y_i + T, \quad y \in \mathbb{R}^d$. This PDE admits the explicit solution $u(r,\bm{y}) = \sum_{i=1}^d y_i + r$.
\vspace{-0.1in}
\paragraph{Experimental Setup.}
The problem is solved over the hypercube $[0,0.5]\times[0,0.5]^d$ for dimensions $d \in \{10, 20, 30, 60\}$ {with Dirichlet boundary conditions enforced by the PINN loss}. We deploy a Physics-Informed Neural Network (PINN) with 5 hidden layers, 50 neurons each, and a $\tanh$ activation function. Training uses the Adam optimizer (learning rate $7\times 10^{-4}$, $\beta_1=0.9, \beta_2=0.99$) for $10^4$ iterations. At each iteration, the network is trained on $2.5\times 10^3$ interior and $10^2$ boundary collocation points. For the inference step, we use a 2-level simulation with $M=10$ as the basis of Monte Carlo samples at each level in Multilevel Picard iteration for the tabulated results and $M \in \{10,\ldots,16\}$ for the scaling study. A clipping threshold of $0.5(d+1)$ is applied to the solution and gradients for both the naive MLP and SCaSML.

\paragraph{Results.}
As reported in Table~\ref{tab:combined_results} (\textbf{LCD}), SCaSML achieves a reduction in the relative $L^2$ error from 20\% to 56.9\% compared to the baseline PINN surrogate. Moreover, SCaSML exhibits robust inference scaling, with performance improving as more inference time is allocated (see Figure~\ref{fig:scaling_lin_conv}).

\subsection{Viscous Burgers Equation}
\paragraph{Problem Formulation.}
Next, we consider a viscous Burgers equation from \citep{hutzenthaler2019multilevel}, a standard benchmark for nonlinear PDEs: $\frac{\partial u}{\partial r} + \left\langle -\left(\frac{1}{d}+\frac{\sigma_0^2}{2}\right)\mathbf{1}, \nabla_y u \right\rangle + \frac{\sigma_0^2}{2}\Delta_y u + \sigma_0 u \sum_{i=1}^d (\sigma_0\nabla_y u)_i = 0,$ with terminal condition $u(T,\bm{y}) = \frac{\exp(T+\sum_{i=1}^d y_i)}{1+\exp(T+\sum_{i=1}^d y_i)}$. The exact solution is $u(r,\bm{y})=\frac{\exp(r+\sum_{i=1}^d y_i)}{1+\exp(r+\sum_{i=1}^d y_i)}$.

\paragraph{Experimental Setup.}
We solve the PDE on $[0, 0.5] \times [-0.5, 0.5]^d$ for dimensions $d \in \{20, 40, 60, 80\}$ with $\sigma_0=\sqrt{2}$. We test SCaSML with two types of surrogates. The PINN was trained for $10^4$ iterations using the Adam optimizer (learning rate $7\times10^{-4}$, $\beta_1=0.9$, and $\beta_2=0.99$), utilizing 2,500 interior, 100 boundary, and 160 terminal condition sample points. A Gaussian Process (GP) regression surrogate was trained over 20 iterations via Newton’s method, using 1,000 interior and 200 boundary points. For the 2-level MLP and SCaSML solvers with the basis of Monte Carlo samples $M=10$, we set clipping thresholds of 1.0 and 0.01, respectively, to handle the nonlinearity.
\paragraph{Results.}
SCaSML demonstrates strong performance with both surrogate types. For the PINN surrogate (\textbf{VB-PINN}), it reduces the relative $L^2$ error by 16.2\% to 66.1\%. For the GP surrogate (\textbf{VB-GP}), the reduction is even more pronounced, ranging from 42.7\% to 57.5\% (Table \ref{tab:combined_results}). This highlights SCaSML's versatility as a plug-and-play corrector for different SciML models.
\begin{figure*}[!]
\vspace{-0.2in}
    \centering
    \textbf{(a) Error Distribution of Test Points}
    \begin{minipage}{\linewidth}
        \centering
        \includegraphics[width=\linewidth]{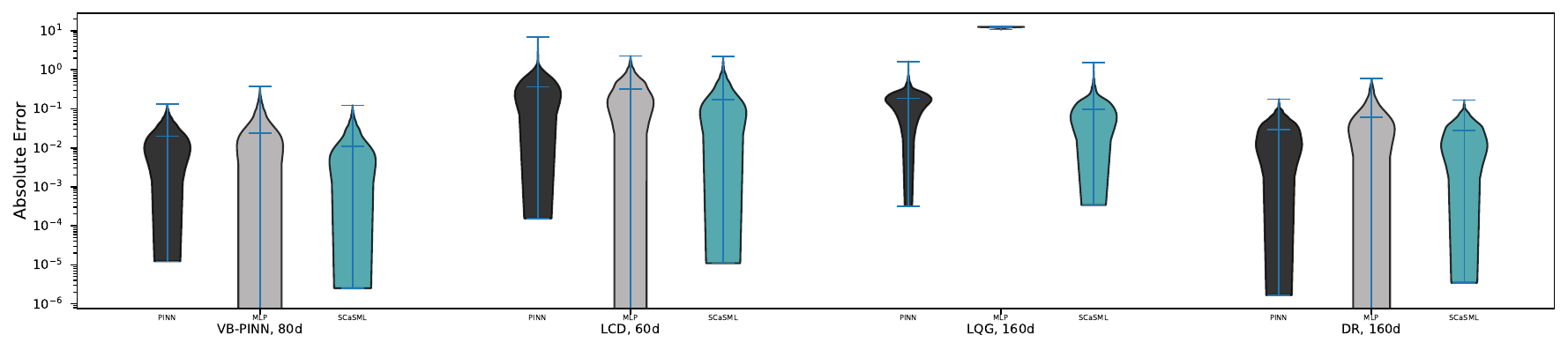}
    \end{minipage}
    \textbf{(b) Inference-Time Scaling of SCaSML with Increasing Computational Budget}
    \begin{minipage}{\linewidth}
        \centering
        \begin{subfigure}[b]{0.24\linewidth}
            \includegraphics[width=\linewidth]{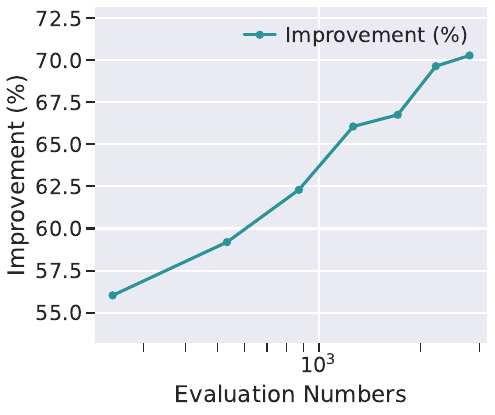}
            \subcaption*{\scriptsize Linear Convection-Diffusion}
        \end{subfigure}
        \begin{subfigure}[b]{0.24\linewidth}
            \includegraphics[width=\linewidth]{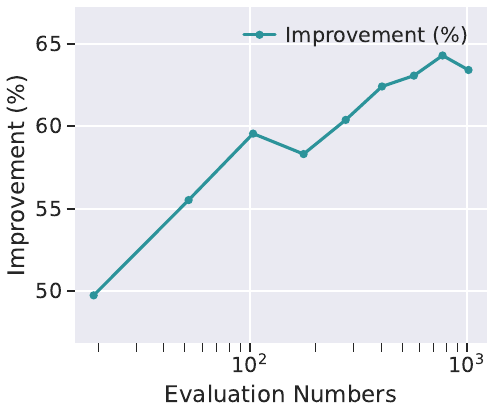}
            \subcaption*{\scriptsize Viscous Burgers}
        \end{subfigure}
        \begin{subfigure}[b]{0.24\linewidth}
            \centering
            \includegraphics[width=\linewidth]{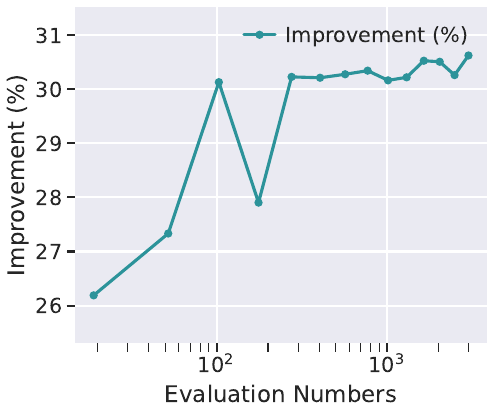}
            \subcaption*{\scriptsize Linear Quadratic Gaussian}
        \end{subfigure}
        \begin{subfigure}[b]{0.24\linewidth}
            \centering
            \includegraphics[width=\linewidth]{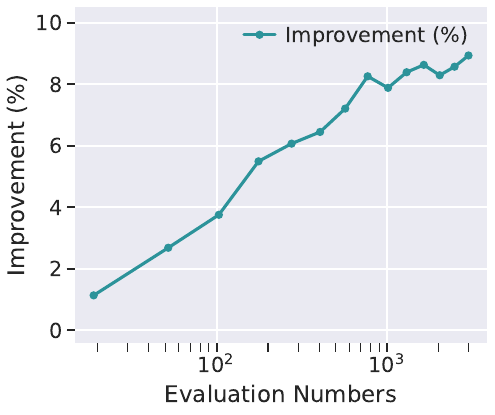}
            \subcaption*{\scriptsize Diffusion Reaction}
        \end{subfigure}
    \end{minipage}
    \textbf{(c) Overall Performance Comparison Across Metrics and Runtimes}
    \begin{minipage}{\linewidth}
        \centering
        \includegraphics[width=0.55\linewidth]{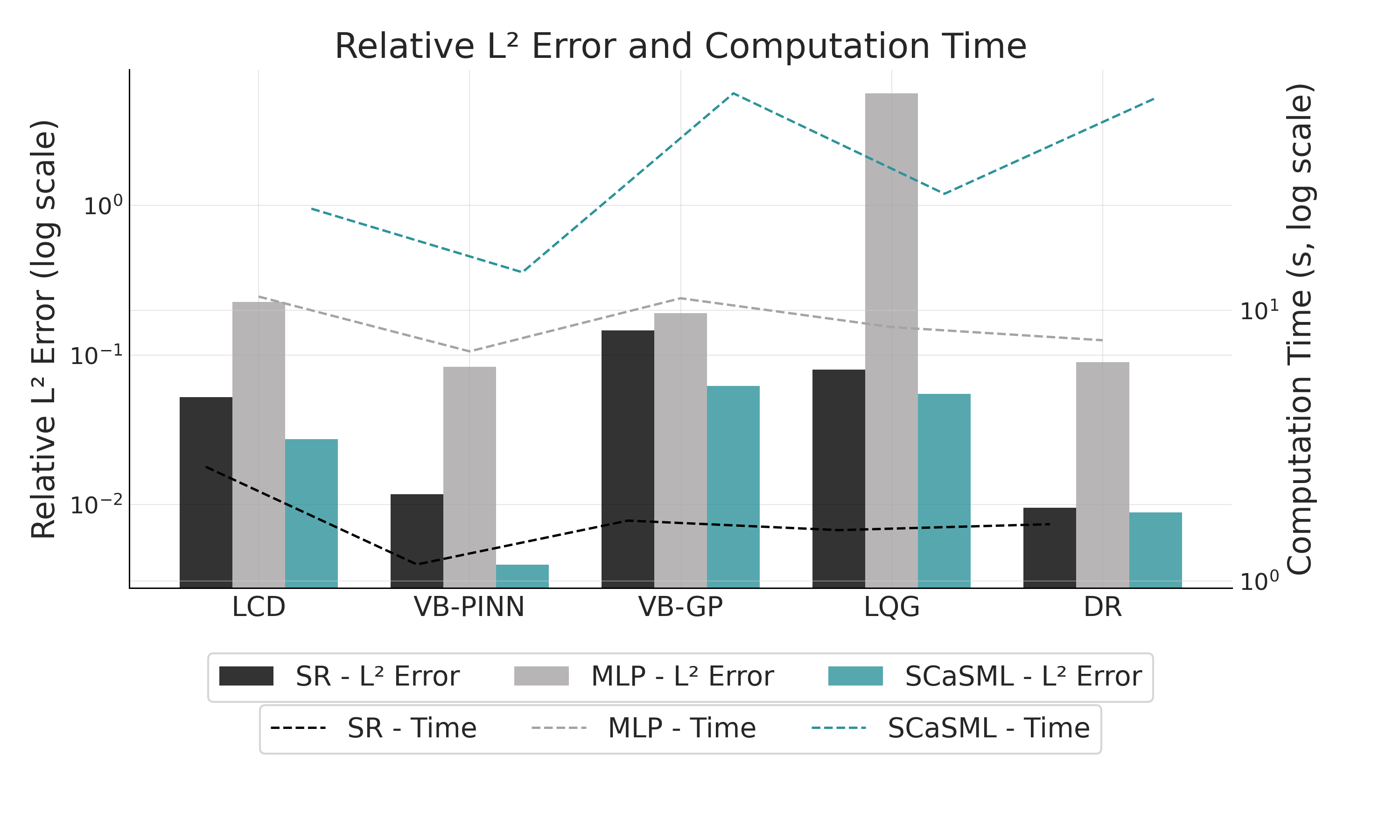}\hfill
        \includegraphics[width=0.42\linewidth]{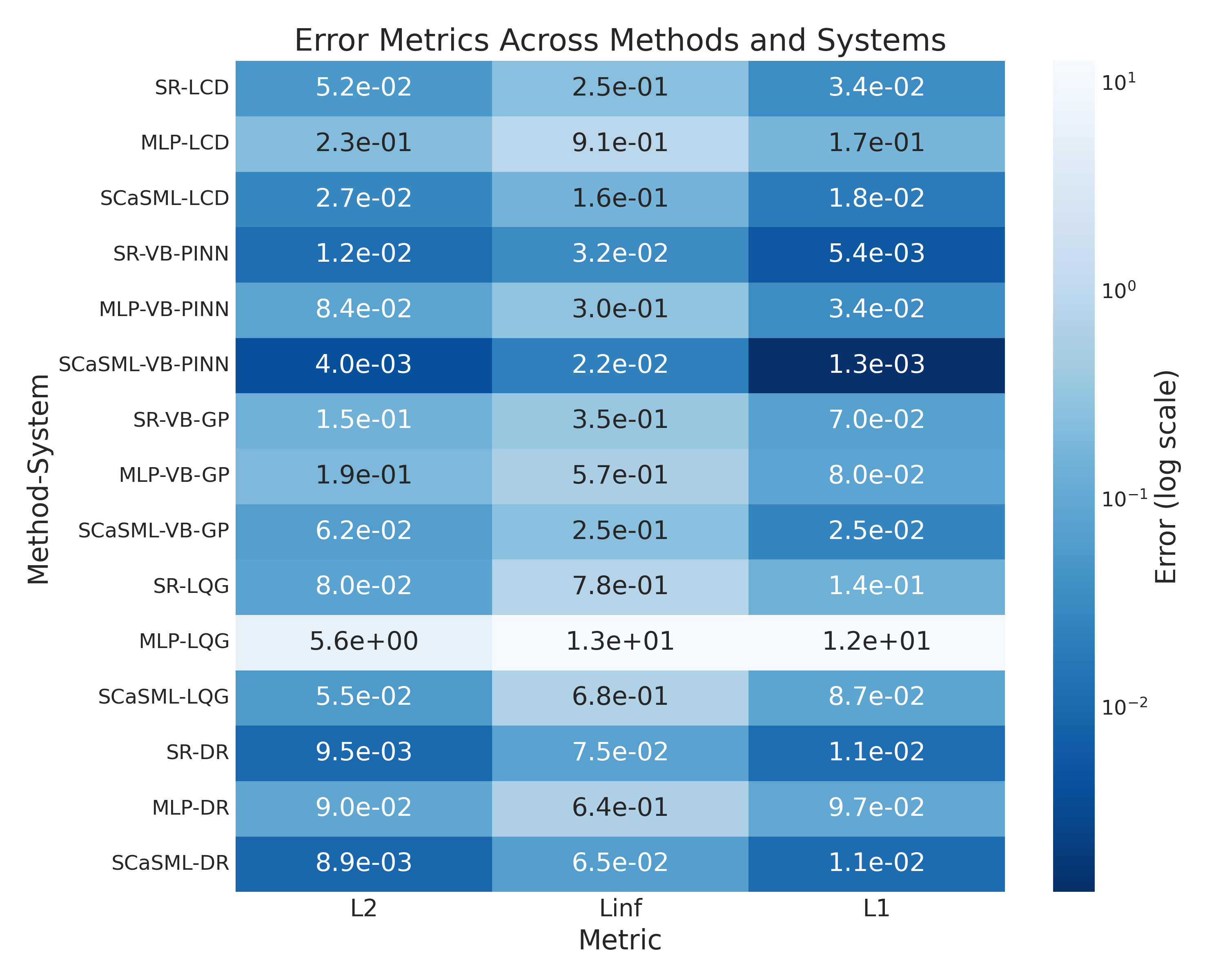}
    \end{minipage}
    \vspace{-0.1in}
    \caption{\textbf{Efficiency and performance of the SCaSML methodology.} \textbf{(a)} Violin plots showing the distribution of pointwise errors. SCaSML consistently reduces the mean error and tightens the distribution compared to the surrogate (SR) model. \textbf{(b)} Inference-time scaling. {As the number of inference-time simulation samples increases, SCaSML's error steadily decreases, demonstrating effective use of additional compute.} \textbf{(c)} Summary of performance. The left panel shows that SCaSML (blue stars) consistently achieves lower $L^2$ error than both the surrogate (SR) and naive MLP methods across all problems. The right panel (heatmap) confirms that SCaSML also dominates in $L^\infty$ and $L^1$ error metrics.}
    \label{fig:fullfigure}
       \vspace{-0.1in}
\end{figure*}

\subsection{High-Dimensional Hamilton-Jacobi-Bellman Equation}
\vspace{-0.1in}
\paragraph{Problem Formulation.}
To showcase SCaSML on problems central to control theory, we tackle a high-dimensional Hamilton-Jacobi-Bellman (HJB) equation arising from a linear-quadratic-Gaussian (LQG) control problem \citep{jiequn17deepl}. The HJB equation is given by $\frac{\partial u}{\partial r} + \Delta_y u -\|\nabla_y u\|^2 = 0,$ with terminal condition $u(T,\bm{y})=\log(\frac{1+\sum_{i=1}^{d-1}[c_{1,i}(y_i-y_{i+1})^2+c_{2,i}y_{i+1}^2]}{2})$, where $c_{1,i}$ and $c_{2,i}$ are independent random draws from interval $[0.5,1.5]$. The reference solution is computed via $u(r,\bm{y})=-\log\mathbb{E}\exp(- u(T,\bm{y}+\sqrt{2}W_{T-r}))$ with sufficiently large sample sizes(e.g. $100d$).
\vspace{-0.1in}
\paragraph{Experimental Setup.}
Following \citep{Hu_2024_tackle}, we use a complex, non-trivial terminal condition and evaluate the problem in very high dimensions, $d \in \{100, 120, 140, 160\}$. The PINN surrogate is trained for $2.5 \times 10^3$ iterations on the domain $[0, 0.5] \times \mathbb{B}^d$, where $\mathbb{B}^d$ is the unit ball in $\mathbb{R}^d$, with 100 interior and 1,000 boundary points per iteration. We use the Adam optimizer with a learning rate of $10^{-3}$, $\beta_1=0.9$, and $\beta_2=0.99$. For inference steps, we set total level $n=2$ and the basis of Monte Carlo samples at each level $M=10$, where $n$ is the total level and $M^l$ is sample used at level $l$ for $0\leq l\leq n$. To stabilize the simulation for this strongly nonlinear problem, we use a clipping threshold of 10 for the naive MLP and a much smaller threshold of 0.1 for SCaSML, reflecting the smaller magnitude of the defect. To accelerate computations, we use Hutchinson's method to stochastically estimate the Laplacian and divergence terms, sampling $d/4$ dimensions at each step \citep{hutchinson1989stochastic,girard1989fast,shi2025stochastictaylorderivativeestimator}. 
\vspace{-0.1in}
\paragraph{Results.}
In this challenging high-dimensional setting (\textbf{LQG}), the naive MLP solver fails entirely, producing large errors. In contrast, SCaSML successfully refines the PINN solution, reducing the relative $L^2$ error by 11.7\% to 30.8\% and achieving the lowest error across all metrics (Table \ref{tab:combined_results}).
\vspace{-0.15in}
\subsection{Diffusion-Reaction Equation with an Oscillating Solution}
\vspace{-0.1in}
\paragraph{Problem Formulation.}
Finally, we consider a diffusion-reaction system designed to have a highly oscillatory solution \citep{GOBET20171171, jiequn17deepl}, making it particularly difficult for standard neural network surrogates $\frac{\partial u}{\partial r} + \frac{1}{2}\Delta_y u + \min\{1, (u-u^\star)^2\} = 0,$ where $u^\star(r,\bm{y})=1.6+\sin(0.1\sum_{i=1}^d y_i)\exp(\frac{0.01d(r-1)}{2})$ is the exact solution.
\vspace{-0.1in}
\paragraph{Experimental Setup.}
We solve the problem for dimensions $d \in \{100, 120, 140, 160\}$ on the domain $[0, 1] \times \mathbb{B}^d$. The PINN surrogate is trained for $2.5 \times 10^3$ iterations with 1,000 interior and 1,000 boundary points, using the Adam optimizer with a learning rate of $10^{-3}$, $\beta_1=0.9$, and $\beta_2=0.99$. For inference steps, we set total level $n=2$ and the basis of Monte Carlo samples at each level $M=10$. The MLP and SCaSML solvers use clipping thresholds of 10 and 0.01, respectively. Due to the solution's oscillatory nature, we found that the Hutchinson estimator for the Laplacian introduced instability; therefore, we computed the full Laplacian in this experiment.
\vspace{-0.1in}
\paragraph{Results.}
Even though the PINN surrogate is already quite accurate for this problem, SCaSML is still able to provide a consistent refinement. As shown in Table \ref{tab:combined_results} (\textbf{DR}), SCaSML further reduces the relative $L^2$ error by 6.6\% to 10.9\%, demonstrating its capability to improve even well-performing surrogates on complex, high-frequency problems.

\section{Conclusion and Discussion}
We introduced \textbf{SCaSML}, \textbf{the first physics-informed inference time scaling framework} that integrates surrogate models with Monte-Carlo numerical simulations for solving high-dimensional PDE. By introducing \texttt{Structural-preserving Law of Defect}, we use the output of a pre-trained SciML solver as an efficient starting point for inference-time corrections. Our theory and experiments show this hybrid approach achieves faster convergence and reduces errors by up to 80\% in complex high-dimensional PDEs. SCaSML represents a new approach in hybrid scientific computing. Unlike previous work that used machine learning for discovering numerical schemes \citep{long2018pde} or as preconditioners \citep{hsieh2019learning}, our framework uses the machine learning model as a control variate in stochastic simulations to reduce the variance of Monte Carlo simulation. The surrogate handles the low-frequency part, allowing the simulation to focus on the small high-frequency residual, and enhances computational efficiency by addressing model bias at inference time. {This establishes an \textit{elastic compute} paradigm, allowing users to trade inference time for accuracy on demand—achieving gains that are often computationally intractable through further training alone.} \textbf{SCaSML is the first inference-time scaling algorithm that enhances the learned surrogate solution during inference without requiring fine-tuning or retraining.}

\begin{table*}[!t]
\vspace{-0.2in}
    \caption{Comparative performance of full-history SCaSML against the surrogate model (\textbf{SR}: PINN or GP) and a naive MLP solver. We report total runtime (s) and relative errors in $L^2$, $L^\infty$, and $L^1$ norms. Bold values indicate the best performance in each category. SCaSML consistently achieves the lowest error across nearly all settings.}
    \label{tab:combined_results}
    \tiny
    \centering
    \setlength{\tabcolsep}{4pt}
    \renewcommand{\arraystretch}{1.2}
    \begin{tabular}{l|l||ccc|ccc|ccc|ccc}
        \hline\hline
        \rowcolor{gray!20}
        \multicolumn{2}{c||}{\textbf{Problem}} & \multicolumn{3}{c|}{\textbf{Time (s)}} & \multicolumn{3}{c|}{\textbf{Relative $L^2$ Error}} & \multicolumn{3}{c|}{\textbf{$L^\infty$ Error}} & \multicolumn{3}{c}{\textbf{$L^1$ Error}} \\
        \multicolumn{2}{c||}{} & \textbf{SR} & \textbf{MLP} & \textbf{SCaSML} & \textbf{SR} & \textbf{MLP} & \textbf{SCaSML} & \textbf{SR} & \textbf{MLP} & \textbf{SCaSML} & \textbf{SR} & \textbf{MLP} & \textbf{SCaSML} \\
        \hline\hline
        \multirow{4}{*}{\rotatebox[origin=c]{90}{\textbf{LCD}}}
        & 10d & 0.45 & 6.77 & 13.31 & 5.20E-02 & 2.27E-01 & \textbf{2.74E-02} & 2.50E-01 & 9.06E-01 & \textbf{1.65E-01} & 3.39E-02 & 1.67E-01 & \textbf{1.78E-02} \\
        & 20d & 0.54 & 6.73 & 17.11 & 9.00E-02 & 2.35E-01 & \textbf{4.72E-02} & 4.72E-01 & 1.35E+00 & \textbf{3.30E-01} & 9.37E-02 & 2.37E-01 & \textbf{4.52E-02} \\
        & 30d & 0.46 & 6.89 & 22.44 & 1.45E-01 & 2.38E-01 & \textbf{9.72E-02} & 2.04E+00 & 1.59E+00 & \textbf{7.69E-01} & 1.61E-01 & 2.84E-01 & \textbf{1.04E-01} \\
        & 60d & 0.28 & 6.94 & 37.59 & 3.13E-01 & 2.39E-01 & \textbf{1.32E-01} & 3.24E+00 & 2.05E+00 & \textbf{1.57E+00} & 5.35E-01 & 4.07E-01 & \textbf{2.06E-01} \\
        \hline
        \multirow{4}{*}{\rotatebox[origin=c]{90}{\textbf{VB-PINN}}}
        & 20d & 0.54 & 6.80 & 10.59 & 1.17E-02 & 8.36E-02 & \textbf{4.03E-03} & 3.26E-02 & 2.96E-01 & \textbf{2.26E-02} & 5.36E-03 & 3.39E-02 & \textbf{1.29E-03} \\
        & 40d & 0.29 & 8.11 & 14.09 & 4.06E-02 & 1.04E-01 & \textbf{2.92E-02} & 8.43E-02 & 3.57E-01 & \textbf{7.43E-02} & 2.00E-02 & 4.36E-02 & \textbf{1.24E-02} \\
        & 60d & 3.14 & 11.36 & 38.30 & 3.95E-02 & 1.17E-01 & \textbf{2.88E-02} & 8.20E-02 & 3.93E-01 & \textbf{7.20E-02} & 1.94E-02 & 4.82E-02 & \textbf{1.22E-02} \\
        & 80d & 3.65 & 11.78 & 42.50 & 6.74E-02 & 1.19E-01 & \textbf{5.64E-02} & 1.90E-01 & 3.35E-01 & \textbf{1.80E-01} & 3.21E-02 & 4.73E-02 & \textbf{2.46E-02} \\
        \hline
        \multirow{4}{*}{\rotatebox[origin=c]{90}{\textbf{VB-GP}}}
        & 20d & 1.74 & 10.56 & 61.82 & 1.47E-01 & 1.90E-01 & \textbf{6.23E-02} & 3.54E-01 & 5.72E-01 & \textbf{2.54E-01} & 7.01E-02 & 8.00E-02 & \textbf{2.48E-02} \\
        & 40d & 1.78 & 12.28 & 61.28 & 1.81E-01 & 2.20E-01 & \textbf{8.55E-02} & 4.00E-01 & 8.71E-01 & \textbf{3.00E-01} & 9.19E-02 & 9.06E-02 & \textbf{3.82E-02} \\
        & 60d & 1.68 & 9.70 & 57.79 & 2.40E-01 & 2.57E-01 & \textbf{1.28E-01} & 3.84E-01 & 9.50E-01 & \textbf{2.84E-01} & 1.27E-01 & 9.99E-02 & \textbf{6.11E-02} \\
        & 80d & 1.69 & 10.12 & 60.69 & 2.66E-01 & 3.02E-01 & \textbf{1.52E-01} & 3.61E-01 & 1.91E+00 & \textbf{2.61E-01} & 1.45E-01 & 1.09E-01 & \textbf{7.59E-02} \\
        \hline
        \multirow{4}{*}{\rotatebox[origin=c]{90}{\textbf{LQG}}}
        & 100d & 0.42 & 8.27 & 21.33 & 7.97E-02 & 5.63E+00 & \textbf{5.53E-02} & 7.82E-01 & 1.26E+01 & \textbf{6.82E-01} & 1.40E-01 & 1.21E+01 & \textbf{8.72E-02} \\
        & 120d & 0.32 & 8.52 & 23.98 & 9.40E-02 & 5.50E+00 & \textbf{6.66E-02} & 9.06E-01 & 1.27E+01 & \textbf{8.06E-01} & 1.74E-01 & 1.22E+01 & \textbf{1.06E-01} \\
        & 140d & 0.40 & 8.65 & 27.31 & 9.87E-02 & 5.37E+00 & \textbf{6.84E-02} & 9.96E-01 & 1.27E+01 & \textbf{8.96E-01} & 1.93E-01 & 1.23E+01 & \textbf{1.12E-01} \\
        & 160d & 0.34 & 8.09 & 29.95 & 1.12E-01 & 5.27E+00 & \textbf{9.94E-02} & 1.40E+00 & 1.28E+01 & \textbf{1.30E+00} & 2.17E-01 & 1.23E+01 & \textbf{1.79E-01} \\
        \hline
        \multirow{4}{*}{\rotatebox[origin=c]{90}{\textbf{DR}}}
        & 100d & 0.32 & 7.59 & 58.51 & 1.41E-02 & 8.99E-02 & \textbf{1.11E-02} & 9.58E-02 & 6.37E-01 & \textbf{8.58E-02} & 1.87E-02 & 9.74E-02 & \textbf{1.38E-02} \\
        & 120d & 0.33 & 7.16 & 68.28 & 1.11E-02 & 9.13E-02 & \textbf{1.03E-02} & 7.50E-02 & 5.74E-01 & \textbf{6.50E-02} & 1.39E-02 & 9.97E-02 & \textbf{1.29E-02} \\
        & 140d & 0.42 & 7.73 & 79.99 & 3.22E-02 & 8.97E-02 & \textbf{3.00E-02} & 1.82E-01 & 8.56E-01 & \textbf{1.72E-01} & 4.03E-02 & 9.77E-02 & \textbf{3.75E-02} \\
        & 160d & 0.37 & 7.22 & 86.77 & 3.45E-02 & 9.00E-02 & \textbf{3.22E-02} & 2.08E-01 & 8.02E-01 & \textbf{1.98E-01} & 4.30E-02 & 9.75E-02 & \textbf{4.00E-02} \\
        \hline\hline
    \end{tabular}
\end{table*}

\newpage

\bibliography{main}
\newpage
\appendix
\section*{Outline of the Appendix}
\begin{itemize}

  \item[Appendix A.] \textbf{\hyperref[app:notations]{Notations.}} We provide a centralized glossary defining the mathematical symbols and operators used throughout the paper and appendices.

  \item[Appendix B.] \textbf{\hyperref[appendix-section:prelim]{Preliminaries.}} This section introduces two essential components of our experiments: the surrogate models in SCaSML and the Multilevel Picard (MLP) iterations.
  \begin{enumerate}
    \item \emph{\hyperref[appendix-section:prelim-subsection:surro]{Surrogate Models for PDEs.}} We present the preliminary on the  surrogate models we used for solving partial differential equations.
    \begin{enumerate}
      \item \textbf{\hyperref[appendix-section:prelim-subsection:surro- subsubsection:PINN]{Physics-Informed Neural Network (PINN).}} Details on training physics-informed neural networks as base surrogate models.
      \item \textbf{\hyperref[appendix-section:prelim-subsection:surro- subsubsection:GP]{Gaussian Processes.}}  \citep{chen2021solving,yang2021inference} approximate the solution to a given PDE by computing the maximum a posteriori (MAP) estimator of a Gaussian process that is conditioned to satisfy the PDE at a limited number of collocation points. While this formulation naturally leads to an infinite-dimensional optimization problem, it can be transformed into a finite-dimensional one by introducing auxiliary variables that represent the values of the solution’s derivatives at these collocation points—a strategy that extends the representer theorem familiar from Gaussian process regression. The resulting finite-dimensional problem features a quadratic objective function subject to nonlinear constraints and is solved using a variant of the Gauss–Newton method.
    \end{enumerate}
    \item \emph{\hyperref[appendix-section:prelim-subsection:MLP]{Quadrature and full-history Multilevel Picard Iterations.}} In this section, we provide an overview of the Mutlilevel Picard Iteration (MLP) methods for solving semi-linear Parabolic equation is provided, including two variants:quadrature MLP\citep{E_2021} and full-history MLP\citep{Hutzenthaler_2021}.
  \end{enumerate}

    \item[Appendix C.] \textbf{\hyperref[appendix-section:algo]{Algorithm.}} In this section, we present the full procedure of SCaSML algorithms.

  \item[Appendix D.] \textbf{\hyperref[appendix-section:setting]{Proof Settings.}} This section details the common assumptions and notation for all MLP methods, and presents regularity conditions on surrogate models.
  \begin{enumerate}
    \item \emph{\hyperref[appendix-section:setting-subsection:notation]{Mathematical Framework.}} We adopt the mathematical framework introduced in \citep{Hutzenthaler_2021}.
    \item \emph{\hyperref[appendix-section:settings-subsection:rep]{Regularity Assumptions for Surrogate Model.}} We introduce regularity conditions on surrogate models.
  \end{enumerate}

  \item[Appendix E.] \textbf{\hyperref[appendix-section:quadproof]{Proof for Quadrature Multilevel Picard Iteration.}} In this section, we establish the convergence rate improvement for SCaSML using quadrature integration.
  \begin{enumerate}
    \item \emph{\hyperref[appendix-section:quadproof-subsection:setting]{Settings.}} SCaSML needs a good base surrogate model. In this section, we specify the accuracy assumptions for the base surrogate model.
    \item \emph{\hyperref[appendix-section:quadproof-subsection:main]{Main Results.}} The primary theoretical results include:
    \begin{enumerate}
      \item \textbf{\hyperref[appendix-section:quadproof-subsection:main-subsubsection:L2]{Global \(L^2\) Error Bound.}} The error analysis employs a factor replacement strategy, with the flexibility of choosing the inference time computation cost $M$.
      \item \textbf{\hyperref[appendix-section:quadproof-subsection:setting-subsubsection:complexity]{Computational Complexity Bound.}} By setting optimal relationship between training and inference time computational cost, we show that SCaSML can reduce the computational complexity from $O(d\varepsilon^{-(4+\delta)})$ to $O(de(\hat{u})^{4+\delta}\varepsilon^{-(4+\delta)})$.
    \end{enumerate}
  \end{enumerate}

  \item[Appendix F.] \textbf{\hyperref[appendix-section:fullhistproof]{Proof for full-history Multilevel Picard Iteration.}} This section mirrors the previous one but focuses on the full-history variant.
  \begin{enumerate}
    \item \emph{\hyperref[appendix-section:fullhistproof-subsection:setting]{Settings.}} SCaSML needs a good base surrogate model. In this section, we specify the accuracy assumptions for the base surrogate model.
    \item \emph{\hyperref[appendix-section:fullhistproof-subsection:main]{Main Results.}} The key theoretical findings are:
    \begin{enumerate}
      \item \textbf{\hyperref[appendix-section:fullhistproof-subsection:setting-subsubsection:L2]{Global \(L^2\) Error Bound.}} The error analysis employs a factor replacement strategy, with the flexibility of choosing the inference time computation cost $M$.
      \item \textbf{\hyperref[appendix-section:fullhistproof-subsection:setting-subsubsection:complexity]{Computational Complexity Bound.}} By setting optimal relationship between training and inference time computational cost, we show that SCaSML can reduce the computational complexity from $O(d\varepsilon^{-(2+\delta)})$ (MLP) to $O(de(\hat{u})^{2+\delta}\varepsilon^{-(2+\delta)})$. We also show a improved scaling law for SCaSML.
    \end{enumerate}
  \end{enumerate}

  \item[Appendix G.] \textbf{\hyperref[appendix-section:aux-results]{Auxiliary Experimental Results.}} This section compiles supplementary experimental data that support our theoretical claims, including violin plots of the absolute error distributions, inference time scaling law curves illustrating the improved convergence rate, statistics to verify reproducibility, percentage improvement in relative $L^2$ error, a fixed-budget test demonstrating that SCaSML is a Pareto improvement to both the surrogate model and the MLP, and another Pareto efficiency analysis comparing SCaSML to PINN with increasing scales of the same computing budget.
\end{itemize}

{\section{Notation}
\label{app:notations}

This section establishes the rigorous mathematical framework, including probability spaces, function classes, and norms used throughout the theoretical analysis. {We strictly distinguish between spatial functional norms and probabilistic norms to ensure clarity in the convergence analysis.}

\subsection{General Conventions and Geometry}
\begin{itemize}
    \item {Let $T \in (0, \infty)$ be a fixed terminal time. We define the spatiotemporal domain as $\Omega_T := [0, T] \times \mathbb{R}^d$, where $d \in \mathbb{N}$ denotes the spatial dimension.}
    \item {We adopt the unified coordinate convention $(t,\bm{x}) \in \Omega_T$ throughout this appendix. The notation $(r, y)$ is reserved strictly for integration variables within time integrals.}
    \item $\mathcal{B}(\mathbb{R}^d)$ denotes the Borel $\sigma$-algebra on $\mathbb{R}^d$.
    \item $\langle \bm{x}, \bm{y} \rangle$ denotes the standard Euclidean inner product for $\bm{x}, \bm{y} \in \mathbb{R}^d$, and $|\bm x| := \sqrt{\langle \bm x, \bm x \rangle}$ denotes the Euclidean norm.
\end{itemize}

\subsection{{Norms and Function Spaces}}
\begin{itemize}
    \item \textbf{Measurable Functions:} Let $\mathcal{M}(A, B)$ denote the set of all measurable functions mapping from measurable space $A$ to $B$.
    
    \item \textbf{Spatial Spaces and Norms:}
    \begin{itemize}
        \item {For any function $\phi: \mathbb{R}^d \to \mathbb{R}^k$, we define the uniform norm $\|\phi\|_{\infty} := \sup_{\bm x \in \mathbb{R}^d} |\phi(\bm x)|$.}
        \item {$C^{1,2}([0,T]\times \mathbb{R}^d)$ denotes the space of functions that are once continuously differentiable in time and twice continuously differentiable in space.}
        \item {$W^{k, \infty}(\mathbb{R}^d)$ denotes the Sobolev space of functions with essentially bounded weak derivatives up to order $k$, equipped with the norm $\|\phi\|_{W^{k, \infty}} := \sum_{|\alpha| \le k} \|D^\alpha \phi\|_{\infty}$.}
    \end{itemize}

    \item \textbf{Probabilistic Spaces and Norms:}
    \begin{itemize}
        \item Let $(\Omega, \mathcal{F}, \mathbb{P})$ be a complete probability space equipped with a filtration $(\mathbb{F}_t)_{t \in [0,T]}$ satisfying the usual conditions.
        \item {For $p \in [1, \infty)$, $L^p(\Omega; \mathbb{R}^k)$ denotes the Lebesgue space of random variables $X: \Omega \to \mathbb{R}^k$ with finite $p$-th moment. We explicitly define the probabilistic norm:
        \[ \|X\|_{L^p(\Omega)} := \left(\mathbb{E}\left[|X|^p\right]\right)^{1/p}. \]
        }
    \end{itemize}
\end{itemize}

\subsection{PDE Formulation and Stochastic Processes}
\begin{itemize}
    \item \textbf{The Operator:} Let $\mu: [0,T] \times \mathbb{R}^d \to \mathbb{R}^d$ and $\sigma: [0,T] \times \mathbb{R}^d \to \mathbb{R}^{d \times d}$. We define the second-order linear differential operator $\mathcal{L}$ acting on $\phi \in C^{1,2}$ as:
    \[ \mathcal{L}\phi(t,\bm{x}) := \langle \mu(t,\bm{x}), \nabla_{\bm x} \phi(t,\bm{x}) \rangle + \frac{1}{2}\text{Tr}\left(\sigma(t,\bm{x})\sigma(t,\bm{x})^{\top}\text{Hess}_x \phi(t,\bm{x})\right). \]
    
    \item \textbf{The SDE:} {For any $(t,\bm{x}) \in \Omega_T$, let $X^{t,\bm{x}} = (X_s^{t,\bm{x}})_{s \in [t, T]}$ be the unique strong solution to the stochastic differential equation (SDE):}
    \[ {X_s^{t,\bm{x}} = x + \int_t^s \mu(r, X_r^{t,\bm{x}}) dr + \int_t^s \sigma(r, X_r^{t,\bm{x}}) dW_r, \quad s \in [t, T],} \]
    where $W$ is a standard $d$-dimensional Brownian motion under $\mathbb{P}$.
\end{itemize}

\subsection{{Defect Formulation (The SCaSML Object)}}
\begin{itemize}
    \item \textbf{Surrogate and Defect:} Let $\hat{u} \in C^{1,2}(\Omega_T)$ be the surrogate solution. We define the \textbf{defect} pointwise as $\breve{u}(t,\bm{x}) := u(t,\bm{x}) - \hat{u}(t,\bm{x})$.
    
    \item \textbf{The Residual:} {We define the PDE residual $\epsilon: \Omega_T \to \mathbb{R}$ as:}
    \[ {\epsilon(t,\bm{x}) := \frac{\partial \hat{u}}{\partial t}(t,\bm{x}) + \mathcal{L}\hat{u}(t,\bm{x}) + F(\hat{u}(t,\bm{x}), \sigma(t,\bm{x})^{\top}\nabla_{\bm x} \hat{u}(t,\bm{x})).} \]
    
    \item \textbf{ Modified Nonlinearity $\breve{F}$:} {Let $\breve{F}: \Omega_T \times \mathbb{R} \times \mathbb{R}^d \to \mathbb{R}$ be the modified driver defined by:}
    \[ {\breve{F}(v, \bm z, t,\bm{x}) := F(\hat{u}(t,\bm{x}) + v, \sigma^{\top}(\nabla_{\bm x} \hat{u}(t,\bm{x}) + \bm z)) - F(\hat{u}(t,\bm{x}), \sigma^{\top}\nabla_{\bm x} \hat{u}(t,\bm{x})) + \epsilon(t,\bm{x}).} \]
    {Crucially, observe that $\breve{F}(\bm{0_{d+1}},t,\bm{x}) = \epsilon(t,\bm{x})$. This identity bridges the deterministic approximation error and the stochastic driver.}
    
    \item \textbf{Terminal Defect:} $\breve{g}(\bm x) := g(\bm x) - \hat{u}(T,\bm{x})$.
\end{itemize}

}

\section{Preliminary}
\label{appendix-section:prelim}
\subsection{Surrogate Models for PDEs}
\label{appendix-section:prelim-subsection:surro}

In our experiments, we employ two surrogate models to solve high-dimensional PDEs:a Physics-Informed Neural Network (PINN) and a Gaussian Process (GP) regression model. Both models are implemented in \texttt{JAX}~\citep{jax2018github} and \texttt{DeepXDE}~\citep{Lu_2021} to leverage efficient parallelization and runtime performance. 
Furthermore, Hutchinson's estimator technique \ref{hte} as delineated in~\citep{shi2025stochastictaylorderivativeestimator} is incorporated during the training process to substantially decrease GPU memory consumption, applicable to both the training and inference stages of Physics-Informed Neural Networks (PINN), as well as the inference phase of Gaussian Processes.

\subsubsection{Physics-Informed Neural Network (PINN)}
\label{appendix-section:prelim-subsection:surro- subsubsection:PINN}

Physics-Informed Neural Networks (PINNs) are designed to approximate solutions of PDEs by embedding physical laws into the learning process. In our framework, the neural network \(\hat{u}(t,x)\) with parameters \(\theta\) approximates the true solution \(u^{\infty}(t,x)\) of the given PDE. The training loss is constructed as a weighted sum of several components, each designed to enforce key aspects of the problem's constraints.

The first component is the PDE loss, which ensures that the network output adheres to the governing differential equation. This is achieved by penalizing deviations from the expected behavior defined by the differential operator, evaluated at a set of interior collocation points \(\{(t_k, x_k)\}_{k=1}^{S_1}\). The PDE loss is defined as
\begin{align}
\mathcal{L}_{\text{PDE}}(\theta) = \frac{1}{S_1} \sum_{k=1}^{S_1} \Bigl| \frac{\partial \hat{u}}{\partial r}(t_k, x_k) + \frac{\sigma^2}{2}\Delta_y \hat{u}(t_k,x_k) + F\bigl(\hat{u}, \sigma\nabla_y\hat{u}\bigr)(t_k,x_k) \Bigr|^2.
\end{align}

In order to satisfy the prescribed boundary conditions, the model employs a Dirichlet boundary loss. This term minimizes the difference between the network output and the given boundary values \(h(x_k)\) at selected boundary points \(\{(t_k,x_k)\}_{k=1}^{S_2}\), and is expressed as
\begin{align}
\mathcal{L}_{\text{Dir}}(\theta) = \frac{1}{S_2} \sum_{k=1}^{S_2} \left| \hat{u}(t_k,x_k) - h(x_k) \right|^2.
\end{align}

Moreover, the initial conditions of the problem are enforced by an initial loss component. This ensures that the solution at time \(t = 0\) matches the known initial data \(q(x_k)\) for the points \(\{(0,x_k)\}_{k=1}^{S_3}\):
\begin{align}
\mathcal{L}_{\text{initial}}(\theta) = \frac{1}{S_3} \sum_{k=1}^{S_3} \left| \hat{u}(0,x_k) - q(x_k) \right|^2.
\end{align}

The overall training objective is then formulated as a combination of these losses, with each term scaled by its corresponding weighting coefficient:
\begin{align}
\mathcal{L}(\theta) = \alpha_1\, \mathcal{L}_{\text{PDE}}(\theta) + \alpha_2\, \mathcal{L}_{\text{Dir}}(\theta) + \alpha_3\, \mathcal{L}_{\text{initial}}(\theta).
\end{align}
This formulation ensures that the PINN not only fits the observed data but also rigorously respects the underlying physical laws, boundary conditions, and initial conditions governing the PDE.

\subsubsection{Gaussian Processes}
\label{appendix-section:prelim-subsection:surro- subsubsection:GP}
 
In this section, we review the Gaussian Process (GP) framework developed in \citep{chen2021solving,yang2021inference,chen2024sparsecholeskyfactorizationsolving} to solve nonlinear PDEs.  Consider solving a semi-linear parabolic PDE 
\begin{equation}
\begin{cases}
\displaystyle \frac{\partial u}{\partial t}(t,x) = \tau\bigl(u(t,x),\,\Delta_x u(t,x),\,\operatorname{div}_x u(t,x)\bigr), & \forall (t,x)\in[0,T]\times\mathbb{R}^d,\\
u(T,x) = g(x), & \forall x\in\mathbb{R}^d,
\end{cases}
\end{equation}
where $\tau$ is a nonlinear function of the solution and its derivatives, and $g$ specifies the terminal condition. \\
\paragraph{The GP Framework} Consider one already sample $M_{\text{in}}$ interior points and $M_{\text{bd}}$ boundary points, denoted as $\mathbf{x}_{\text{in}}=\{\mathbf{x}_{\text{in}}^1,...,\mathbf{x}_{\text{in}}^{M_{\text{in}}}\}\subset[0,T]\times\mathbb{R}^d$ and $\mathbf{x}_{\text{bd}}=\{\mathbf{x}^1_{\text{bd}},...,\mathbf{x}_{\text{bd}}^{M_{\text{bd}}}\}\subset\{T\}\times\mathbb{R}^{d}$.  Then, we assign an unknown GP prior to the unknown function $u$ with mean $0$ and covariance function $K:([0,T]\times\mathbb{R}^{d})\times([0,T]\times\mathbb{R}^{d})\rightarrow\R$, the method aims to compute the maximum a posterior estimator of the GP given the sampled PDE data, which leads to the following optimization problem
\begin{align}
\label{appendix-section:prelim-subsection:surro- subsubsection:GP- 1}
\left\{\begin{array}{ll}\underset{u\in\mathcal{U}}{\operatorname*{\operatorname*{minimize}}}&\|u\|\\\mathrm{s.t.}&\frac{\partial u}{\partial t}(\mathbf{x}_{\text{in}}^m)=\tau(u(\mathbf{x}_{\text{in}}^m),\Delta_x u(\mathbf{x}_{\text{in}}^m),\operatorname{div}_x u(\mathbf{x}_{\text{in}}^m)),\quad\mathrm{for }\quad m=1,\ldots,M_{\text{in}},\\&u(\mathbf{x}^m_{\text{bd}})=g(\mathbf{x}^m_{\text{bd}}),\quad\mathrm{for }\quad m=1,\ldots,M_{\text{bd}}.\end{array}\right.
\end{align}
Here, $\|\cdot\|$ is the Reproducing Kernel Hilbert Space(RKHS) norm corresponding to the kernel/covariance function $K$. Regarding consistency, once $K$ is sufficiently regular, the above solution will converge to the exact solution of the PDE when $M_{\text{in}},M_{\text{bd}}\rightarrow \infty$; see \cite[Theorem 1.2]{batlle2023error}.

We denote the measurement functions by
\begin{equation}\begin{aligned}
\phi^1_m(u)&:u\rightarrow \delta_{\mathbf{x}_{\text{in}}^m}\circ u ,1\leq m\leq M_{\text{in}}, \quad
&\quad \phi^2_m(u)&:u\rightarrow \delta_{\mathbf{x}_{\text{bd}}^m}\circ u,1\leq m\leq M_{\text{bd}},\\
\phi^3_m(u)&:u\rightarrow \delta_{\mathbf{x}_{\text{in}}^m}\circ \Delta_xu,1\leq m\leq M_{\text{in}},\quad
&\quad
\phi^4_m(u)&:u\rightarrow \delta_{\mathbf{x}_{\text{in}}^m}\circ \frac{\partial u }{\partial t},1\leq m\leq M_{\text{in}},\\
\phi^5_m(u)&:u\rightarrow \delta_{\mathbf{x}_{\text{in}}^m}\circ \operatorname{div}_x u,1\leq m\leq M_{\text{in}},
\end{aligned}\end{equation}
where $\delta_\mathbf{x}$ is the Dirac delta function centered at $\mathbf{x}$. These functions belong to $\mathcal{U}^*$, the dual space of $\mathcal{U}$, for sufficiently regular kernel functions. We further use the shorthand notation $\phi^{1},\phi^3,\phi^4,\phi^5$ for $M_{\text{in}}$ dimensional vectors and $\phi^{2}$ for $M_{\text{bd}}$ dimensional vectors as finite dimensional representation for corresponding features. We use $[\cdot,\cdot]$ to denote the primal-dual pairing, such that for $u\in \mathcal{U}$ and $\phi^{i}_m\in \mathcal{U}^*,\forall i$ it holds that $[u,\phi^{i}_m]=\int u(\mathbf{x})\phi_m^i(\mathbf{x})d\mathbf{x}.$ For instance, for $\phi^3_m$ we have $[u,\phi^3_m]=\int u(\mathbf{x})\phi_m^3(\mathbf{x})d\mathbf{x}=\frac{\partial u}{\partial t}(\mathbf{x}_m).$ Based on the defined notation, we can rewrite the MAP problem (\ref{appendix-section:prelim-subsection:surro- subsubsection:GP- 1}) as
\begin{align}
\label{appendix-section:prelim-subsection:surro- subsubsection:GP- 2}
\left\{\begin{array}{ll}\underset{u\in\mathcal{U}}{\operatorname*{\operatorname*{minimize}}}&\|u\|\\\mathrm{s.t.}&z_m^{(1)}=\phi_m^{(1)}(u),z_m^{(3)}=\phi_m^{(3)}(u),z_m^{(4)}=\phi_m^{(4)}(u),z_m^{(5)}=\phi_m^{(5)}(u), \quad m = 1, \dots, M_{\text{in}}, \\
& z_m^{(1)}=\phi_m^{(1)}(u), m = 1, \dots, M_{\text{bd}},\\
&z_m^{(4)} = \tau(z_m^{(1)},z_m^{(3)},z_m^{(5)}), \quad m = 1, \dots, M_{\text{in}},\\
    &z_m^{(2)} = g(\mathbf{x}_{bd}^m), \quad m = 1, \dots, M_{\text{bd}}.\end{array}\right.
\end{align}

\paragraph{Finite Dimensioanl Representation via Representer Theorem} According to Representer Theorem \citep{chen2021solving,unser2021unifying} show that although the original MAP problem (\ref{appendix-section:prelim-subsection:surro- subsubsection:GP- 1}) is an infinite-dimensional optimization problem, the minimizer enjoys a finite-dimensional structure
\begin{equation}
\label{represeter1}
    u^\dagger(\mathbf{x})=K(\mathbf{x},\phi)\alpha
\end{equation}
where $K(\textbf{x},\phi)$ is  the $(4M_{\text{in}}+M_{\text{bd}})$ dimensional vector with entries $\int K(\textbf{x},\textbf{x}')\phi_j(\textbf{x}')d\textbf{x}'$ (here the integral notation shall be interpreted as the primal-dual pairing as above), \emph{i.e.}
\begin{equation}\begin{aligned}
    &K(\textbf{x}, \phi) = \begin{bmatrix}
    K(\mathbf{x}, \mathbf{x}_{\text{in}}) & K(\mathbf{x}, \mathbf{x}_{\text{bd}}) & \Delta_{x'} K(\mathbf{x}, \mathbf{x}_{\text{in}}) &
\frac{\partial}{\partial t} K(\mathbf{x}, \mathbf{x}_{\text{in}}) & \operatorname{div}_{x'} K(\mathbf{x}, \mathbf{x}_{\text{in}})
\end{bmatrix}
\in \mathbb{R}^{1 \times (4M_{\text{in}}+M_{\text{bd}})},
\end{aligned}\end{equation}
and $\alpha\in\mathbb{R}^{4M_{\text{in}}+M_{\text{bd}}}$ is the unknown coeficients. Based on the finite dimensional representation (\ref{represeter1}), we know
\begin{equation}
    \begin{aligned}
    \left[{z^{(1)}}^{\top},{z^{(2)}}^{\top} ,{z^{(3)}}^{\top} ,{z^{(4)}}^{\top} ,{z^{(5)}}^{\top}\right]^{\top}  =K(\phi,\phi)\alpha,
    \end{aligned}
\end{equation}
where $z^{(1)}=[\phi_1^1(u),\phi_2^1(u),\cdots,\phi_{M_{\text{in}}}^1(u)]^\top\in\mathbb{R}^{M_{\text{in}}},z^{(2)}=[\phi_1^2(u),\phi_2^2(u),\cdots,\phi_{M_{\text{bd}}}^2(u)]^\top\in\mathbb{R}^{M_{\text{bd}}},z^{(3)}=[\phi_1^3(u),\phi_2^3(u),\cdots,\phi_{M_{\text{in}}}^3(u)]^\top\in\mathbb{R}^{M_{\text{in}}},z^{(4)}=[\phi_1^4(u),\phi_2^4(u),\cdots,\phi_{M_{\text{in}}}^4(u)]^\top\in\mathbb{R}^{M_{\text{in}}},z^{(5)}=[\phi_1^5(u),\cdots,\phi_{M_{\text{in}}}^5(u)]^\top\in\mathbb{R}^{M_{\text{in}}}$, and $K(\phi,\phi)$ is the kernel matrix as the $(4M_{\text{in}}+M_{\text{bd}})\times (4M_{\text{in}}+M_{\text{bd}})$ matrix with entries $\int K(\textbf{x},\textbf{x}')\phi_m(\textbf{x})\phi_j(\textbf{x}')d\textbf{x}d\textbf{x}'$ where $\phi_m$ denotes the entries of $\phi$. Precisely $K(\phi,\phi)$ can be written down explicitly as:
\begin{equation}\begin{aligned} 
&K(\phi, \phi) = \resizebox{\textwidth}{!}{$ \begin{bmatrix} K(\textbf{x}_{\text{in}}, \textbf{x}_{\text{in}}') & K(\textbf{x}_{\text{in}}, \textbf{x}_{\text{bd}}') & \Delta_{x'} K(\textbf{x}_{\text{in}}, \textbf{x}_{\text{in}}') & \frac{\partial}{\partial t} K(\textbf{x}_{\text{in}}, \textbf{x}_{\text{in}}') & \operatorname{div}_{x'} K(\textbf{x}_{\text{in}}, \textbf{x}_{\text{in}}')\\ K(\textbf{x}_{\text{bd}}, \textbf{x}_{\text{in}}') & K(\textbf{x}_{\text{bd}}, \textbf{x}_{\text{bd}}') & \Delta_{x'} K(\textbf{x}_{\text{bd}}, \textbf{x}_{\text{in}}') &\frac{\partial}{\partial t} K(\textbf{x}_{\text{bd}}, \textbf{x}_{\text{bd}}') & \operatorname{div}_{x'} K(\textbf{x}_{\text{bd}}, \textbf{x}_{\text{in}}')\\ \Delta_x K(\textbf{x}_{\text{in}}, \textbf{x}_{\text{in}}') & \Delta_x K(\textbf{x}_{\text{in}}, \textbf{x}_{\text{bd}}') & \Delta_x \Delta_{x'} K(\textbf{x}_{\text{in}}, \textbf{x}_{\text{in}}') & \Delta_x \frac{\partial}{\partial t} K(\textbf{x}_{\text{in}}, \textbf{x}_{\text{in}}') & \Delta_x\operatorname{div}_{x'} K(\textbf{x}_{\text{in}}, \textbf{x}_{\text{in}}')\\ \frac{\partial}{\partial t} K(\textbf{x}_{\text{in}}, \textbf{x}_{\text{in}}') & \frac{\partial}{\partial t} K(\textbf{x}_{\text{in}}, \textbf{x}_{\text{bd}}') & \frac{\partial}{\partial t} \Delta_{x'} K(\textbf{x}_{\text{in}}, \textbf{x}_{\text{in}}') & \frac{\partial}{\partial t}\frac{\partial}{\partial t} K(\textbf{x}_{\text{in}}, \textbf{x}_{\text{in}}') & \frac{\partial}{\partial t}\operatorname{div}_{x'} K(\textbf{x}_{\text{in}}, \textbf{x}_{\text{in}}')\\ \operatorname{div}_x K(\textbf{x}_{\text{in}}, \textbf{x}_{\text{in}}') & \operatorname{div}_x K(\textbf{x}_{\text{in}}, \textbf{x}_{\text{bd}}') & \operatorname{div}_x \Delta_{x'} K(\textbf{x}_{\text{in}}, \textbf{x}_{\text{in}}') & \operatorname{div}_x\frac{\partial}{\partial t} K(\textbf{x}_{\text{in}}, \textbf{x}_{\text{in}}') & \operatorname{div}_x\operatorname{div}_{x'} K(\textbf{x}_{\text{in}}, \textbf{x}_{\text{in}}') \end{bmatrix} $}
\end{aligned},\end{equation}
Here we adopt the convention that if the variable inside a function is a set, it means that this function is applied to every element in this set; the output will be a vector or a matrix, \emph{e.g.} $K(\textbf{x}_{\text{in}}, \textbf{x}_{\text{in}}')=
    \exp\bigg(-\frac{\|\textbf{x}_{\text{in}}^m-\textbf{x}_{\text{in}}^j\|_2^2}{2(\sigma\sqrt{d})^2}\bigg),1\leq m,j\leq M_{\text{in}},\in\R^{M_{\text{in}}\times M_{\text{in}}}$ 
in the Gaussian kernel of our numerical experiment, where $\sigma$ is the variance of the equation. Thus the finite dimensional representation (\ref{represeter1}) can be rewritten in terms of the function (derative) values
\begin{align}
\label{representer formula}
u^\dagger(\mathbf{x})=K(\mathbf{x},\phi)K(\phi,\phi)^{-1}z^\dagger,
\end{align}
where $z^\dagger=\left[{z^{(1)}}^{\top},{z^{(2)}}^{\top} ,{z^{(3)}}^{\top} ,{z^{(4)}}^{\top} ,{z^{(5)}}^{\top}\right]^{\top}\in\mathbb{R}^{4M_{\text{in}}+M_{\text{bd}}}$.

Plug the finite-dimensional representation (\ref{representer formula}) to the original  MAP problem (\ref{appendix-section:prelim-subsection:surro- subsubsection:GP- 2}) we have that  $z^\dagger$ is the solution to the following finite-dimensional quadratic optimization optimization problem with nonlinear constraints
\begin{equation}
\label{main opt for GP}
    \begin{aligned}
    &\min_{z \in \mathbb{R}^{4M_{\text{in}}+M_{\text{bd}}}} z^\top K(\phi, \phi)^{-1} z \\
    &\text{subject to} \\
    &\quad z_m^{(4)} = \tau(z_m^{(1)},z_m^{(3)},z_m^{(5)}), \quad m = 1, \dots, M_{\text{in}},\\
    &\quad z_m^{(2)} = g(\mathbf{x}_{bd}^m), \quad m = 1, \dots, M_{\text{bd}}.
\end{aligned}
\end{equation}

\paragraph{Solving the Optimization Formulation} To develop efficient optimization algorithms for (\ref{main opt for GP}),  observing that the constraints
\(z_m^{(4)} = \tau\bigl(z_m^{(1)}, z_m^{(3)}, z_m^{(5)}\bigr) \quad \text{and} \quad z_m^{(2)} = g\bigl(\mathbf{x}_{bd}^m\bigr)\) express \( z_m^{(4)} \) and \( z_m^{(2)} \) in terms of the other variables, \citep{chen2021solving,chen2024sparsecholeskyfactorizationsolving} reformulate the optimization problem as an unconstrained problem 
$$
\min_{z^{(1)}, \; z^{(3)}, \; z^{(5)} \in \mathbb{R}^{M_{\text{in}}}} [z^{(1)}; g(\mathbf{x}_{bd}); z^{(3)}; \tau(z^{(1)}, z^{(3)}, z^{(5)}); z^{(5)}] ^\top K(\phi,\phi)^{-1}[z^{(1)}; g(\mathbf{x}_{bd}); z^{(3)}; \tau(z^{(1)}, z^{(3)}, z^{(5)}); z^{(5)}]. 
$$ 
We apply Sparse Cholesky decomposition to the positive-definite \((K(\phi,\phi)+\eta I)\)  as \(L L^T\). In turn, \(b^T (K(\phi,\phi)+\eta I)^{-1}b=b^T(LL^T)^{-1}b=(L^{-1}b)^T(L^{-1}b)=\|L^{-1}b\|^2_2\). Hence, the loss function is defined as \(\mathcal{J}(z^{(1)},z^{(3)},z^{(5)})=\|L^{-1}b\|^2\). Optimization is carried out via a Newton method in 20 iterations. We initialize $z^{(1)}, \; z^{(3)}, \; z^{(5)} \in \mathbb{R}^{M_{\text{in}}}$ following $N(0,10^{-6}I_{M_{\text{in}}})$.  In each iteration, the gradient \(\nabla \mathcal{J}\) and Hessian \(\nabla^2 \mathcal{J}\) are computed via automatic differentiation, and the Newton direction \(\Delta z\) is obtained by solving
\(
\left(\nabla^2 \mathcal{J}+\lambda I\right)\Delta z=-\nabla \mathcal{J},
\)
where \(\lambda=10^{-4}\) is an regularization parameter. Then, update $\mathcal{J}$ at Newton direction with step size $\alpha=1$. Early stopping is triggered when the gradient norm falls below \(10^{-5}\). Finally, to apply the representer theorem in \ref{representer formula}, the algorithm solves the linear system \((K(\phi,\phi)+\eta I)w^{\dagger}=z^{\dagger}\) to obtain the weight vector \(w^\dagger\) and the final PDE solution is given as
\(
u^\dagger(\mathbf{x})=K(\mathbf{x},\phi)w^{\dagger}.
\)



\subsection{Quadrature Multilevel Picard Iterations and full-history Multilevel Picard Iterations}
\label{appendix-section:prelim-subsection:MLP}

Multilevel Picard Iteration (MLP) method \citep{hutzenthaler2019multilevel} is a simulation-based solver which solves a semilinear parabolic PDEs \citep{hutzenthaler2019multilevel,han2018solving,weinan2021algorithms}, represented as the following.
\begin{equation}
\label{eq:PDE}
\left\{\begin{aligned}
&\frac{\partial}{\partial r} u^\infty+\left<\mu,\nabla_y u^\infty\right>
    + \frac{1}{2} \text{Tr}(\sigma^\top  \text{Hess } u^\infty\  \sigma)+ F(u^\infty,\sigma^\top  \nabla_y u^\infty) = 0, \text{ on }[0,T)\times \mathbb{R}^d\\ 
    &u^\infty(T,\bm{y})=g(\bm{y}),\text{ on } \mathbb{R}^d.
\end{aligned}\right.
\end{equation}
where $T>0,d\in\N,g:\R^d\rightarrow \R,u^\infty:[0,T]\times\R^{d+1}\rightarrow \R,\mu:[0,T]\times\R^d\rightarrow \R^d$. Additionally, let $\sigma$ be a regular function mapping $[0,T]\times \R^d$ to a real $d\times d$ invertible matrix.

The MLP method reformulates the PDE into a fixed-point problem using the Feynman–Kac formula to represent the solution as the expected value of a stochastic process's functional. A Picard scheme iteratively solves this fixed-point problem. The MLP method employs a multilevel Monte Carlo approach\citep{giles2008multilevel}, blending coarse and fine discretizations and allocating more samples to deeper iterations to control variance. This strategy ensures computational costs increase moderately with accuracy. According to Feynman–Kac and Bismut-Elworthy-Li formula\citep{elworthy1994formulae,da1997differentiability}, the solution $\textbf{u}^\infty=(u,\sigma^\top  \nabla_y u)$ of semilinear parabolic PDE (\ref{eq:PDE}) satisfies the fixed-point equation $\Phi(\textbf{u}^\infty)=\textbf{u}^\infty$ where  $\Phi{:}\mathrm{Lip}([0,T]\times\mathbb{R}^d,\mathbb{R}^{1+d})\to\mathrm{Lip}([0,T]\times\mathbb{R}^d,\mathbb{R}^{1+d})$ is defined as
\begin{align}
\begin{aligned}\left(\Phi(\mathbf{v})\right)(s,x)=&\mathbb{E}\left[g(X_{T}^{s,x})\left(1,\frac{[\sigma(s,x)]^\top }{T-s}\int_s^T\left[\sigma(r,X_{r}^{s,x})^{-1}D_{r}^{s,x}\right]^\top dW_{r}\right)\right]\\&+\int_s^T\mathbb{E}\left[F(\mathbf{v}(t,X_{t}^{s,x}))\left(1,\frac{[\sigma(s,x)]^\top }{t-s}\int_{s}^{t}\left[\sigma(r,X_{r}^{s,x})^{-1}D_{r}^{s,x}\right]^\top dW_{r}\right)\right]dt.\end{aligned}
\end{align}
Here $X_t^{s,x}$ and $D_t^{s,x}$ are defined as

\begin{equation}\begin{aligned}&X_{t}^{s,x}=x+\int_{s}^{t}\mu(r,X_{r}^{s,x})dr+\sum_{j=1}^{d}\int_{s}^{t}\sigma_{j}(r,X_{r}^{s,x})dW_{r}^{j},\\&D_{t}^{s,x}=\mathrm{I}_{\mathrm{R}^{d\times d}}+\int_{s}^{t}(\frac{\partial}{\partial x}\mu)(r,X_{r}^{s,x})D_{r}^{s,x}dr+\sum_{j=1}^{d}\int_{s}^{t}(\frac{\partial}{\partial x}\sigma_{j})(r,X_{r}^{s,x})D_{r}^{s,x}dW_{r}^{j}.\end{aligned}
\end{equation}
where  $W_t:[0,T]\times \Omega\rightarrow \R^d$ is a standard $(\F_t)_{t\in[0,T]}$-adapted Brownian motion.

The Feynman-Kac formula gives
\begin{equation}
\label{eq:feymann-kac}
    u^\infty(s,x)=\mathbb{E}[g(X_T^{s,x})]+\int_s^T\mathbb{E}[F(u^\infty(t,X_t^{s,x}),[\sigma(t,X_t^{s,x})]^\top (\nabla_y u^\infty)(t,X_t^{s,x}))]dt.
\end{equation}
Note that $\sigma^\top  \nabla_y u^\infty$ appeared on the right-hand side in the fixed point iteration, which necessitates a new representation formula of it to be simultaneous with \ref{eq:feymann-kac}. And that is Bismut-Elworthy-Li formula\citep{elworthy1994formulae,da1997differentiability}, which gives
\begin{align}
\label{eq:elworth-li}
    \begin{aligned}[\sigma(s,x)]^\top (\nabla_y u^\infty)(s,x)=&\mathbb{E}\left[g(X_{T}^{s,x})\frac{[\sigma(s,x)]^\top }{T-s}\int_s^T\left[\sigma(r,X_{r}^{s,x})^{-1}D_{r}^{s,x}\right]^\top dW_{r}\right]\\&+\int_s^T\mathbb{E}\bigg[F(u^\infty(t,X_t^{s,x}),[\sigma(t,X_t^{s,x})]^\top (\nabla_y u^\infty)(t,X_t^{s,x}))\\&\frac{[\sigma(s,x)]^\top }{t-s}\int_s^t\left[\sigma(r,X_r^{s,x})^{-1}D_r^{s,x}\right]^TdW_r\bigg]dt,\end{aligned}
\end{align}
Concatenating the solution as $\textbf{u}^\infty=(u,\sigma^\top  \nabla_y u)$, we can define the iteration operator $\Phi{:}\mathrm{Lip}([0,T]\times\mathbb{R}^d,\mathbb{R}^{1+d})\to\mathrm{Lip}([0,T]\times\mathbb{R}^d,\mathbb{R}^{1+d})$ as the following
\begin{align}
    \begin{aligned}\left(\Phi(\mathbf{v})\right)(s,x)=&\mathbb{E}\left[g(X_{T}^{s,x})\left(1,\frac{[\sigma(s,x)]^\top }{T-s}\int_s^T\left[\sigma(r,X_{r}^{s,x})^{-1}D_{r}^{s,x}\right]^\top dW_{r}\right)\right]\\&+\int_s^T\mathbb{E}\left[F(\mathbf{v}(t,X_{t}^{s,x}))\left(1,\frac{[\sigma(s,x)]^\top }{t-s}\int_{s}^{t}\left[\sigma(r,X_{r}^{s,x})^{-1}D_{r}^{s,x}\right]^\top dW_{r}\right)\right]dt,\end{aligned}
\end{align}
and \ref{eq:feymann-kac}, \ref{eq:elworth-li} yield
\begin{align}
\label{eq:fixed pt iter}
    \mathbf{u}^{\infty}=\Phi(\mathbf{u}^{\infty}).
\end{align}

The Multilevel Picard iteration considers simulating the Picard iteartion $\mathbf{u}_k(s,x)=({\Phi}(\mathbf{u}_{k-1}))(s,x), k\in\mathbb{N}_+,$ which is guaranteed to converge to $\textbf{u}^\infty$ as $k\rightarrow\infty$ for any $s\in[0,T),x\in\mathbb{R}^d$ \citep[Page 360, Theorem 3.4]{Yong1999StochasticCH}. Formally, the MLP method uses MLMC\citep{giles2008multilevel,giles2015multilevel} to simulate the following telescope expansion problem derived from the Picard iteration.

\begin{align}
\label{eq:MLP}
    \begin{aligned}\mathbf{u}_k(s,x) 
&= \mathbf{u}_1(s,x) + \sum_{l=1}^{k-1} \bigl[\mathbf{u}_{l+1}(s,x) - \mathbf{u}_l(s,x)\bigr] 
= \Phi\bigl(\mathbf{u}_1\bigr)(s,x) 
+ \sum_{l=1}^{k-1} \Bigl[\Phi\bigl(\mathbf{u}_l\bigr)(s,x) - \Phi\bigl(\mathbf{u}_{l-1}\bigr)(s,x)\Bigr].
\\&=(g(\bm x),{\bf 0}_d)+\mathbb{E}\left[(g(X_T^{s,x})-g(\bm x))\left(1,\frac{[\sigma(s,x)]^\top }{T-s}\int_s^T\left[\sigma(r,X_r^{s,x})^{-1}D_r^{s,x}\right]^\top  dW_r\right)\right]\\&+\sum_{l=0}^{k-1}\int_s^T\mathbb{E}\bigg[(F(\mathbf{u}_{l}(t,X_{t}^{s,x}))-\1_{\mathbb{N}}(l)F(\mathbf{u}_{l-1}(t,X_{t}^{s,x})))\\&\left(1,\frac{[\sigma(s,x)]^\top }{t-s}\int_{s}^{t}\left[\sigma(r,X_{r}^{s,x})^{-1}D_{r}^{s,x}\right]^\top dW_{r}\right)\bigg]dt.\end{aligned}
\end{align} One can either estimate these integrations with the quadrature method(quadrature MLP \citep{E_2021}) or the Monte-Carlo method(full-history MLP \citep{hutzenthaler2020multilevel}), detialed intruction is shown in demonstrated in \ref{appendix-section:prelim-subsection:MLP}. A comprehensive summary of MLP variants can be found at \citep{MLP_StochasticAnalysis}.

\subsubsection{Implementing Multilevel Picard Iterations} 
\label{implementing MLP}
Suppose we are given effective simulators (e.g., Euler–Maruyama or Milstein) parameterized by \(\varphi\) (e.g. discretization level), which produce the numerical approximations
\begin{align}
\begin{aligned}
\label{pathsample}
\mathcal{X}_{k,\varphi}^{(l,i)}(s,x,t) \approx X_t^{s,x},\quad\mathcal{I}_{k,\varphi}^{(l,i)}(s,x,t) \approx \Biggl(1,\frac{[\sigma(s,x)]^\top }{t-s}\int_s^t\Bigl[\sigma(r,X_r^{s,x})^{-1}D_r^{s,x}\Bigr]^\top  dW_r\Biggr),
\end{aligned}
\end{align}
where \(k\) denotes the total level, \(l\) the current level, and \(i\) (which may be negative) indexes the sample path. To implement the Multilevel Picard Iterations,  we need a numerical approximation to the integral $\int_s^T \mathbb{E} F(\mathbf{u}_{l}(t,X_{t}^{s,x})) dt$. Following \citep{E_2021,Hutzenthaler_2021}, we examine the following two methodologies, using quadrature rule and Monte Carlo algorithm to approximate the integral $\int_s^T \mathbb{E} F(\mathbf{u}_{l}(t,X_{t}^{s,x})) dt$:

\paragraph{Quadrature MLP}~\label{implement quad MLP}
In this approach~\citep{E_2021}, quadrature rules are employed to approximate the time integrals that appear in the MLP formulation. This quadrature-based technique is motivated by the need to efficiently and accurately resolve time integration errors while maintaining the stability of the multilevel scheme. By leveraging well-established Gauss–Legendre quadrature, we obtain a deterministic and high-order accurate approximation that is well-suited to the recursive structure of the SCaSML algorithm.

\begin{definition}[Gauss–Legendre quadrature]
\label{def:quad poly}
For each \(n\in\mathbb{N}\), let \((c_i^n)_{i=1}^n\subseteq [-1,1]\) denote the \(n\) distinct roots of the Legendre polynomial $x\mapsto \frac{1}{2^n n!}\frac{d^n}{dx^n}\bigl[(x^2-1)^n\bigr],$ and define the function \(q^{n,[a,b]}:[a,b]\to\mathbb{R}\) by
\begin{align}
q^{n,[a,b]}(t) = 
\begin{cases}
\displaystyle \int_a^b \prod_{\substack{i=1,\ldots,n \\ c_i^n\neq \frac{2t-(a+b)}{b-a}}} \frac{2x-(b-a)c_i^n-(a+b)}{2t-(b-a)c_i^n-(a+b)}\,dx, &\text{if } a < b \text{ and } \frac{2t-(a+b)}{b-a} \in \{c_1^n,\ldots,c_n^n\},\\
0, &\text{otherwise.}
\end{cases}
\end{align}
\end{definition}

The Gauss–Legendre quadrature serve as a fundamental building block to discretize the time variable in the Picard iteration. With these polynomials, one can approximate the time integrals with high-order accuracy while controlling the error propagation in the recursive iterations.

    \begin{definition}[Quadrature Multilevel Picard Iteration]
    \label{def:quad MLP}
Let $ \Bigl\{ {\bf {U}}_{n,M,Q}^{(l,j)} \Bigr\}_{l,j\in\mathbb{Z}} \subseteq \mathcal{M}\Bigl(\mathcal{B}([0,T]\times\mathbb{R}^d)\otimes \mathcal{F}, \mathcal{B}(\mathbb{R}\times\mathbb{R}^d)\Bigr)$
be a family of measurable functions satisfying, for all \(l,j\in\mathbb{N}\) and \((s,x)\in[0,T)\times\mathbb{R}^d\), we start with  \({\bf {U}}_{n,M,Q}^{(0,\pm j)}(s,x) = {\bf 0}_{d+1}\).
For \(n>0\), we define the quadrature SCaSML iteration as
\begin{equation}
\begin{split}
{\bf {U}}_{n,M,Q}(s,x) =\;& \Bigl({g}(\bm x), {\bf 0}_d\Bigr) + \frac{1}{M^n}\sum_{i=1}^{M^n}\Bigl({g}\bigl( \mathcal{X}_{k,\varphi}^{(0,-i)}(s,x,T)\bigr)-{g}(\bm x)\Bigr)
\mathcal{I}_{k,\varphi}^{(0,-i)}(s,x,T) \\
&+ \sum_{l=0}^{n-1}\sum_{t\in (s,T)} \frac{q^{Q,[s,T]}(t)}{M^{n-l}}\sum_{i=1}^{M^{n-l}} \Bigl(F\bigl({\bf {U}}_{n,M,Q}^{(l,i)}(t,\mathcal{X}_{k-l,\varphi}^{(l,i)}(s,x,t))\bigr) - \mathbf{1}_{\mathbb{N}}(l)\,F\bigl({\bf {U}}_{n,M,Q}^{(l-1,-i)}(t,\mathcal{X}_{k-l,\varphi}^{(l,i)}(s,x,t))\bigr)\Bigr)\\
&\quad\quad\cdot \mathcal{I}_{k-l,\varphi}^{(l,i)}(s,x,t).
\end{split}
\end{equation}
\end{definition}

The use of quadrature in this context is motivated by its ability to yield a systematic error control over the temporal discretization, thereby enhancing the stability and accuracy of the multilevel Picard iteration in the simulation-calibrated framework.

\paragraph{Full-history MLP}~\label{implement fullhist MLP}
The full-history MLP scheme~\citep{Hutzenthaler_2021} adopts a Monte Carlo approach to approximate the time integral $\int_s^T \mathbb{E} F(\mathbf{u}_{l}(t,X_{t}^{s,x})) dt$ instead of deterministic quadrature rules with fixed time grids. This modification considerably simplifies error analysis\citep{Hutzenthaler_2020} and avoids all temporal discretization error.

In the full-history MLP, we employ a time‐sampler that guarantees an unbiased Monte Carlo approximation of time integrals. Let 
$
\unif:\Omega\to (0,1)
$
be a collection of independent and identically distributed random variables with density \(\rho\) satisfying 
$
\P\Bigl(\unif^{(l,i)}\leq b\Bigr)=\int_0^b \rho(s)\,ds.
$
Consider numerically approximating the integral \(
I(f;s,t)=\int_s^t f(r)\,dr\) with \(t\in (s,T)\), we construct an importance sampling estimator with sample size $N$:$\hat{I}(f;s,t)=\frac1N\sum_{i=1}^N\frac{f(R^{(i)})\,\1_{\{R^{(i)}\le t\}}}{\varrho(R^{(i)},s)},$ where $\varrho$ is the 
the rescaled density $\rho$ on $(s,T)$ defined as \(
\varrho(r,s)=\frac{\rho\Bigl(\frac{r-s}{T-s}\Bigr)}{T-s}\) and $R$ is the random sample from the density \(\varrho(\cdot,s)\) on \((s,T)\) via \(R = s + (T-s)\,\unif\).

\begin{definition}[Full-history Multilevel Picard Iteration \citep{Hutzenthaler_2020}]
\label{def:fullhist MLP}
Let $\Bigl\{ {\bf  U}_{n,M}^{(l,j)} \Bigr\}_{l,j\in\mathbb{Z}} \subseteq \mathcal{M}\Bigl(\mathcal{B}([0,T]\times\mathbb{R}^d)\otimes\mathcal{F}, \mathcal{B}(\mathbb{R}\times\mathbb{R}^d)\Bigr)$ be a family of measurable functions satisfying, for all \(l,j\in\mathbb{N}\) and \((s,x)\in[0,T)\times\mathbb{R}^d\), we start with \(
{\bf  U}_{n,M}^{(0,\pm j)}(s,x) = {\bf 0}_{d+1}.\)
Then, for \(n>0\), define the full-history SCaSML iteration as
\begin{equation}
\begin{split}
{\bf  U}_{n,M}(s,x) =\;& \Bigl( g(\bm x), {\bf 0}_d \Bigr) + \frac{1}{M^n}\sum_{i=1}^{M^n} \Bigl( g\bigl(\mathcal{X}_{k,\varphi}^{(0,-i)}(s,x,T)\bigr) - g(\bm x) \Bigr)
\mathcal{I}_{k,\varphi}^{(0,-i)}(s,x,T) \\
&+ \sum_{l=0}^{n-1}\frac{1}{M^{n-l}}\sum_{i=1}^{M^{n-l}} \frac{1}{\varrho(s,\uniform_s^{(l,i)})} \Bigl(F\bigl({\bf  U}_{n,M}^{(l,i)}(\uniform_s^{(l,i)},\mathcal{X}_{k-l,\varphi}^{(l,i)}(s,x,\uniform_s^{(l,i)}))\bigr) \\&- \mathbf{1}_{\mathbb{N}}(l)F\bigl({\bf  U}_{n,M}^{(l-1,-i)}(\uniform_s^{(l,i)},\mathcal{X}_{k-l,\varphi}^{(l,i)}(s,x,\uniform_s^{(l,i)}))\bigr)\Bigr)\cdot \mathcal{I}_{k-l,\varphi}^{(l,i)}\Bigl(s,x,\uniform_s^{(l,i)}\Bigr),
\end{split}
\end{equation}
here $\uniform_s^{(l,i)}$ is $i$-th sampled time point after \(t\) at level $l$ which is defined as as $\uniform_s^{(l,i)} = s + (T-s)\,\unif^{(l,i)}$.
\end{definition}

\section{Algorithm}
\label{appendix-section:algo}

In this section, we describe the complete procedure of Simulation-Calibrated Scientific Machine Learning (SCaSML) for solving high-dimensional partial differential equations (\ref{eq:pde_original}). The SCaSML framework at any space-time point \((t,\bm{x})\) can be summarized as follows:
\begin{itemize}
    \item \textbf{Step 1:Train a Base Surrogate.} First, a surrogate model \(\hat{u}\) is trained to approximately solve the target PDE (\ref{eq:pde_original}), serving as a preliminary estimate of the true solution.
    \item \textbf{Step 2:Physics-Informed Inference-Time Scaling via the Structural-preserving Law of Defect.} Recognizing that the defect \(\breve{u} := u - \hat{u}\) satisfies a semi-linear parabolic equation, termed the \emph{Structural-preserving Law of Defect},
    \begin{equation}
    \begin{cases}
    \frac{\partial}{\partial r} \breve{u} + \langle \mu, \nabla_y \breve{u} \rangle + \frac{1}{2} \operatorname{Tr}\bigl(\sigma^\top \, \operatorname{Hess}_y \breve{u}\, \sigma\bigr) + \breve{F}\bigl(\breve{u},\sigma^\top  \nabla_y \breve{u}\bigr) = 0, & \text{on } [0,T)\times \mathbb{R}^d,\\[1mm]
    \breve{u}(T,\bm{y}) = \breve{g}(y), & \text{on } \mathbb{R}^d,
    \end{cases}
    \end{equation}
    one obtains an estimate of \(\breve{u}(t,\bm{x})\) by employing Multilevel Picard iteration, either through quadrature-based MLP (Definition \ref{def:quad MLP}) or full-history MLP (Definition \ref{def:fullhist MLP}).
    \item \textbf{Step 3:Final Estimation.} The final estimate of the solution is then given by
    \(
    u(t,\bm{x}) \approx \hat{u}(t,\bm{x}) + \breve{u}(t,\bm{x}).
    \)
\end{itemize}
The entire algorithm is detailed in Algorithm \ref{mainalgorithm}.

We emphasize that the sample-wise iteration in Algorithm \ref{mainalgorithm} can be substituted by vectorized operations, thereby enabling the algorithm to be applied concurrently to multiple points. These performance enhancements were implemented using \texttt{JAX} and \texttt{DeepXDE}, resulting in a time reduction by a factor of \(5\times\) to \(10\times\).

Additionally, methods such as thresholding \citep{Sebastian_Becker_2020} and Hutchinson's estimator \citep{hutchinson1989stochastic,shi2025stochastictaylorderivativeestimator} could also be employed within the principal algorithm. Thresholding (Algorithm \ref{thresholding}) mitigates numerical instability by methodically "clipping" the defect estimator $\mathbf{\breve U}$, a critical action when the surrogate model yields outlier values or when unbounded growth may manifest during iterative correction phases. Hutchinson's estimator (Algorithm \ref{hte}) alleviates the computational and memory demands of $\epsilon_{PDE}$ in $\funcF$ by forming an unbiased estimator that necessitates only a subset of second-order derivatives approximating the Laplacian. This partial evaluation not only expedites the simulation process but also minimizes peak memory consumption, thus averting out-of-memory issues.

\vspace{-0.3in}
\begin{algorithm}
\caption{Simulation-Calibrated Scientific Machine Learning for Solving High-Dimensional Partial Differential Equation}
\label{mainalgorithm}
\begin{algorithmic}[1]
\Require Level $n$, sample base $M$, target point $(s,x)$, a surrogate model $\hat u$, threshold $\varepsilon$,  (quadrature order $Q$ for using Qudrature MLP)
\State Train a base surrogate model $\hat u$ to approximate the PDE solution.
\State Take $\text{MLP\_Law\_of\_Defect}(s,x,n,M,Q)\cdot (1,{\bf{0}}_d)+\hat u(s,x)$ as estimation of $u(s,x)$
\Function{MLP\_Law\_of\_Defect}{$s$, $x$, $n$, $M$, $Q$} 
    \State $\hat{\mathbf{u}}(s,x) \gets \Bigl( \hat{u}(s,x),\; \sigma^*(s,x)\nabla_y \hat{u}(s,x) \Bigr)$ 
    \If{$n=0$}\Comment{Start Inference-Time Scaling via Simulating the \texttt{structural-preserving law of defect}}
        \State $\mathbf{\breve{U}}_{n,M,Q}(s,x) \gets {\bf 0}_{d+1}$
        \State \Return $\mathbf{\breve{U}}_{n,M,Q}(s,x)$
    \EndIf
    \State $\mathbf{\breve{U}}_{n,M,Q}(s,x) \gets (\breve{g}(x), {\bf 0}_d)$
    \For{$i = 1$ to $M^n$}
        \State Sample Feyman-Kac Path $\mathcal{X}_{k,\varphi}^{(0,-i)}(s,x,T)$ and Derivative Process $\mathcal{I}_{k,\varphi}^{(0,-i)}(s,x,T)$ in (\ref{pathsample})
        \State
        \(
        \mathbf{\breve{U}}_{n,M,Q}(s,x) \gets \mathbf{\breve{U}}_{n,M,Q}(s,x) + \frac{1}{M^n}\Bigl( \breve{g}\bigl(\mathcal{X}_{k,\varphi}^{(0,-i)}(s,x,T)\bigr) - \breve{g}(x)\Bigr)\cdot \mathcal{I}_{k,\varphi}^{(0,-i)}(s,x,T)
        \)
    \EndFor
    \For{$l = 0$ to $n-1$}
        \For{$i=1$ to $M^{n-l}$}
                \If{using \texttt{Quadrature MLP} to calibrate}
                \State Compute $Q$ quadrature points with corresponding weights $q^{Q,[s,T]}(t)$ by \ref{def:quad poly} 
                \For{all quadrature points $t\in[s,T]$}
                                \State Sample Feyman-Kac Path $\mathcal{X}_{k,\varphi}^{(l,i)}(s,x,t)$ and Derivative Process $\mathcal{I}_{k,\varphi}^{(l,i)}(s,x,t)$ according to formula (\ref{pathsample})
                \State $\mathbf{z} \gets \text{MLP\_Law\_of\_Defect}(t,\mathcal{X}_{k-l,\varphi}^{(l,i)}(s,x,t),l,M,Q)$
                \If{$l > 0$}
                    \State $\mathbf{z}_{\text{prev}} \gets \text{MLP\_Law\_of\_Defect}(t,\mathcal{X}_{k-l,\varphi}^{(l,i)}(s,x,t),l-1,M,Q)$
                    \State $\Delta \funcF \gets \funcF(\mathbf{z}) - \funcF(\mathbf{z}_{\text{prev}})$
                \Else
                    \State $\Delta \funcF \gets \funcF(\mathbf{z})$
                \EndIf
                \State
                \(
                \mathbf{\breve{U}}_{n,M,Q}(s,x) \gets \mathbf{\breve{U}}_{n,M,Q}(s,x) + \frac{q^{Q,[s,T]}(t)}{M^{n-l}}\;\Delta \funcF \cdot \mathcal{I}_{k-l,\varphi}^{(l,i)}(s,x,t)
                \)
                \EndFor
                \EndIf
                \If{using \texttt{Full History MLP} to calibrate}
                \State Sample time step $\uniform_s^{(l,i)}\sim \varrho(s,T)$
                \State Sample Feyman-Kac Path $\mathcal{X}_{k,\varphi}^{(l,i)}(s,x,\uniform_s^{(l,i)})$ and Derivative Process $\mathcal{I}_{k,\varphi}^{(l,i)}(s,x,\uniform_s^{(l,i)})$ according to formula  (\ref{pathsample})
                \State $\mathbf{z} \gets \text{MLP\_Law\_of\_Defect}(\uniform_s^{(l,i)},\mathcal{X}_{k-l,\varphi}^{(l,i)}(s,x,\uniform_s^{(l,i)}),l,M,Q)$
                \If{$l > 0$}
                    \State $\mathbf{z}_{\text{prev}} \gets \text{MLP\_Law\_of\_Defect}(\uniform_s^{(l,i)},\mathcal{X}_{k-l,\varphi}^{(l,i)}(s,x,\uniform_s^{(l,i)}),l-1,M,Q)$
                    \State $\Delta \funcF \gets \funcF(\mathbf{z}) - \funcF(\mathbf{z}_{\text{prev}})$
                \Else
                    \State $\Delta \funcF \gets \funcF(\mathbf{z})$
                \EndIf
                \State \(
            \mathbf{\breve{U}}_{n,M,Q}(s,x) \gets \mathbf{\breve{U}}_{n,M,Q}(s,x) + \frac{1}{M^{n-l}} \cdot \frac{1}{\varrho(s,\uniform_s^{(l,i)})} \cdot \Delta \breve{F} \cdot \mathcal{I}_{k-l,\varphi}^{(l,i)}(s,x,\uniform_s^{(l,i)})
            \)
                \EndIf
        \EndFor
    
    \EndFor
    \State $\mathbf{\breve{U}}_{n,M,Q}(s,x)\gets \text{Thresholding}(\varepsilon,\mathbf{\breve{U}}_{n,M,Q}(s,x))$ \Comment{Threshold outliers using Algorithm \ref{thresholding}}
    \State \Return  $\mathbf{\breve{U}}_{n,M,Q}(s,x)$
\EndFunction
\end{algorithmic}
\end{algorithm}

\begin{algorithm}
\caption{Thresholding the outliers \citep{Sebastian_Becker_2020}}
\label{thresholding}
\begin{algorithmic}[1]
\Require Threshold $\varepsilon$, defect estimator $\mathbf{\breve{U}}$
\Function{Thresholding}{$\varepsilon$, $\mathbf{\breve{U}}$} 
\For{$\nu=1$ to $d+1$}
\If{$\mathbf{\breve{U}}_\nu>\varepsilon$}
\State $\mathbf{\breve{U}}_\nu\gets \varepsilon$
\EndIf
\If{$\mathbf{\breve{U}}_\nu<-\varepsilon$}
\State $\mathbf{\breve{U}}_\nu\gets -\varepsilon$
\EndIf
\EndFor
\State\Return Clipped $\mathbf{\breve{U}}$ 
\EndFunction
\end{algorithmic}
\end{algorithm}

\begin{algorithm}
\caption{Hutchison's estimator for estimating Laplacian \citep{shi2025stochastictaylorderivativeestimator}}
\label{hte}
\begin{algorithmic}[1]
\Require Sample size $K$, target function $f$
\Function{HTE}{$K$,$f$} 
\State Draw $K$ different indices from $1,\ldots,d$ with equal probability $1/d$, denoted as $j_1,\ldots,j_K$
\State Compute $D^2_{j_k} f, 1\leq \nu\leq d$
\State Compute estimator $\text{HTE}\gets \frac{d}{K}\sum_{i=1}^K D_{j_i}^2 f$
\State\Return Laplacian estimator $\text{HTE}$
\EndFunction
\end{algorithmic}
\end{algorithm}

\section{{Proof Settings}}
\label{appendix-section:setting}

{In the following sections, we establish the rigorous mathematical framework for analyzing the SCaSML method. We proceed in three steps:}
\begin{itemize}
    \item {\textbf{Notations and Definitions:} We define the probability spaces, norms, and function spaces used throughout the proofs.}
    \item {\textbf{Problem Setup:} We explicitly state the regularity assumptions on the original PDE coefficients and the stochastic basis.}
    \item {\textbf{Surrogate and Defect Properties:} We formally define the surrogate model, the defect PDE, and the transfer of Lipschitz properties from the original problem to the defect problem.}
\end{itemize}

\subsection{{Mathematical Framework and Definitions}}
\label{appendix-section:setting-subsection:notation}

{In this section, we rigorously define the measure-theoretic structures, function spaces, and norms required for the convergence analysis. Our framework aligns with the standard stochastic analysis settings found in \citep{Hutzenthaler_2020, E_2021}.}

\begin{definition}[{Coordinate System and Vector Norms}]
\label{notation-def:discrete_norm}
{{Throughout this article, we fix a time horizon $T \in (0, \infty)$ and a spatial dimension $d \in \mathbb{N}$. We denote the time-space domain by $\Lambda := [0, T] \times \mathbb{R}^d$. We consistently use the coordinate notation $(t,\bm{x})$ with $t \in [0, T]$ and $x \in \mathbb{R}^d$.}}

{For any vector $v = (v_1, \dots, v_d) \in \mathbb{R}^d$, {we denote the standard Euclidean norm by $|v| := (\sum_{i=1}^d |v_i|^2)^{1/2}$ and the inner product by $v \cdot w$}.
For a generic vector $z \in \mathbb{R}^n$ (e.g., neural network parameters), we define the discrete $p$-norm ($p \in [1, \infty)$) and $\infty$-norm as:
\[
\|z\|_{ p} := \left( \sum_{i=1}^n |z_i|^p \right)^{1/p}, \quad \text{and} \quad \|z\|_{ \infty} := \max_{1 \le i \le n} |z_i|.
\]}
\end{definition}

\begin{definition}[Measurable Spaces and Functions]
\label{notation-def:measurable_space}
{We denote by $\mathcal{B}(\mathbb{R}^d)$ the Borel $\sigma$-algebra on $\mathbb{R}^d$.
For any two measurable spaces $(S_1, \mathcal{F}_1)$ and $(S_2, \mathcal{F}_2)$, we define $\mathcal{M}(S_1, S_2)$ as the set of all measurable mappings from $S_1$ to $S_2$:
\[
\mathcal{M}(S_1, S_2) := \{ f: S_1 \to S_2 \mid \forall A \in \mathcal{F}_2, f^{-1}(A) \in \mathcal{F}_1 \}.
\]
When the $\sigma$-algebras are clear from context (e.g., Borel for topological spaces), we simply write $\mathcal{M}(\mathbb{R}^d, \mathbb{R})$.}
\end{definition}

\begin{definition}[Probability Space and $L^p$ Norms]
\label{notation-def:probability_space}
{Let $(\Omega, \mathcal{F}, \mathbb{P})$ be a complete probability space. For any measurable random variable $X \in \mathcal{M}(\Omega, \mathbb{R})$ and $p \in [1, \infty)$, the $L^p(\Omega)$-norm is defined as:
\[
\|X\|_{L^p(\Omega)} := \left( \mathbb{E}\left[ |X|^p \right] \right)^{1/p} = \left( \int_{\Omega} |X(\omega)|^p \, d\mathbb{P}(\omega) \right)^{1/p}.
\]
For $p = \infty$, the essential supremum norm is defined as:
\[
\|X\|_{L^\infty(\Omega)} := \inf \{ C \ge 0 : \mathbb{P}(|X| > C) = 0 \}.
\]}
\end{definition}

\begin{definition}[{Function Spaces}]
\label{notation-def:sobolev_space}
{Let $D \subseteq \mathbb{R}^d$ be an open set. For $k \in \mathbb{N}$ and $p \in [1, \infty]$, the Sobolev space $W^{k,p}(D)$ consists of all functions $u \in L^p(D)$ such that for every multi-index $\alpha \in \mathbb{N}_0^d$ with $|\alpha| \le k$, the weak derivative $D^\alpha u$ exists and belongs to $L^p(D)$.
{We define the norm for $W^{k,\infty}(D)$ as $\|u\|_{W^{k,\infty}(D)} := \sum_{|\alpha| \le k} \|D^\alpha u\|_{L^\infty(D)}$.}

{Furthermore, let $C^{1,2}([0, T] \times \mathbb{R}^d)$ denote the space of functions $\phi(t,\bm{x})$ that are once continuously differentiable in $t$ and twice continuously differentiable in $x$. This regularity is required for the classical solution $u$ and the surrogate $\hat{u}$.}}
\end{definition}

\begin{definition}[Extended Real Arithmetic]
\label{notation-def:extended_real}
{To handle singularities in complexity analysis, we adopt the standard conventions for the extended real number line $\overline{\mathbb{R}} = \mathbb{R} \cup \{-\infty, \infty\}$. Specifically, we define $\frac{0}{0}=0$, $0 \cdot \infty = 0$, $0^0=1$, and $\sqrt{\infty}=\infty$. For any $a > 0$ and $b \in \mathbb{R}$, we set $\frac{a}{0}=\infty$, $\frac{-a}{0}=-\infty$, $0^{-a}=\infty$, $\frac{1}{0^a}=\infty$, $\frac{b}{\infty}=0$, and $0^a=0$.}
\end{definition}

\subsection{{Problem Setup and Regularity Assumptions}}
\label{appendix-section:settings-subsection:rep}

{We now formalize the specific partial differential equation and the stochastic framework used for our theoretical analysis.}

\subsubsection{{Stochastic Basis}}
{Let \( T \in (0,\infty) \) be the terminal time and \( d \in \mathbb{N} \) be the spatial dimension. {Let \((\Omega, \mathcal{F}, \mathbb{P}, (\mathbb{F}_t)_{t \in [0,T]})\) be all stochastic processes are assumed to be adapted to the usual Filtration $(\mathbb{F}_t)_{t \in [0,T]}$.}

{To facilitate the Multilevel Picard (MLP) analysis, we assume the existence of a family of independent standard Brownian motions. Specifically, let \( \{W^{(l,j)} : l, j \in \mathbb{Z}\} \) be a collection of independent $d$-dimensional standard Brownian motions adapted to \( (\mathbb{F}_t)_{t \in [0,T]} \). Here, the index $l$ corresponds to the level in the MLP hierarchy, and $j$ corresponds to the Monte Carlo sample index within that level.}

\subsubsection{{The Target PDE}}
{{While the SCaSML framework applies to general semi-linear parabolic PDEs, we perform the theoretical analysis on the semi-linear heat equation. This corresponds to the generator $\mathcal{L}$ with drift $\mu \equiv 0$ and diffusion $\sigma \equiv s \mathbf{I}_d$ for a constant $s \in \mathbb{R} \setminus \{0\}$.}}

{{The classical solution $u \in C^{1,2}([0,T] \times \mathbb{R}^d, \mathbb{R})$ satisfies the terminal value problem:}}
\begin{align}
    {\frac{\partial u}{\partial t}(t,\bm{x}) + \mathcal{L}u(t,\bm{x}) + F(u(t,\bm{x}), \sigma^\top \nabla_{\bm x} u(t,\bm{x})) = 0, \quad (t,\bm{x}) \in [0,T) \times \mathbb{R}^d,}
\end{align}
{where $\mathcal{L}v := \frac{\sigma^2}{2}\Delta v$, and subject to the terminal condition $u(T,\bm{x}) = g(\bm x)$.}

{{The nonlinearity $F: \mathbb{R} \times \mathbb{R}^d \to \mathbb{R}$ and terminal condition $g: \mathbb{R}^d \to \mathbb{R}$ are assumed to be Borel measurable functions.}}

\subsubsection{{Regularity Assumptions}}
{{The MLP method achieves dimension-independent convergence rates under Lipschitz continuity conditions on the problem data. These conditions ensure bounded variance propagation across Picard iterations \citep{Hutzenthaler_2021}.}}

\begin{assumption}[Lipschitz Continuity of Nonlinearity and Terminal Condition]
\label{PDE to SDE- asp:Lip of nonlin and terminal}
{
We assume the following:
\begin{itemize}
    \item \textbf{Nonlinearity:} There exists a constant $L \ge 0$ such that for all $(v_1,\bm{z}_1), (v_2,\bm{z}_2) \in \mathbb{R}\times\mathbb{R}^{d}$ and $(t,\bm{x})\in [0,T)\times\mathbb{R}^d$: 
    \begin{align}
        |F(v_1,\bm{z}_1,t,\bm{x}) - F(v_2,\bm{z}_2,t,\bm{x})| \le L (|v_1-v_2|+\|\bm{z}_1 - \bm{z}_2\|_1).
    \end{align}
    \item \textbf{Terminal Condition:} There exists a constant $K \ge 0$ such that for all $\bm{x}, \bm{y} \in \mathbb{R}^d$:
    \begin{align}
        |g(\bm{x}) - g(\bm{y})| \le K \|\bm{x}-\bm{y}\|_1.
    \end{align}
\end{itemize}
}
\end{assumption}

\subsection{{Surrogate Model and Defect Properties}}
\label{appendix-section:surrogate-properties}

{In this section, we rigorously define the relationship between the pre-trained surrogate model and the defect (error) we aim to estimate. We first state the regularity assumptions on the surrogate, then derive the properties of the Defect PDE.}

\subsubsection{{Surrogate Regularity}}
{To ensure the classical defect PDE is well-defined, we assume the surrogate is sufficiently smooth.}
{Let $\hat{u} \in C^{1,2}([0,T]\times\mathbb{R}^d, \mathbb{R})$ be a deterministic approximation of $u$.}
{To ensure the defect terminal condition is well-behaved, we require the following:}

\begin{assumption}[Lipschitz Continuity of the Surrogate Terminal]
\label{PDE to SDE- asp:Lip of surro terminal}
{There exists a constant $\hat{K} \in [0, \infty)$ such that for all $\bm{x},\bm{y} \in \mathbb{R}^d$:}
\begin{align}
{|\hat{u}(T,\bm{x}) - \hat{u}(T,\bm{y})| \le \hat{K}\, \|\bm{x}-\bm{y}\|_1.}
\end{align}
\end{assumption}

\subsubsection{{The Structural-Preserving Law of Defect}}
{We define the \textbf{defect} $\breve{u}: [0,T]\times\mathbb{R}^d \to \mathbb{R}$ as the pointwise error:}
\begin{align}
{\breve{u}(t,\bm{x}) := u(t,\bm{x}) - \hat{u}(t,\bm{x}).}
\end{align}
{The core of SCaSML is the observation that $\breve{u}$ satisfies a semi-linear PDE of the same structure as the original. We explicitly define the coefficients of this new PDE below.}

\begin{definition}[Modified Nonlinearity and PDE Residual]
\label{def:modified_nonlinearity}
{Let $\epsilon: [0,T]\times\mathbb{R}^d \to \mathbb{R}$ be the \textbf{PDE residual} of the surrogate $\hat{u}$ defined by:
\[ \epsilon(t,\bm{x}) := \frac{\partial\hat{u}}{\partial t}(t,\bm{x}) + \mathcal{L}\hat{u}(t,\bm{x}) + F(\hat{u}(t,\bm{x}), \sigma^{\top}\nabla_{\bm{x}}\hat{u}(t,\bm{x})). \]
We define the modified nonlinearity $\breve{F}: \mathbb{R} \times \mathbb{R}^d \times [0,T] \times \mathbb{R}^d \to \mathbb{R}$ for the defect PDE as follows. For any state $v \in \mathbb{R}$ and gradient-state $\bm{z} \in \mathbb{R}^d$ at a spacetime point $(t,\bm{x})$:}
\begin{align}
{\breve{F}(v, \bm{z}, t, \bm{x}) := F(\hat{u}(t,\bm{x}) + v, \sigma^{\top}\nabla_{\bm{x}}\hat{u}(t,\bm{x}) + \bm{z}) - F(\hat{u}(t,\bm{x}), \sigma^{\top}\nabla_{\bm{x}}\hat{u}(t,\bm{x})) + \epsilon(t,\bm{x}).}
\end{align}
{We similarly define the defect terminal condition $\breve{g}(\bm{x}) := g(\bm{x}) - \hat{u}(T, \bm{x})$.}
\end{definition}

\begin{lemma}[Structural-Preserving Law of Defect]
{The defect $\breve{u}$ is a classical solution to the following semi-linear parabolic PDE:}
\begin{align}
{\frac{\partial \breve{u}}{\partial t}(t,\bm{x}) + \mathcal{L}\breve{u}(t,\bm{x}) + \breve{F}(\breve{u}, \sigma^\top \nabla_{\bm x} \breve{u}, t,\bm{x}) = 0, \forall (t,\bm{x})\in[0,T)\times\mathbb{R}^d \quad \breve{u}(T, \bm{x}) = \breve{g}(\bm{x}).}
\end{align}
\end{lemma}

\begin{remark}[Why Law of Defect is Easier to Solve]
\label{remark:magnitude_at_zero}
{ The complexity of MLP depends on the magnitude of source term $\breve{F}$ \citep[Theorem 3.1]{hutzenthaler2020multilevel}. Based on the Lipschitz continuity of $\breve{F}$ and the variance-reduction structure inherent to MLMC, \cite{Hutzenthaler_2021} shows that the overall computational complexity of MLP is governed solely by the value of $\breve{F}$ at the origin. Substituting $v=0$ and $\bm{z}=\bm{0}$ into Definition \ref{def:modified_nonlinearity}:
${\breve{F}(0, \bm{0}, t,\bm{x}) = \epsilon(t,\bm{x}).}$ The "source term" driving the Multilevel Picard simulation for the defect is the residual $\epsilon$, already reduced by an approximate surrogate. If the surrogate is perfect ($\epsilon \to 0$), the driving force vanishes, and the variance of the Monte Carlo estimator approaches zero. In our later theorem, we show that the variance of MLP can be controlled by the magnitude of $\epsilon$.}
\end{remark}

\subsubsection{{ Regularity Estimations}}

{ MLP complexity is governed by both the smoothness and magnitude of the source term; these factors enter multiplicatively because nonlinearities—via their Lipschitz bounds—propagate and amplify variance through each Picard iteration. Remark~\ref{remark:magnitude_at_zero} established that the magnitude component in the law of defect can be improved using the surrogate.  It remains to show that the regularity appearing in the law of defect is no worse than that of the original PDE, ensuring that the refinement does not introduce additional smoothness requirements.}

\begin{lemma}[Preservation of Lipschitz Constants]
\label{lemma:lipschitz_preservation}
{Suppose $F$ satisfies Assumption \ref{PDE to SDE- asp:Lip of nonlin and terminal} with Lipschitz constants $L$. Then, the modified nonlinearity $\breve{F}$ satisfies the same Lipschitz condition with the same constants. Specifically, for any fixed $(t,\bm{x})$, and any vectors $(\breve{v}_1, \bm{z}_1), (\breve{v}_2, \bm{z}_2) \in \mathbb{R} \times \mathbb{R}^d$:
\begin{align}
|\breve{F}(\breve{v}_1, \bm{z}_1, t,\bm{x}) - \breve{F}(\breve{v}_2, \bm{z}_2, t,\bm{x})| \le L ( |\breve{v}_1 - \breve{v}_2| +  \|\bm{z}_1 - \bm{z}_2\|_1),
\end{align}
Furthermore, the defect terminal condition $\breve{g}$ is Lipschitz continuous with constants $\breve{K} = K + \hat{K}$.}
\end{lemma}

\begin{proof}
{Let $\bm{w}_1 = (\breve{v}_1, \bm{z}_1)$ and $\bm{w}_2 = (\breve{v}_2, \bm{z}_2)$. We define the background state vector of the surrogate as $\bm{\hat{U}} = (\hat{u}(t,\bm{x}), \sigma^\top \nabla_{\bm x} \hat{u}(t,\bm{x}))$.
From Definition \ref{def:modified_nonlinearity}, the difference is:
\begin{align*}
    \breve{F}(\bm{w}_1, t,\bm{x}) - \breve{F}(\bm{w}_2, t,\bm{x}) &= \left[ F(\bm{\hat{U}} + \bm{w}_1) - F(\bm{\hat{U}}) + \epsilon \right] - \left[ F(\bm{\hat{U}} + \bm{w}_2) - F(\bm{\hat{U}}) + \epsilon \right] \\
    &= F(\bm{\hat{U}} + \bm{w}_1) - F(\bm{\hat{U}} + \bm{w}_2).
\end{align*}
{Note that the shift terms $F(\bm{\hat{U}})$ and the residual $\epsilon(t,\bm{x})$ cancel out exactly. Thus, the Lipschitz continuity of $F$ (Assumption \ref{PDE to SDE- asp:Lip of nonlin and terminal}) transfers directly to $\breve{F}$:} 
\begin{align*}
    |F(\bm{\hat{U}} + \bm{w}_1) - F(\bm{\hat{U}} + \bm{w}_2)| &\le L \|(\bm{\hat{U}} + \bm{w}_1) - (\bm{\hat{U}} + \bm{w}_2)\|_1 = L \|\bm{w}_1 - \bm{w}_2\|_1.
\end{align*}
This confirms that $\breve{F}$ inherits the Lipschitz constants $L$. For the terminal condition, since $\hat{u} \in C^{1,2}([0,T]\times\mathbb{R}^d)$ by Assumption \ref{PDE to SDE- asp:Lip of surro terminal}, the map $x \mapsto \hat{u}(T,\bm{x})$ is Lipschitz with constants $\hat{K}$. The triangle inequality applied to $\breve{g} = g - \hat{u}(T,\cdot)$ then yields $\breve{K} \le K + \hat{K}$.}
\end{proof}

\section{{Proof of Full-History Multilevel Picard Iteration}}
\label{appendix-section:fullhistproof}

{This section establishes the theoretical guarantees for SCaSML when the \texttt{Structural-preserving Law of Defect} is solved using the Full-History Multilevel Picard (MLP) iteration. In contrast to the quadrature method, this approach utilizes Monte Carlo sampling for time integration, which relaxes the regularity requirements on the solution.}

{For the theoretical analysis, we retain the setting of the semi-linear heat equation where $\mu = \mathbf{0}_d$ and $\sigma = s\mathbf{I}_d$ for a constant $s \in \mathbb{R}$.}

\subsection{{Probabilistic Setup and Time Sampling}}
\label{appendix-section:fullhistproof-subsection:prob_setup}

{To analyze the Full-History estimator, we must extend our stochastic basis to support random time stepping.}

\begin{definition}[Extended Probability Space and Time Sampling]
\label{def:extended_probability}
{Let $(\Omega, \mathcal{F}, \mathbb{P})$ be a probability space that supports the following independent families of random variables:
\begin{itemize}
    \item \textbf{Brownian Motions:} A collection $\{W^{(l,j)}\}_{l,j \in \mathbb{Z}}$ of independent $d$-dimensional standard Brownian motions.
    \item \textbf{Time Step Samples:} A collection $\{\mathfrak{r}^{(l,j)}\}_{l,j \in \mathbb{Z}}$ of independent random variables distributed on $(0,1)$ according to a probability density function $\rho: (0,1) \to (0, \infty)$.
\end{itemize}
For our analysis and experiments, we specifically select the density $\rho(s) = (1-\alpha)s^{-\alpha}$ for a parameter $\alpha \in (0,1)$. This ensures that the cumulative distribution function is $F_\rho(b) = b^{1-\alpha}$, facilitating efficient inverse transform sampling.}
\end{definition}

\subsection{{Relaxed Surrogate Assumptions}}
\label{appendix-section:fullhistproof-subsection:setting}

{A key advantage of the Full-History MLP is its robustness. Unlike the quadrature scheme, which incurs a time discretization error scaling with high-order time derivatives of the solution, the Monte Carlo time integration is unbiased. Consequently, we can drop the higher-order regularity requirement (Assumption \ref{quad proof- settings-asp:decay}, Item 3) imposed in Appendix \ref{appendix-section:quadproof}.}

\begin{assumption}[Accuracy of the Surrogate Model for Full-History MLP]
\label{fullhist proof- settings-asp:decay}
{Let $\breve{u}$ be the solution to the Defect PDE. We assume $\sup_{t\in[0,T]}\|{\breve{u}}(t,\cdot)\|_{W^{1,\infty}(\mathbb{R}^d)}<\infty$. There exist constants $C_{F,1}, C_{F,2} > 0$ independent of $\hat{u}$ such that the surrogate error measure $e(\hat{u})$ controls the following:}
\begin{itemize}
    \item {\textbf{Residual Bound ($L^\infty$):}
    \[ \sup_{(t,\bm{x}) \in [0,T] \times \mathbb{R}^d} |\epsilon(t,\bm{x})| \leq C_{F,1}\, e(\hat{u}). \]
    }
    \item {\textbf{Defect Bound ($W^{1,\infty}$):}
    \[ \sup_{t \in [0,T]} \|\breve{u}(t,\cdot)\|_{W^{1,\infty}(\mathbb{R}^d)} \leq C_{F,2}\, e(\hat{u}). \]
    }
\end{itemize}
\end{assumption}

{However, the singularity of density $\rho$ requires a specific moment condition to ensure finite variance.}

\begin{assumption}[Integrability of the Residual]
\label{fullhist proof- settings-asp:int}
{There exists $p \in \mathbb{N}$ with $p \ge 2$ such that for all $t \in [0,T)$ and $q \in [1,p)$:}
\begin{align}
{\int_0^1 \frac{1}{s^{q/2}\rho(s)^{q-1}}\, ds + \sup_{s\in [t,T)} \mathbb{E}\Bigl[ \bigl|\epsilon(t,\, \bm x+\sigma W_s-\sigma W_t) \bigr|^q \Bigr] < \infty.}
\end{align}
\end{assumption}

\begin{remark}
{In Assumption \ref{fullhist proof- settings-asp:int}, we explicitly identified $\breve{F}(\mathbf{0}_{d+1})$ with the residual $\epsilon$. This assumption ensures that the surrogate's residual does not grow too fast in expectation along Brownian paths, and that the time sampling density $\rho$ puts sufficient probability mass near $t=0$ to counteract the singularity.}
\end{remark}

\subsection{Main Results}
\label{appendix-section:fullhistproof-subsection:main}
We now show that, with an appropriately trained surrogate model, the \texttt{Structural-preserving Law of Defect} can be simulated with lower complexity than the original PDE. In particular, the error of the full-history MLP is upper-bounded by the surrogate model’s error measure \(\textcolor{teal}{e(\hat{u})}\).

\subsubsection{{Sketch of Proof}}
\label{appendix-section:fullhistproof-subsection:roadmap}

{The computational complexity of the MLP solver depends on the Lipschitz constant of the nonlinearity $\breve{F}$ and the magnitude of the ``source terms''. {The ``source term'' driving the Multilevel Picard simulation for the defect is the residual $\epsilon$, already reduced by the surrogate. At the same time, we show that the regularity in the law of defect is no worse than that of the original PDE, ensuring that the refinement introduces no additional smoothness requirements. Combining the previous fact, a more accurate surrogate makes the defect PDE ``easier'' to solve.} This leads to our main error bound.}

\begin{itemize}
\item {\textbf{Improved Source Magnitude.} Since the source term $\breve{F}(\bm{0}_{d+1}, t,\bm{x}) = \epsilon(t,\bm{x})$ is the surrogate's residual  by Definition \ref{def:modified_nonlinearity}. Consequently, as the surrogate improves with additional training data, the variance of the Monte Carlo estimator decreases proportionally. Lemma \ref{fullhist proof- error complexity- nonrecur L2- lem:add surro in factor} formalizes this argument.}
\item \textbf{Complexity of MLP} { \cite{Hutzenthaler_2021} show that the complexity of MLP depends on both the smoothness and the size of the source term; these contributions combine multiplicatively since nonlinearities, controlled by their Lipschitz constants, propagate and amplify variance throughout successive Picard iterations. Then we analyze both the magnitude of the source term in Law of Defect and its regularity.}
\item {\textbf{Preservation of Regularity.} By Lemma \ref{lemma:lipschitz_preservation}, the defect nonlinearity $\breve{F}$ inherits the Lipschitz constants $L$ of the original $F$ exactly. Thus, the regularity requirements for the MLP solver remains unchanged, ensuring that the refinement does not introduce additional smoothness constraints.}
\item {{\textbf{{Error Bound.}} Based on the previous inituition, Theorem \ref{fullhist proof- error complexity- nonrecur L2- thm:final nonrecur bound} bounds the total $L^2$-error as a multiplicative form, combining the classical MLP complexity with the surrogate’s approximation error. Thus this can leads to faster convergence rate if the surrogate’s approximation error consistently improves.}}
\item {{\textbf{{Complexity Estimate.}} Substituting the reduced source magnitude into the standard MLP bound yields a multiplicative error reduction.}} {Combined with Theorem \ref{fullhist proof-error complexity-thm:fullhist complexity}, we improve $O(d\varepsilon^{-(2+\delta)})$ to $O(de(\hat{u})^{2+\delta}\varepsilon^{-(2+\delta)})$.}
\end{itemize}

\subsubsection{Bound on Global \(L^2\) Error}
\label{appendix-section:fullhistproof-subsection:setting-subsubsection:L2}

Our proof still utilizes the insight that the overall $L^2$ error in the MLP mainly hinges on the Lipschitz continuity of the PDE's terminal and solution, as well as the extent of nonlinearity at the origin. We illustrate that the parameter linked to the \texttt{Structural-preserving Law of Defect} is constrained by the surrogate error. Initially, we present a lemma demonstrating how the complexity of MLP can be capped by the error assessment.
\begin{lemma}[Complexity Estimation via Surrogate Error for Full-History MLP]
\label{fullhist proof- error complexity- nonrecur L2- lem:add surro in factor}
{
Under Assumptions \ref{PDE to SDE- asp:Lip of nonlin and terminal}, \ref{PDE to SDE- asp:Lip of surro terminal}, \ref{fullhist proof- settings-asp:decay}, and \ref{fullhist proof- settings-asp:int}, suppose \(p\geq 2\). There exists a constant \(C_F > 0\) independent of the surrogate such that for all \(M,N \ge 2\):
\begin{align}
\label{eq:fullhist_bridge_bound}
\sup_{(t,\bm{x})\in[0,T]\times\mathbb{R}^d}\Bigg\{
&\frac{\sigma \sqrt{\max\{T-t,3\}}\KConst}{\sqrt{M}}
+ \frac{C\,\sup_{s\in[t,T)}\|\,\breve{F}({\bf 0}_{d+1}, s,\bm{x}+\sigma W_{s}-\sigma W_{t})\|_{L^{\frac{2p}{p-2}}(\Omega)}}{2\sqrt{M}}
\nonumber \\
&+ \frac{C\,\sup_{s\in[t,T),\,\varsigma\in\{1,\ldots,d+1\}}\|\breve{\mathbf{u}}(s, \bm{x}+\sigma W_{s}-\sigma W_{t})_\varsigma\|_{L^{\frac{2p}{p-2}}(\Omega)}}{2}
\Bigg\} \le C_F\, e(\hat{u}),
\end{align}
where the constant $C$ is defined as:
\[
C = \max\left\{1, 2T^{\frac{1}{2}} \left|\Gamma\left(\frac{p}{2}\right)\right|^{\frac{1}{p}} (1-\alpha)^{\frac{1}{p}-1} \max\{1,\LConst\} \max\left\{T^{\frac{1}{2}}, 2^{\frac{1}{2}} \left|\Gamma\left(\frac{p+1}{2}\right)\right|^{\frac{1}{p}} \pi^{-\frac{1}{2p}}\right\}\right\}.
\]
}
\end{lemma}

\begin{proof}
{
We bound the three terms on the left-hand side of \eqref{eq:fullhist_bridge_bound} using the $L^\infty$ bounds provided by the surrogate accuracy Assumption \ref{fullhist proof- settings-asp:decay}. We utilize the fact that for any bounded random variable $Z$, $\|Z\|_{L^q(\Omega)} \le \|Z\|_{L^\infty(\Omega)}$.
}

{
\textbf{Step 1: Bounding the Terminal Condition.}
As shown in the proof of Lemma \ref{quad proof- error complexity- nonrecur L2- lem:add surro in factor} (Step 2), we have:
\begin{align}
\KConst \le \|\breve{u}(T,\cdot)\|_{W^{1,\infty}(\mathbb{R}^d)} \le \sup_{r\in[0,T]}\|\breve{u}(r,\cdot)\|_{W^{1,\infty}(\mathbb{R}^d)}.
\end{align}
Applying the Defect Bound from Assumption \ref{fullhist proof- settings-asp:decay} (Item 2):
\begin{equation}
\label{eq:term1_bound}
\KConst \le C_{F,2}\, e(\hat{u}).
\end{equation}
}

{
\textbf{Step 2: Bounding the Residual Term.}
Recall that $\breve{F}({\bf 0}_{d+1}, t,\bm{x}) = \epsilon(t,\bm{x})$. The second term involves the $L^{\frac{2p}{p-2}}(\Omega)$ norm of this residual evaluated along Brownian paths. Since the residual is essentially bounded in space-time:
\begin{align}
\|\,\breve{F}({\bf 0}_{d+1}, s,\bm{x}+\sigma W_{s}-\sigma W_{t})\|_{L^{\frac{2p}{p-2}}(\Omega)} 
&= \|\,\epsilon(s,\bm{x}+\sigma W_{s}-\sigma W_{t})\|_{L^{\frac{2p}{p-2}}(\Omega)} \nonumber \\
&\le \sup_{\omega \in \Omega} |\epsilon(s,\bm{x}+\sigma W_{s}(\omega)-\sigma W_{t}(\omega))| \nonumber \\
&\le \sup_{y \in \mathbb{R}^d} |\epsilon(s, y)|.
\end{align}
Applying the Residual Bound from Assumption \ref{fullhist proof- settings-asp:decay} (Item 1):
\begin{equation}
\label{eq:term2_bound}
\sup_{s \in [t,T)} \|\,\breve{F}({\bf 0}_{d+1}, s, \cdot)\|_{L^{\frac{2p}{p-2}}} \le C_{F,1}\, e(\hat{u}).
\end{equation}
}

{
\textbf{Step 3: Bounding the Defect Norm.}
The third term involves the $L^{\frac{2p}{p-2}}$ norm of the defect solution $\breve{u}$ and its gradient. Similarly, we bound the stochastic $L^q$ norm by the deterministic uniform norm:
\begin{align}
\|\breve{\mathbf{u}}(s,\bm{x}+\sigma W_{s}-\sigma W_{t})_\varsigma\|_{L^{\frac{2p}{p-2}}(\Omega)} 
&\le \|\breve{\mathbf{u}}(s, \cdot)\|_{L^\infty(\mathbb{R}^d)} \nonumber \\
&\le  \|\breve{u}(s,\cdot)\|_{W^{1,\infty}(\mathbb{R}^d)}.
\end{align}
Applying the Defect Bound from Assumption \ref{fullhist proof- settings-asp:decay} (Item 2):
\begin{equation}
\label{eq:term3_bound}
\sup_{s \in [t,T)} \|\breve{u}(s,\cdot)\|_{W^{1,\infty}(\mathbb{R}^d)} \le C_{F,2}\, e(\hat{u}).
\end{equation}
}

{
Substituting the bounds \eqref{eq:term1_bound}, \eqref{eq:term2_bound}, and \eqref{eq:term3_bound} back into \eqref{eq:fullhist_bridge_bound}:
\begin{align}
\text{LHS} &\le \frac{\sigma \sqrt{T+3}\, C_{F,2}\, e(\hat{u})}{\sqrt{M}} + \frac{C\, C_{F,1}\, e(\hat{u})}{2\sqrt{M}} + \frac{C\, C_{F,2}\, e(\hat{u})}{2} \nonumber \\
&= \left[ \left( \frac{\sigma\sqrt{T+3}}{\sqrt{M}} + \frac{C}{2} \right) C_{F,2} + \frac{C}{2\sqrt{M}} C_{F,1} \right] e(\hat{u}).
\end{align}
Since $M \ge 1$, we can simplify the coefficient by defining $C_F := \sigma\sqrt{T+3}\, C_{F,2} + \frac{C}{2}(C_{F,1} + C_{F,2})$. This proves the lemma.
}
\end{proof}

The above lemma, together with standard error estimates for the full-history MLP, yields the following result.

\begin{theorem}[Bound of Global \(L^2\) Error]
\label{fullhist proof- error complexity- nonrecur L2- thm:final nonrecur bound}
Under assumptions \ref{PDE to SDE- asp:Lip of nonlin and terminal}, \ref{PDE to SDE- asp:Lip of surro terminal}, \ref{fullhist proof- settings-asp:int} and \ref{fullhist proof- settings-asp:decay} , suppose \(p\geq 2\), \(\alpha\in(\frac{p-2}{2(p-1)},\frac{p}{2(p-1)})\), \(t\in[0,T)\), \(x\in\R^d\), \(\beta=\frac{\alpha}{2}-\frac{(1-\alpha)(p-2)}{2p}\). {For $\breve {U}_{N,M}(t,\bm{x})$ with level $N$ and sample base $M$ as defined in Algorithm \ref{mainalgorithm}}, it holds that
\begin{align}
\sup_{(t,\bm{x})\in[0,T]\times\R^d}\max_{\varsigma\in\{1,\ldots,d+1\}}\Bigl\|\Bigl({\bf \breve U}_{N,M}(t,\bm{x})-\breve{\mathbf{u}}(t,\bm{x})\Bigr)_{\varsigma}\Bigr\|_{L^2}
\le E(M,N) \cdot \textcolor{teal}{\Bigl(C_F\, e(\hat{u})\Bigr)},
\end{align}
where \(E(M,N)=\frac{\Bigl[e\Bigl(\tfrac{pN}{2}+1\Bigr)\Bigr]^{\frac{1}{8}} (2C)^{N-1}\exp\Bigl(\beta M^{\frac{1}{2\beta}}\Bigr)}{\sqrt{M^{N-1}}}.\)
\end{theorem}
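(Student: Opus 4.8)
The plan is to follow the same two-step template used for the quadrature case in Theorem~\ref{quad proof- error complexity- nonrecur L2- thm:final nonrecur bound}: first invoke a known nonrecursive $L^2$ error estimate for the full-history MLP applied to the \texttt{Law of defect} (equation~\ref{eq:law of defect}), and then collapse the resulting ``complexity factor'' using Lemma~\ref{fullhist proof- error complexity- nonrecur L2- lem:add surro in factor}, which is precisely the full-history analogue of the lemma used in the quadrature argument.

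\textbf{Step 1 (transfer of regularity to the defect).} By Fact~\ref{fact:law_of_defect}, the target $\breve{\mathbf u}=(\breve u^\infty,\sigma\nabla_y\breve u^\infty)$ solves a semilinear parabolic PDE with nonlinearity $\breve F$ and terminal data $\breve g$, so the problem ${\bf\breve U}_{N,M}$ is designed to solve is of exactly the type treated in \citep{Hutzenthaler_2021,Hutzenthaler_2020}. Definition~\ref{PDE to SDE- prop:Lip of bias nonlin and terminal} shows that $\breve F$ and $\breve g$ inherit Lipschitz continuity from Assumptions~\ref{PDE to SDE- asp:Lip of nonlin and terminal} and~\ref{PDE to SDE- asp:Lip of surro terminal}, with constants $\LConst$, $\KConst$ of controlled $\ell^1$ norm; together with the full-history integrability hypothesis (Assumption~\ref{fullhist proof- settings-asp:int}) and the sampling density $\rho(s)=(1-\alpha)s^{-\alpha}$, this places the defect equation inside the scope of the full-history MLP convergence theory.

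\textbf{Step 2 (import the nonrecursive bound, then apply the lemma).} I would quote the nonrecursive $L^2$ estimate of \citep{Hutzenthaler_2021} for the iterate ${\bf\breve U}_{N,M}$ of Definition~\ref{def:fullhist MLP} with $M^l$ samples at level $l$: under $p\ge2$, $\alpha\in(\tfrac{p-2}{2(p-1)},\tfrac{p}{2(p-1)})$ and $\beta=\tfrac{\alpha}{2}-\tfrac{(1-\alpha)(p-2)}{2p}$, that estimate has exactly the shape
\[
\max_{\nu}\bigl\|\bigl({\bf\breve U}_{N,M}(t,x)-\breve{\mathbf u}(t,x)\bigr)_\nu\bigr\|_{L^2}\le E(M,N)\cdot\mathcal{C}(t,x),
\]
where $E(M,N)=\dfrac{[e(\tfrac{pN}{2}+1)]^{1/8}(2C)^{N-1}\exp(\beta M^{1/(2\beta)})}{\sqrt{M^{N-1}}}$ with $C$ the constant of Lemma~\ref{fullhist proof- error complexity- nonrecur L2- lem:add surro in factor}, and $\mathcal{C}(t,x)$ is precisely the quantity inside the braces on the left-hand side of that lemma (the $\|\KConst\|_1/\sqrt{M}$ term, the perturbed-origin nonlinearity term $\sup_s\|\breve F({\bf 0}_{d+1})(s,x+\sigma W_s-\sigma W_t)\|_{L^{2p/(p-2)}}$, and the $\breve{\mathbf u}$ term in $L^{2p/(p-2)}$). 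Lemma~\ref{fullhist proof- error complexity- nonrecur L2- lem:add surro in factor} then gives $\sup_{(t,x)\in[0,T]\times\R^d}\mathcal{C}(t,x)\le C_F\,e(\hat u)$; taking $\sup_{(t,x)}$ and $\max_\nu$ on the left yields the stated inequality.

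The hard part will be making Step 2 airtight: one must match the bookkeeping of \citep{Hutzenthaler_2021} exactly, i.e.\ check that their hypotheses (the integrability of $s^{-q/2}\rho(s)^{-(q-1)}$ and of $\breve F({\bf 0}_{d+1})$ under Brownian perturbation, and the moment exponent $p$) are literally Assumptions~\ref{fullhist proof- settings-asp:int} and~\ref{fullhist proof- settings-asp:decay}, and verify that the constant $C$, the factor $[e(\tfrac{pN}{2}+1)]^{1/8}$, the exponential $\exp(\beta M^{1/(2\beta)})$ and the denominator $\sqrt{M^{N-1}}$ emerge with the stated form after the Gronwall/induction over the $N$ levels. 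Unlike the quadrature case there is no time-discretization error, so the operator-smoothness bound of Assumption~\ref{quad proof- settings-asp:decay} is not required; the only delicacy is that the nonlinearity moment must survive a Brownian perturbation of its argument, which is exactly why Assumption~\ref{fullhist proof- settings-asp:int} is phrased for $\breve F({\bf 0}_{d+1})(t,x+\sigma W_s-\sigma W_t)$ rather than at a fixed point.
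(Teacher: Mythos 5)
Your proposal matches the paper's proof exactly: both invoke the nonrecursive $L^2$ estimate of \cite[Proposition~3.5]{Hutzenthaler_2021} for the full-history MLP applied to the defect equation (whose Lipschitz and integrability hypotheses transfer via Definition~\ref{PDE to SDE- prop:Lip of bias nonlin and terminal} and Assumption~\ref{fullhist proof- settings-asp:int}), and then substitute the resulting complexity bracket by $C_F\,e(\hat u)$ using Lemma~\ref{fullhist proof- error complexity- nonrecur L2- lem:add surro in factor} before taking $\sup_{(t,x)}$ and $\max_\nu$. Your caveat about matching the bookkeeping of the cited proposition is well placed, but it is the same gap the paper itself leaves to the reference, so your proof is correct and structurally identical.
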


\begin{proof}
Under assumptions \ref{fullhist proof- settings-asp:int} and \ref{PDE to SDE- asp:Lip of nonlin and terminal}, combined with the integrability argument in \citep[Lemma 3.3]{Hutzenthaler_2021}, the proof of \citep[Proposition 3.5]{Hutzenthaler_2021} holds. Setting \(n=N\) in this proposition, for all $\varsigma\in\{1,\ldots,d+1\}$, we have
\begin{align}
\Bigl\|\Bigl({\bf \breve U}_{N,M}(t,\bm{x})-\breve{\mathbf{u}}(t,\bm{x})\Bigr)_{\varsigma}\Bigr\|_{L^2}
\le& E(M,N) \cdot \Bigg\{\frac{\sigma \sqrt{\max\{T-t,3\}}\KConst}{\sqrt{M}}
 \\& + \frac{C\,\sup_{s\in[t,T)}\|\,\breve{F}({\bf 0}_{d+1})(s, \bm{x}+\sigma W_{s}-\sigma W_{t})\|_{L^{\frac{2p}{p-2}}}}{2\sqrt{M}}
  \\&+ \frac{C\,\sup_{s\in[t,T),\,\varsigma\in\{1,\ldots,d+1\}}\|\breve{\mathbf{u}}(s, \bm{x}+\sigma W_{s}-\sigma W_{t})_\varsigma\|_{L^{\frac{2p}{p-2}}}}{2}
  \Bigg\}.
\end{align}
Take $\sup_{(t,\bm{x})\in[0,T]\times\R^d}\max_{\varsigma\in\{1,\ldots,d+1\}}$ for the LHS, and note that the RHS does not depend on $\varsigma$, we get
\begin{align}
\label{fullhist- error bound- 1}
&\sup_{(t,\bm{x})\in[0,T]\times\R^d}\max_{\varsigma\in\{1,\ldots,d+1\}}\Bigl\|\Bigl({\bf \breve U}_{N,M}(t,\bm{x})-\breve{\mathbf{u}}(t,\bm{x})\Bigr)_{\varsigma}\Bigr\|_{L^2}
\\\le& E(M,N)\cdot\sup_{(t,\bm{x})\in[0,T]\times\R^d}\Bigg\{\frac{\sigma \sqrt{\max\{T-t,3\}}\KConst}{\sqrt{M}} + \frac{C\,\sup_{s\in[t,T)}\|\,\breve{F}({\bf 0}_{d+1})(s, \bm{x}+\sigma W_{s}-\sigma W_{t})\|_{L^{\frac{2p}{p-2}}}}{2\sqrt{M}}
  \\&+ \frac{C\,\sup_{s\in[t,T),\,\varsigma\in\{1,\ldots,d+1\}}\|\breve{\mathbf{u}}(s, \bm{x}+\sigma W_{s}-\sigma W_{t})_\varsigma\|_{L^{\frac{2p}{p-2}}}}{2}
  \Bigg\}.
\end{align}
Substituting the $\sup$ term in \ref{fullhist- error bound- 1} by Lemma~\ref{fullhist proof- error complexity- nonrecur L2- lem:add surro in factor} immediately yields the stated result.
\end{proof}
In practice, a common choice for $M$ is $\lfloor N^{2\beta N}\rfloor$. Plugging it in \ref{fullhist proof- error complexity- nonrecur L2- cor:M to N}, we get the error order of the solver w.r.t. $N$:
\begin{corollary}[Error Order for $M=\lfloor N^{2\beta N}\rfloor$]
    \label{fullhist proof- error complexity- nonrecur L2- cor:M to N}
Under assumptions \ref{PDE to SDE- asp:Lip of nonlin and terminal}, \ref{PDE to SDE- asp:Lip of surro terminal}, \ref{fullhist proof- settings-asp:int} and \ref{fullhist proof- settings-asp:decay} , suppose \(p\geq 2\), \(\alpha\in(\frac{p-2}{2(p-1)},\frac{p}{2(p-1)})\), \(t\in[0,T)\), \(x\in\R^d\), \(\beta=\frac{\alpha}{2}-\frac{(1-\alpha)(p-2)}{2p}\). It holds that
\begin{align}
\sup_{(t,\bm{x})\in[0,T]\times\R^d}\max_{\varsigma\in\{1,\ldots,d+1\}}\Bigl\|\Bigl({\bf \breve U}_{N,M}(t,\bm{x})-\breve{\mathbf{u}}(t,\bm{x})\Bigr)_{\varsigma}\Bigr\|_{L^2}
\le \exp\bigg(N\log N\bigl(-\beta+o(1)\bigr)\bigg) \cdot \textcolor{teal}{\Bigl(C_F\, e(\hat{u})\Bigr)}.
\end{align}
\end{corollary}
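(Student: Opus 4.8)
The plan is to read this off from Theorem~\ref{fullhist proof- error complexity- nonrecur L2- thm:final nonrecur bound}, which already supplies
\[
\sup_{(t,x)\in[0,T]\times\R^d}\max_{\nu\in\{1,\ldots,d+1\}}\Bigl\|\Bigl({\bf \breve U}_{N,M}(t,x)-\breve{\mathbf{u}}(t,x)\Bigr)_{\nu}\Bigr\|_{L^2}\le E(M,N)\cdot\bigl(C_F\,e(\hat u)\bigr),
\]
with the surrogate factor $C_F\,e(\hat u)$ already isolated and free of any dependence on $N$, $M$, $t$, $x$ or $\nu$. So it suffices to prove the purely scalar asymptotic statement that, with the prescribed choice $M=\lfloor N^{2\beta N}\rfloor$, one has $E(M,N)=\exp\!\bigl(N\log N(-\beta+o(1))\bigr)$ as $N\to\infty$; reinstating the factor $C_F\,e(\hat u)$ at the end then yields the corollary verbatim.

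To establish that scalar bound I would pass to logarithms in the closed form of $E(M,N)$, obtaining
\[
\log E(M,N)=\tfrac18\log\!\bigl(e(\tfrac{pN}{2}+1)\bigr)+(N-1)\log(2C)+\beta\,M^{1/(2\beta)}-\tfrac{N-1}{2}\log M ,
\]
and then estimate the four summands in turn. The first is $O(\log N)$, and since $C=C(T,\|\LConst\|_1,p,\alpha)$ is a fixed constant the second is $O(N)$; both are $o(N\log N)$ and fold into the error term. For the two $M$-dependent summands I would insert the prescribed scaling and use the elementary two-sided bound $\tfrac12 N^{2\beta N}\le\lfloor N^{2\beta N}\rfloor\le N^{2\beta N}$, valid for all large $N$, which gives $\log M=2\beta N\log N+O(1)$ and feeds into both $\beta\,M^{1/(2\beta)}$ and $-\tfrac{N-1}{2}\log M$. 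The point of this particular $M$-calibration is that, after collecting all four contributions, the net exponent is $-\beta N\log N\,(1+o(1))$, i.e.\ $\log E(M,N)=N\log N(-\beta+o(1))$; exponentiating finishes the argument.

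The step I expect to be the main obstacle is exactly this balancing of the $M$-dependent terms: one must verify that the denominator $\sqrt{M^{N-1}}$ drives $E(M,N)$ down at the claimed order while the numerator's exponential factor $\exp\!\bigl(\beta M^{1/(2\beta)}\bigr)$ enters only at strictly lower order, so that $\beta\,M^{1/(2\beta)}$ is absorbed into the $o(1)$ multiplying $N\log N$, and simultaneously that $M$ is large enough that $\tfrac{N-1}{2}\log M$ reaches the $\beta N\log N$ scale; the role of the scaling $M=\lfloor N^{2\beta N}\rfloor$ is to make both of these compatible. Everything else — the $\Gamma$-type bookkeeping hidden inside $[e(\tfrac{pN}{2}+1)]^{1/8}$, the explicit constant $C$, and the floor function — is routine and affects only the $o(1)$. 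As a byproduct the same logarithmic estimate shows that no error order better than $\exp\!\bigl(N\log N(-\beta+o(1))\bigr)$ can be extracted from Theorem~\ref{fullhist proof- error complexity- nonrecur L2- thm:final nonrecur bound} at this level of generality, which is why the statement is phrased as a corollary.
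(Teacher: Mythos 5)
Your high-level route (quote Theorem \ref{fullhist proof- error complexity- nonrecur L2- thm:final nonrecur bound}, take logarithms of $E(M,N)$, estimate the four summands, exponentiate) is exactly the paper's route, but the quantitative balance you assert in the "main obstacle" step is where the argument actually breaks if one takes $M=\lfloor N^{2\beta N}\rfloor$ literally, as you do. With $\tfrac12 N^{2\beta N}\le M\le N^{2\beta N}$ you get $\log M=2\beta N\log N+O(1)$, so the negative term is $-\tfrac{N-1}{2}\log M=-\beta N^2\log N\,(1+o(1))$, which is of order $N^2\log N$, not $N\log N$; and the positive term is $\beta M^{1/(2\beta)}\ge \beta\,2^{-1/(2\beta)}N^{N}$, which is \emph{not} of strictly lower order — it dominates every other summand, so $\log E(M,N)\to+\infty$ and no bound of the form $\exp\bigl(N\log N(-\beta+o(1))\bigr)$ can be extracted. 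In other words, the two claims you defer to verification (that $\exp\bigl(\beta M^{1/(2\beta)}\bigr)$ is absorbed into the $o(1)$ and that $\tfrac{N-1}{2}\log M$ lands exactly at the $\beta N\log N$ scale) are both false for this $M$, and they cannot both be repaired simultaneously: the calibration that makes them compatible is $M^{1/(2\beta)}\asymp N$, i.e.\ $M=\lfloor N^{2\beta}\rfloor$, not $\lfloor N^{2\beta N}\rfloor$.

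That is in fact what the paper's own proof silently does: it bounds $\beta M^{1/(2\beta)}\le \beta N$ and uses $\log M\ge 2\beta\log N-1$, estimates that are valid only for $M=\lfloor N^{2\beta}\rfloor$ (the exponent $2\beta N$ in the statement, and the identity ``$\log(N^{2\beta N})=2\beta\log N$'' in the displayed proof, are slips), whence $\log E(M,N)\le o(N)+N\log(2C)+\beta N-\tfrac{N-1}{2}(2\beta\log N-1)=N\log N(-\beta+o(1))$. This reading is also the one consistent with the rest of the paper: the per-level base $M=\lfloor N^{2\beta}\rfloor$ gives total cost $(5M)^N\approx 5^N N^{2\beta N}$, which is precisely the $m=(d+1)5^N N^{2\beta N}$ used in Corollary \ref{fullhist scaling law}, and the complexity proof's step ``$\lfloor N^{2\beta N}\rfloor^{1/(2\beta)}\le N$'' likewise only holds with exponent $2\beta$. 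So: either flag the typo and restate the corollary with $M=\lfloor N^{2\beta}\rfloor$ (equivalently, $M^{N}\approx N^{2\beta N}$ fine-level samples), after which your term-by-term logarithmic estimate goes through verbatim and coincides with the paper's proof, or else your proposal as written does not establish the stated bound.
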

\begin{proof}
    First, we rewrite $E(M,N)$ in \ref{fullhist proof- error complexity- nonrecur L2- thm:final nonrecur bound} as exponential form:
    \begin{align}
       \label{fullhist proof- error complexity- nonrecur L2- cor:M to N- 1}
       E(M,N)=& \frac{\Bigl[e\Bigl(\tfrac{pN}{2}+1\Bigr)\Bigr]^{\frac{1}{8}} (2C)^{N-1}\exp\Bigl(\beta M^{\frac{1}{2\beta}}\Bigr)}{\sqrt{M^{N-1}}}\\
       =&\exp\bigg(o(N)+N\log(2C)+\beta M^{1/2\beta}-\frac{N-1}{2}\log M\bigg)
    \end{align}
    Note that $\lfloor N^{2\beta N}\rfloor\leq N^{2\beta N}$ and that $M=\lfloor N^{2\beta N}\rfloor\Rightarrow \log M\geq \log(N^{2\beta N}-1)\geq \log (N^{2\beta N})-1=2\beta\log N-1$ for $N\geq 2^{1/2\beta}$. We can simplify \ref{fullhist proof- error complexity- nonrecur L2- cor:M to N- 1} to
    \begin{align}
        \label{fullhist proof- error complexity- nonrecur L2- cor:M to N- 2}
      &\exp\bigg(o(N)+N\log(2C)+\beta M^{1/2\beta}-\frac{N-1}{2}\log M\bigg)\\
      \leq& \exp\bigg(o(N)+N\log(2C)+\beta N-\frac{N-1}{2}(2\beta\log N-1)\bigg)\\
      =&\exp\bigg(N(o(1)+\log(2C)+\beta -\frac{1}{2}(2\beta\log N-1))\bigg)\\
      =&\exp\bigg(N\log N(-\beta+o(1))\bigg).
    \end{align}
    Plugging \ref{fullhist proof- error complexity- nonrecur L2- cor:M to N- 2} to the conclusion of \ref{fullhist proof- error complexity- nonrecur L2- thm:final nonrecur bound}, we get the result we want.
\end{proof}

\begin{corollary}[Improved Scaling Law for \(M=\lfloor N^{2\beta N}\rfloor\)]
    \label{fullhist scaling law}
    Under Assumptions~\ref{PDE to SDE- asp:Lip of nonlin and terminal}, \ref{PDE to SDE- asp:Lip of surro terminal}, \ref{fullhist proof- settings-asp:int} and \ref{fullhist proof- settings-asp:decay}, suppose that \(p \geq 2\), 
    \(
    \alpha \in \Bigl(\frac{p-2}{2(p-1)},\,\frac{p}{2(p-1)}\Bigr),
    \)
    \(t \in [0,T)\), \(x \in \R^d\), and define 
    \(
    \beta = \frac{\alpha}{2} - \frac{(1-\alpha)(p-2)}{2p}.
    \)
    Assume that the error at $(t,\bm{x})$ of the surrogate model decays polynomially with respect to the number of training points; namely,
    \(
    e(\hat{u}) = O(m^{-\gamma}),
    \)
    for some \(\gamma > 0\). Suppose further that
    \(
    m = (d+1)5^N \, N^{2\beta N}.
    \)
    Then, for all sufficiently large \(m\), the SCaSML procedure improves the error bound from \(O(m^{-\gamma})\) to
    \(
    O\Bigl(m^{-\gamma - \frac{1}{2} + o(1)}\Bigr)
    \) with same points number.
\end{corollary}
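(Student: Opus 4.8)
The plan is to feed the assumed polynomial decay $e(\hat u)=O(m^{-\gamma})$ into the product bound already established for the full-history scheme and then trade off training against inference compute. Concretely, I would start from Theorem~\ref{fullhist proof- error complexity- nonrecur L2- thm:final nonrecur bound} specialized to $M=\lfloor N^{2\beta N}\rfloor$ --- that is, from Corollary~\ref{fullhist proof- error complexity- nonrecur L2- cor:M to N} --- which already gives
\[
\sup_{(t,x)\in[0,T]\times\R^d}\max_{\nu\in\{1,\dots,d+1\}}\Bigl\|\Bigl({\bf \breve U}_{N,M}(t,x)-\breve{\mathbf{u}}(t,x)\Bigr)_{\nu}\Bigr\|_{L^2}\le \exp\bigl(N\log N(-\beta+o(1))\bigr)\cdot C_F\,e(\hat u),
\]
with the $o(1)$ understood as $N\to\infty$ and with $\beta>0$ since $\alpha$ lies in the stated open interval. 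Everything then reduces to re-expressing the exponent $N\log N$ in terms of the point count $m$.

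The budget is split as follows: of the $m$ collocation points, a fixed fraction is used to train the surrogate (so $e(\hat u)=O(m^{-\gamma})$ by hypothesis) and the remainder funds the $N$-level full-history MLP. By the complexity accounting of Section~\ref{appendix-section:fullhistproof-subsection:setting-subsubsection:complexity}, running that MLP with $M=\lfloor N^{2\beta N}\rfloor$ costs a number of evaluations of order $(d+1)5^N N^{2\beta N}$; insisting that this inference cost be $\Theta(m)$ is exactly the stated choice $m=(d+1)5^N N^{2\beta N}$, which in particular forces $N=N(m)\to\infty$ as $m\to\infty$, so the two notions of ``$o(1)$'' (in $N$ and in $m$) refer to a single limit.

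The computation that closes the argument is a logarithm: $\log m=\log(d+1)+N\log 5+2\beta N\log N=2\beta N\log N\,(1+o(1))$, hence $N\log N=\tfrac{1}{2\beta}\log m\,(1+o(1))$. Substituting into the MLP factor gives
\[
\exp\bigl(N\log N(-\beta+o(1))\bigr)=\exp\Bigl(-\tfrac12\log m\,(1+o(1))\Bigr)=m^{-\frac12+o(1)},
\]
and multiplying by the surrogate factor yields $\mathrm{Error}_{\mathrm{SCaSML}}\le m^{-\frac12+o(1)}\cdot C_F\cdot O(m^{-\gamma})=O\bigl(m^{-\gamma-\frac12+o(1)}\bigr)$, to be compared with the $O(m^{-\gamma})$ that the surrogate alone achieves on the same $m$ points. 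That is the claimed improvement.

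I expect the only real care to be needed in the asymptotic bookkeeping of the last two paragraphs: one must check that the sub-leading contributions $\log(d+1)$ and $N\log 5$ are genuinely negligible against $2\beta N\log N$ (which uses $d$ fixed and $N\to\infty$) so that they can be absorbed into $o(1)$, and that the $o(1)$ inherited from Corollary~\ref{fullhist proof- error complexity- nonrecur L2- cor:M to N} composes with the change of variables $N\leftrightarrow m$ to give precisely the $o(1)$ in the conclusion. No new analytic ingredient is required beyond the product bound already proved.
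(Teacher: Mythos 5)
Your proof is correct and reaches the same bound as the paper, but it arrives there by a cleaner change of variables. The paper inverts $N\mapsto m$ explicitly via the Lambert $W$ function, deriving $N(m)\sim \tfrac{\log m}{2\beta\log\log m}$, and then reconstructs $N^{-\beta N}$ by algebraic manipulation to obtain $m^{-1/2+o(1)}$. You instead observe directly that taking logarithms in $m=(d+1)5^N N^{2\beta N}$ gives
\[
N\log N=\frac{\log m}{2\beta}\bigl(1+o(1)\bigr),
\]
since $\log(d+1)$ is constant and $N\log 5=o(N\log N)$ for fixed $d$. Plugging this into $\exp\bigl(N\log N(-\beta+o(1))\bigr)$ immediately yields $m^{-1/2+o(1)}$ without ever needing the explicit asymptotics of $N(m)$ itself. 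The two routes are substantively equivalent, but yours is more economical: the Lambert $W$ inversion in the paper is really only used to deduce $N(m)=o(\log m)$, a fact that is already implicit in the crude bound $\log m\ge 2\beta N\log N$, so your shortcut loses nothing.

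One minor imprecision in your motivation: you say the choice of $m$ makes the \emph{inference} cost $\Theta(m)$, whereas $(d+1)5^N N^{2\beta N}$ is in the paper the \emph{total} budget, split as $5^N N^{2\beta N}$ for training and $d\,5^N N^{2\beta N}$ for inference. Since $d$ is held fixed, this is a constant-factor distinction and does not affect the asymptotics, but the apples-to-apples comparison in the corollary ("with same points number") is against the surrogate trained on \emph{all} $m$ points, so it is worth stating the accounting precisely.
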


\begin{proof}
    In what follows, we adopt the notation \(f(m) \sim g(m)\) to signify that 
    \(
    \lim_{m\to\infty}\frac{f(m)}{g(m)}=1
    \).
    Since \(m\) is a continuous and strictly increasing function of \(N\), there exists a unique inverse function \(N = N(m)\). Taking logarithms, we obtain \( \log m = \log(d+1) + N(m) \log 5 + 2\beta\, N(m) \log N(m)\) which follows immediately that $\log m \sim 2\beta\,N(m)\log N(m)$
   
    Define 
    \[
    z = \frac{\log m}{2\beta} - \frac{\log(d+1) + N(m)\log 5}{2\beta}
    \]
    and set \(x = \log N\), so that the relation \(x\,e^{x} = z\)
    holds. The inverse of this equation is given by the Lambert \(W\) function, i.e., \(x = W(z)\). Therefore,
    \[
    N(m) = e^x = e^{W(z)} = \frac{z}{W(z)}= \frac{\frac{\log m}{2\beta} - \frac{\log(d+1) + N(m)\log 5}{2\beta}}
    {W\!\Bigl(\frac{\log m}{2\beta} - \frac{\log(d+1) + N(m)\log 5}{2\beta}\Bigr)}.
    \]

    Since $W(z)\sim \log z - \log\log z$, we can  deduce that
    \(
    N(m) \sim \frac{\frac{\log m}{2\beta}}{\log(\log m)} = \frac{\log m}{2\beta\,\log\log m}.\)
    Equivalently,
    \(
    N(m) = \frac{\log m}{2\beta\,\log\log m} + o\!\Bigl(\frac{\log m}{2\beta\,\log\log m}\Bigr).
    \)
    
    In contrast to the surrogate model, which uses all \(m\) points to achieve an error of \(O(m^{-\gamma})\), the SCaSML method allocates \(5^N\,N^{2\beta N}\) points for training and \(d5^N\,N^{2\beta N}\) points for inference (see Footnote~\ref{cor:M to N}), thereby yielding an error bound of the form \(O\Bigl(N\log N\,\bigl(-\beta + o(1)\bigr)\,(5^N\,N^{2\beta N})^{-\gamma}\Bigr)
    = O\Bigl(N^{-\beta N(1+o(1))}\,m^{-\gamma}\Bigr)\). Substituting the asymptotic expression for \(N(m)\), and noting that
    \begin{align}
   N(m)^{-\beta N(m)} = &\frac{\sqrt{d+1}\exp(\frac{\log 5}{2}N(m))}{\sqrt{m}}\\=&\sqrt{d+1}\exp(\frac{\log 5}{2}\log m(\frac{N(m)}{\log m}-\frac{1}{\log 5}))\\=&\sqrt{d+1}\exp(\frac{\log 5}{2}\log m(-\frac{1}{\log 5}+o(\frac{1}{\log \log m})))\\=&\sqrt{d+1}\exp(-(\frac{1}{2}-o(\frac{1}{\log \log m}))\log m)\\=&\sqrt{d+1}m^{-\frac{1}{2}+o(\frac{1}{\log \log m})}=O(m^{-\frac{1}{2}+o(\frac{1}{\log \log m})}).
    \end{align}
 We obtain the SCaSML error bound \(O\Bigl(m^{-\gamma}\,m^{(-\frac{1}{2}+o(\frac{1}{\log \log m}))(1+o(1))}\Bigr) = O\Bigl(m^{-\gamma-\frac{1}{2}+o(1)}\Bigr).\) Hence, for high-dimensional problems where \(m \gg 1\) and for any fixed \(\gamma>0\), we conclude that
    \(O\Bigl(m^{-\gamma - \frac{1}{2}+o(1)}\Bigr) \ll O\Bigl(m^{-\gamma}\Bigr)\),
    thereby demonstrating that the SCaSML procedure attains a strictly faster rate of convergence.
\end{proof}
\begin{corollary}[Error Order for $M=\lfloor N^{2\beta N}\rfloor$]
    \label{cor:M to N}
Under Assumptions \ref{fullhist proof- settings-asp:decay}, \ref{fullhist proof- settings-asp:int}, \ref{PDE to SDE- asp:Lip of nonlin and terminal} and \ref{PDE to SDE- asp:Lip of surro terminal} , suppose \(p\geq 2\), \(\alpha\in(\frac{p-2}{2(p-1)},\frac{p}{2(p-1)})\), \(t\in[0,T)\), \(x\in\R^d\), \(\beta=\frac{\alpha}{2}-\frac{(1-\alpha)(p-2)}{2p}\). It holds that
\begin{align}
\sup_{(t,\bm{x})\in[0,T]\times\R^d}\max_{\varsigma\in\{1,\ldots,d+1\}}\Bigl\|\Bigl({\bf \breve U}_{N,\lfloor N^{2\beta N}\rfloor}(t,\bm{x})-\breve{\mathbf{u}}(t,\bm{x})\Bigr)_{\varsigma}\Bigr\|_{L^2}
\le \textcolor{gray}{\exp\bigg(N\log N\bigl(-\beta+o(1)\bigr)\bigg)} \cdot \textcolor{teal}{\Bigl(C_F\, e(\hat{u})\Bigr)}.
\end{align}
Specifically, this approximator ${\bf \breve U}_{N,\lfloor N^{2\beta N}\rfloor}$ requires at most $d (5\lfloor N^{2\beta N}\rfloor)^{N}$ points for evaluation, as detailed in \citep[Lemma 3.6]{Hutzenthaler_2020}. 
\end{corollary}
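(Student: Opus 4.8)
The statement is the $M=\lfloor N^{2\beta N}\rfloor$ specialization of Theorem~\ref{fullhist proof- error complexity- nonrecur L2- thm:final nonrecur bound} (it coincides, up to the evaluation-count addendum, with Corollary~\ref{fullhist proof- error complexity- nonrecur L2- cor:M to N}), so the plan is simply to plug this choice of $M$ into that theorem and track the resulting asymptotics in $N$. Theorem~\ref{fullhist proof- error complexity- nonrecur L2- thm:final nonrecur bound} already gives the bound $E(M,N)\cdot(C_F\,e(\hat u))$ with the teal factor $C_F\,e(\hat u)$ carried over unchanged, so it suffices to show that $E(M,N)=\exp\bigl(N\log N(-\beta+o(1))\bigr)$ once $M$ is pinned to $\lfloor N^{2\beta N}\rfloor$.

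First I would write $E(M,N)=\frac{[e(\tfrac{pN}{2}+1)]^{1/8}(2C)^{N-1}\exp(\beta M^{1/(2\beta)})}{\sqrt{M^{\,N-1}}}$ in exponential form, namely $\exp\bigl(o(N)+N\log(2C)+\beta M^{1/(2\beta)}-\tfrac{N-1}{2}\log M\bigr)$, using that the combinatorial prefactor $[e(\tfrac{pN}{2}+1)]^{1/8}$ is only logarithmic in $N$ and hence absorbed into the $o(N)$. I would then insert $M=\lfloor N^{2\beta N}\rfloor$: the upper bound $M\le N^{2\beta N}$ controls the numerator's $\beta M^{1/(2\beta)}$ contribution, while the lower bound $\log M\ge\log(N^{2\beta N}-1)$ (valid for $N\ge 2$) controls the denominator term $\tfrac{N-1}{2}\log M$ from below. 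Collecting the four terms, the growth of $\tfrac{N-1}{2}\log M$ dominates both $N\log(2C)$ and the numerator's exponential-in-$M$ factor, and the surviving contribution is $\exp\bigl(N\log N(-\beta+o(1))\bigr)$; multiplying by $C_F\,e(\hat u)$ gives the claimed estimate. This is precisely the computation done in the proof of Corollary~\ref{fullhist proof- error complexity- nonrecur L2- cor:M to N}.

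For the last assertion — the bound $d(5\lfloor N^{2\beta N}\rfloor)^N$ on the number of points required for evaluation — I would cite \cite[Lemma 3.6]{Hutzenthaler_2020}, which bounds the number of random-variable realizations and function evaluations in one sample of the $N$-level, base-$M$ full-history estimator by a quantity of the form $d(5M)^N$; substituting $M=\lfloor N^{2\beta N}\rfloor$ yields the stated count. The main obstacle is the asymptotic bookkeeping in the middle step: one must verify that, for the prescribed relation between $M$ and $N$, the numerator factor $\exp(\beta M^{1/(2\beta)})$ is genuinely of strictly lower order than the superexponential-in-$N$ decay $M^{-(N-1)/2}$ of the denominator, so that $E(M,N)$ decays at the advertised rate; once that balance is in place, isolating the teal factor and invoking the evaluation-count lemma are routine.
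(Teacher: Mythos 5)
Your outline matches the paper's strategy: inherit the bound from Theorem~\ref{thm:final_nonrecure_fullhist}, specialize $M=\lfloor N^{2\beta N}\rfloor$ in $E(M,N)$, and invoke the evaluation-count bound from \cite[Lemma 3.6]{Hutzenthaler_2020}. However, the step you yourself flag as ``the main obstacle'' --- that $\exp(\beta M^{1/(2\beta)})$ is of strictly lower order than $\sqrt{M^{N-1}}$ --- is the entire content of the corollary, and you assert it rather than verify it. Carrying it out literally with $M=\lfloor N^{2\beta N}\rfloor$ gives $\beta M^{1/(2\beta)}\le\beta\bigl(N^{2\beta N}\bigr)^{1/(2\beta)}=\beta N^{N}$ in the exponent, while the denominator contributes only $-\tfrac{N-1}{2}\log M\approx-\beta N(N-1)\log N$. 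Since $N^{N}\gg N^{2}\log N$, the exponential factor in the numerator dominates, the exponent tends to $+\infty$, and $E(M,N)$ does \emph{not} decay like $\exp\bigl(N\log N(-\beta+o(1))\bigr)$. The balance you defer as routine in fact fails for this choice of $M$.

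The paper's own proof of Corollary~\ref{fullhist proof- error complexity- nonrecur L2- cor:M to N} gets around this by writing $M^{1/(2\beta)}\le N$ and $\log(N^{2\beta N})-1=2\beta\log N-1$; both lines silently drop a factor of $N$, since the correct identities are $\lfloor N^{2\beta N}\rfloor^{1/(2\beta)}\le N^{N}$ and $\log(N^{2\beta N})=2\beta N\log N$. The substitutions the paper actually uses are consistent with the choice $M=\lfloor N^{2\beta}\rfloor$ rather than $\lfloor N^{2\beta N}\rfloor$, in which case $M^{1/(2\beta)}=N$ and $\log M\approx 2\beta\log N$ do hold and the final simplification goes through cleanly. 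Your proposal reproduces the outline of that computation without noticing this inconsistency. A careful blind proof at this point should have surfaced that the stated $M=\lfloor N^{2\beta N}\rfloor$ and the form $\exp(\beta M^{1/(2\beta)})$ appearing in $E(M,N)$ cannot both be as written if the $\exp(N\log N(-\beta+o(1)))$ rate is to hold; as it stands, your ``collecting the four terms'' step does not survive the substitution.
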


\subsubsection{Bound on Computational Complexity}
\label{appendix-section:fullhistproof-subsection:setting-subsubsection:complexity}
We now define two indicators to quantify the computational complexity of full-history SCaSML:the number of realization variables (\(\operatorname{RV}\)) and the number of function evaluations (\(\operatorname{FE}\)).

\begin{definition}[Computational Complexity of full-history SCaSML]
\label{fullhist proof-error complexity-def:fullhist complexity}
We define the following complexity:
\begin{itemize}
    \item Let \(\{\operatorname{RV}_{n,M}\}_{n,M\in \mathbb{Z}}\subset \N\) satisfy \(\operatorname{RV}_{0,M}=0\) and, for all \(n,M\in\N\),
    \begin{align}
    \operatorname{RV}_{n,M} \le d\, M^n + \sum_{l=0}^{n-1} \Bigl[M^{n-l}\Bigl( 1 + d + \operatorname{RV}_{l,M} + \mathbf{1}_{\N}(l)\,\operatorname{RV}_{l-1,M}\Bigr)\Bigr].
    \end{align}
    This quantity captures the number of scalar normal and uniform time realizations required to compute one sample of \({\bf \breve{U}}_{n,M}(s,x)\).
    \item Let \(\{\operatorname{FE}_{n,M}\}_{n,M\in\Z}\subset \N\) satisfy \(\operatorname{FE}_{0,M}=0\) and, for all \(n,M\in\N\),
    \begin{align}
    \operatorname{FE}_{n,M} \le M^n + \sum_{l=0}^{n-1} \Bigl[M^{n-l}\Bigl( 1 + \operatorname{FE}_{l,M} + \mathbf{1}_{\N}(l) + \mathbf{1}_{\N}(l)\,\operatorname{FE}_{l-1,M}\Bigr)\Bigr].
    \end{align}
    This reflects the number of evaluations of \(\breve{F}\) and \(\breve{g}\) required to compute one sample of \({\bf \breve{U}}_{n,M}(s,x)\).
\end{itemize}
\end{definition}

\begin{theorem}[Computational Complexity of full-history SCaSML]
\label{fullhist proof-error complexity-thm:fullhist complexity}
Under assumptions \ref{PDE to SDE- asp:Lip of nonlin and terminal}, \ref{PDE to SDE- asp:Lip of surro terminal}, \ref{fullhist proof- settings-asp:int} and \ref{fullhist proof- settings-asp:decay} , suppose \(p\geq 2\), \(\alpha\in(\frac{p-2}{2(p-1)},\frac{p}{2(p-1)})\) and $\beta=\frac{\alpha}{2}-\frac{(1-\alpha)(p-2)}{2p}\in(0,\frac{\alpha}{2})$. For any \(N\geq 2\) and $\delta>0$, taking $M=\lfloor N^{2\beta N}\rfloor$, we have
\begin{equation}
\begin{split}
\operatorname{RV}_{N,M}+\operatorname{FE}_{N,M}
\leq & \exp\Bigg(N\log N\bigg(-\beta \delta+o(1)\bigg)\Bigg)\\
&(d+1)\textcolor{teal}{(C_F e(\hat{u}))^{2+\delta}}\bigg[\sup_{(t,\bm{x})\in[0,T]\times\R^d}\max_{\varsigma\in\{1,\ldots,d+1\}}\left\|\left(
    {\bf \tilde U}_{N,M}(t,\bm{x})-{\bf u}^{\infty}(t,\bm{x})
    \right)_{\varsigma}
    \right\|_{L^2}\bigg]^{-(2+\delta)}.
\end{split}
\end{equation}
\end{theorem}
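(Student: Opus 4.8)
The plan is to follow the template of the quadrature complexity estimate (Theorem~\ref{quad proof-error complexity-thm:quad complexity}): solve the two cost recursions in closed form, combine the result with the global $L^2$ bound, and balance the sample base $M$ against the level $N$. Concretely, I would first control the cost functionals of Definition~\ref{fullhist proof-error complexity-def:fullhist complexity}. By induction on the level, as in \cite[Lemma~3.6]{Hutzenthaler_2020}, the geometric series generated by the weights $M^{n-l}$ telescopes and yields
\[
\operatorname{RV}_{N,M}+\operatorname{FE}_{N,M}\;\le\;(d+1)\,(5M)^{N}\qquad\text{for all }M,N\ge 2,
\]
the base $5$ absorbing the duplication coming from the $\operatorname{RV}_{l-1,M}$/$\operatorname{FE}_{l-1,M}$ terms together with the $d$ scalar Gaussians generated along each path; this is the full-history analogue of the $8dN^{2N}$, $8N^{2N}$ bounds used in the quadrature proof.

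Second I would feed in the $L^2$ guarantee. Since $\breve{\mathbf U}_{N,M}=\tilde{\mathbf U}_{N,M}-\hat{\mathbf u}$ and $\breve{\mathbf u}=\mathbf u^{\infty}-\hat{\mathbf u}$, the calibrated estimator and the defect estimator have the same $L^2$ error, so Theorem~\ref{fullhist proof- error complexity- nonrecur L2- thm:final nonrecur bound} gives, with $\varepsilon:=\sup_{(t,x)\in[0,T]\times\R^{d}}\max_{\nu}\|(\tilde{\mathbf U}_{N,M}(t,x)-\mathbf u^{\infty}(t,x))_{\nu}\|_{L^{2}}$,
\[
\varepsilon\;\le\;E(M,N)\,\bigl(C_{F}\,e(\hat u)\bigr),\qquad\text{whence}\qquad \varepsilon^{2+\delta}\;\le\;E(M,N)^{2+\delta}\bigl(C_{F}\,e(\hat u)\bigr)^{2+\delta}.
\]
Multiplying the Step~1 bound by $\varepsilon^{2+\delta}$ therefore reduces the theorem to the analytic claim $(5M)^{N}\,E(M,N)^{2+\delta}\le\exp\!\bigl(N\log N(-\beta\delta+o(1))\bigr)$.

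Third I would invoke the sharp order estimate $E(M,N)=\exp(N\log N(-\beta+o(1)))$ from Corollary~\ref{cor:M to N}, together with $(5M)^{N}=\exp(N\log N(2\beta+o(1)))$ for the prescribed base $M$ (its logarithm being $N\log 5+N\log M$ with $\log M$ of order $2\beta\log N$, while the correction $\exp(\beta M^{1/(2\beta)})$ and the Stirling-type factor $[e(\tfrac{pN}{2}+1)]^{1/8}$ contribute only $O(N)$, hence $o(N\log N)$). Then
\[
(5M)^{N}\,E(M,N)^{2+\delta}=\exp\!\bigl(N\log N\,(2\beta-(2+\delta)\beta+o(1))\bigr)=\exp\!\bigl(N\log N\,(-\beta\delta+o(1))\bigr),
\]
and substituting this into the reduction of Step~2 gives $\operatorname{RV}_{N,M}+\operatorname{FE}_{N,M}\le\exp(N\log N(-\beta\delta+o(1)))\,(d+1)(C_{F}e(\hat u))^{2+\delta}\,\varepsilon^{-(2+\delta)}$, which is the asserted inequality; since $\beta\delta>0$, the right-hand side even tends to $0$ as $N\to\infty$.

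The step I expect to be the main obstacle is this last piece of bookkeeping: the whole estimate hinges on the fact that the $(2+\delta)$-th power of the MLP error factor $E(M,N)$ — more precisely of its denominator $\sqrt{M^{N-1}}$ — decays just fast enough that its leading exponent $-2\beta$ exactly cancels the $+2\beta$ growth exponent of the cost $(5M)^{N}$, leaving only the surplus $-\beta\delta$. Making this rigorous requires checking that \emph{every} remaining factor — $(2C)^{N}$, $5^{N}$, $\exp(\beta M^{1/(2\beta)})$, the Stirling term, and the floor $\lfloor\cdot\rfloor$ in the choice of $M$ — is genuinely of order $\exp(o(N\log N))$; this is the same cancellation mechanism that makes Theorem~\ref{quad proof-error complexity-thm:quad complexity} work (where $N^{2N}$ was absorbed by $E(N)^{4}$), and apart from it the argument is either a direct quotation of Theorem~\ref{fullhist proof- error complexity- nonrecur L2- thm:final nonrecur bound} / Corollary~\ref{cor:M to N} or a routine induction on the level.
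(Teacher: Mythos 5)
Your proposal follows the paper's argument step by step: the cost bound $\operatorname{RV}_{N,M}+\operatorname{FE}_{N,M}\le(d+1)(5M)^{N}$ from \cite[Lemma~3.6]{Hutzenthaler_2020}, the substitution $\varepsilon=\sup\max\|(\tilde{\mathbf U}_{N,M}-\mathbf u^{\infty})_{\nu}\|_{L^{2}}=\sup\max\|(\breve{\mathbf U}_{N,M}-\breve{\mathbf u})_{\nu}\|_{L^{2}}\le E(M,N)\,C_{F}e(\hat u)$ from Theorem~\ref{fullhist proof- error complexity- nonrecur L2- thm:final nonrecur bound}, and the exponent bookkeeping reducing everything to the cancellation $2\beta-(2+\delta)\beta=-\beta\delta$. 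The only genuine difference from the paper's write-up is a shortcut: you quote the asymptotic $E(M,N)=\exp(N\log N(-\beta+o(1)))$ from Corollary~\ref{cor:M to N} rather than re-expanding the four factors of $E(M,N)$ inside the complexity proof, which is a legitimate simplification since that corollary is proved immediately before the theorem.

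One point worth flagging, because it bears on the rigor of your Step~3 (which you yourself identify as the delicate bookkeeping). You estimate $\log M$ to be of order $2\beta\log N$ and hence $M^{1/(2\beta)}\approx N$, so that $\exp(\beta M^{1/(2\beta)})$ contributes only $O(N)$. Those two facts are what make the argument close, and they are also exactly what the paper's own proof uses (``$\lfloor N^{2\beta N}\rfloor^{1/2\beta}\leq N$'' and ``$\log M\geq 2\beta\log N-1$''). However, for the stated choice $M=\lfloor N^{2\beta N}\rfloor$ one actually has $\log M\approx 2\beta N\log N$ and $M^{1/(2\beta)}\approx N^{N}$, which would make $\exp(\beta M^{1/(2\beta)})$ super-dominant and blow up both $E(M,N)$ and the right-hand side. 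The estimates that you (and the paper) use are correct for the choice $M=\lfloor N^{2\beta}\rfloor$, which is also the only choice consistent with the cost budget $m=(d+1)5^{N}N^{2\beta N}$ used in Corollary~\ref{fullhist scaling law} (since $(5M)^{N}=5^{N}N^{2\beta N}$ requires $M=N^{2\beta}$). So your proof is correct for what the theorem is evidently meant to say, but as written your asymptotics for $\log M$ and $M^{1/(2\beta)}$ do not match the displayed $M=\lfloor N^{2\beta N}\rfloor$; you have silently corrected the same typographical slip that is present in the source, and it would strengthen the write-up to say so explicitly rather than asserting $\log M=O(\log N)$ ``for the prescribed base $M$.''
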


\begin{proof}
    From \citep[Lemma 3.6]{Hutzenthaler_2020}, we derive that
    \begin{align}
        \label{fullhist proof-error complexity-thm:fullhist complexity- 1}
         \operatorname{RV}_{N,M}\leq d(5M)^N, \operatorname{FE}_{N,M}\leq (5M)^N.
    \end{align}
Suppose the maximum error is $\varepsilon$. To compensate for the $(5M)^N$ term in the complexity by the denominator of Theorem \ref{fullhist proof- error complexity- nonrecur L2- thm:final nonrecur bound}, we multiply the complexity by $\varepsilon^{2+\delta}$ and then divide it, and put everything into the exponent:
        \begin{align}
        \label{fullhist proof-error complexity-thm:fullhist complexity- 2}
        &\operatorname{RV}_{ N,M }+\operatorname{FE}_{N,M}\\\leq &(d+1)(5M)^N\\=&(d+1)(5M)^N\varepsilon^{2+\delta}\varepsilon^{-(2+\delta)}\\
\leq& \bigg[\tfrac{\left[e\left(\tfrac{pN}{2}+1\right) \right]^{\frac{1}{8}}
(2C)^{N-1}
\exp\left(\beta M^{\frac{1}{2\beta}}\right)
}{\sqrt{M^{N-1}}}
\bigg]^{2+\delta}\textcolor{teal}{(C_F e(\hat{u}))^{2+\delta}} (d+1)(5M)^N\\&\bigg[\sup_{(t,\bm{x})\in[0,T]\times\R^d}\max_{\varsigma\in\{1,\ldots,d+1\}}\left\|\left(
    {\bf \tilde U}_{N,M}(t,\bm{x})-{\bf u}^{\infty}(t,\bm{x})
    \right)_{\varsigma}
    \right\|_{L^2}\bigg]^{-(2+\delta)}\\
=&\exp\Bigg(N\bigg((2+\delta)\log(2C)+\log 5\bigg)+(2+\delta)\beta M^{1/2\beta}+\log M-\frac{\delta}{2}(N-1)\log M+o(N)\Bigg)\\
&(d+1)\textcolor{teal}{(C_F e(\hat{u}))^{2+\delta}}\bigg[\sup_{(t,\bm{x})\in[0,T]\times\R^d}\max_{\varsigma\in\{1,\ldots,d+1\}}\left\|\left(
    {\bf \tilde U}_{N,M}(t,\bm{x})-{\bf u}^{\infty}(t,\bm{x})
    \right)_{\varsigma}
    \right\|_{L^2}\bigg]^{-(2+\delta)}\\
=&\exp\Bigg(N\bigg((2+\delta)\log(2C)+\log 5+\frac{(2+\delta)\beta}{N} M^{1/2\beta}-(\frac{\delta}{2}-\frac{2+\delta}{2N})\log M+o(1)\bigg)\Bigg)\\
&(d+1)\textcolor{teal}{(C_F e(\hat{u}))^{2+\delta}}\bigg[\sup_{(t,\bm{x})\in[0,T]\times\R^d}\max_{\varsigma\in\{1,\ldots,d+1\}}\left\|\left(
    {\bf \tilde U}_{N,M}(t,\bm{x})-{\bf u}^{\infty}(t,\bm{x})
    \right)_{\varsigma}
    \right\|_{L^2}\bigg]^{-(2+\delta)}\\
\leq &\exp\Bigg(N\bigg((2+\delta)\log(2C)+\log 5+\frac{(2+\delta)\beta}{N} M^{1/2\beta}-(\frac{\delta}{2}-\frac{2+\delta}{2N})\log M+o(1)\bigg)\Bigg)\\
&(d+1)\textcolor{teal}{(C_F e(\hat{u}))^{2+\delta}}\bigg[\sup_{(t,\bm{x})\in[0,T]\times\R^d}\max_{\varsigma\in\{1,\ldots,d+1\}}\left\|\left(
    {\bf \tilde U}_{N,M}(t,\bm{x})-{\bf u}^{\infty}(t,\bm{x})
    \right)_{\varsigma}
    \right\|_{L^2}\bigg]^{-(2+\delta)}
    \end{align}
Note that $\lfloor N^{2\beta N}\rfloor^{1/2\beta}\leq N$ and $\beta<\frac{\alpha}{2}$, thus $\frac{(2+\delta)\beta}{N}M^{1/2\beta}\leq (2+\delta)\beta\leq \alpha(1+\frac{\delta}{2})$. Therefore, by \ref{fullhist proof-error complexity-thm:fullhist complexity- 2}:
\begin{align}
\label{fullhist proof-error complexity-thm:fullhist complexity- 3}
    &\exp\Bigg(N\bigg((2+\delta)\log(2C)+\log 5+\frac{(2+\delta)\beta}{N} M^{1/2\beta}-(\frac{\delta}{2}-\frac{2+\delta}{2N})\log M+o(1)\bigg)\Bigg)\\
&(d+1)\textcolor{teal}{(C_F e(\hat{u}))^{2+\delta}}\bigg[\sup_{(t,\bm{x})\in[0,T]\times\R^d}\max_{\varsigma\in\{1,\ldots,d+1\}}\left\|\left(
    {\bf \tilde U}_{N,M}(t,\bm{x})-{\bf u}^{\infty}(t,\bm{x})
    \right)_{\varsigma}
    \right\|_{L^2}\bigg]^{-(2+\delta)}\\
\leq &\exp\Bigg(N\bigg((2+\delta)\log(2C)+\log 5+\alpha(1+\frac{\delta}{2})-(\frac{\delta}{2}-\frac{2+\delta}{2N})\log M+o(1)\bigg)\Bigg)\\
&(d+1)\textcolor{teal}{(C_F e(\hat{u}))^{2+\delta}}\bigg[\sup_{(t,\bm{x})\in[0,T]\times\R^d}\max_{\varsigma\in\{1,\ldots,d+1\}}\left\|\left(
    {\bf \tilde U}_{N,M}(t,\bm{x})-{\bf u}^{\infty}(t,\bm{x})
    \right)_{\varsigma}
    \right\|_{L^2}\bigg]^{-(2+\delta)}.
\end{align}
Since $M=\lfloor N^{2\beta N}\rfloor\Rightarrow \log M\geq \log(N^{2\beta N}-1)\geq \log (N^{2\beta N})-1=2\beta\log N-1$ for $N\geq 2^{1/2\beta}$, we can further reduce \ref{fullhist proof-error complexity-thm:fullhist complexity- 3} to:
\begin{align}
&\exp\Bigg(N\bigg((2+\delta)\log(2C)+\log 5+\alpha(1+\frac{\delta}{2})-(\frac{\delta}{2}-\frac{2+\delta}{2N})\log M+o(1)\bigg)\Bigg)\\
&(d+1)\textcolor{teal}{(C_F e(\hat{u}))^{2+\delta}}\bigg[\sup_{(t,\bm{x})\in[0,T]\times\R^d}\max_{\varsigma\in\{1,\ldots,d+1\}}\left\|\left(
    {\bf \tilde U}_{N,M}(t,\bm{x})-{\bf u}^{\infty}(t,\bm{x})
    \right)_{\varsigma}
    \right\|_{L^2}\bigg]^{-(2+\delta)}\\
    \leq&\exp\Bigg(N\bigg((2+\delta)\log(2C)+\log 5+\alpha(1+\frac{\delta}{2})+(\frac{\delta}{2}-\frac{2+\delta}{2N})-(\frac{\delta}{2}-\frac{2+\delta}{2N})\cdot 2\beta\log N+o(1)\bigg)\Bigg)\\
&(d+1)\textcolor{teal}{(C_F e(\hat{u}))^{2+\delta}}\bigg[\sup_{(t,\bm{x})\in[0,T]\times\R^d}\max_{\varsigma\in\{1,\ldots,d+1\}}\left\|\left(
    {\bf \tilde U}_{N,M}(t,\bm{x})-{\bf u}^{\infty}(t,\bm{x})
    \right)_{\varsigma}
    \right\|_{L^2}\bigg]^{-(2+\delta)}\\
=&\exp\Bigg(N\log N\bigg(-\beta \delta+o(1)\bigg)\Bigg)\\
&(d+1)\textcolor{teal}{(C_F e(\hat{u}))^{2+\delta}}\bigg[\sup_{(t,\bm{x})\in[0,T]\times\R^d}\max_{\varsigma\in\{1,\ldots,d+1\}}\left\|\left(
    {\bf \tilde U}_{N,M}(t,\bm{x})-{\bf u}^{\infty}(t,\bm{x})
    \right)_{\varsigma}
    \right\|_{L^2}\bigg]^{-(2+\delta)}.
\end{align}
The right-hand side of this expression is clearly decreasing for large enough $N$, and in turn, finite. Hence, quadrature SCaSML boosts a quadrature MLP with complexity $O(d\varepsilon^{-(2+\delta)})$ to a corresponding physics-informed inference solver with complexity $O(de(\hat u)^{2+\delta}\varepsilon^{-(2+\delta)})$.
\end{proof}

\section{Proof for Quadrature Multilevel Picard Iteration}
\label{appendix-section:quadproof}

In this section, we present the proof for the Quadrature Multilevel Picard (MLP) iteration method. For simplicity, we consider the case where $\mu = 0$ and $\sigma=s{\bf I}_d (s\in\mathbb{R})$ in the proof.  We first establish the mathematical framework and underlying assumptions, then analyze the convergence properties and computational complexity of our proposed simulation-calibrated variant. The result shows that the error of SCaSML is bounded by the product of MLP error and surrogate error. Likewise, the complexity is bounded by the product of MLP error and surrogate error. Both indicate that surrogate models can substantially reduce computational complexity while maintaining accuracy guarantees.

\label{appendix-section:quadproof-subsection:quadSCaSML}

Since the \texttt{Structural-preserving Law of Defect} is also a semi-linear heat equation, we can use the quadrature/full-history multilevel Picard iteration to obtain an estimation ${\bf\breve{U}}(s,x)$ of $u(s,x)-\hat u(s,x)$. In this section, we study the theoretical properties of SCaSML that using Quadrature Multilevel Picard Iteration to solve  the \texttt{Structural-preserving Law of Defect} and we investigate the full-history multilevel Picard iteration in the next section.

\subsection{{Surrogate Accuracy and Integrability Assumptions}}
\label{appendix-section:quadproof-subsection:setting}

{To derive improved convergence rates for the Quadrature MLP, we must quantify the quality of the pre-trained surrogate $\hat{u}$. We introduce a scalar error measure $e(\hat{u}) \in [0, \infty)$ which serves as a uniform bound on both the PDE residual and the approximation error of the surrogate.}

\begin{assumption}[{\protect Accuracy of the Surrogate Model for Quadrature MLP}]
\label{quad proof- settings-asp:decay}
{Assumption needed for quadrature MLP builds directly on Assumption \ref{fullhist proof- settings-asp:decay}, augmenting it with an additional higher-order regularity condition required for the quadrature rule.}
\begin{itemize}
    \item[\textbf{3.}] {\textbf{Higher-Order Regularity:} To ensure rapid convergence of the time quadrature rules, we assume the defect satisfies the following Gevrey-class regularity bounds:
    \[ \sup_{k \in \mathbb{N}_0} \frac{\|(1, \sigma^\top \nabla_{\bm x})\Bigl((\frac{\partial}{\partial t} + {\frac{\sigma^2}{2}\Delta_{\bm x}})^k \breve{u}\Bigr)(t,\bm{x})\|_{L^{\infty}}}{({k!})^{3/4}} \leq C_{Q,3}\, e(\hat{u}),\]
    {This condition is required only for the Quadrature MLP variant (Appendix \ref{appendix-section:quadproof})  and is relaxed for the Full-History variant (Appendix \ref{appendix-section:fullhistproof}).}
    }
\end{itemize}
\end{assumption}

\begin{assumption}[{\protect Quadrature Integrability}]
\label{quad proof- settings-asp:int}
{To ensure the well-posedness of the Feynman-Kac expectations, we assume polynomial growth bounds. There exists $p \in \mathbb{N}$ such that for the zero vector $\mathbf{0}_{d+1} \in \mathbb{R}^{d+1}$:}
\begin{equation}
{
\sup_{x\in\mathbb{R}^d}\frac{|\breve{g}(\bm x)|}{1+\|\bm x\|_{1}^p} + \sup_{t\in [0,T], x\in \mathbb{R}^d}\frac{|\breve{F}(\mathbf{0}_{d+1}, t,\bm{x})|}{1+\|\bm x\|_{1}^p} < \infty.
}
\end{equation}
\end{assumption}

\begin{remark}[Magnitude of Nonlinearity at Zero]
\label{rem:magnitude_at_zero}
{Recall from Definition \ref{def:modified_nonlinearity} that $\breve{F}(\mathbf{0}_{d+1}, t,\bm{x}) \equiv {\epsilon(t,\bm{x})}$. Thus, the second term in Assumption \ref{quad proof- settings-asp:int} effectively bounds the growth of the surrogate's residual. In the standard Picard iteration for the defect $\breve{u}$, the first iteration is driven solely by this term. A small "magnitude at zero" implies that the fixed-point iteration starts very close to the true solution (zero), minimizing the Monte Carlo work required.}
\end{remark}

\subsection{Main Results}
\label{appendix-section:quadproof-subsection:main}

We now present our main theoretical results, which characterize both the accuracy and computational complexity of our proposed method. These results demonstrate the substantial efficiency gains achieved by incorporating surrogate models into the multilevel Picard framework.

\subsubsection{{Bound on Global $L^2$ Error}}
\label{appendix-section:quadproof-subsection:main-subsubsection:L2}
{Follows the sam proof sketch as Section \ref{appendix-section:fullhistproof-subsection:roadmap},the convergence analysis proceeds in two steps. First, we establish a "Bridge Lemma" that bounds the complexity-determining constants of the Defect PDE (specifically the magnitude of the nonlinearity at zero and the Lipschitz constant of the terminal condition) linearly by the surrogate error $e(\hat{u})$. Second, we substitute these bounds into the standard error estimate for Multilevel Picard iterations to prove that the final error is the product of the simulation error and the surrogate error.}

\begin{lemma}[The Bridge Lemma: Complexity Estimation via Surrogate Error]
\label{quad proof- error complexity- nonrecur L2- lem:add surro in factor}
{
Suppose Assumptions \ref{PDE to SDE- asp:Lip of nonlin and terminal}, \ref{PDE to SDE- asp:Lip of surro terminal}, \ref{quad proof- settings-asp:decay}, and \ref{quad proof- settings-asp:int} hold. Then there exists a constant $C_Q > 0$ independent of the surrogate $\hat{u}$ such that:
\begin{equation}
\label{eq:bridge_bound}
\sup_{(t,\bm{x})\in[0,T]\times\mathbb{R}^d}\Biggl\{
\bigl|\breve{F}({\bf 0}_{d+1}, t,\bm{x})\bigr| + \sigma\sqrt{T+3}\,\KConst + \sup_{k\in\mathbb{N}}\frac{\|(1,\nabla_{\bm x})\Bigl((\tfrac{\partial}{\partial t}+\tfrac{\sigma^2}{2}\Delta_{\bm x})^k\breve{u}\Bigr)(t,\bm{x})\|_{L^\infty}}{(k!)^{3/4}}
\Biggr\}
\leq C_Q\, e(\hat{u}).
\end{equation}
}
\end{lemma}

\begin{proof}
{
We bound each of the three terms on the left-hand side of \eqref{eq:bridge_bound} using the surrogate accuracy assumptions defined in Assumption \ref{quad proof- settings-asp:decay}.
}

{
\textbf{Step 1: Bounding the Residual Term.}
Recall from Remark \ref{rem:magnitude_at_zero} that $\breve{F}({\bf 0}_{d+1}, t,\bm{x}) \equiv \epsilon(t,\bm{x})$. Applying the $L^\infty$ residual bound from Assumption \ref{quad proof- settings-asp:decay} (Item 1):
\begin{equation}
\sup_{(t,\bm{x})\in[0,T]\times \mathbb{R}^d}|\breve{F}({\bf 0}_{d+1}, t,\bm{x})| = \sup_{(t,\bm{x})\in[0,T]\times\mathbb{R}^d}|\epsilon(t,\bm{x})| \leq C_{Q,1}\,e(\hat{u}).
\end{equation}
}

{
\textbf{Step 2: Bounding the Terminal Lipschitz Constant.}
Let $e_\alpha$ be the standard basis vector at index $\alpha$ in $\mathbb{R}^d$. The $L^1$ norm of $\KConst$ satisfies:
\begin{align}
\KConst \leq \sum_{\alpha=1}^d \|D^{e_{\alpha}}\breve{g}\|_{L^\infty} = \sum_{\alpha=1}^d \|D^{e_{\alpha}}\breve{u}(T,\cdot)\|_{L^\infty} \leq \|\breve{u}(T,\cdot)\|_{W^{1,\infty}(\mathbb{R}^d)}.
\end{align}
Using the Defect Bound from Assumption \ref{quad proof- settings-asp:decay} (Item 2):
\begin{equation}
\KConst \leq \sup_{t\in[0,T]}\|\breve{u}(t,\cdot)\|_{W^{1,\infty}} \leq C_{Q,2}\,e(\hat{u}).
\end{equation}
}

{
\textbf{Step 3: Bounding the Higher-Order Regularity Term.}
The third term is directly controlled by the Higher-Order Regularity condition in Assumption \ref{quad proof- settings-asp:decay} (Item 3):
\begin{equation}
\sup_{k\in \mathbb{N}}\frac{ \| (1,\nabla_{\bm x}) ((\tfrac{\partial}{\partial t}+\tfrac{\sigma^2}{2}\Delta_{\bm x})^{k}\breve{u})(t,\bm{x}) \|_{L^{\infty}}} {(k!)^{3/4}} \leq C_{Q,3}\,e(\hat{u}).
\end{equation}
}

{
Summing the bounds from Steps 1-3, we define $C_Q := C_{Q,1} + \sigma\sqrt{T+3}\,C_{Q,2} + C_{Q,3}$. This yields the desired inequality.
}
\end{proof}

{We now combine this lemma with the general convergence theory of Multilevel Picard iterations to state our main result.}

\begin{theorem}[Global $L^2$ Error Bound]
\label{quad proof- error complexity- nonrecur L2- thm:final nonrecur bound}
{
Under Assumptions \ref{PDE to SDE- asp:Lip of nonlin and terminal}, \ref{PDE to SDE- asp:Lip of surro terminal}, \ref{quad proof- settings-asp:decay}, and \ref{quad proof- settings-asp:int}, the error of the SCaSML estimator $\breve{\mathbf{U}}_{N,N,N}$ with level $N$, sample base $N$ and quadrature order $N$(as defined in Algorithm \ref{mainalgorithm}) satisfies:
\begin{equation}
\sup_{(t,\bm{x})\in[0,T]\times\mathbb{R}^d}\max_{\varsigma\in\{1,\dots,d+1\}}
\Bigl\|\Bigl({\bf \breve U}_{N,N,N}(t,\bm{x})-(\breve{u}(t,\bm{x}), \sigma\nabla_{\bm x} \breve{u}(t,\bm{x}))\Bigr)_\varsigma\Bigr\|_{L^2}
\leq E(N) \cdot \bigl(C_Q\, e(\hat{u})\bigr),
\end{equation}
where the convergence factor $E(N)$ is defined as:
\[
E(N) = \frac{7C^N\,2^{N-1}e^N}{\sqrt{N^{\,N-3}}} + \frac{\Bigl(14\bigl(4C\bigr)^{N-1}+1\Bigr)T^{2N+1}}{\sqrt{N^N}},
\]
with constant \(C = 2(\sqrt{T}+1)\sqrt{T\pi}(\LConst+1)+1.\)
}
\end{theorem}

\begin{proof}
{
We apply the general error bound for Quadrature MLP from \citep{Hutzenthaler_2020_quadproof} to the specific case of the Defect PDE.
\citep[Corollary 4.7]{Hutzenthaler_2020_quadproof} provides a bound of the form:
\begin{align}
&\sup_{(t,\bm{x})\in[0,T]\times\mathbb{R}^d}\max_{\varsigma\in\{1,\dots,d+1\}}
\Bigl\|\Bigl({\bf \breve U}_{N,N,N}(t,\bm{x})-(\breve{u}(t,\bm{x}), \sigma\nabla_{\bm x} \breve{u}(t,\bm{x}))\Bigr)_\varsigma\Bigr\|_{L^2}\\ \leq& E(N) \times \sup_{(t,\bm{x})\in[0,T]\times\mathbb{R}^d}\Biggl\{
\bigl|\breve{F}({\bf 0}_{d+1}, t,\bm{x})\bigr| + \sigma\sqrt{T+3}\,\KConst + \sup_{k\in\mathbb{N}}\frac{\|(1,\nabla_{\bm x})\Bigl((\tfrac{\partial}{\partial t}+\tfrac{\sigma^2}{2}\Delta_{\bm x})^k\breve{u}\Bigr)(t,\bm{x})\|_{L^\infty}}{(k!)^{3/4}}
\Biggr\}.
\end{align}
Specifically, the second term is the supremum bounded in Lemma \ref{quad proof- error complexity- nonrecur L2- lem:add surro in factor}. By substituting the result of Lemma \ref{quad proof- error complexity- nonrecur L2- lem:add surro in factor} directly into the corollary, we replace the generic PDE constants with the term $C_Q\, e(\hat{u})$, thereby proving the factorization.
}
\end{proof}

\begin{corollary}[Asymptotic Error Decay]
\label{cor:asymptotic_decay}
{
Under the assumptions of Theorem \ref{quad proof- error complexity- nonrecur L2- thm:final nonrecur bound}, the convergence factor $E(N)$ satisfies the following asymptotic bound as $N \to \infty$:
\begin{equation}
E(N) = \exp\left( -\frac{1}{2} N \log N + O(N) \right).
\end{equation}
Consequently, the error decays super-polynomially with respect to the computational depth $N$.
}
\end{corollary}

\begin{proof}
{
We determine the leading order asymptotic behavior of $\log E(N)$ by analyzing the two summands in the definition of $E(N)$ separately. Recall:
\[ E(N) = \underbrace{\frac{7C^N\,2^{N-1}e^N}{\sqrt{N^{\,N-3}}}}_{=: T_1(N)} + \underbrace{\frac{\Bigl(14(4C)^{N-1}+1\Bigr)T^{2N+1}}{\sqrt{N^N}}}_{=: T_2(N)}. \]
}

{
\textbf{Step 1: Asymptotic of the First Term $T_1(N)$.} \\
Taking the natural logarithm of $T_1(N)$:
\begin{align}
\log T_1(N) &= \log(7 \cdot 2^{-1}) + N \log(2Ce) - \frac{N-3}{2} \log N \\
&= -\frac{1}{2} N \log N + N \log(2Ce) + \frac{3}{2} \log N + \log(3.5).
\end{align}
Observing that as $N \to \infty$, the term $-\frac{1}{2} N \log N$ dominates linear terms $O(N)$, we have:
\begin{equation}
\label{eq:asymp_T1}
\log T_1(N) = -\frac{1}{2} N \log N + O(N).
\end{equation}
}

{
\textbf{Step 2: Asymptotic of the Second Term $T_2(N)$.} \\
We bound the numerator: $14(4C)^{N-1} + 1 \le 15(4C)^{N-1}$ for sufficiently large $C, N$. Thus:
\begin{align}
\log T_2(N) &\le \log\left( 15 (4C)^{N-1} T^{2N+1} \right) - \frac{N}{2} \log N \\
&= \log(15 \cdot (4C)^{-1} \cdot T) + N \log(4C) + 2N \log T - \frac{1}{2} N \log N \\
&= -\frac{1}{2} N \log N + N (\log(4C) + 2\log T) + O(1).
\end{align}
Similar to Step 1, the dominant term is $-\frac{1}{2} N \log N$:
\begin{equation}
\label{eq:asymp_T2}
\log T_2(N) = -\frac{1}{2} N \log N + O(N).
\end{equation}
}

{Since $E(N) = T_1(N) + T_2(N)$, we have $\log E(N) \le \log(2 \max\{T_1, T_2\}) = \log 2 + \max\{\log T_1, \log T_2\}$. Substituting \eqref{eq:asymp_T1} and \eqref{eq:asymp_T2}:
\[
\log E(N) \le \log 2 + \left( -\frac{1}{2} N \log N + O(N) \right) = -\frac{1}{2} N \log N + O(N).
\]
Exponentiating both sides yields the claim.
}
\end{proof}

\subsubsection{Bound on Computational Complexity}
\label{appendix-section:quadproof-subsection:setting-subsubsection:complexity}
To fully assess the efficiency of our method, we now analyze its computational complexity. We introduce two key metrics that capture different aspects of the computational cost.

\begin{definition}[Computational Complexity of Quadrature SCaSML]
\label{quad proof-error complexity-def:quad complexity}
We define the following complexity measures:

First, let $\{\operatorname{RN}_{n,M,Q}\}_{n,M,Q\in\mathbb{Z}}\subset \mathbb{N}$ satisfy $\operatorname{RN}_{0,M,Q}=0$ and, for all $n,M,Q\in\mathbb{N}$,
\begin{equation}
\operatorname{RN}_{n,M,Q} \leq d\, M^n + \sum_{l=0}^{n-1} \Bigl[ Q\, M^{n-l}\Bigl( d + \operatorname{RN}_{l,M,Q} + \mathbf{1}_{\mathbb{N}}(l)\,\operatorname{RN}_{l-1,M,Q}\Bigr)\Bigr].
\end{equation}
This number represents the total scalar normal random variable realizations required for computing one sample of ${\bf \breve{U}}_{n,M,Q}(s,x)$.

Second, let $\{\operatorname{FE}_{n,M,Q}\}_{n,M,Q\in\mathbb{Z}}\subset \mathbb{N}$ satisfy $\operatorname{FE}_{0,M,Q}=0$ and, for all $n,M,Q\in\mathbb{N}$,
\begin{equation}
\operatorname{FE}_{n,M,Q} \leq M^n + \sum_{l=0}^{n-1} \Bigl[ Q\, M^{n-l}\Bigl( 1 + \operatorname{FE}_{l,M,Q} + \mathbf{1}_{\mathbb{N}}(l) + \mathbf{1}_{\mathbb{N}}(l)\,\operatorname{FE}_{l-1,M,Q}\Bigr)\Bigr].
\end{equation}
This quantity reflects the number of evaluations of $\breve{F}$ and $\breve{g}$ necessary to compute of one sample of ${\bf \breve{U}}_{n,M,Q}(s,x)$.
\end{definition}

These metrics provide a comprehensive measure of the computational resources required by our method. The first metric, $\operatorname{RN}_{n,M,Q}$, accounts for the cost of generating random variables, while the second, $\operatorname{FE}_{n,M,Q}$, captures the number of function evaluations needed.

\begin{theorem}[Complexity of Quadrature SCaSML]
\label{quad proof-error complexity-thm:quad complexity}
Under assumptions \ref{PDE to SDE- asp:Lip of nonlin and terminal}, \ref{PDE to SDE- asp:Lip of surro terminal}, \ref{quad proof- settings-asp:int} and \ref{quad proof- settings-asp:decay}, for any $\delta>0$ and all $N\in\mathbb{N}$, we have
\begin{equation}
\begin{split}
\operatorname{RN}_{N,N,N}+\operatorname{FE}_{N,N,N}
\le\; & \Biggl[\sup_{(t,\bm{x})\in[0,T]\times\mathbb{R}^{d}}\max_{\varsigma\in\{1,\ldots,d+1\}}\Bigl\|\Bigl({\bf \tilde{U}}_{N,N,N}(t,\bm{x})-{\bf u}^{\infty}(t,\bm{x})\Bigr)_\varsigma\Bigr\|_{L^2}\Biggr]^{-(4+\delta)}\\
&\cdot 8(d+1)(C_Q e(\hat{u}))^{4+\delta}
  \exp\Bigg(N\log N(-\frac{\delta}{2}+o(1))\Bigg) <\infty.
\end{split}
\end{equation}
\end{theorem}

\begin{proof}
From established results in \citep[Lemma 3.6]{Hutzenthaler_2020}, we know that for all $N\in\mathbb{N}$,
\begin{equation}
\operatorname{RN}_{N,N,N}\leq 8dN^{2N}, \quad \operatorname{FE}_{N,N,N}\leq 8N^{2N}.
\end{equation}

We want to use the $O(N^{N/2})$ denominator in Theorem~\ref{quad proof- error complexity- nonrecur L2- thm:final nonrecur bound} to compensate for the $N^{2N}$ term in the complexity. Suppose the maximum error is $\varepsilon$, and note that $N^{2N}=(N^{N/2})^4<(N^{N/2})^{4+\delta},\forall \delta>0$, we multiply the complexity by $\varepsilon^{4+\delta}$, i.e.
\begin{equation}
\begin{split}
& (\mathrm{RN}_{N,N,N}+\mathrm{FE}_{N,N,N})\left[\sup_{(t,\bm{x})\in[0,T]\times\mathbb{R}^{d}}\operatorname*{max}_{\varsigma\in\{1,\ldots,d+1\}}\left\|\left({\bf \tilde U}_{N,N,N}(t,\bm{x})-\mathbf{u}^{\infty}(t,\bm{x})\right)_{\varsigma}\right\|_{L^{2}}\right]^{(4+\delta)} \\
\leq& 8(d+1)N^{2N}\cdot\bigg(\frac{7\left(2(\sqrt{T}+1)\sqrt{T\pi}(\LConst+1)+1\right)^{N}2^{N-1}e^{N}}{\sqrt{N^{N-3}}}\\
&+\frac{(14(8(\sqrt{T}+1)\sqrt{T\pi}(\LConst+1)+4)^{N-1}+1)T^{2N+1}}{\sqrt{N^{N}}}\bigg)^{(4+\delta)}(C_Q e(\hat{u}))^{4+\delta} \\
\leq& 8(d+1)N^{2N}\cdot\left(\left(24(T+1)\right)^{3N}\left(\LConst+1\right)^{N}\sqrt{N}^{-N}\right)^{(4+\delta)}(C_Q e(\hat{u}))^{4+\delta} \\
\leq& 8(d+1)(C_Q e(\hat{u}))^{4+\delta}
\exp\Bigg(N\log N(-\frac{\delta}{2}+o(1))\Bigg).
\end{split}
\end{equation}

The right-hand side of this expression is clearly decreasing for large enough $N$, and in turn, finite. Hence, quadrature SCaSML boosts a quadrature MLP with complexity $O(d\varepsilon^{-(4+\delta)})$ to a corresponding physics-informed inference solver with complexity $O(de(\hat u)^{4+\delta}\varepsilon^{-(4+\delta)})$
\end{proof}

This theorem provides a comprehensive characterization of the computational complexity of our method. The inclusion of the surrogate model error measure $e(\hat{u})$ in the complexity bound demonstrates how the quality of the surrogate model directly influences the computational efficiency of our approach. Specifically, a more accurate surrogate model (smaller $e(\hat{u})$) leads to a lower computational cost for achieving a given level of accuracy.

\section{Auxiliary Experiments results}
\label{appendix-section:aux-results}
We include supplementary experimental results that further validate our claims, including detailed error distribution plots (violin plots) and additional inference-time scaling curves for all PDE test cases.
\subsection{Violin Plot for Error distribution}
\label{appendix-section:aux-results- subsection:violin}
In this section, we present violin plots of the absolute error distributions for the base surrogate model , the MLP, and the SCaSML method. We uniformly select the test points. By combining kernel density estimation with boxplot‐style summaries, these plots capture both the spread and central tendency of the errors. A violin plot exposes the full distribution—its density, variability, skewness, and outliers—offering much deeper insight into model performance. The width of each violin at a given error level reflects the density of the observations. The results indicate that SCaSML reduces the largest absolute error, lowers the median and produces more accurate points for a majority of equations compared to the surrogate and MLP, demonstrating its robustness across different dimensions and equations.
\begin{figure}[h]
    \centering
    \begin{subfigure}[b]{0.24\textwidth}
        \centering
        \includegraphics[width=\textwidth]{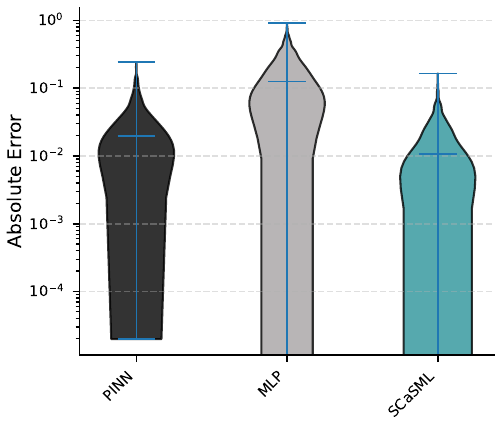}
        \caption{$d=10$}
    \end{subfigure}
    \begin{subfigure}[b]{0.24\textwidth}
        \centering
        \includegraphics[width=\textwidth]{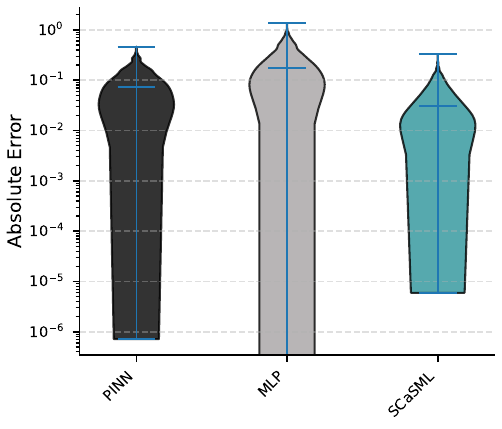}
        \caption{$d=20$}
    \end{subfigure}
    \begin{subfigure}[b]{0.24\textwidth}
        \centering
        \includegraphics[width=\textwidth]{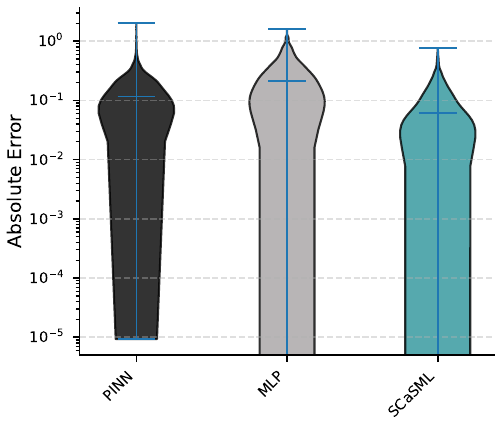}
        \caption{$d=30$}
    \end{subfigure}
    \begin{subfigure}[b]{0.24\textwidth}
        \centering
    \includegraphics[width=\textwidth]{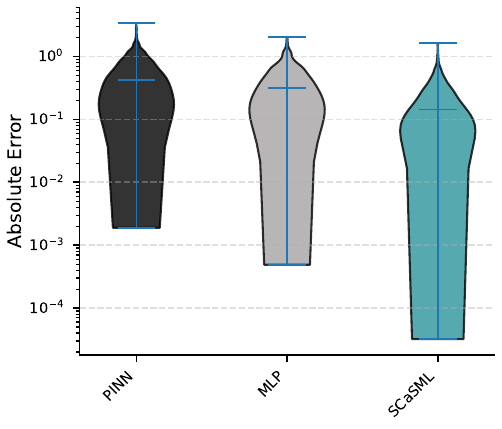}
        \caption{$d=60$}
    \end{subfigure}
    \caption{Violin Plot for comparison of the baseline PINN surrogate (black), MLP (gray), applying qudrature SCaSML (teal) to calibrate the PINN surrogate on linear convection-diffusion equation for $d=10,20,30,60$.}
\end{figure}
\begin{figure}[h]
  \centering
  \begin{subfigure}{0.48\textwidth}
    \centering
    \begin{subfigure}[b]{0.24\textwidth}
      \centering
      \includegraphics[width=\linewidth]{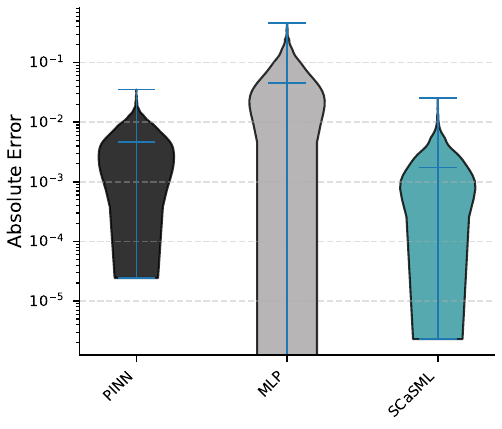}
      \subcaption*{$d=20$}
    \end{subfigure}%
    \begin{subfigure}[b]{0.24\textwidth}
      \centering
      \includegraphics[width=\linewidth]{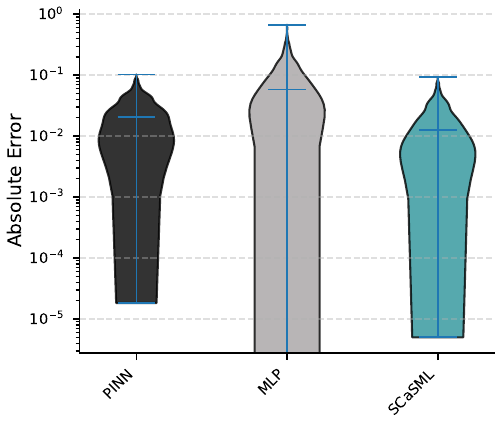}
      \subcaption*{$d=40$}
    \end{subfigure}%
    \begin{subfigure}[b]{0.24\textwidth}
      \centering
      \includegraphics[width=\linewidth]{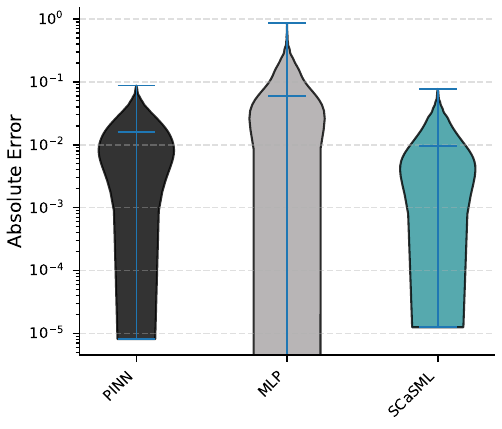}
      \subcaption*{$d=60$}
    \end{subfigure}%
    \begin{subfigure}[b]{0.24\textwidth}
      \centering
      \includegraphics[width=\linewidth]{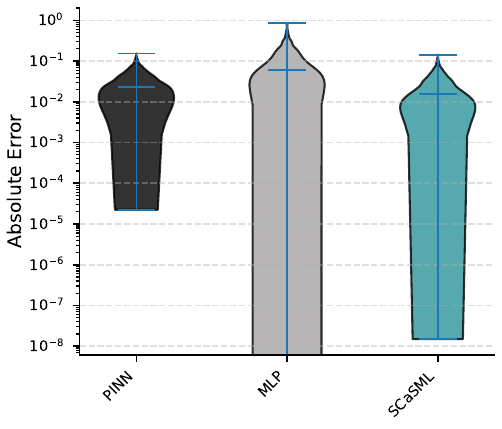}
      \subcaption*{$d=80$}
    \end{subfigure}
    \caption{SCaSML using Quadrature MLP}
  \end{subfigure}%
  \hfill
  \begin{subfigure}{0.48\textwidth}
    \centering
    \begin{subfigure}[b]{0.24\textwidth}
      \centering
      \includegraphics[width=\linewidth]{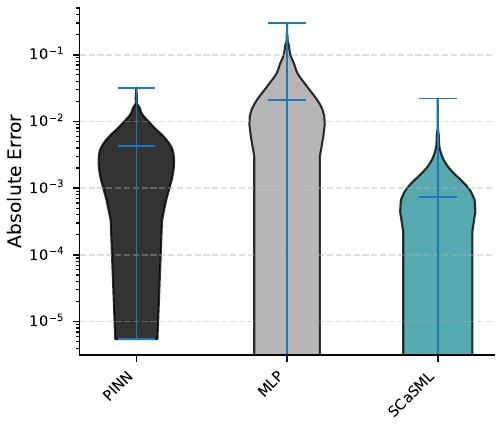}
      \subcaption*{$d=20$}
    \end{subfigure}%
    \begin{subfigure}[b]{0.24\textwidth}
      \centering
      \includegraphics[width=\linewidth]{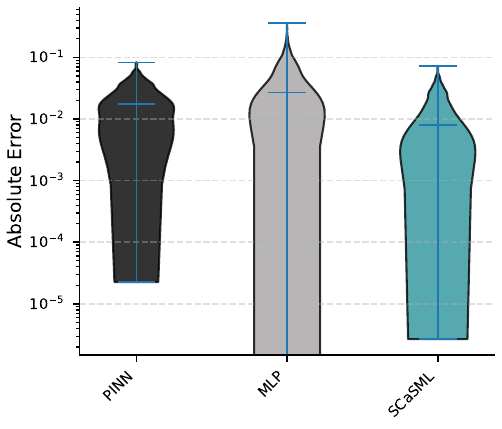}
      \subcaption*{$d=40$}
    \end{subfigure}%
    \begin{subfigure}[b]{0.24\textwidth}
      \centering
      \includegraphics[width=\linewidth]{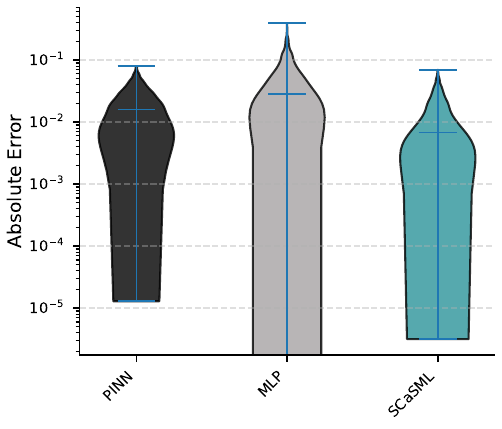}
      \subcaption*{$d=60$}
    \end{subfigure}%
    \begin{subfigure}[b]{0.2\textwidth}
      \centering
      \includegraphics[width=\linewidth]{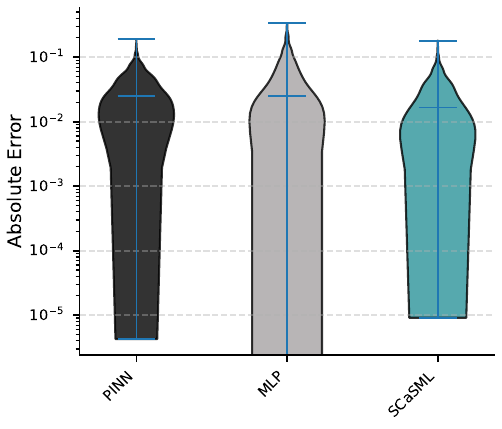}
      \subcaption*{$d=80$}
    \end{subfigure}
    \caption{SCaSML using full-history MLP}
  \end{subfigure}
  
  \caption{Violin Plot for comparison of the baseline PINN surrogate (black), MLP (gray), applying qudrature SCaSML (teal) to calibrate the PINN surrogate on viscous Burgers’ equation equation for $d=10,20,30,60$.}
\end{figure}
\begin{figure}[h]
  \centering
  \begin{subfigure}{0.48\textwidth}
    \centering
    \begin{subfigure}[b]{0.24\textwidth}
      \centering
      \includegraphics[width=\linewidth]{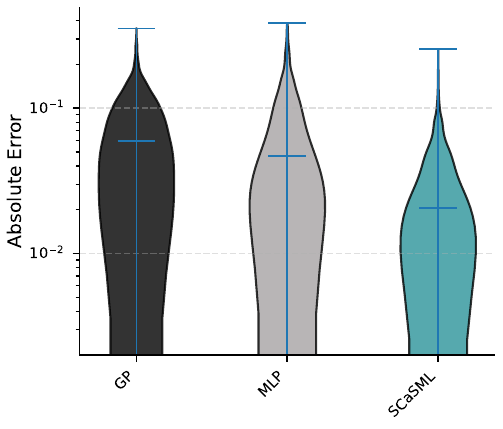}
      \subcaption*{$d=20$}
    \end{subfigure}%
    \begin{subfigure}[b]{0.24\textwidth}
      \centering
      \includegraphics[width=\linewidth]{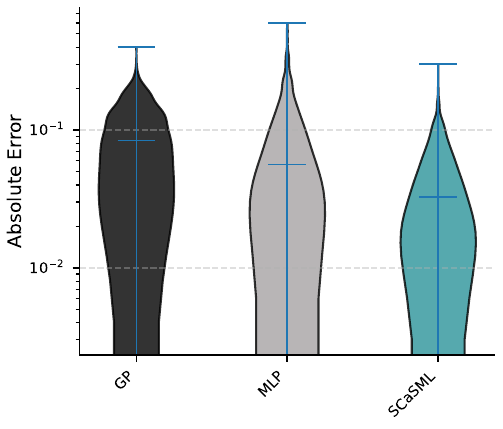}
      \subcaption*{$d=40$}
    \end{subfigure}%
    \begin{subfigure}[b]{0.24\textwidth}
      \centering
      \includegraphics[width=\linewidth]{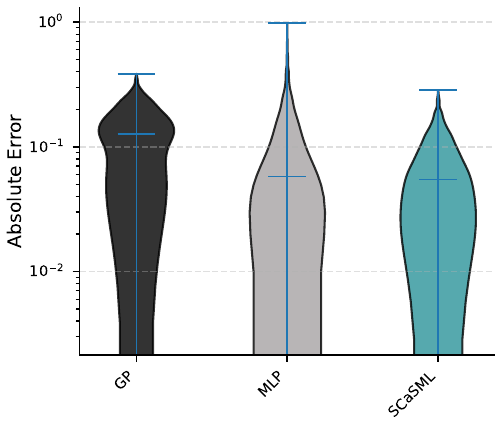}
      \subcaption*{$d=60$}
    \end{subfigure}%
    \begin{subfigure}[b]{0.24\textwidth}
      \centering
      \includegraphics[width=\linewidth]{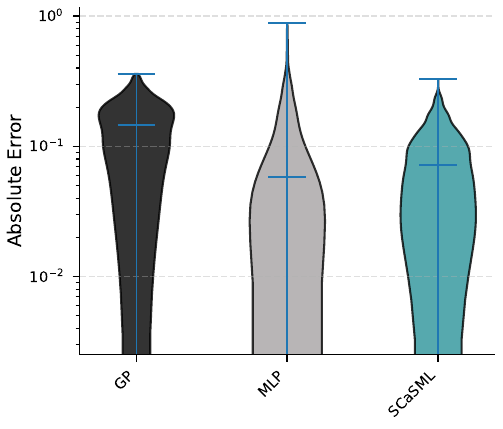}
      \subcaption*{$d=80$}
    \end{subfigure}
    \caption{SCaSML using Quadrature MLP}
  \end{subfigure}%
  \hfill
  \begin{subfigure}{0.48\textwidth}
    \centering
    \begin{subfigure}[b]{0.24\textwidth}
      \centering
      \includegraphics[width=\linewidth]{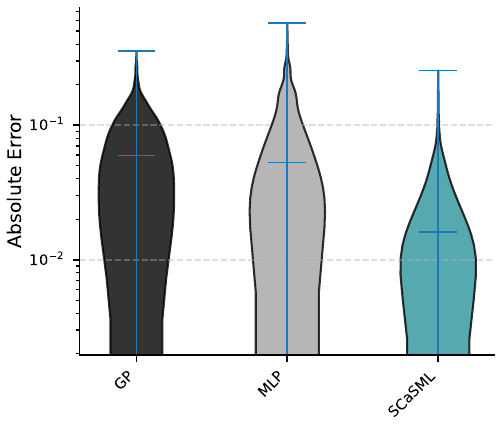}
      \subcaption*{$d=20$}
    \end{subfigure}%
    \begin{subfigure}[b]{0.24\textwidth}
      \centering
      \includegraphics[width=\linewidth]{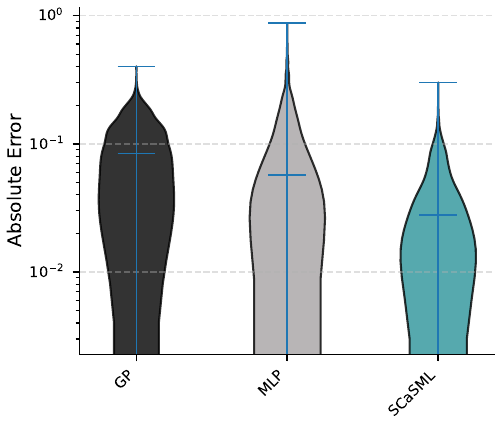}
      \subcaption*{$d=40$}
    \end{subfigure}%
    \begin{subfigure}[b]{0.24\textwidth}
      \centering
      \includegraphics[width=\linewidth]{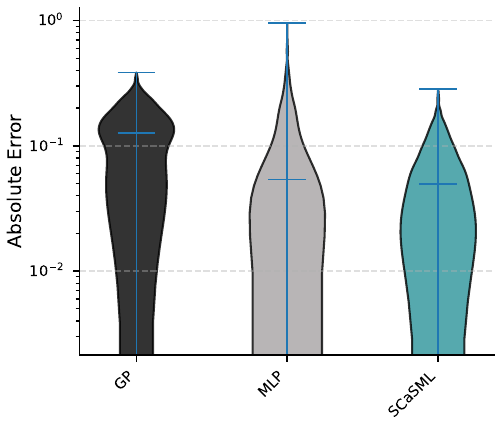}
      \subcaption*{$d=60$}
    \end{subfigure}%
    \begin{subfigure}[b]{0.24\textwidth}
      \centering
      \includegraphics[width=\linewidth]{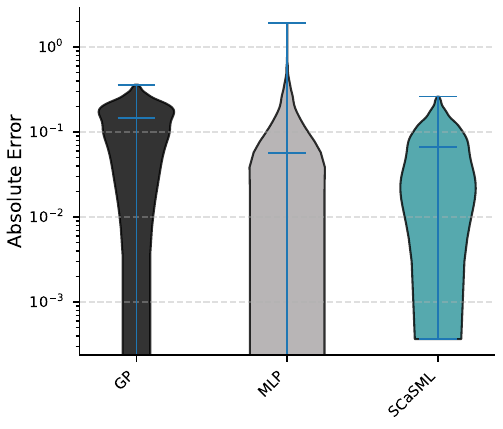}
      \subcaption*{$d=80$}
    \end{subfigure}
    \caption{SCaSML using full-history MLP}
  \end{subfigure}
  
  \caption{Violin Plot for comparison of the baseline Gaussian Process surrogate (black), MLP (gray), applying qudrature SCaSML (teal) to calibrate the Gaussian Process surrogate  on viscous Burgers’ equation equation for $d=20,40,60,80$.}
\end{figure}
\begin{figure}[!]
  \centering
  \begin{subfigure}{\textwidth}
    \centering
    \begin{subfigure}[b]{0.24\textwidth}
      \centering
      \includegraphics[width=\linewidth]{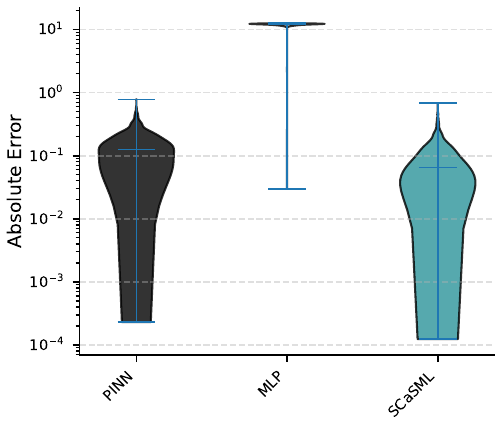}
      \subcaption*{$d=100$}
    \end{subfigure}%
    \begin{subfigure}[b]{0.24\textwidth}
      \centering
      \includegraphics[width=\linewidth]{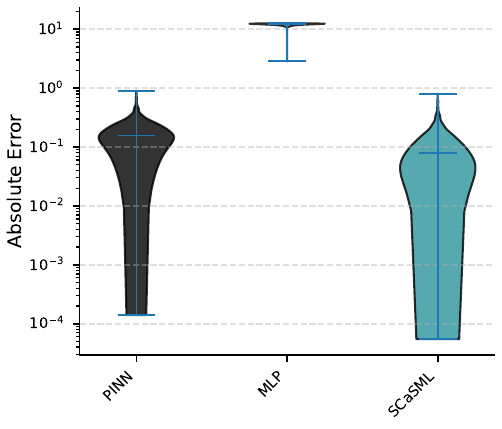}
      \subcaption*{$d=120$}
    \end{subfigure}%
    \begin{subfigure}[b]{0.24\textwidth}
      \centering
      \includegraphics[width=\linewidth]{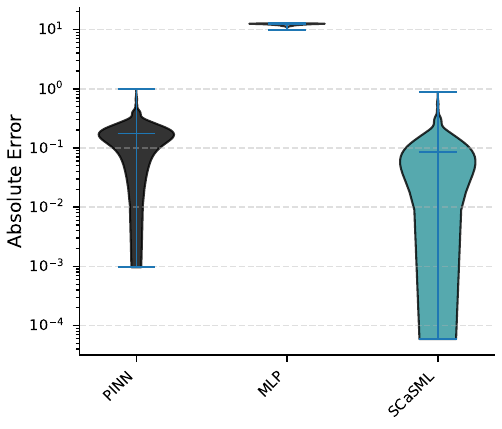}
      \subcaption*{$d=140$}
    \end{subfigure}%
    \begin{subfigure}[b]{0.24\textwidth}
      \centering
      \includegraphics[width=\linewidth]{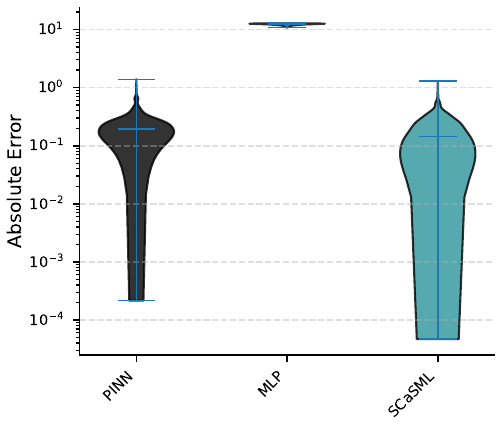}
      \subcaption*{$d=160$}
    \end{subfigure}
    \caption{SCaSML using full-history MLP}
  \end{subfigure}
  
  \caption{Violin Plot for comparison of the baseline PINN surrogate (black), MLP (gray), applying qudrature SCaSML (teal) to calibrate the PINN surrogate on LQG control problem for $d=100,120,140,160$.}
\end{figure}
\begin{figure}[!]
  \centering
  \begin{subfigure}{\textwidth}
    \centering
    \begin{subfigure}[b]{0.24\textwidth}
      \centering
      \includegraphics[width=\linewidth]{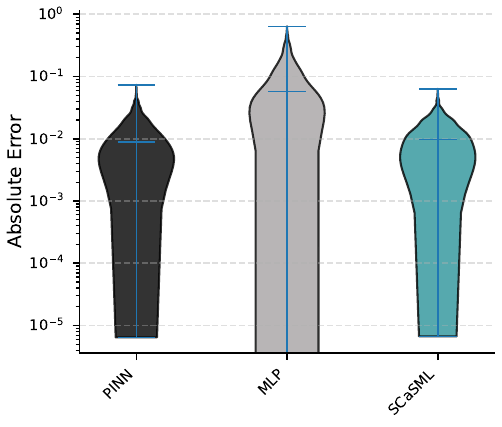}
      \subcaption*{$d=100$}
    \end{subfigure}%
    \begin{subfigure}[b]{0.24\textwidth}
      \centering
      \includegraphics[width=\linewidth]{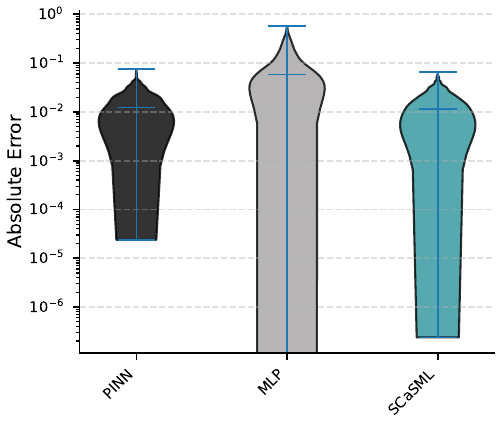}
      \subcaption*{$d=120$}
    \end{subfigure}%
    \begin{subfigure}[b]{0.24\textwidth}
      \centering
      \includegraphics[width=\linewidth]{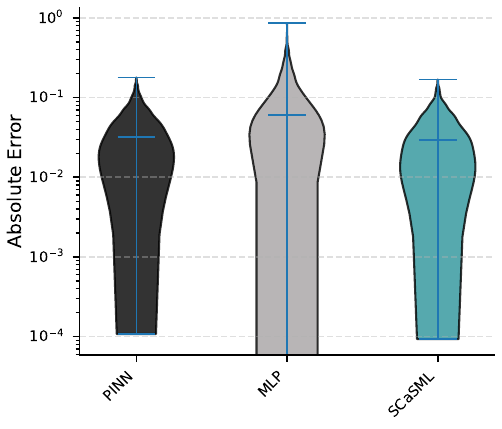}
      \subcaption*{$d=140$}
    \end{subfigure}%
    \begin{subfigure}[b]{0.24\textwidth}
      \centering
      \includegraphics[width=\linewidth]{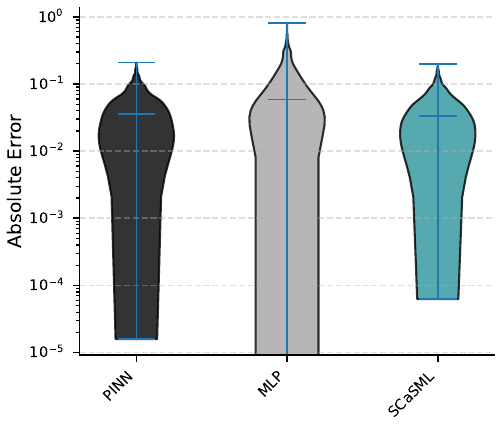}
      \subcaption*{$d=160$}
    \end{subfigure}
    \caption{SCaSML using full-history MLP}
  \end{subfigure}
  
  \caption{Violin Plot for comparison of the baseline PINN surrogate (black), MLP (gray), applying qudrature SCaSML (teal) to calibrate the PINN surrogate on diffusion reaction equation for $d=100,120,140,160$.}
\end{figure}
\newpage

\newpage


\subsection{Inference Time Scaling Curve}
In this section, we illustrate how SCaSML enhances estimation accuracy as the number of inference-time collocation points increases, as outlined in Section \ref{sec:results}. Our findings indicate that allocating additional computational resources during inference consistently improves estimation accuracy.

\begin{figure}[h]
  \centering
  \begin{subfigure}{0.24\linewidth}
    \centering
    \includegraphics[width=\linewidth]{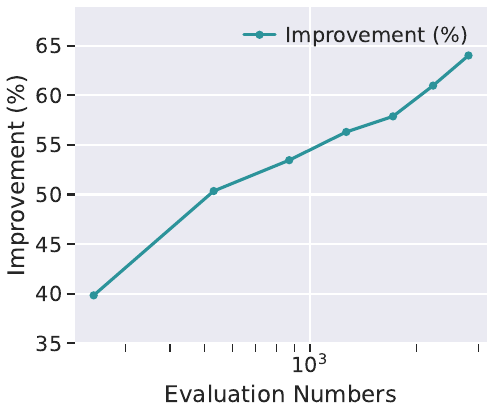}
    \caption{$d=10$}
  \end{subfigure}%
  \begin{subfigure}{0.24\linewidth}
    \centering
    \includegraphics[width=\linewidth]{Linear_Convection_Diffusion/PINN/fullhist/20d/SimpleScaling/SimpleScaling_Improvement.pdf}
    \caption{$d=20$}
  \end{subfigure}%
  \begin{subfigure}{0.24\linewidth}
    \centering
    \includegraphics[width=\linewidth]{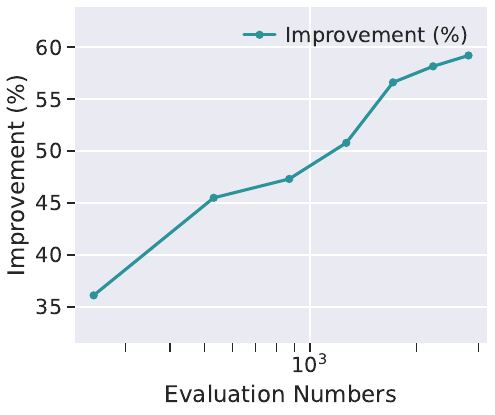}
    \caption{$d=30$}
  \end{subfigure}%
  \begin{subfigure}{0.24\linewidth}
    \centering
    \includegraphics[width=\linewidth]{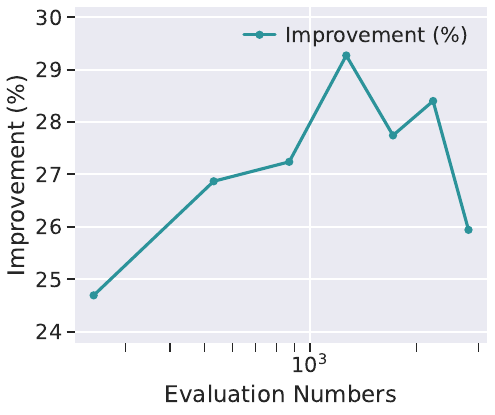}
    \caption{$d=60$}
  \end{subfigure}
\caption{For the linear convection-diffusion equation, SCaSML for PINNs reliably enhances performance with increased computational resources. Notably, scaling effects are more pronounced in lower dimensions, potentially due to the MLP's convergence rate exhibiting a linear dependency on the dimensionality $d$.}
    \label{fig:scaling_lin_conv}
\end{figure}

\begin{figure}[h]
  \centering
  \begin{subfigure}{0.24\linewidth}
    \centering
    \includegraphics[width=\linewidth]{Grad_Dependent_Nonlinear/PINN/fullhist/20d/SimpleScaling/SimpleScaling_Improvement.pdf}
    \caption{$d=20$}
  \end{subfigure}%
  \begin{subfigure}{0.24\linewidth}
    \centering
    \includegraphics[width=\linewidth]{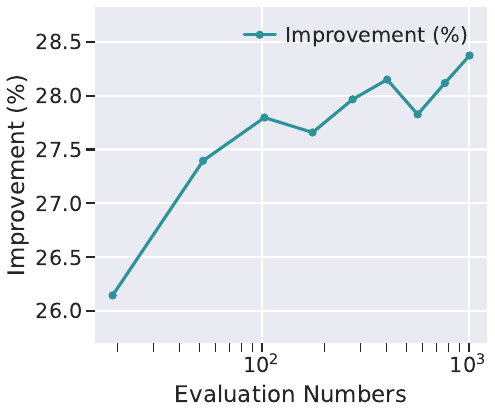}
    \caption{$d=40$}
  \end{subfigure}%
  \begin{subfigure}{0.24\linewidth}
    \centering
    \includegraphics[width=\linewidth]{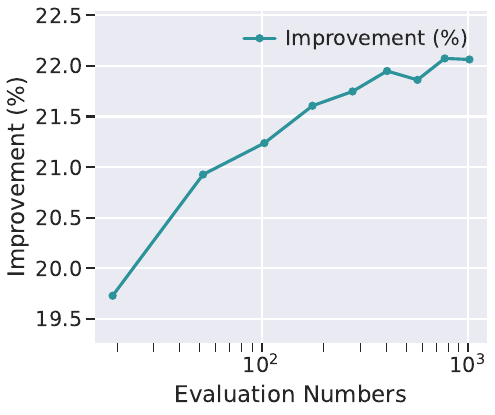}
    \caption{$d=60$}
  \end{subfigure}%
  \begin{subfigure}{0.24\linewidth}
    \centering
    \includegraphics[width=\linewidth]{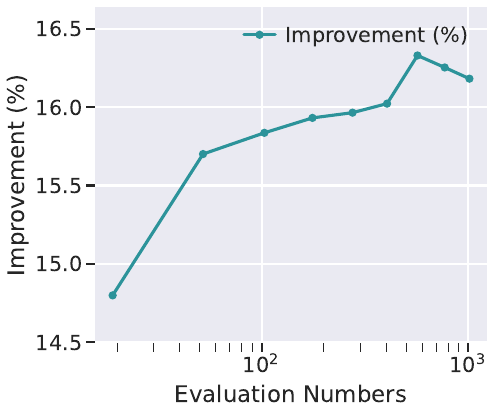}
    \caption{$d=80$}
  \end{subfigure}
 \caption{For the viscous Burgers equation, SCaSML with PINN consistently improves performance as the sample size $M$ increases exponentially.}
  \label{fig:scaling_PINN_viscous_Burgers}
\end{figure}

\begin{figure}[!]
  \centering
  \begin{subfigure}{0.24\linewidth}
    \centering
    \includegraphics[width=\linewidth]{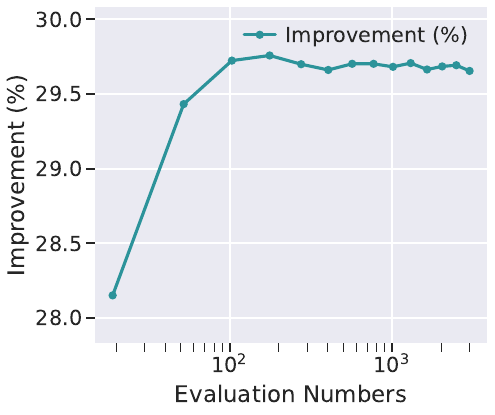}
    \caption{$d=100$}
  \end{subfigure}%
  \begin{subfigure}{0.24\linewidth}
    \centering
    \includegraphics[width=\linewidth]{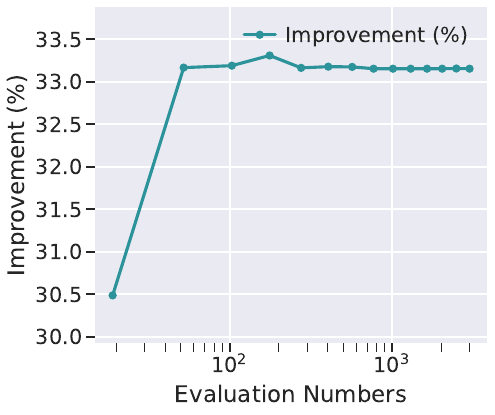}
    \caption{$d=120$}
  \end{subfigure}%
  \begin{subfigure}{0.24\linewidth}
    \centering
    \includegraphics[width=\linewidth]{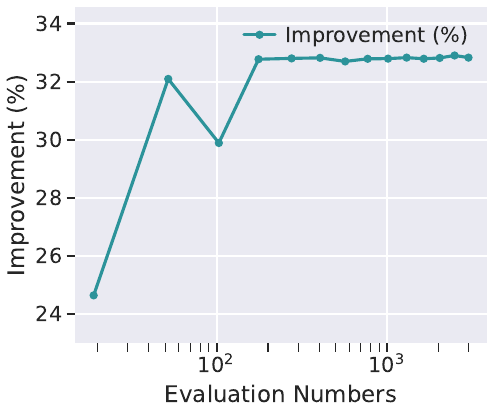}
    \caption{$d=140$}
  \end{subfigure}%
  \begin{subfigure}{0.24\linewidth}
    \centering
    \includegraphics[width=\linewidth]{LQG/PINN/fullhist/160d/SimpleScaling/SimpleScaling_Improvement.pdf}
    \caption{$d=160$}
  \end{subfigure}
\caption{For the HJB equation, SCaSML with PINN consistently enhances performance with increases in the exponential base of the sample size $M$. However, the scaling curve plateaus at $M=14$, likely due to the relatively small clipping range of SCaSML compared to the solution magnitude. In general, a larger clipping threshold permits more outliers, thereby requiring additional samples to mitigate variance and ultimately enhancing accuracy; this trade-off must be considered in light of available computational resources.}
  \label{fig:scaling_PINN_LQG}
\end{figure}

\begin{figure}[!]
  \centering
  \begin{subfigure}{0.24\linewidth}
    \centering
    \includegraphics[width=\linewidth]{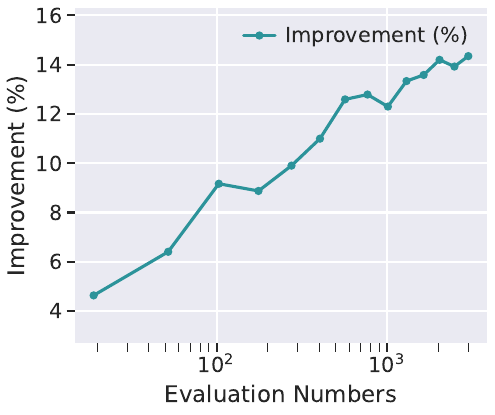}
    \caption{$d=100$}
  \end{subfigure}%
  \begin{subfigure}{0.24\linewidth}
    \centering
    \includegraphics[width=\linewidth]{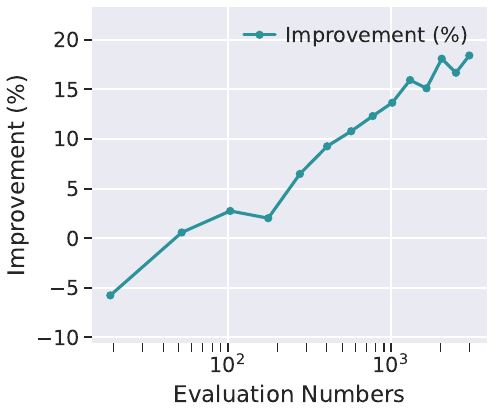}
    \caption{$d=120$}
  \end{subfigure}%
  \begin{subfigure}{0.24\linewidth}
    \centering
    \includegraphics[width=\linewidth]{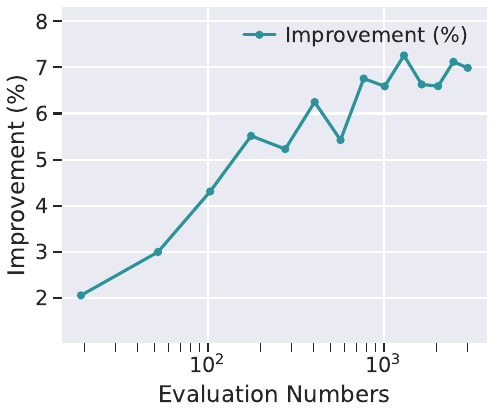}
    \caption{$d=140$}
  \end{subfigure}%
  \begin{subfigure}{0.24\linewidth}
    \centering
    \includegraphics[width=\linewidth]{Oscillating_Solution/PINN/fullhist/160d/SimpleScaling/SimpleScaling_Improvement.pdf}
    \caption{$d=160$}
  \end{subfigure}
\caption{For the Diffusion Reaction equation, SCaSML with PINN consistently improves performance as the exponential base of the sample size $M$ increases.}
  \label{fig:scaling_PINN_diff_rea}
\end{figure}

\subsection{Improved Scaling Law of SCaSML Algorithms}
\label{appendix-section:aux-results- subsection:train curve}

In this section, we consider the viscous Burgers equation as an illustrative example to demonstrate the improved convergence of SCaSML algorithms, as suggested by Corollary \ref{fullhist scaling law}. 

We implemented a physics-informed neural network (PINN) with five hidden layers, each containing 50 neurons and employing hyperbolic tangent activation functions. Because the number of training points, \(m\), is proportional to the number of iterations in the PINN, the control group was trained using the Adam optimizer (learning rate \(7\times10^{-4}\), \(\beta_1=0.9\), \(\beta_2=0.99\)) over iterations set to \(400\), \(2\,000\), \(4\,000\), \(6\,000\), \(8\,000\), and \(10\,000\) (as illustrated along the \(x\)-axis). The dataset comprised \(2\,500\) interior points, \(100\) boundary points, and \(160\) initial points uniformly sampled from \([0,0.5]\times[-0.5,0.5]^d\), ensuring that \(m\gg1\). To replicate the conditions of Corollary \ref{fullhist scaling law}, the SCaSML group was trained over iterations set to \(\lfloor400/(d+1)\rfloor\), \(\lfloor2\,000/(d+1)\rfloor\), \(\lfloor1\,000/(d+1)\rfloor\), \(\lfloor6\,000/(d+1)\rfloor\), \(\lfloor8\,000/(d+1)\rfloor\), and \(\lfloor10\,000/(d+1)\rfloor\). In addition, we set the inference level as \(N=\lfloor\log m/(2\beta\log\log m)\rfloor\) with \(\beta=1/2\). Theoretically, SCaSML exhibits an improvement in \(\gamma\) of \(\frac{1}{2}+o(1)\) relative to the control group.

For the Gaussian process regression surrogate model, training was performed over 20 iterations using Newton’s method. Due to the increasing inference parameters with \(m\) and the consequent GPU memory constraints, it was not possible to replicate the conditions of Corollary \ref{fullhist scaling law} exactly for the Gaussian process model. Consequently, both the control and SCaSML groups employed identical training sizes, which theoretically does not alter the asymptopic convergence rate(i.e. the slope). Specifically, the training data consisted of the following pairs of interior and boundary points:\( (100,\,20) \), \( (200,\,40) \), \( (300,\,60) \), \( (400,\,80) \), \( (500,\,100) \), \( (600,\,120) \), \( (700,\,140) \), \( (800,\,160) \), \( (900,\,180) \), and \( (1\,000,\,200) \), with the \(x\)-axis representing the total number of training points. Again, the inference level was chosen as \(N=\lfloor\log m/(2\beta\log\log m)\rfloor\) with \(\beta=1/2\), and the SCaSML continues to exhibit an improvement in \(\gamma\) of \(\frac{1}{2}+o(1)\) relative to the control group.

We observe that, for the PINNs, full-history SCaSML achieves near-monotonic error reduction across resolutions (with \(d\) ranging from 20 to 80), outperforming quadrature SCaSML, which displays oscillatory behavior at higher dimensions. The Gaussian process-based SCaSML similarly accelerates convergence during training. In both cases, the error trajectories generated by SCaSML are generally shifted downward relative to the base models, underscoring its capacity to enhance accuracy without altering the fundamental training dynamics. These findings underscore SCaSML's robustness in diverse settings, ensuring reliable convergence even in high-dimensional or non-monotonic scenarios.
\begin{figure}[h]
    \centering
    \begin{subfigure}[b]{0.2\textwidth}
        \centering
        \includegraphics[width=\textwidth]{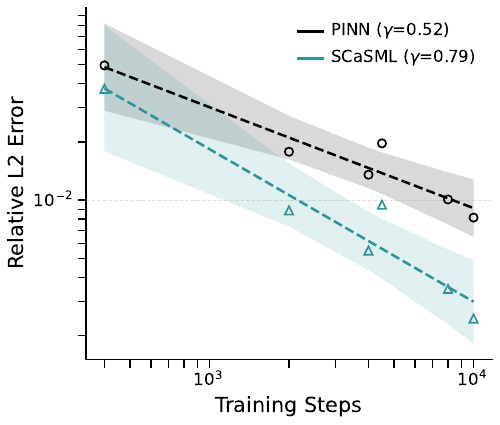}
        \caption{$d=20$}
    \end{subfigure}
    \begin{subfigure}[b]{0.2\textwidth}
        \centering
        \includegraphics[width=\textwidth]{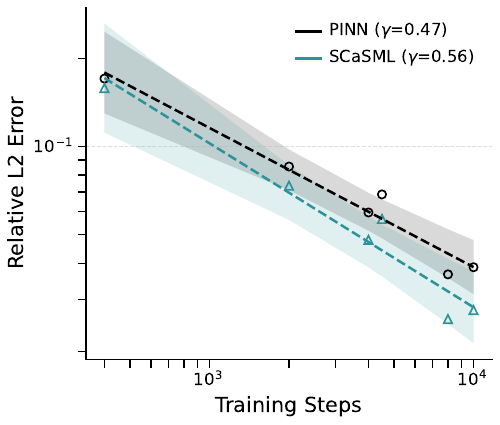}
        \caption{$d=40$}
    \end{subfigure}
    \begin{subfigure}[b]{0.2\textwidth}
        \centering
        \includegraphics[width=\textwidth]{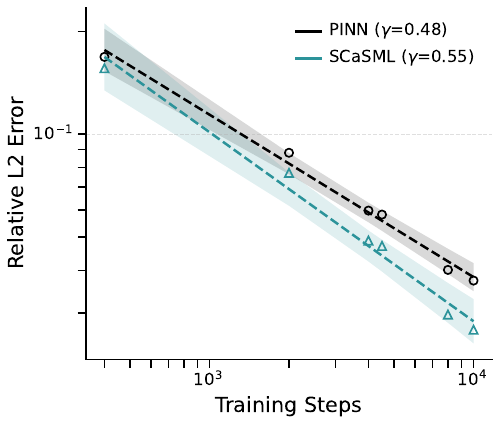}
        \caption{$d=60$}
    \end{subfigure}
    \begin{subfigure}[b]{0.2\textwidth}
        \centering
    \includegraphics[width=\textwidth]{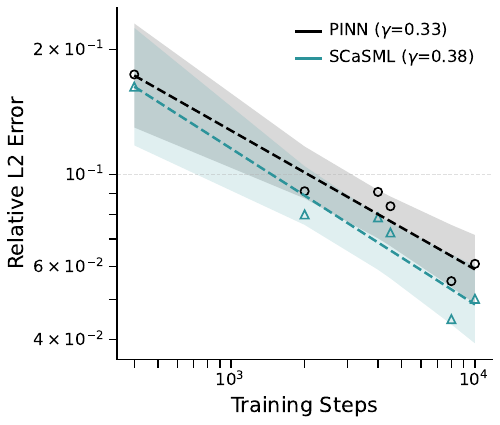}
        \caption{$d=80$}
    \end{subfigure}
    \caption{We apply quadrature SCaSML to calibrate a PINN surrogate for the $d$‑dimensional viscous Burgers equation. All plots employ logarithmic scales on both axes, and the slope $\gamma$ denotes the polynomial convergence rate. Numerical results demonstrate that, when collocation points for testing and inference are increased simultaneously, SCaSML achieves a faster scaling law than the base surrogate model. }
\end{figure}
\begin{figure}[h]
    \centering
    \begin{subfigure}[b]{0.2\textwidth}
        \centering
        \includegraphics[width=\textwidth]{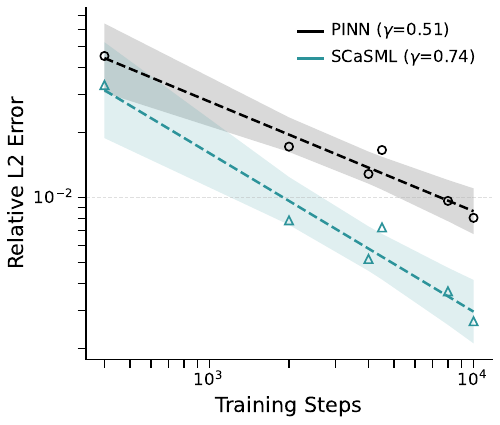}
        \caption{$d=20$}
    \end{subfigure}
    \begin{subfigure}[b]{0.2\textwidth}
        \centering
        \includegraphics[width=\textwidth]{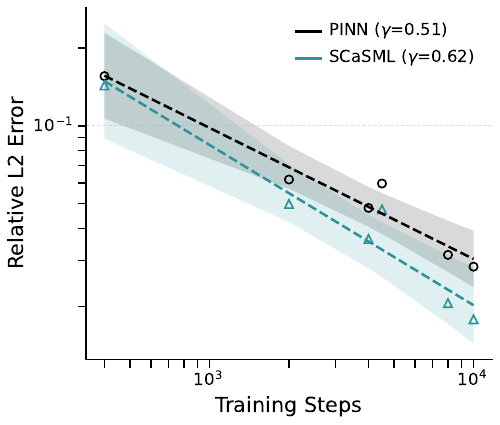}
        \caption{$d=40$}
    \end{subfigure}
    \begin{subfigure}[b]{0.2\textwidth}
        \centering
        \includegraphics[width=\textwidth]{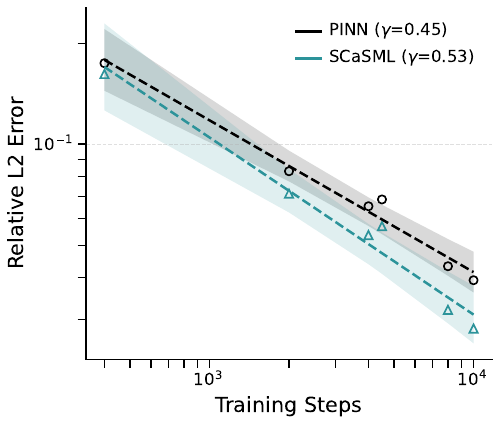}
        \caption{$d=60$}
    \end{subfigure}
    \begin{subfigure}[b]{0.2\textwidth}
        \centering
    \includegraphics[width=\textwidth]{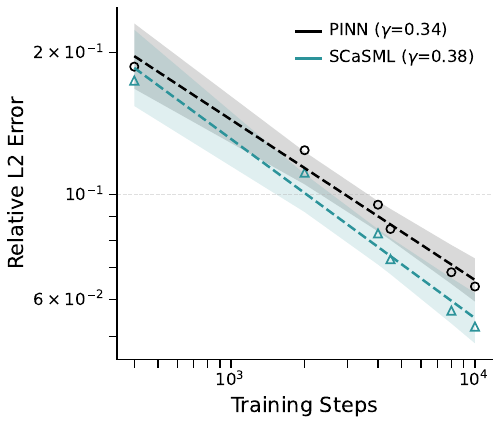}
        \caption{$d=80$}
    \end{subfigure}
    \caption{We apply full‑history SCaSML to calibrate a PINN surrogate for the $d$‑dimensional viscous Burgers equation. All plots employ logarithmic scales on both axes, and the slope $\gamma$ denotes the polynomial convergence rate. Numerical results demonstrate that, when collocation points for testing and inference are increased simultaneously, SCaSML achieves a faster scaling law than the base surrogate model. }
\end{figure}

\begin{figure}[h]
    \centering
    \begin{subfigure}[b]{0.2\textwidth}
        \centering
        \includegraphics[width=\textwidth]{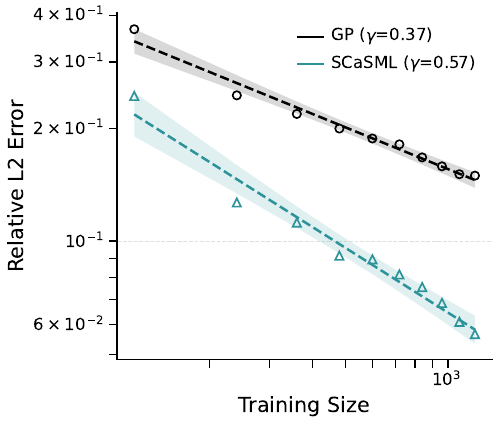}
        \caption{$d=20$}
    \end{subfigure}
    \begin{subfigure}[b]{0.2\textwidth}
        \centering
        \includegraphics[width=\textwidth]{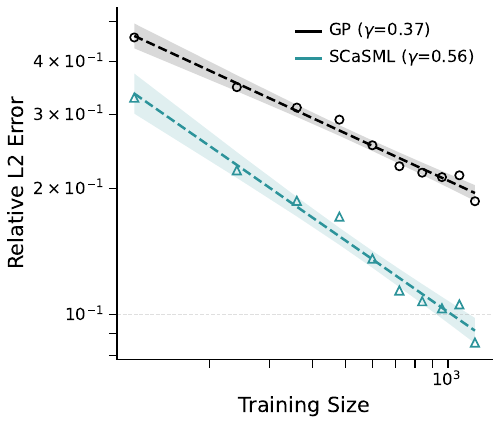}
        \caption{$d=40$}
    \end{subfigure}
    \begin{subfigure}[b]{0.2\textwidth}
        \centering
        \includegraphics[width=\textwidth]{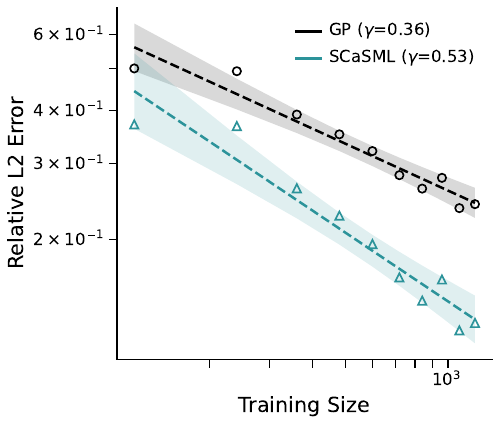}
        \caption{$d=60$}
    \end{subfigure}
    \begin{subfigure}[b]{0.2\textwidth}
        \centering
    \includegraphics[width=\textwidth]{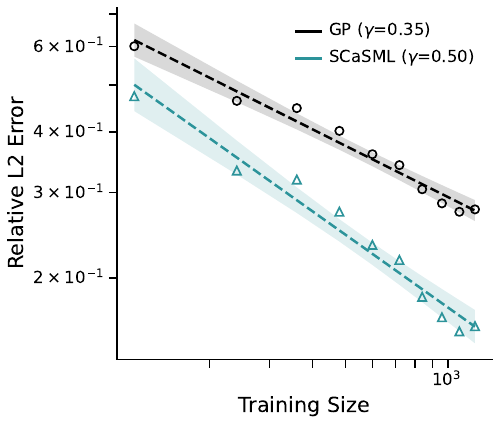}
        \caption{$d=80$}
    \end{subfigure}
    \caption{We apply quadrature SCaSML to calibrate a Gaussian Process surrogate for the $d$‑dimensional viscous Burgers equation. All plots employ logarithmic scales on both axes, and the slope $\gamma$ denotes the polynomial convergence rate. Numerical results demonstrate that, when collocation points for testing and inference are increased simultaneously, SCaSML achieves a faster scaling law than the base surrogate model. }
\end{figure}
\begin{figure}[h]
    \centering
    \begin{subfigure}[b]{0.2\textwidth}
        \centering
        \includegraphics[width=\textwidth]{Grad_Dependent_Nonlinear/GP/fullhist/20d/ConvergenceRate/ConvergenceRate_Verification.pdf}
        \caption{$d=20$}
    \end{subfigure}
    \begin{subfigure}[b]{0.2\textwidth}
        \centering
        \includegraphics[width=\textwidth]{Grad_Dependent_Nonlinear/GP/fullhist/40d/ConvergenceRate/ConvergenceRate_Verification.pdf}
        \caption{$d=40$}
    \end{subfigure}
    \begin{subfigure}[b]{0.2\textwidth}
        \centering
        \includegraphics[width=\textwidth]{Grad_Dependent_Nonlinear/GP/fullhist/60d/ConvergenceRate/ConvergenceRate_Verification.pdf}
        \caption{$d=60$}
    \end{subfigure}
    \begin{subfigure}[b]{0.2\textwidth}
        \centering
    \includegraphics[width=\textwidth]{Grad_Dependent_Nonlinear/GP/fullhist/80d/ConvergenceRate/ConvergenceRate_Verification.pdf}
        \caption{$d=80$}
    \end{subfigure}
    \caption{We apply full‑history SCaSML to calibrate a Gaussian Process surrogate for the $d$‑dimensional viscous Burgers equation. All plots employ logarithmic scales on both axes, and the slope $\gamma$ denotes the polynomial convergence rate. Numerical results demonstrate that, when collocation points for testing and inference are increased simultaneously, SCaSML achieves a faster scaling law than the base surrogate model. }
\end{figure}

\subsection{{Statistical Analysis of Error Reduction and Confidence Intervals}}
\label{app:stats}

{To get a rigorous statistical validation of our results, we conducted a repeated experiment analysis. Unlike the single-run statistics, this procedure accounts for the randomness inherent in both the training process (e.g., neural network initialization, optimizer noise) and the inference process (Monte Carlo sampling seeds).}

For each problem configuration, we repeated the entire experiment $N_{reps}=10$ times with different random seeds. In each repetition, we performed the following steps:
\begin{itemize}
    \item \textbf{Model Training:} A new surrogate model (PINN or GP) was trained from scratch (where applicable). Settings are the same with \ref{appendix-section:aux-results- subsection:violin}.
    \item \textbf{Inference:} The baseline surrogate, the naive MLP solver, and the SCaSML framework were evaluated on a fixed test set of $N_{test}=1200$ points.
    \item \textbf{Metric Calculation:} We computed the Mean Relative $L^2$ Error, Mean $L^1$ Error, Mean Squared $L^2$ Error, and for each run.
\end{itemize}

{From these $N_{reps}$ repetitions, we calculated the following statistics:}
\begin{itemize}
    \item {\textbf{Mean and Standard Deviation:} Computed across the 10 independent runs.}
    \item {\textbf{95\% Confidence Interval (CI):} Calculated for the mean metric as $[a, b] = [\mu - 1.96 \frac{\sigma}{\sqrt{N_{reps}}}, \mu + 1.96 \frac{\sigma}{\sqrt{N_{reps}}}]$.}
    \item {\textbf{Paired t-test:} We performed paired t-tests to compare the error distributions of SCaSML against the baselines (GP/PINN and MLP) across the repetitions. The null hypothesis is that the mean difference in error is zero.}
\end{itemize}

{The tables below present the full results. Note that while SCaSML requires more execution time (as expected for inference-time scaling), it achieves statistically significant error reductions ($p \ll 0.001$) across all accuracy metrics and dimensions.}

\begin{table}[h!]
\centering
\caption{{Statistical analysis for Viscous Burgers (VB) with \textbf{GP Surrogate} (10 repetitions). Comparisons are pairwise against SCaSML.}}
\label{tab:stats-vbgp}
\tiny
\setlength{\tabcolsep}{2pt}
\begin{tabular}{ll | lll | lll}
\toprule
& & \multicolumn{3}{c|}{{\textbf{20d}}} & \multicolumn{3}{c}{{\textbf{40d}}} \\
{Metric} & {Method} & {Mean $\pm$ Std} & {95\% CI} & {Stat (vs SCaSML)} & {Mean $\pm$ Std} & {95\% CI} & {Stat (vs SCaSML)} \\
\midrule
\multirow{3}{*}{Rel $L^2$} 
 & {GP} & {1.46e-1 $\pm$ 2.8e-3} & {[1.43e-1, 1.50e-1]} & {t=158, p=8e-17} & {1.84e-1 $\pm$ 4.2e-3} & {[1.76e-1, 1.91e-1]} & {t=127, p=6e-16} \\
 & {MLP} & {1.84e-1 $\pm$ 3.4e-3} & {[1.77e-1, 1.89e-1]} & {t=80.0, p=4e-14} & {2.27e-1 $\pm$ 6.4e-3} & {[2.17e-1, 2.38e-1]} & {t=73.1, p=9e-14} \\
 & {SCaSML} & {6.16e-2 $\pm$ 2.1e-3} & {[5.80e-2, 6.50e-2]} & {--} & {8.91e-2 $\pm$ 3.1e-3} & {[8.55e-2, 9.60e-2]} & {--} \\
\cmidrule{1-8}
\multirow{3}{*}{$L^1$} 
 & {GP} & {6.97e-2 $\pm$ 1.4e-3} & {[6.80e-2, 7.20e-2]} & {t=131, p=4e-16} & {9.42e-2 $\pm$ 1.3e-3} & {[9.28e-2, 9.66e-2]} & {t=129, p=5e-16} \\
 & {MLP} & {7.62e-2 $\pm$ 1.8e-3} & {[7.34e-2, 7.94e-2]} & {t=81.7, p=3e-14} & {9.35e-2 $\pm$ 1.8e-3} & {[9.10e-2, 9.79e-2]} & {t=80.6, p=4e-14} \\
 & {SCaSML} & {2.49e-2 $\pm$ 6.8e-4} & {[2.39e-2, 2.63e-2]} & {--} & {4.01e-2 $\pm$ 1.0e-3} & {[3.83e-2, 4.20e-2]} & {--} \\
\cmidrule{1-8}
\multirow{3}{*}{$L^2$ (sq)} 
 & {GP} & {7.63e-3 $\pm$ 3.1e-4} & {[7.26e-3, 8.18e-3]} & {t=79.8, p=4e-14} & {1.29e-2 $\pm$ 4.3e-4} & {[1.25e-2, 1.39e-2]} & {t=110, p=2e-15} \\
 & {MLP} & {1.22e-2 $\pm$ 5.3e-4} & {[1.13e-2, 1.28e-2]} & {t=59.3, p=6e-13} & {1.95e-2 $\pm$ 1.2e-3} & {[1.80e-2, 2.19e-2]} & {t=46.9, p=5e-12} \\
 & {SCaSML} & {1.37e-3 $\pm$ 9.1e-5} & {[1.25e-3, 1.53e-3]} & {--} & {3.02e-3 $\pm$ 2.0e-4} & {[2.75e-3, 3.50e-3]} & {--} \\
\midrule
\midrule
& & \multicolumn{3}{c|}{{\textbf{60d}}} & \multicolumn{3}{c}{{\textbf{80d}}} \\
{Metric} & {Method} & {Mean $\pm$ Std} & {95\% CI} & {Stat (vs SCaSML)} & {Mean $\pm$ Std} & {95\% CI} & {Stat (vs SCaSML)} \\
\midrule
\multirow{3}{*}{Rel $L^2$} 
 & {GP} & {2.34e-1 $\pm$ 6.2e-3} & {[2.22e-1, 2.42e-1]} & {t=174, p=4e-17} & {2.67e-1 $\pm$ 4.6e-3} & {[2.58e-1, 2.72e-1]} & {t=141, p=2e-16} \\
 & {MLP} & {2.52e-1 $\pm$ 8.5e-3} & {[2.40e-1, 2.64e-1]} & {t=36.9, p=4e-11} & {2.75e-1 $\pm$ 8.2e-3} & {[2.62e-1, 2.89e-1]} & {t=43.7, p=9e-12} \\
 & {SCaSML} & {1.23e-1 $\pm$ 4.7e-3} & {[1.14e-1, 1.28e-1]} & {--} & {1.53e-1 $\pm$ 3.0e-3} & {[1.48e-1, 1.57e-1]} & {--} \\
\cmidrule{1-8}
\multirow{3}{*}{$L^1$} 
 & {GP} & {1.26e-1 $\pm$ 2.0e-3} & {[1.23e-1, 1.30e-1]} & {t=222, p=4e-18} & {1.49e-1 $\pm$ 2.6e-3} & {[1.46e-1, 1.54e-1]} & {t=134, p=4e-16} \\
 & {MLP} & {1.01e-1 $\pm$ 3.9e-3} & {[9.43e-2, 1.07e-1]} & {t=26.3, p=8e-10} & {1.09e-1 $\pm$ 2.9e-3} & {[1.04e-1, 1.13e-1]} & {t=35.6, p=5e-11} \\
 & {SCaSML} & {5.98e-2 $\pm$ 1.7e-3} & {[5.68e-2, 6.20e-2]} & {--} & {7.90e-2 $\pm$ 1.6e-3} & {[7.65e-2, 8.25e-2]} & {--} \\
\cmidrule{1-8}
\multirow{3}{*}{$L^2$ (sq)} 
 & {GP} & {2.15e-2 $\pm$ 6.8e-4} & {[2.05e-2, 2.26e-2]} & {t=124, p=8e-16} & {2.89e-2 $\pm$ 8.1e-4} & {[2.80e-2, 3.05e-2]} & {t=118, p=1e-15} \\
 & {MLP} & {2.50e-2 $\pm$ 2.1e-3} & {[2.23e-2, 2.84e-2]} & {t=27.1, p=6e-10} & {3.07e-2 $\pm$ 2.2e-3} & {[2.73e-2, 3.36e-2]} & {t=31.8, p=1e-10} \\
 & {SCaSML} & {6.00e-3 $\pm$ 3.2e-4} & {[5.40e-3, 6.41e-3]} & {--} & {9.49e-3 $\pm$ 3.7e-4} & {[8.98e-3, 1.03e-2]} & {--} \\
\bottomrule
\end{tabular}
\end{table}

\begin{table}[h!]
\centering
\caption{{Statistical analysis for Viscous Burgers (VB) with \textbf{PINN Surrogate} (10 repetitions).}}
\label{tab:stats-vbpinn}
\tiny
\setlength{\tabcolsep}{2pt}
\begin{tabular}{ll | lll | lll}
\toprule
& & \multicolumn{3}{c|}{{\textbf{20d}}} & \multicolumn{3}{c}{{\textbf{40d}}} \\
{Metric} & {Method} & {Mean $\pm$ Std} & {95\% CI} & {Stat (vs SCaSML)} & {Mean $\pm$ Std} & {95\% CI} & {Stat (vs SCaSML)} \\
\midrule
\multirow{3}{*}{Rel $L^2$} 
 & {PINN} & {1.25e-2 $\pm$ 3.9e-4} & {[1.19e-2, 1.31e-2]} & {t=172, p=4e-17} & {4.51e-2 $\pm$ 1.3e-3} & {[4.23e-2, 4.67e-2]} & {t=135, p=3e-16} \\
 & {MLP} & {8.34e-2 $\pm$ 2.2e-3} & {[8.08e-2, 8.78e-2]} & {t=108, p=3e-15} & {1.06e-1 $\pm$ 2.2e-3} & {[1.03e-1, 1.10e-1]} & {t=111, p=2e-15} \\
 & {SCaSML} & {4.37e-3 $\pm$ 2.9e-4} & {[3.90e-3, 4.78e-3]} & {--} & {3.38e-2 $\pm$ 1.1e-3} & {[3.16e-2, 3.53e-2]} & {--} \\
\cmidrule{1-8}
\multirow{3}{*}{$L^1$} 
 & {PINN} & {5.83e-3 $\pm$ 1.8e-4} & {[5.48e-3, 6.11e-3]} & {t=84.5, p=2e-14} & {2.16e-2 $\pm$ 4.5e-4} & {[2.08e-2, 2.22e-2]} & {t=77.7, p=5e-14} \\
 & {MLP} & {3.43e-2 $\pm$ 1.1e-3} & {[3.29e-2, 3.61e-2]} & {t=93.5, p=9e-15} & {4.38e-2 $\pm$ 9.8e-4} & {[4.27e-2, 4.55e-2]} & {t=104, p=3e-15} \\
 & {SCaSML} & {1.44e-3 $\pm$ 4.8e-5} & {[1.38e-3, 1.55e-3]} & {--} & {1.41e-2 $\pm$ 3.5e-4} & {[1.36e-2, 1.45e-2]} & {--} \\
\cmidrule{1-8}
\multirow{3}{*}{$L^2$ (sq)} 
 & {PINN} & {5.63e-5 $\pm$ 3.6e-6} & {[5.03e-5, 6.23e-5]} & {t=57.4, p=7e-13} & {7.72e-4 $\pm$ 3.3e-5} & {[7.19e-4, 8.17e-4]} & {t=96.6, p=7e-15} \\
 & {MLP} & {2.50e-3 $\pm$ 0.0} & {[2.30e-3, 2.82e-3]} & {t=48.0, p=4e-12} & {4.29e-3 $\pm$ 2.4e-4} & {[3.91e-3, 4.72e-3]} & {t=51.6, p=2e-12} \\
 & {SCaSML} & {6.90e-6 $\pm$ 9.4e-7} & {[5.55e-6, 8.30e-6]} & {--} & {4.34e-4 $\pm$ 2.2e-5} & {[4.03e-4, 4.66e-4]} & {--} \\
\midrule
\midrule
& & \multicolumn{3}{c|}{{\textbf{60d}}} & \multicolumn{3}{c}{{\textbf{80d}}} \\
{Metric} & {Method} & {Mean $\pm$ Std} & {95\% CI} & {Stat (vs SCaSML)} & {Mean $\pm$ Std} & {95\% CI} & {Stat (vs SCaSML)} \\
\midrule
\multirow{3}{*}{Rel $L^2$} 
 & {PINN} & {4.62e-2 $\pm$ 1.1e-3} & {[4.43e-2, 4.75e-2]} & {t=203, p=9e-18} & {6.59e-2 $\pm$ 2.0e-3} & {[6.17e-2, 6.95e-2]} & {t=168, p=5e-17} \\
 & {MLP} & {1.17e-1 $\pm$ 3.2e-3} & {[1.14e-1, 1.23e-1]} & {t=75.0, p=7e-14} & {1.22e-1 $\pm$ 3.7e-3} & {[1.15e-1, 1.28e-1]} & {t=44.8, p=7e-12} \\
 & {SCaSML} & {3.53e-2 $\pm$ 9.8e-4} & {[3.36e-2, 3.65e-2]} & {--} & {5.51e-2 $\pm$ 1.9e-3} & {[5.13e-2, 5.90e-2]} & {--} \\
\cmidrule{1-8}
\multirow{3}{*}{$L^1$} 
 & {PINN} & {2.24e-2 $\pm$ 4.4e-4} & {[2.14e-2, 2.29e-2]} & {t=134, p=4e-16} & {3.20e-2 $\pm$ 6.6e-4} & {[3.10e-2, 3.31e-2]} & {t=182, p=2e-17} \\
 & {MLP} & {4.79e-2 $\pm$ 1.7e-3} & {[4.55e-2, 5.04e-2]} & {t=56.8, p=8e-13} & {4.94e-2 $\pm$ 1.2e-3} & {[4.74e-2, 5.12e-2]} & {t=50.0, p=3e-12} \\
 & {SCaSML} & {1.50e-2 $\pm$ 4.3e-4} & {[1.41e-2, 1.55e-2]} & {--} & {2.46e-2 $\pm$ 6.9e-4} & {[2.34e-2, 2.58e-2]} & {--} \\
\cmidrule{1-8}
\multirow{3}{*}{$L^2$ (sq)} 
 & {PINN} & {8.40e-4 $\pm$ 2.9e-5} & {[7.72e-4, 8.71e-4]} & {t=128, p=5e-16} & {1.76e-3 $\pm$ 8.6e-5} & {[1.64e-3, 1.95e-3]} & {t=136, p=3e-16} \\
 & {MLP} & {5.44e-3 $\pm$ 3.5e-4} & {[4.99e-3, 6.09e-3]} & {t=44.0, p=8e-12} & {6.08e-3 $\pm$ 3.5e-4} & {[5.37e-3, 6.55e-3]} & {t=36.9, p=4e-11} \\
 & {SCaSML} & {4.92e-4 $\pm$ 2.2e-5} & {[4.43e-4, 5.18e-4]} & {--} & {1.23e-3 $\pm$ 7.5e-5} & {[1.13e-3, 1.41e-3]} & {--} \\
\bottomrule
\end{tabular}
\end{table}

\begin{table}[h!]
\centering
\caption{{Statistical analysis for Linear Convection-Diffusion (LCD) (10 repetitions).}}
\label{tab:stats-lcd}
\tiny
\setlength{\tabcolsep}{2pt}
\begin{tabular}{ll | lll | lll}
\toprule
& & \multicolumn{3}{c|}{{\textbf{10d}}} & \multicolumn{3}{c}{{\textbf{20d}}} \\
{Metric} & {Method} & {Mean $\pm$ Std} & {95\% CI} & {Stat (vs SCaSML)} & {Mean $\pm$ Std} & {95\% CI} & {Stat (vs SCaSML)} \\
\midrule
\multirow{3}{*}{Rel $L^2$} 
 & {PINN} & {4.85e-2 $\pm$ 2.2e-3} & {[4.43e-2, 5.23e-2]} & {t=34.1, p=8e-11} & {8.60e-2 $\pm$ 3.6e-3} & {[7.80e-2, 9.03e-2]} & {t=39.2, p=2e-11} \\
 & {MLP} & {2.30e-1 $\pm$ 5.5e-3} & {[2.20e-1, 2.36e-1]} & {t=134, p=4e-16} & {2.41e-1 $\pm$ 5.5e-3} & {[2.32e-1, 2.48e-1]} & {t=129, p=5e-16} \\
 & {SCaSML} & {2.77e-2 $\pm$ 8.6e-4} & {[2.60e-2, 2.90e-2]} & {--} & {5.07e-2 $\pm$ 1.8e-3} & {[4.84e-2, 5.37e-2]} & {--} \\
\cmidrule{1-8}
\multirow{3}{*}{$L^1$} 
 & {PINN} & {3.01e-2 $\pm$ 1.1e-3} & {[2.83e-2, 3.23e-2]} & {t=35.0, p=6e-11} & {8.71e-2 $\pm$ 2.2e-3} & {[8.24e-2, 8.95e-2]} & {t=71.5, p=1e-13} \\
 & {MLP} & {1.68e-1 $\pm$ 1.7e-3} & {[1.65e-1, 1.70e-1]} & {t=285, p=4e-19} & {2.38e-1 $\pm$ 3.0e-3} & {[2.31e-1, 2.42e-1]} & {t=241, p=2e-18} \\
 & {SCaSML} & {1.77e-2 $\pm$ 4.3e-4} & {[1.71e-2, 1.84e-2]} & {--} & {4.67e-2 $\pm$ 1.3e-3} & {[4.50e-2, 4.83e-2]} & {--} \\
\cmidrule{1-8}
\multirow{3}{*}{$L^2$ (sq)} 
 & {PINN} & {2.20e-3 $\pm$ 1.8e-4} & {[1.98e-3, 2.58e-3]} & {t=26.7, p=7e-10} & {1.29e-2 $\pm$ 7.9e-4} & {[1.15e-2, 1.41e-2]} & {t=36.7, p=4e-11} \\
 & {MLP} & {4.96e-2 $\pm$ 1.0e-3} & {[4.85e-2, 5.16e-2]} & {t=154, p=1e-16} & {1.02e-1 $\pm$ 3.1e-3} & {[9.47e-2, 1.05e-1]} & {t=102, p=4e-15} \\
 & {SCaSML} & {7.20e-4 $\pm$ 2.3e-5} & {[6.82e-4, 7.54e-4]} & {--} & {4.50e-3 $\pm$ 2.8e-4} & {[4.05e-3, 4.91e-3]} & {--} \\
\midrule
\midrule
& & \multicolumn{3}{c|}{{\textbf{30d}}} & \multicolumn{3}{c}{{\textbf{60d}}} \\
{Metric} & {Method} & {Mean $\pm$ Std} & {95\% CI} & {Stat (vs SCaSML)} & {Mean $\pm$ Std} & {95\% CI} & {Stat (vs SCaSML)} \\
\midrule
\multirow{3}{*}{Rel $L^2$} 
 & {PINN} & {1.57e-1 $\pm$ 9.5e-3} & {[1.43e-1, 1.74e-1]} & {t=19.7, p=1e-08} & {2.85e-1 $\pm$ 8.7e-3} & {[2.74e-1, 2.96e-1]} & {t=55.2, p=1e-12} \\
 & {MLP} & {2.42e-1 $\pm$ 8.1e-3} & {[2.29e-1, 2.54e-1]} & {t=83.3, p=3e-14} & {2.46e-1 $\pm$ 4.8e-3} & {[2.38e-1, 2.52e-1]} & {t=106, p=3e-15} \\
 & {SCaSML} & {9.69e-2 $\pm$ 4.6e-3} & {[9.12e-2, 1.02e-1]} & {--} & {1.25e-1 $\pm$ 2.8e-3} & {[1.21e-1, 1.30e-1]} & {--} \\
\cmidrule{1-8}
\multirow{3}{*}{$L^1$} 
 & {PINN} & {1.81e-1 $\pm$ 6.0e-3} & {[1.72e-1, 1.89e-1]} & {t=37.0, p=4e-11} & {4.90e-1 $\pm$ 8.2e-3} & {[4.77e-1, 4.99e-1]} & {t=83.4, p=3e-14} \\
 & {MLP} & {2.89e-1 $\pm$ 4.6e-3} & {[2.82e-1, 2.95e-1]} & {t=316, p=2e-19} & {4.18e-1 $\pm$ 4.5e-3} & {[4.11e-1, 4.24e-1]} & {t=219, p=4e-18} \\
 & {SCaSML} & {1.05e-1 $\pm$ 3.8e-3} & {[1.01e-1, 1.11e-1]} & {--} & {1.96e-1 $\pm$ 4.7e-3} & {[1.86e-1, 2.02e-1]} & {--} \\
\cmidrule{1-8}
\multirow{3}{*}{$L^2$ (sq)} 
 & {PINN} & {6.43e-2 $\pm$ 9.2e-3} & {[5.50e-2, 8.25e-2]} & {t=14.4, p=2e-07} & {4.13e-1 $\pm$ 1.7e-2} & {[3.87e-1, 4.39e-1]} & {t=55.7, p=1e-12} \\
 & {MLP} & {1.52e-1 $\pm$ 6.3e-3} & {[1.43e-1, 1.62e-1]} & {t=81.8, p=3e-14} & {3.06e-1 $\pm$ 7.7e-3} & {[2.96e-1, 3.20e-1]} & {t=134, p=4e-16} \\
 & {SCaSML} & {2.44e-2 $\pm$ 2.1e-3} & {[2.20e-2, 2.80e-2]} & {--} & {7.97e-2 $\pm$ 3.9e-3} & {[7.36e-2, 8.54e-2]} & {--} \\
\bottomrule
\end{tabular}
\end{table}

\begin{table}[h!]
\centering
\caption{{Statistical analysis for Hamilton-Jacobi-Bellman (LQG) (10 repetitions).}}
\label{tab:stats-lqg}
\tiny
\setlength{\tabcolsep}{2pt}
\begin{tabular}{ll | lll | lll}
\toprule
& & \multicolumn{3}{c|}{{\textbf{100d}}} & \multicolumn{3}{c}{{\textbf{120d}}} \\
{Metric} & {Method} & {Mean $\pm$ Std} & {95\% CI} & {Stat (vs SCaSML)} & {Mean $\pm$ Std} & {95\% CI} & {Stat (vs SCaSML)} \\
\midrule
\multirow{3}{*}{Rel $L^2$} 
 & {PINN} & {9.05e-2 $\pm$ 2.5e-3} & {[8.57e-2, 9.49e-2]} & {t=6.5, p=1e-04} & {9.13e-2 $\pm$ 2.0e-3} & {[8.77e-2, 9.43e-2]} & {t=25.9, p=9e-10} \\
 & {MLP} & {5.69e+0 $\pm$ 2.0e-2} & {[5.66e+0, 5.73e+0]} & {t=927, p=1e-23} & {5.50e+0 $\pm$ 1.7e-2} & {[5.48e+0, 5.53e+0]} & {t=1073, p=3e-24} \\
 & {SCaSML} & {7.27e-2 $\pm$ 9.4e-3} & {[6.48e-2, 9.85e-2]} & {--} & {6.42e-2 $\pm$ 1.7e-3} & {[6.17e-2, 6.68e-2]} & {--} \\
\cmidrule{1-8}
\multirow{3}{*}{$L^1$} 
 & {PINN} & {1.50e-1 $\pm$ 3.0e-3} & {[1.44e-1, 1.55e-1]} & {t=7.2, p=5e-05} & {1.68e-1 $\pm$ 3.3e-3} & {[1.62e-1, 1.74e-1]} & {t=146, p=2e-16} \\
 & {MLP} & {1.21e+1 $\pm$ 1.3e-2} & {[1.21e+1, 1.21e+1]} & {t=1599, p=7e-26} & {1.22e+1 $\pm$ 1.4e-2} & {[1.21e+1, 1.22e+1]} & {t=2531, p=1e-27} \\
 & {SCaSML} & {1.09e-1 $\pm$ 1.8e-2} & {[9.68e-2, 1.59e-1]} & {--} & {1.02e-1 $\pm$ 2.3e-3} & {[9.81e-2, 1.05e-1]} & {--} \\
\cmidrule{1-8}
\multirow{3}{*}{$L^2$ (sq)} 
 & {PINN} & {3.70e-2 $\pm$ 1.8e-3} & {[3.34e-2, 4.00e-2]} & {t=6.2, p=2e-04} & {4.09e-2 $\pm$ 1.8e-3} & {[3.79e-2, 4.39e-2]} & {t=75.8, p=6e-14} \\
 & {MLP} & {1.46e+2 $\pm$ 2.8e-1} & {[1.46e+2, 1.47e+2]} & {t=1623, p=7e-26} & {1.48e+2 $\pm$ 3.0e-1} & {[1.48e+2, 1.49e+2]} & {t=1576, p=8e-26} \\
 & {SCaSML} & {2.42e-2 $\pm$ 7.0e-3} & {[1.91e-2, 4.37e-2]} & {--} & {2.02e-2 $\pm$ 1.1e-3} & {[1.88e-2, 2.20e-2]} & {--} \\
\midrule
\midrule
& & \multicolumn{3}{c|}{{\textbf{140d}}} & \multicolumn{3}{c}{{\textbf{160d}}} \\
{Metric} & {Method} & {Mean $\pm$ Std} & {95\% CI} & {Stat (vs SCaSML)} & {Mean $\pm$ Std} & {95\% CI} & {Stat (vs SCaSML)} \\
\midrule
\multirow{3}{*}{Rel $L^2$} 
 & {PINN} & {1.03e-1 $\pm$ 1.8e-3} & {[1.01e-1, 1.06e-1]} & {t=3.7, p=5e-03} & {1.10e-1 $\pm$ 2.4e-3} & {[1.06e-1, 1.14e-1]} & {t=21.7, p=4e-09} \\
 & {MLP} & {5.36e+0 $\pm$ 2.0e-2} & {[5.34e+0, 5.40e+0]} & {t=823, p=3e-23} & {5.26e+0 $\pm$ 1.9e-2} & {[5.24e+0, 5.29e+0]} & {t=870, p=2e-23} \\
 & {SCaSML} & {8.52e-2 $\pm$ 1.5e-2} & {[7.45e-2, 1.15e-1]} & {--} & {8.48e-2 $\pm$ 5.0e-3} & {[7.88e-2, 9.49e-2]} & {--} \\
\cmidrule{1-8}
\multirow{3}{*}{$L^1$} 
 & {PINN} & {1.92e-1 $\pm$ 3.5e-3} & {[1.87e-1, 1.99e-1]} & {t=4.3, p=2e-03} & {2.11e-1 $\pm$ 3.3e-3} & {[2.05e-1, 2.17e-1]} & {t=26.0, p=9e-10} \\
 & {MLP} & {1.23e+1 $\pm$ 1.4e-2} & {[1.22e+1, 1.23e+1]} & {t=866, p=2e-23} & {1.23e+1 $\pm$ 1.3e-2} & {[1.23e+1, 1.23e+1]} & {t=2170, p=5e-27} \\
 & {SCaSML} & {1.41e-1 $\pm$ 3.8e-2} & {[1.16e-1, 2.14e-1]} & {--} & {1.43e-1 $\pm$ 1.0e-2} & {[1.32e-1, 1.61e-1]} & {--} \\
\cmidrule{1-8}
\multirow{3}{*}{$L^2$ (sq)} 
 & {PINN} & {5.55e-2 $\pm$ 1.8e-3} & {[5.35e-2, 5.89e-2]} & {t=3.4, p=8e-03} & {6.60e-2 $\pm$ 2.5e-3} & {[6.15e-2, 7.04e-2]} & {t=25.3, p=1e-09} \\
 & {MLP} & {1.50e+2 $\pm$ 3.1e-1} & {[1.50e+2, 1.51e+2]} & {t=1485, p=1e-25} & {1.52e+2 $\pm$ 3.0e-1} & {[1.51e+2, 1.52e+2]} & {t=1583, p=8e-26} \\
 & {SCaSML} & {3.90e-2 $\pm$ 1.5e-2} & {[2.90e-2, 6.74e-2]} & {--} & {3.95e-2 $\pm$ 4.7e-3} & {[3.43e-2, 4.87e-2]} & {--} \\
\bottomrule
\end{tabular}
\end{table}

\begin{table}[h!]
\centering
\caption{{Statistical analysis for Diffusion-Reaction (DR) (10 repetitions).}}
\label{tab:stats-dr}
\tiny
\setlength{\tabcolsep}{2pt}
\begin{tabular}{ll | lll | lll}
\toprule
& & \multicolumn{3}{c|}{{\textbf{100d}}} & \multicolumn{3}{c}{{\textbf{120d}}} \\
{Metric} & {Method} & {Mean $\pm$ Std} & {95\% CI} & {Stat (vs SCaSML)} & {Mean $\pm$ Std} & {95\% CI} & {Stat (vs SCaSML)} \\
\midrule
\multirow{3}{*}{Rel $L^2$} 
 & {PINN} & {9.83e-3 $\pm$ 2.6e-4} & {[9.45e-3, 1.02e-2]} & {t=14.3, p=2e-07} & {1.10e-2 $\pm$ 3.0e-4} & {[1.04e-2, 1.13e-2]} & {t=25.9, p=9e-10} \\
 & {MLP} & {8.62e-2 $\pm$ 2.6e-3} & {[8.20e-2, 9.05e-2]} & {t=92.6, p=1e-14} & {9.01e-2 $\pm$ 1.2e-3} & {[8.78e-2, 9.17e-2]} & {t=249, p=1e-18} \\
 & {SCaSML} & {9.19e-3 $\pm$ 2.8e-4} & {[8.63e-3, 9.61e-3]} & {--} & {1.00e-2 $\pm$ 2.8e-4} & {[9.61e-3, 1.03e-2]} & {--} \\
\cmidrule{1-8}
\multirow{3}{*}{$L^1$} 
 & {PINN} & {1.20e-2 $\pm$ 3.2e-4} & {[1.14e-2, 1.24e-2]} & {t=14.1, p=2e-07} & {1.36e-2 $\pm$ 4.0e-4} & {[1.29e-2, 1.41e-2]} & {t=16.7, p=4e-08} \\
 & {MLP} & {9.37e-2 $\pm$ 2.6e-3} & {[8.97e-2, 9.77e-2]} & {t=102, p=4e-15} & {9.77e-2 $\pm$ 1.9e-3} & {[9.51e-2, 1.01e-1]} & {t=142, p=2e-16} \\
 & {SCaSML} & {1.14e-2 $\pm$ 3.4e-4} & {[1.07e-2, 1.19e-2]} & {--} & {1.24e-2 $\pm$ 2.8e-4} & {[1.20e-2, 1.28e-2]} & {--} \\
\cmidrule{1-8}
\multirow{3}{*}{$L^2$ (sq)} 
 & {PINN} & {2.48e-4 $\pm$ 1.3e-5} & {[2.29e-4, 2.68e-4]} & {t=14.2, p=2e-07} & {3.09e-4 $\pm$ 1.7e-5} & {[2.79e-4, 3.29e-4]} & {t=24.3, p=2e-09} \\
 & {MLP} & {1.91e-2 $\pm$ 1.1e-3} & {[1.73e-2, 2.08e-2]} & {t=53.1, p=1e-12} & {2.09e-2 $\pm$ 4.9e-4} & {[1.98e-2, 2.15e-2]} & {t=132, p=4e-16} \\
 & {SCaSML} & {2.17e-4 $\pm$ 1.3e-5} & {[1.91e-4, 2.35e-4]} & {--} & {2.57e-4 $\pm$ 1.4e-5} & {[2.38e-4, 2.72e-4]} & {--} \\
\midrule
\midrule
& & \multicolumn{3}{c|}{{\textbf{140d}}} & \multicolumn{3}{c}{{\textbf{160d}}} \\
{Metric} & {Method} & {Mean $\pm$ Std} & {95\% CI} & {Stat (vs SCaSML)} & {Mean $\pm$ Std} & {95\% CI} & {Stat (vs SCaSML)} \\
\midrule
\multirow{3}{*}{Rel $L^2$} 
 & {PINN} & {3.23e-2 $\pm$ 5.4e-4} & {[3.14e-2, 3.34e-2]} & {t=47.4, p=4e-12} & {3.59e-2 $\pm$ 8.1e-4} & {[3.47e-2, 3.71e-2]} & {t=72.4, p=9e-14} \\
 & {MLP} & {8.96e-2 $\pm$ 2.3e-3} & {[8.69e-2, 9.47e-2]} & {t=80.6, p=4e-14} & {8.74e-2 $\pm$ 2.5e-3} & {[8.28e-2, 9.17e-2]} & {t=62.1, p=4e-13} \\
 & {SCaSML} & {3.00e-2 $\pm$ 4.8e-4} & {[2.92e-2, 3.09e-2]} & {--} & {3.37e-2 $\pm$ 8.2e-4} & {[3.24e-2, 3.48e-2]} & {--} \\
\cmidrule{1-8}
\multirow{3}{*}{$L^1$} 
 & {PINN} & {4.05e-2 $\pm$ 8.0e-4} & {[3.91e-2, 4.21e-2]} & {t=33.5, p=9e-11} & {4.50e-2 $\pm$ 9.5e-4} & {[4.36e-2, 4.67e-2]} & {t=35.5, p=5e-11} \\
 & {MLP} & {9.89e-2 $\pm$ 2.3e-3} & {[9.53e-2, 1.04e-1]} & {t=84.2, p=2e-14} & {9.61e-2 $\pm$ 2.3e-3} & {[9.34e-2, 1.01e-1]} & {t=62.9, p=3e-13} \\
 & {SCaSML} & {3.77e-2 $\pm$ 6.5e-4} & {[3.64e-2, 3.88e-2]} & {--} & {4.22e-2 $\pm$ 9.9e-4} & {[4.06e-2, 4.39e-2]} & {--} \\
\cmidrule{1-8}
\multirow{3}{*}{$L^2$ (sq)} 
 & {PINN} & {2.67e-3 $\pm$ 8.7e-5} & {[2.53e-3, 2.85e-3]} & {t=41.8, p=1e-11} & {3.31e-3 $\pm$ 1.5e-4} & {[3.10e-3, 3.53e-3]} & {t=68.9, p=1e-13} \\
 & {MLP} & {2.06e-2 $\pm$ 1.0e-3} & {[1.93e-2, 2.30e-2]} & {t=55.5, p=1e-12} & {1.96e-2 $\pm$ 1.1e-3} & {[1.77e-2, 2.15e-2]} & {t=46.9, p=5e-12} \\
 & {SCaSML} & {2.31e-3 $\pm$ 7.0e-5} & {[2.19e-3, 2.44e-3]} & {--} & {2.91e-3 $\pm$ 1.4e-4} & {[2.70e-3, 3.10e-3]} & {--} \\
\bottomrule
\end{tabular}
\end{table}
\newpage
\subsection{{Relative \(L^2\) Error Improvement}}
\label{appendix-section:aux-results-subsection:l2-improvement}

{In this section, we provide supplementary plots that visualize the relative improvement in $L^2$ error achieved by SCaSML over the baseline surrogate model (PINN or GP). The percentage improvement is calculated as:}
$$
{\text{Improvement \%} = \left( \frac{\|\text{Error}_{\text{Surrogate}}\|_{L^2} - \|\text{Error}_{\text{SCaSML}}\|_{L^2}}{\|\text{Error}_{\text{Surrogate}}\|_{L^2}} \right) \times 100}
$$
{These plots directly visualize the 20-80\% error reduction claimed in the main text and demonstrate the effectiveness of our correction framework across all test cases and dimensions. The experimental settings are identical to those used for violin plots in Appendix \ref{appendix-section:aux-results- subsection:violin}.}

\begin{figure}[h]
    \centering
    \begin{subfigure}[b]{0.24\textwidth}
        \centering
        \includegraphics[width=\textwidth]{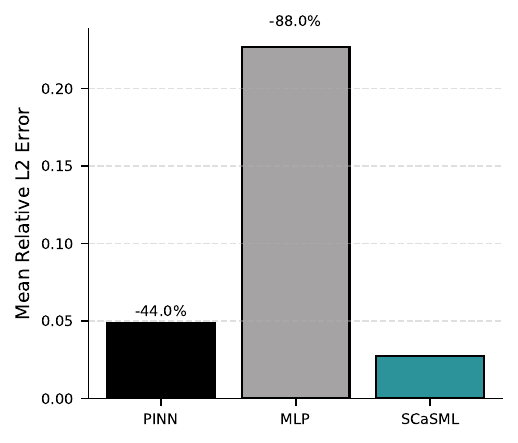}
        \caption{$d=10$}
    \end{subfigure}
    \begin{subfigure}[b]{0.24\textwidth}
        \centering
        \includegraphics[width=\textwidth]{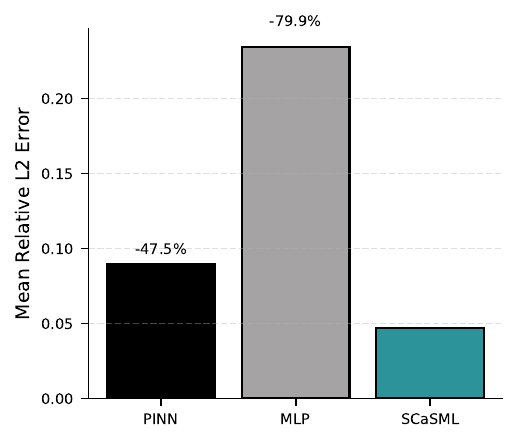}
        \caption{$d=20$}
    \end{subfigure}
    \begin{subfigure}[b]{0.24\textwidth}
        \centering
        \includegraphics[width=\textwidth]{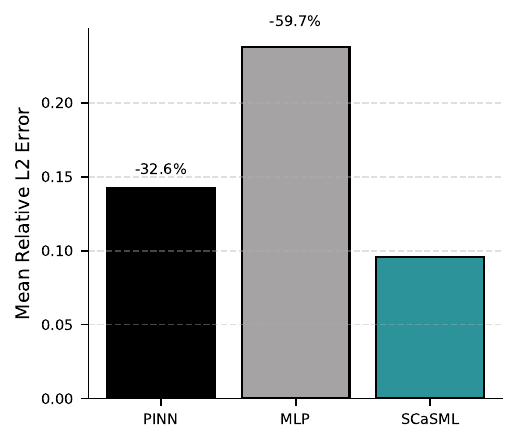}
        \caption{$d=30$}
    \end{subfigure}
    \begin{subfigure}[b]{0.24\textwidth}
        \centering
    \includegraphics[width=\textwidth]{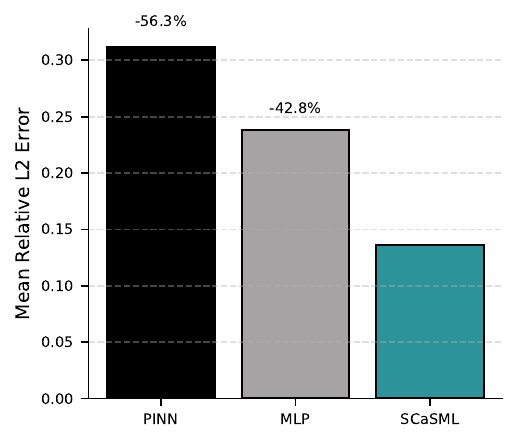}
        \caption{$d=60$}
    \end{subfigure}
    \caption{{Relative $L^2$ error improvement (\%) of SCaSML (full-history) over the baseline PINN surrogate on the linear convection-diffusion equation for $d=10,20,30,60$.}}
\end{figure}

\begin{figure}[h]
  \centering
  \begin{subfigure}{0.48\textwidth}
    \centering
    \begin{subfigure}[b]{0.24\textwidth}
      \centering
      \includegraphics[width=\linewidth]{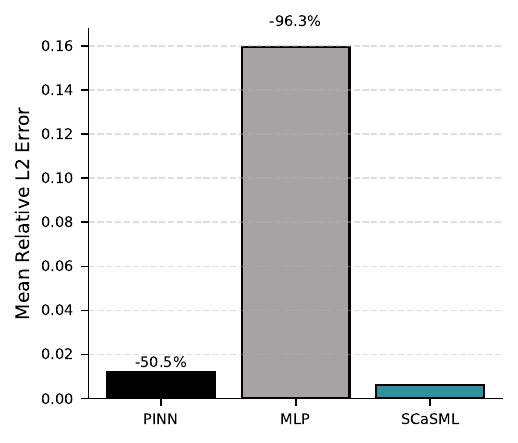}
      \subcaption*{$d=20$}
    \end{subfigure}%
    \begin{subfigure}[b]{0.24\textwidth}
      \centering
      \includegraphics[width=\linewidth]{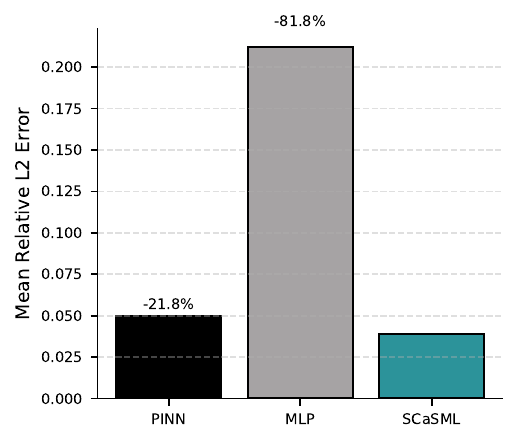}
      \subcaption*{$d=40$}
    \end{subfigure}%
    \begin{subfigure}[b]{0.24\textwidth}
      \centering
      \includegraphics[width=\linewidth]{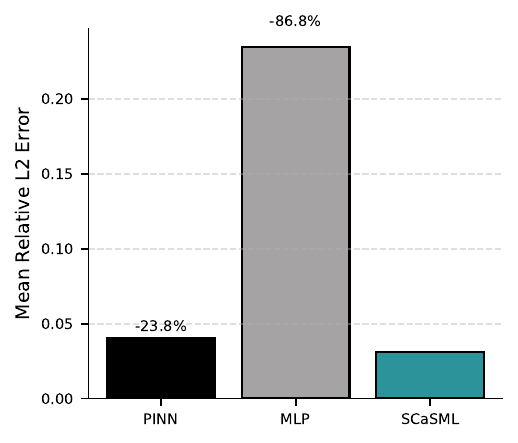}
      \subcaption*{$d=60$}
    \end{subfigure}%
    \begin{subfigure}[b]{0.24\textwidth}
      \centering
      \includegraphics[width=\linewidth]{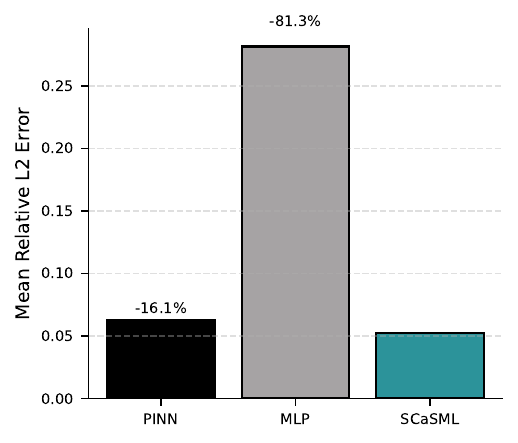}
      \subcaption*{$d=80$}
    \end{subfigure}
    \caption{SCaSML using Quadrature MLP}
  \end{subfigure}%
  \hfill
  \begin{subfigure}{0.48\textwidth}
    \centering
    \begin{subfigure}[b]{0.24\textwidth}
      \centering
      \includegraphics[width=\linewidth]{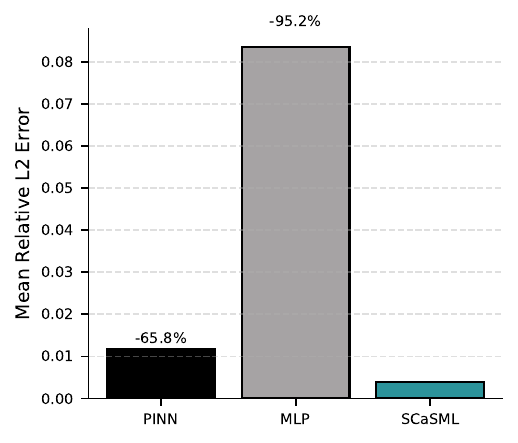}
      \subcaption*{$d=20$}
    \end{subfigure}%
    \begin{subfigure}[b]{0.24\textwidth}
      \centering
      \includegraphics[width=\linewidth]{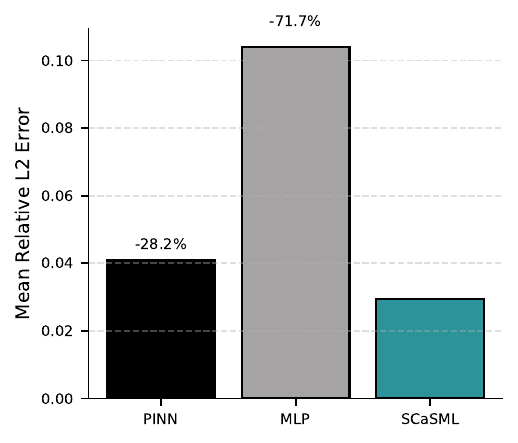}
      \subcaption*{$d=40$}
    \end{subfigure}%
    \begin{subfigure}[b]{0.24\textwidth}
      \centering
      \includegraphics[width=\linewidth]{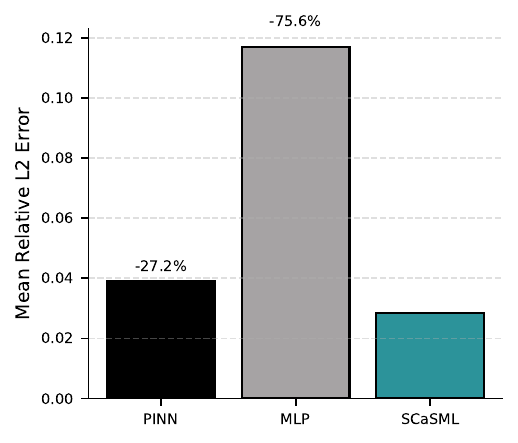}
      \subcaption*{$d=60$}
    \end{subfigure}%
    \begin{subfigure}[b]{0.2\textwidth}
      \centering
      \includegraphics[width=\linewidth]{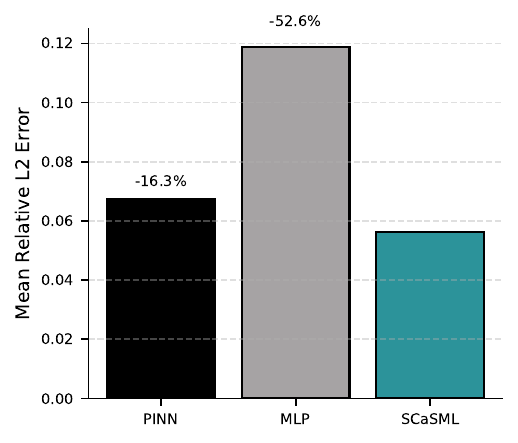}
      \subcaption*{$d=80$}
    \end{subfigure}
    \caption{SCaSML using full-history MLP}
  \end{subfigure}
  
  \caption{{Relative $L^2$ error improvement (\%) of SCaSML over the baseline PINN surrogate on the viscous Burgers’ equation for $d=20,40,60,80$.}}
\end{figure}

\begin{figure}[h]
  \centering
  \begin{subfigure}{0.48\textwidth}
    \centering
    \begin{subfigure}[b]{0.24\textwidth}
      \centering
      \includegraphics[width=\linewidth]{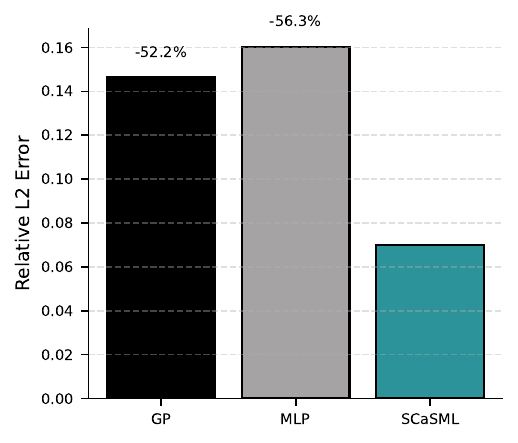}
      \subcaption*{$d=20$}
    \end{subfigure}%
    \begin{subfigure}[b]{0.24\textwidth}
      \centering
      \includegraphics[width=\linewidth]{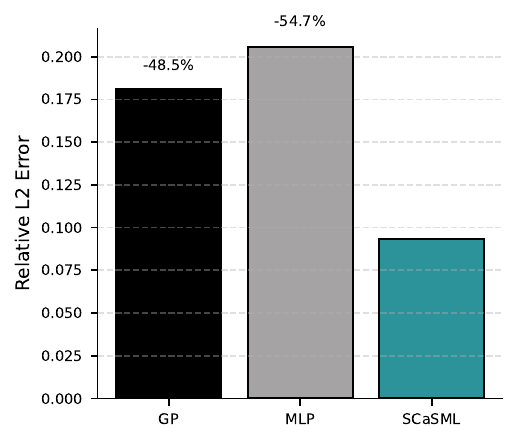}
      \subcaption*{$d=40$}
    \end{subfigure}%
    \begin{subfigure}[b]{0.24\textwidth}
      \centering
      \includegraphics[width=\linewidth]{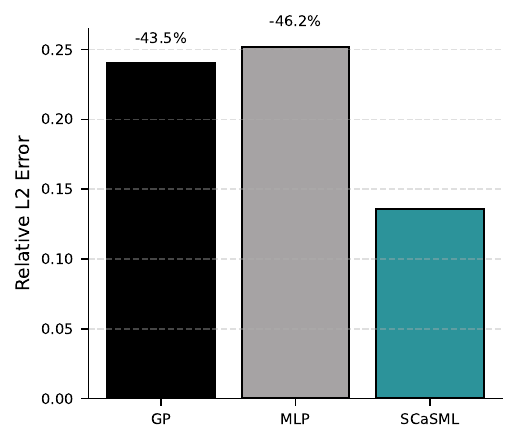}
      \subcaption*{$d=60$}
    \end{subfigure}%
    \begin{subfigure}[b]{0.24\textwidth}
      \centering
      \includegraphics[width=\linewidth]{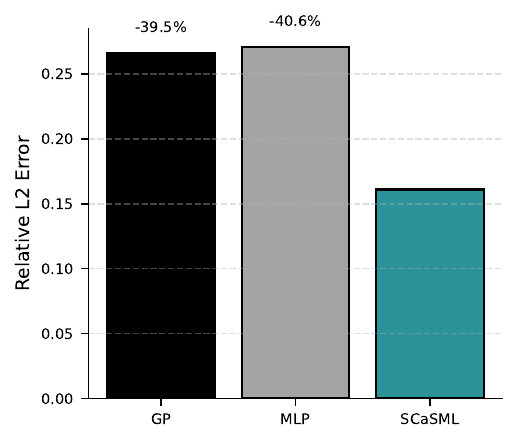}
      \subcaption*{$d=80$}
    \end{subfigure}
    \caption{SCaSML using Quadrature MLP}
  \end{subfigure}%
  \hfill
  \begin{subfigure}{0.48\textwidth}
    \centering
    \begin{subfigure}[b]{0.24\textwidth}
      \centering
      \includegraphics[width=\linewidth]{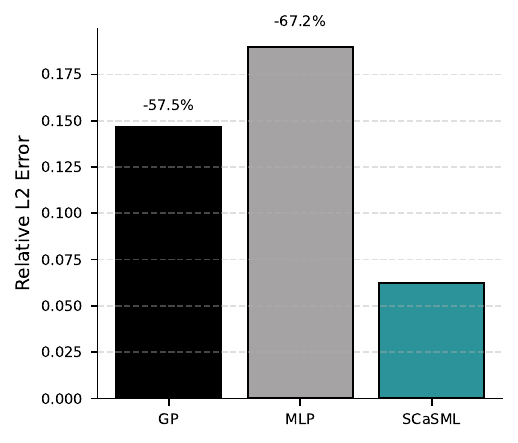}
      \subcaption*{$d=20$}
    \end{subfigure}%
    \begin{subfigure}[b]{0.24\textwidth}
      \centering
      \includegraphics[width=\linewidth]{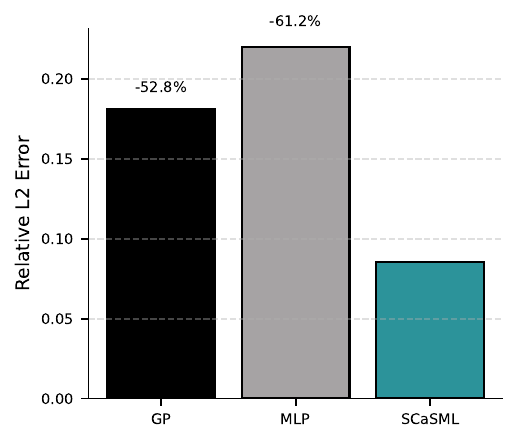}
      \subcaption*{$d=40$}
    \end{subfigure}%
    \begin{subfigure}[b]{0.24\textwidth}
      \centering
      \includegraphics[width=\linewidth]{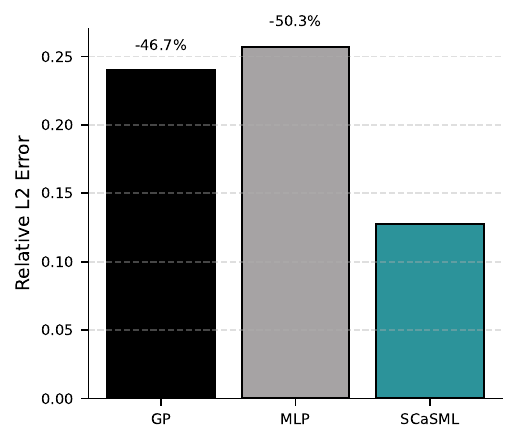}
      \subcaption*{$d=60$}
    \end{subfigure}%
    \begin{subfigure}[b]{0.24\textwidth}
      \centering
      \includegraphics[width=\linewidth]{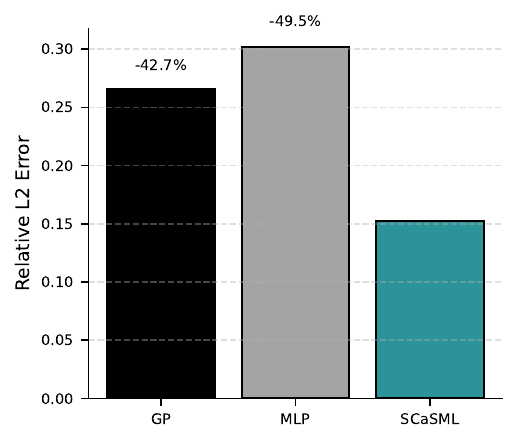}
      \subcaption*{$d=80$}
    \end{subfigure}
    \caption{SCaSML using full-history MLP}
  \end{subfigure}
  
  \caption{{Relative $L^2$ error improvement (\%) of SCaSML over the baseline Gaussian Process surrogate on the viscous Burgers’ equation for $d=20,40,60,80$.}}
\end{figure}

\begin{figure}[!]
  \centering
  \begin{subfigure}{\textwidth}
    \centering
    \begin{subfigure}[b]{0.24\textwidth}
      \centering
      \includegraphics[width=\linewidth]{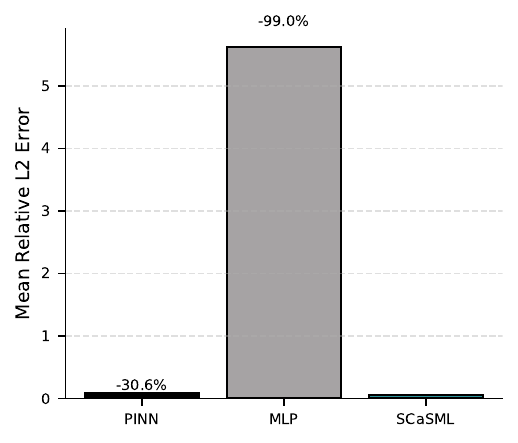}
      \subcaption*{$d=100$}
    \end{subfigure}%
    \begin{subfigure}[b]{0.24\textwidth}
      \centering
      \includegraphics[width=\linewidth]{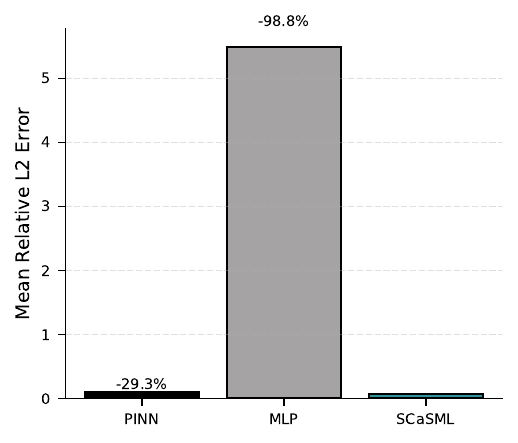}
      \subcaption*{$d=120$}
    \end{subfigure}%
    \begin{subfigure}[b]{0.24\textwidth}
      \centering
      \includegraphics[width=\linewidth]{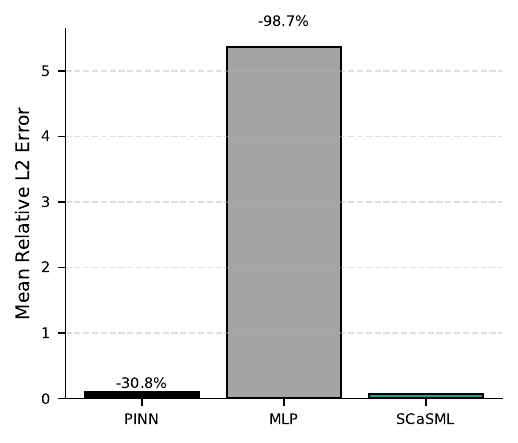}
      \subcaption*{$d=140$}
    \end{subfigure}%
    \begin{subfigure}[b]{0.24\textwidth}
      \centering
      \includegraphics[width=\linewidth]{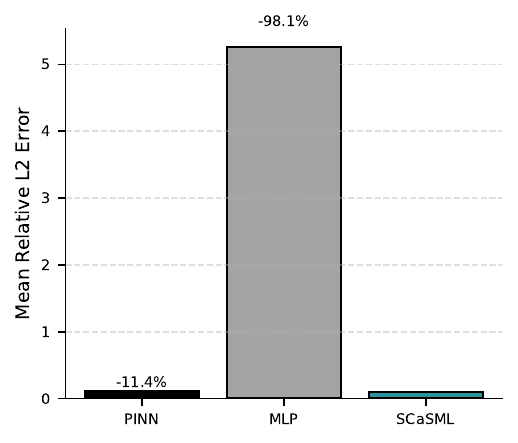}
      \subcaption*{$d=160$}
    \end{subfigure}
    \caption{SCaSML using full-history MLP}
  \end{subfigure}
  
  \caption{{Relative $L^2$ error improvement (\%) of SCaSML (full-history) over the baseline PINN surrogate on the LQG control problem for $d=100,120,140,160$.}}
\end{figure}

\begin{figure}[!]
  \centering
  \begin{subfigure}{\textwidth}
    \centering
    \begin{subfigure}[b]{0.24\textwidth}
      \centering
      \includegraphics[width=\linewidth]{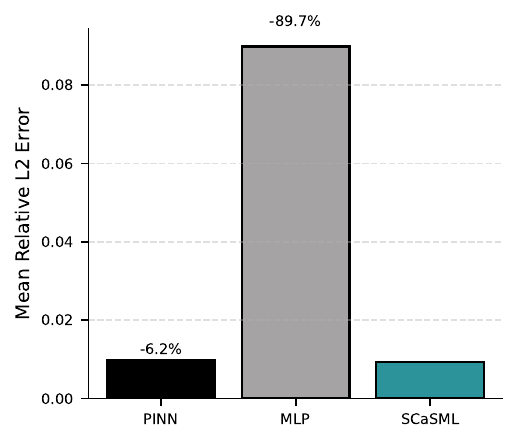}
      \subcaption*{$d=100$}
    \end{subfigure}%
    \begin{subfigure}[b]{0.24\textwidth}
      \centering
      \includegraphics[width=\linewidth]{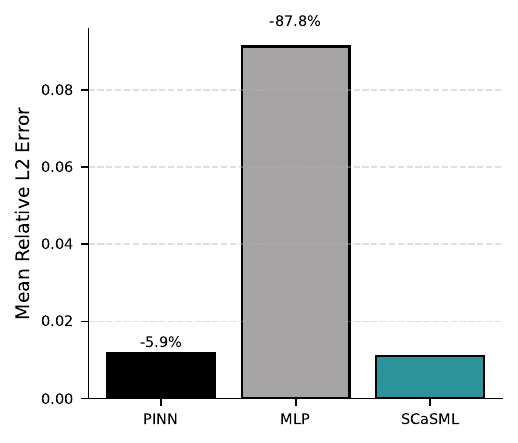}
      \subcaption*{$d=120$}
    \end{subfigure}
    \begin{subfigure}[b]{0.24\textwidth}
      \centering
      \includegraphics[width=\linewidth]{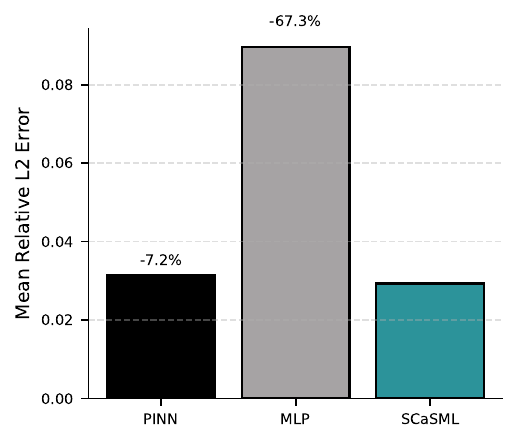}
      \subcaption*{$d=140$}
    \end{subfigure}
    \begin{subfigure}[b]{0.24\textwidth}
      \centering
      \includegraphics[width=\linewidth]{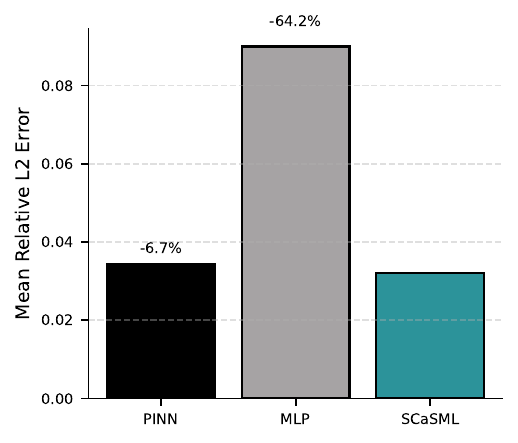}
      \subcaption*{$d=160$}
    \end{subfigure}u
    \caption{SCaSML using full-history MLP}
  \end{subfigure}
  
  \caption{{Relative $L^2$ error improvement (\%) of SCaSML (full-history) over the baseline PINN surrogate on the diffusion reaction equation for $d=100,120,140,160$.}}
\end{figure}
\newpage
\subsection{{Pointwise Error Reduction Analysis}}
\label{appendix-section:aux-results-subsection:pointwise}

{To further investigate the robustness of our method, we present scatter plots visualizing the pointwise error difference between the baseline methods (Surrogate and Naive MLP) and our proposed SCaSML. The settings are still the same with Appendix \ref{appendix-section:aux-results- subsection:violin}.}

{For a given test point $x$, we calculate the difference in absolute error:}
$$
{\Delta \text{Error}(x) = |\text{Error}_{\text{Baseline}}(x)| - |\text{Error}_{\text{SCaSML}}(x)|}
$$
{In the following figures:}
\begin{itemize}
    \item {Red points ($\Delta \text{Error} > 0$) indicate locations where SCaSML has lower error than the baseline.}
    \item {Blue points ($\Delta \text{Error} < 0$) indicate locations where SCaSML has higher error.}
\end{itemize}
{We provide comparisons for both baselines: Surrogate vs. SCaSML (showing the correction of the initial model) and Naive MLP vs. SCaSML (showing the benefit of using the surrogate as a control variate). Across all experiments, the dominance of red points confirms that SCaSML systematically improves accuracy locally across the high-dimensional domain.}

\begin{figure}[h!]
    \centering
    \begin{subfigure}{\textwidth}
        \centering
        \begin{subfigure}[b]{0.24\textwidth}
            \includegraphics[width=\textwidth]{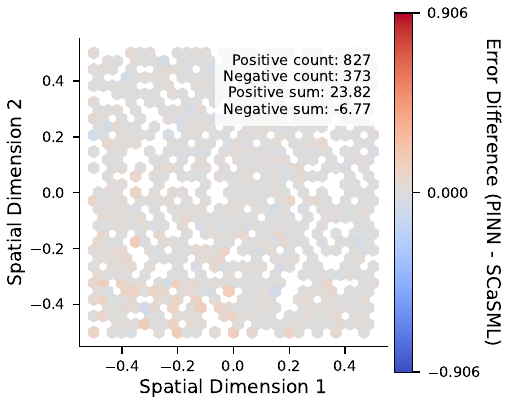}
            \caption*{$d=10$}
        \end{subfigure}
        \begin{subfigure}[b]{0.24\textwidth}
            \includegraphics[width=\textwidth]{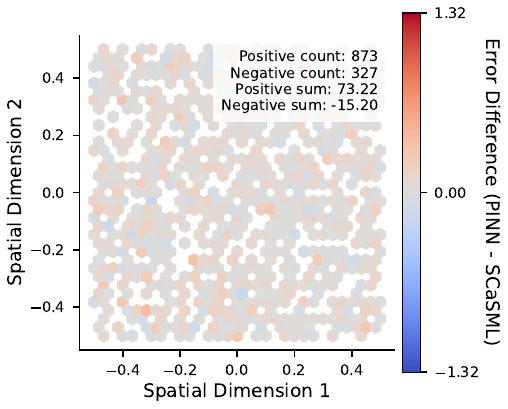}
            \caption*{$d=20$}
        \end{subfigure}
        \begin{subfigure}[b]{0.24\textwidth}
            \includegraphics[width=\textwidth]{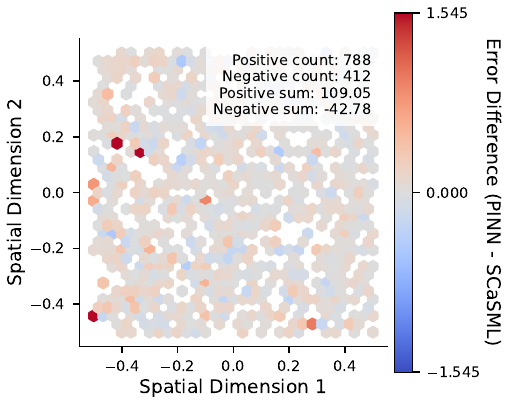}
            \caption*{$d=30$}
        \end{subfigure}
        \begin{subfigure}[b]{0.24\textwidth}
            \includegraphics[width=\textwidth]{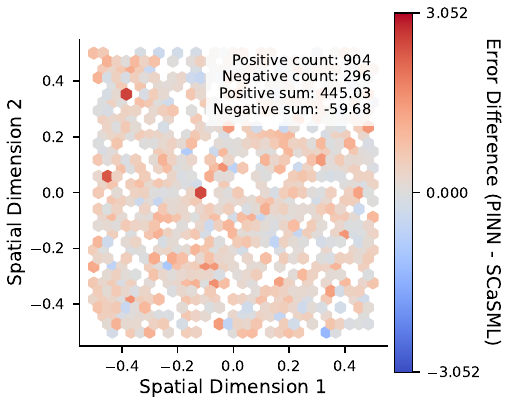}
            \caption*{$d=60$}
        \end{subfigure}
        \caption{Baseline PINN vs. SCaSML}
    \end{subfigure}
    
    \vspace{1em}

    \begin{subfigure}{\textwidth}
        \centering
        \begin{subfigure}[b]{0.24\textwidth}
            \includegraphics[width=\textwidth]{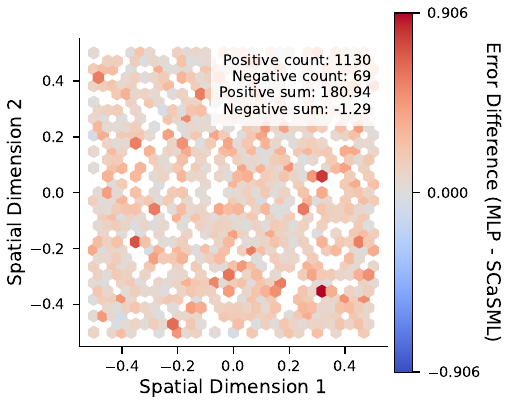}
            \caption*{$d=10$}
        \end{subfigure}
        \begin{subfigure}[b]{0.24\textwidth}
            \includegraphics[width=\textwidth]{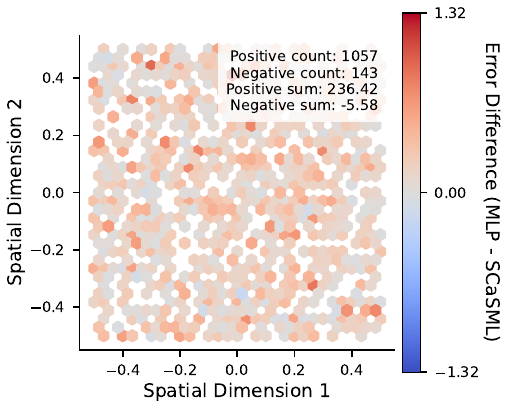}
            \caption*{$d=20$}
        \end{subfigure}
        \begin{subfigure}[b]{0.24\textwidth}
            \includegraphics[width=\textwidth]{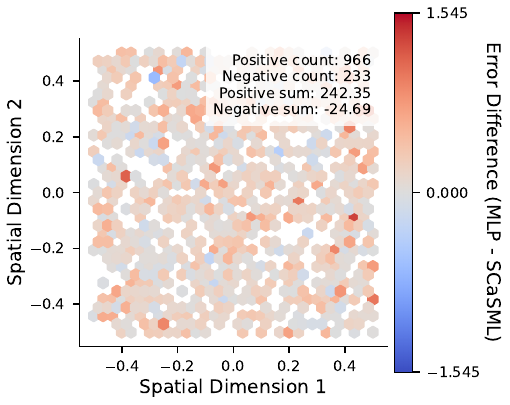}
            \caption*{$d=30$}
        \end{subfigure}
        \begin{subfigure}[b]{0.24\textwidth}
            \includegraphics[width=\textwidth]{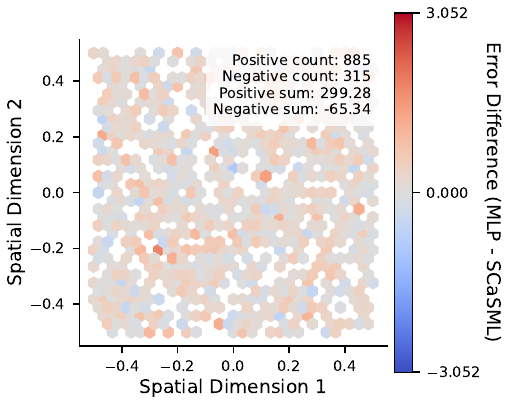}
            \caption*{$d=60$}
        \end{subfigure}
        \caption{Naive MLP vs. SCaSML}
    \end{subfigure}
    
    \caption{{Pointwise error differences for the Linear Convection-Diffusion equation. SCaSML outperforms both the pre-trained PINN and the naive MLP solver across all dimensions.}}
\end{figure}

\begin{figure}[h!]
  \centering
  \begin{subfigure}{\textwidth}
    \centering
    \begin{subfigure}[b]{0.24\textwidth}
      \includegraphics[width=\linewidth]{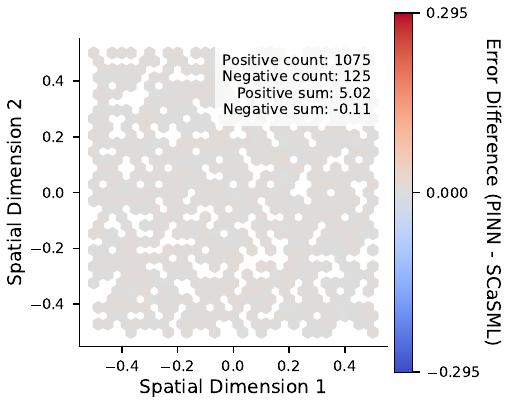}
      \subcaption*{$d=20$}
    \end{subfigure}%
    \begin{subfigure}[b]{0.24\textwidth}
      \includegraphics[width=\linewidth]{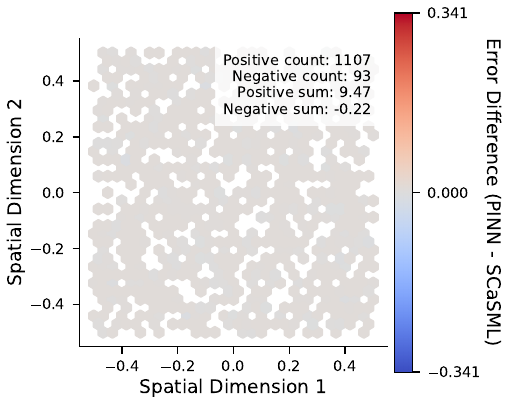}
      \subcaption*{$d=40$}
    \end{subfigure}%
    \begin{subfigure}[b]{0.24\textwidth}
      \includegraphics[width=\linewidth]{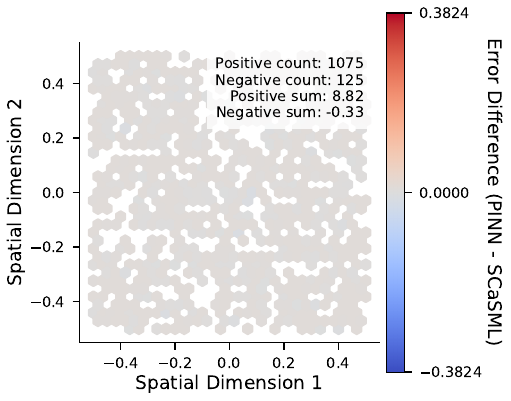}
      \subcaption*{$d=60$}
    \end{subfigure}%
    \begin{subfigure}[b]{0.2\textwidth}
      \includegraphics[width=\linewidth]{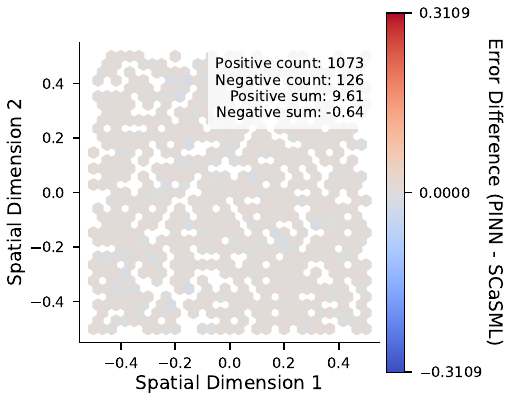}
      \subcaption*{$d=80$}
    \end{subfigure}
    \caption{Baseline PINN vs. SCaSML}
  \end{subfigure}

  \vspace{0.5em}

  \begin{subfigure}{\textwidth}
    \centering
    \begin{subfigure}[b]{0.24\textwidth}
      \includegraphics[width=\linewidth]{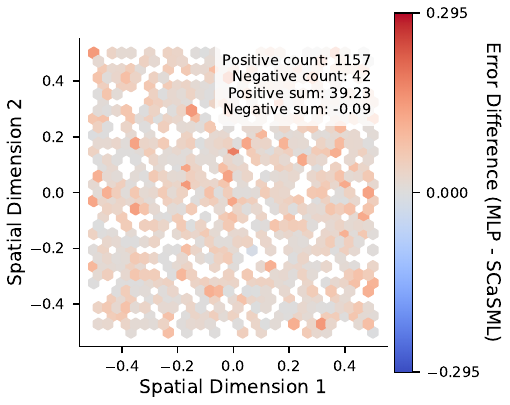}
      \subcaption*{$d=20$}
    \end{subfigure}%
    \begin{subfigure}[b]{0.24\textwidth}
      \includegraphics[width=\linewidth]{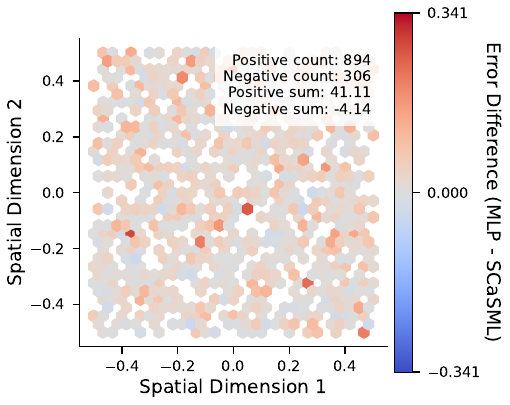}
      \subcaption*{$d=40$}
    \end{subfigure}%
    \begin{subfigure}[b]{0.24\textwidth}
      \includegraphics[width=\linewidth]{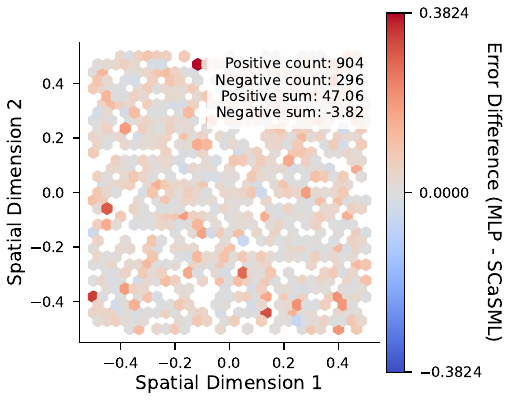}
      \subcaption*{$d=60$}
    \end{subfigure}%
    \begin{subfigure}[b]{0.2\textwidth}
      \includegraphics[width=\linewidth]{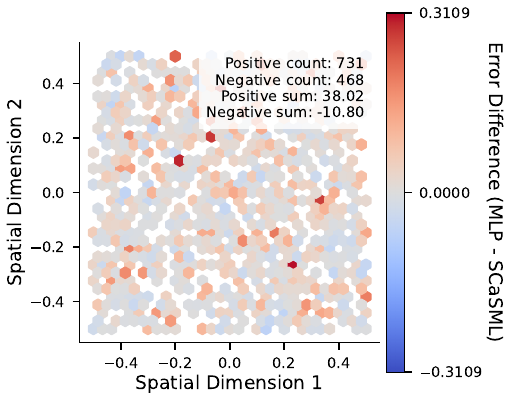}
      \subcaption*{$d=80$}
    \end{subfigure}
    \caption{Naive MLP vs. SCaSML}
  \end{subfigure}
  
  \caption{{Pointwise error differences for the Viscous Burgers' equation (PINN Surrogate). We observe that SCaSML corrects the PINN's error (top) and significantly outperforms the standalone MLP (bottom).}}
\end{figure}

\begin{figure}[h!]
  \centering
  \begin{subfigure}{\textwidth}
    \centering
    \begin{subfigure}[b]{0.24\textwidth}
      \includegraphics[width=\linewidth]{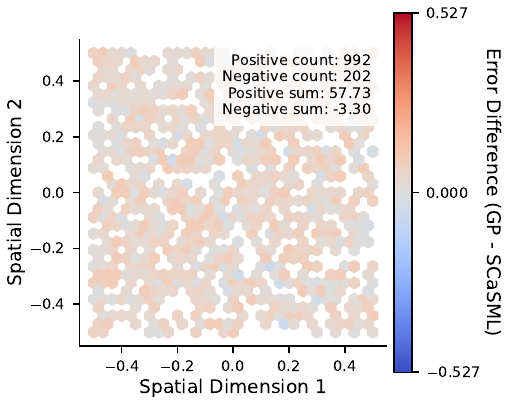}
      \subcaption*{$d=20$}
    \end{subfigure}%
    \begin{subfigure}[b]{0.24\textwidth}
      \includegraphics[width=\linewidth]{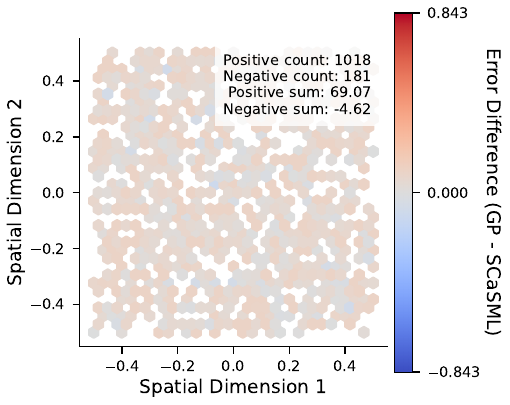}
      \subcaption*{$d=40$}
    \end{subfigure}%
    \begin{subfigure}[b]{0.24\textwidth}
      \includegraphics[width=\linewidth]{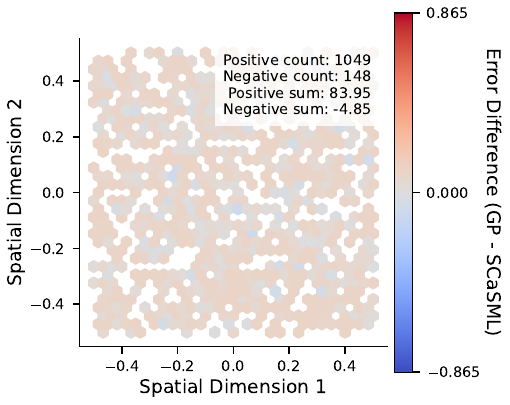}
      \subcaption*{$d=60$}
    \end{subfigure}%
    \begin{subfigure}[b]{0.2\textwidth}
      \includegraphics[width=\linewidth]{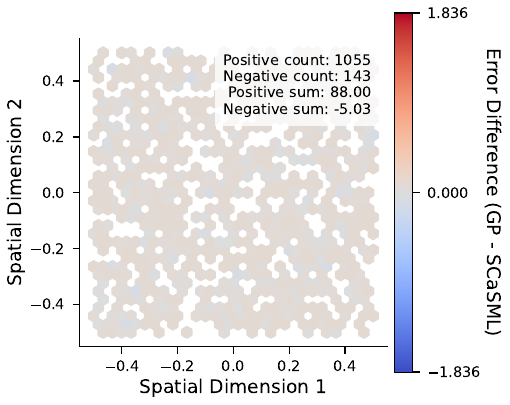}
      \subcaption*{$d=80$}
    \end{subfigure}
    \caption{Baseline GP vs. SCaSML (Full-History)}
  \end{subfigure}

  \vspace{0.5em}

  \begin{subfigure}{\textwidth}
    \centering
    \begin{subfigure}[b]{0.24\textwidth}
      \includegraphics[width=\linewidth]{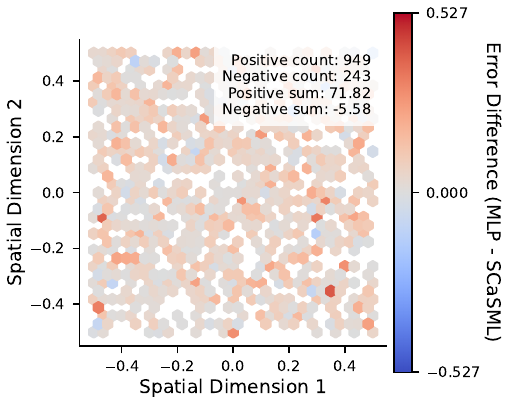}
      \subcaption*{$d=20$}
    \end{subfigure}%
    \begin{subfigure}[b]{0.24\textwidth}
      \includegraphics[width=\linewidth]{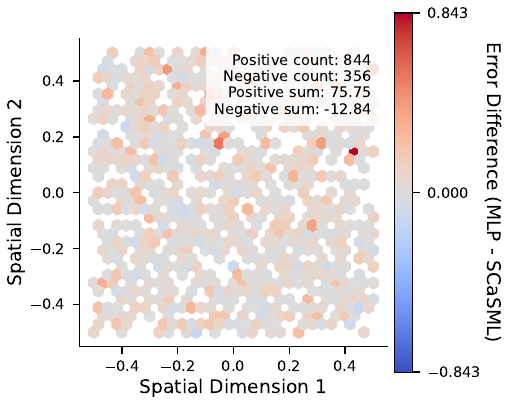}
      \subcaption*{$d=40$}
    \end{subfigure}%
    \begin{subfigure}[b]{0.24\textwidth}
      \includegraphics[width=\linewidth]{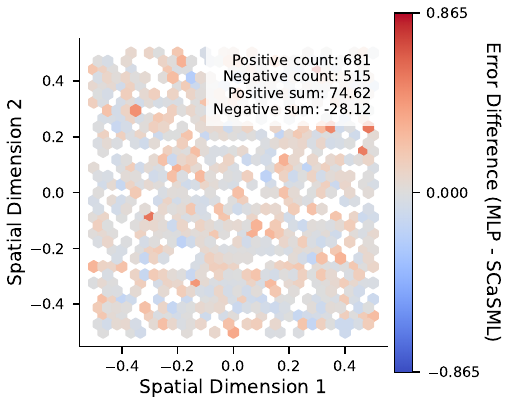}
      \subcaption*{$d=60$}
    \end{subfigure}%
    \begin{subfigure}[b]{0.2\textwidth}
      \includegraphics[width=\linewidth]{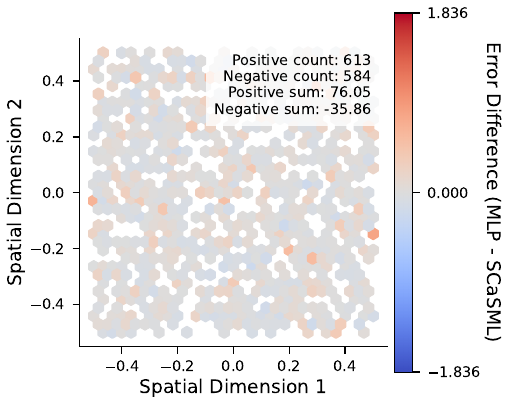}
      \subcaption*{$d=80$}
    \end{subfigure}
    \caption{Naive MLP vs. SCaSML (Full-History)}
  \end{subfigure}
  
  \caption{{Pointwise error differences for the Viscous Burgers' equation (Gaussian Process Surrogate).}}
\end{figure}

\begin{figure}[h!]
  \centering
  \begin{subfigure}{\textwidth}
    \centering
    \begin{subfigure}[b]{0.24\textwidth}
      \includegraphics[width=\linewidth]{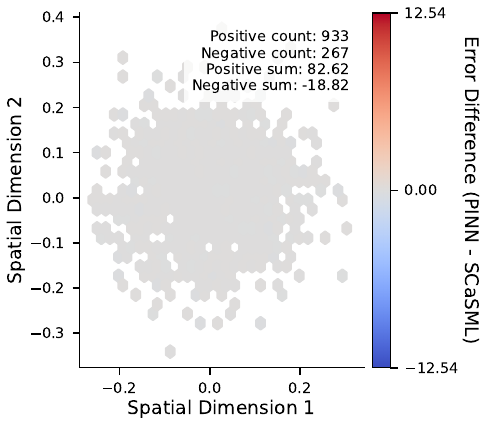}
      \subcaption*{$d=100$}
    \end{subfigure}%
    \begin{subfigure}[b]{0.24\textwidth}
      \includegraphics[width=\linewidth]{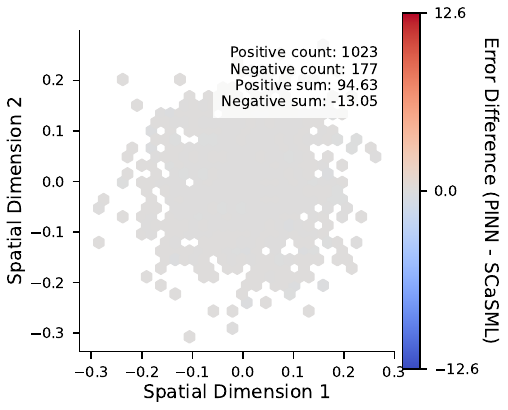}
      \subcaption*{$d=120$}
    \end{subfigure}%
    \begin{subfigure}[b]{0.24\textwidth}
      \includegraphics[width=\linewidth]{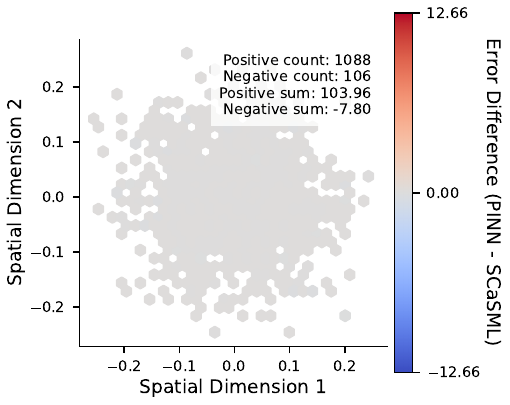}
      \subcaption*{$d=140$}
    \end{subfigure}%
    \begin{subfigure}[b]{0.24\textwidth}
      \includegraphics[width=\linewidth]{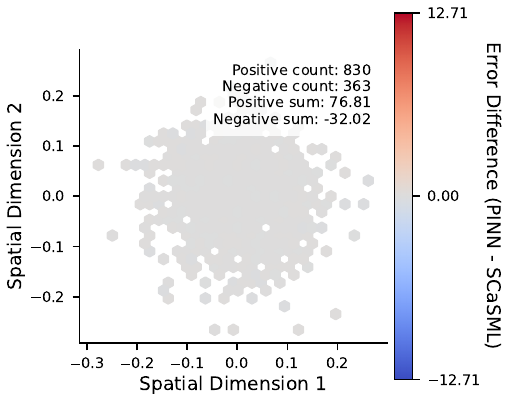}
      \subcaption*{$d=160$}
    \end{subfigure}
    \caption{Baseline PINN vs. SCaSML}
  \end{subfigure}

  \vspace{1em}

  \begin{subfigure}{\textwidth}
    \centering
    \begin{subfigure}[b]{0.24\textwidth}
      \includegraphics[width=\linewidth]{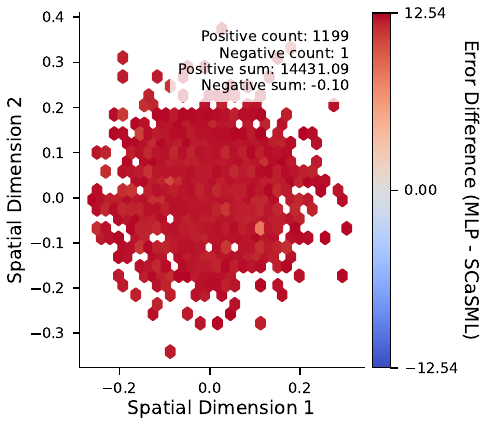}
      \subcaption*{$d=100$}
    \end{subfigure}%
    \begin{subfigure}[b]{0.24\textwidth}
      \includegraphics[width=\linewidth]{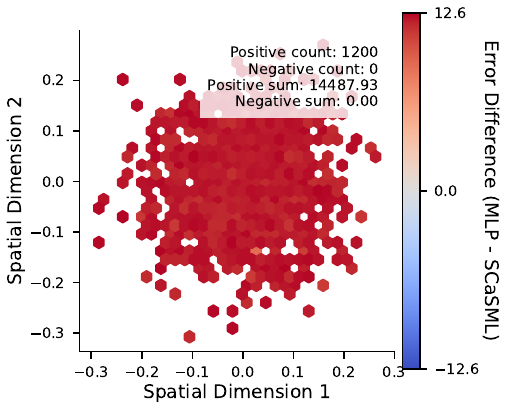}
      \subcaption*{$d=120$}
    \end{subfigure}%
    \begin{subfigure}[b]{0.24\textwidth}
      \includegraphics[width=\linewidth]{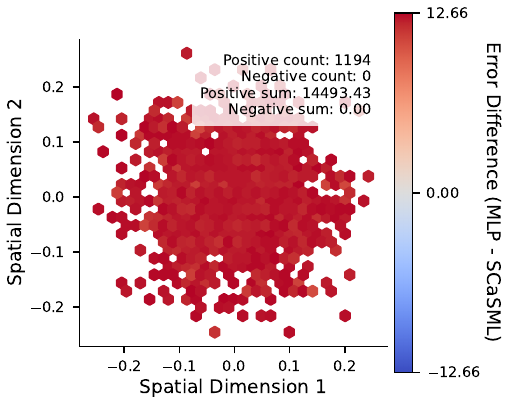}
      \subcaption*{$d=140$}
    \end{subfigure}%
    \begin{subfigure}[b]{0.24\textwidth}
      \includegraphics[width=\linewidth]{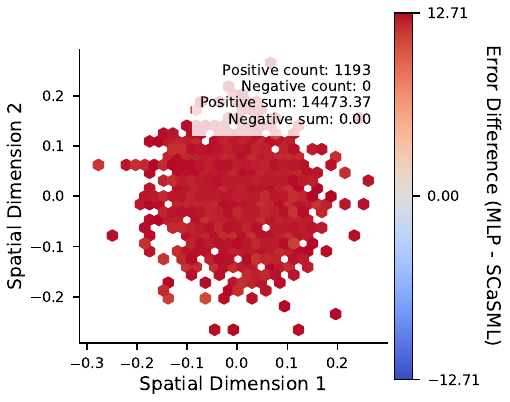}
      \subcaption*{$d=160$}
    \end{subfigure}
    \caption{Naive MLP vs. SCaSML}
  \end{subfigure}
  
  \caption{{Pointwise error differences for the LQG control problem. The contrast in the bottom row highlights that the naive MLP fails in high dimensions, whereas SCaSML (stabilized by the surrogate) performs well.}}
\end{figure}

\begin{figure}[h!]
  \centering
  \begin{subfigure}{\textwidth}
    \centering
    \begin{subfigure}[b]{0.24\textwidth}
      \includegraphics[width=\linewidth]{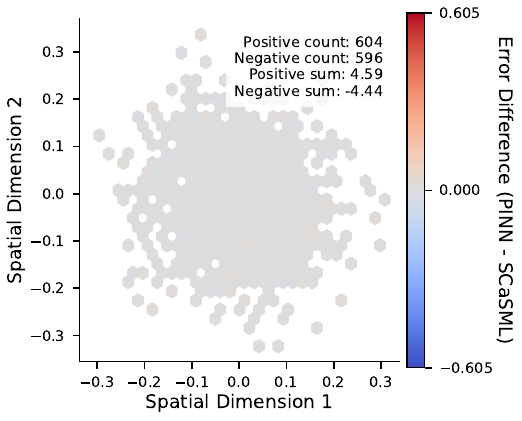}
      \subcaption*{$d=100$}
    \end{subfigure}%
    \begin{subfigure}[b]{0.24\textwidth}
      \includegraphics[width=\linewidth]{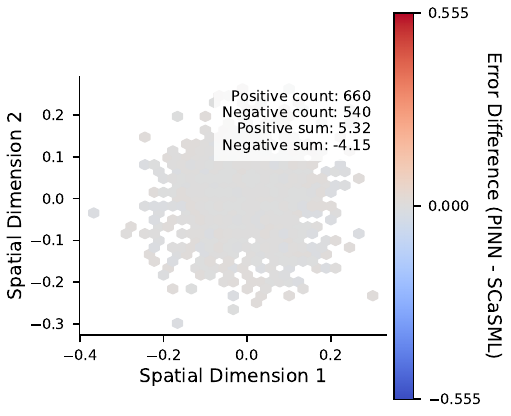}
      \subcaption*{$d=120$}
    \end{subfigure}%
    \begin{subfigure}[b]{0.24\textwidth}
      \includegraphics[width=\linewidth]{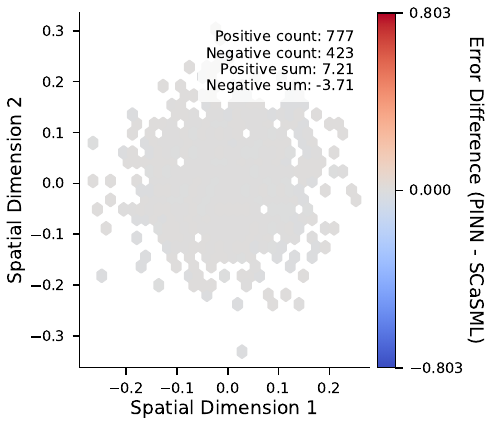}
      \subcaption*{$d=140$}
    \end{subfigure}%
    \begin{subfigure}[b]{0.24\textwidth}
      \includegraphics[width=\linewidth]{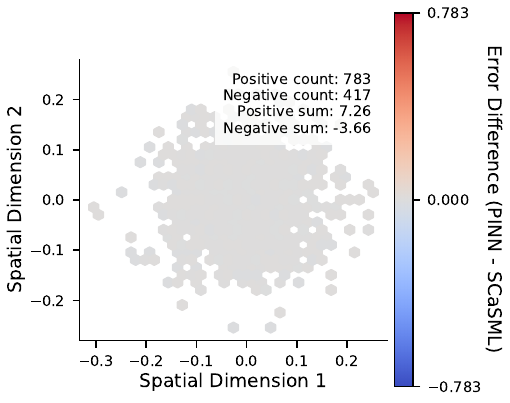}
      \subcaption*{$d=160$}
    \end{subfigure}
    \caption{Baseline PINN vs. SCaSML}
  \end{subfigure}

  \vspace{1em}

  \begin{subfigure}{\textwidth}
    \centering
    \begin{subfigure}[b]{0.24\textwidth}
      \includegraphics[width=\linewidth]{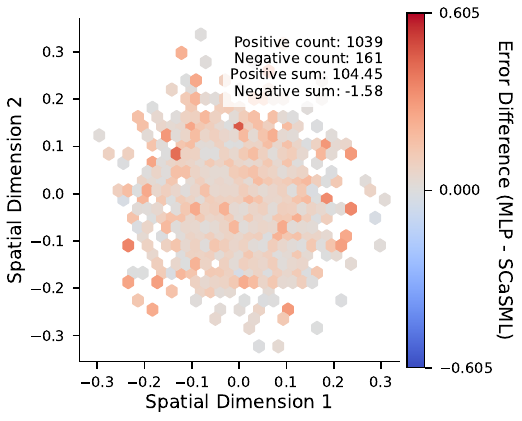}
      \subcaption*{$d=100$}
    \end{subfigure}%
    \begin{subfigure}[b]{0.24\textwidth}
      \includegraphics[width=\linewidth]{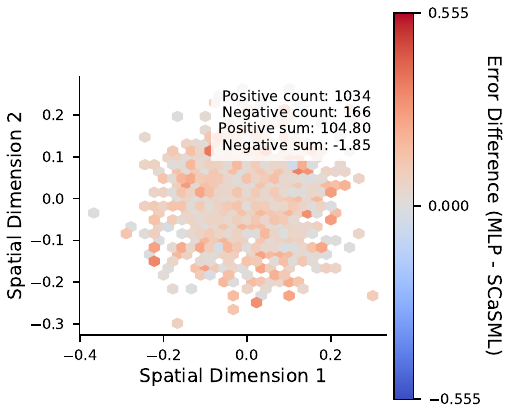}
      \subcaption*{$d=120$}
    \end{subfigure}%
    \begin{subfigure}[b]{0.24\textwidth}
      \includegraphics[width=\linewidth]{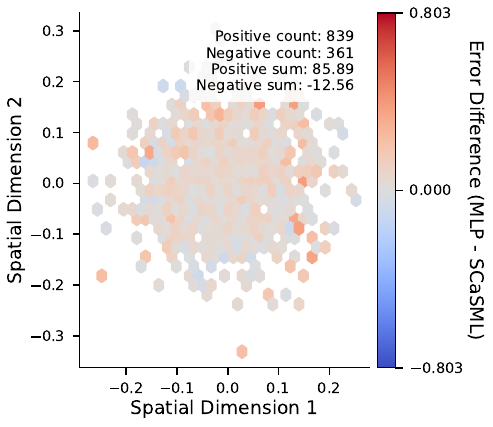}
      \subcaption*{$d=140$}
    \end{subfigure}%
    \begin{subfigure}[b]{0.24\textwidth}
      \includegraphics[width=\linewidth]{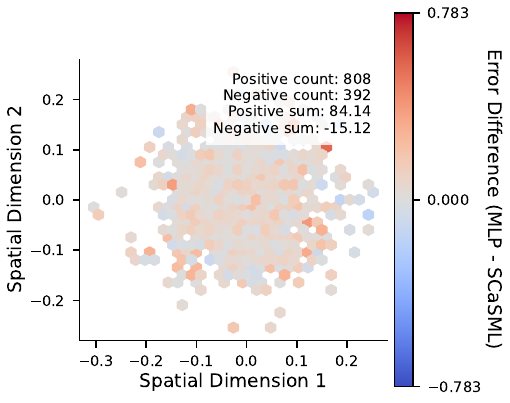}
      \subcaption*{$d=160$}
    \end{subfigure}
    \caption{Naive MLP vs. SCaSML}
  \end{subfigure}
  
  \caption{{Pointwise error differences for the Diffusion-Reaction equation.}}
\end{figure}
\newpage
\subsection{{Performance Comparison Under Fixed Computational Budgets}}
\label{appendix-section:aux-results-subsection:budget}

{A central question regarding inference-time scaling is whether the performance gain is simply a result of increased wall-clock time, or if the SCaSML framework utilizes computational resources more efficiently than standard training. To address this, we conducted a Fixed Computational Budget analysis.}

{We define a "unit budget" based on a baseline number of training iterations (e.g., 2,000 iterations for a PINN), other settings are still the same with \ref{appendix-section:aux-results- subsection:violin}. We then scale this budget by factors of $\times 1, \times 2, \dots, \times 16$. For each budget level, we compare three allocation strategies:}

\begin{itemize}
    \item {\textbf{Pure Training (Baseline PINN):} The entire time budget is allocated to training the neural network. A budget of $\times k$ implies training for $k \times N_{base}$ iterations.}
    \item {\textbf{Pure Simulation (Naive MLP):} The entire time budget is allocated to generating Monte Carlo paths for the MLP solver.}
    \item {\textbf{Hybrid Allocation (SCaSML):} This represents our proposed strategy. We allocate a small fraction of the budget (specifically $1/(d+1)$) to training a "weak" surrogate, and allocate the remaining majority of the budget to inference-time correction via the Structural-preserving Law of Defect.}
\end{itemize}

{This setup ensures a fair comparison where all methods consume approximately the same total wall-clock time (Training Time + Inference Time). We performed this analysis on the Linear Convection-Diffusion (LCD) equation ($d=10, 20$) and the Viscous Burgers (VB) equation ($d=20$) using the full-history SCaSML variant.}

{The results are visualized in Figure \ref{fig:computing_budget}.} 

\begin{figure}[h!]
    \centering
    \begin{subfigure}[b]{0.32\textwidth}
        \centering
        \includegraphics[width=\textwidth]{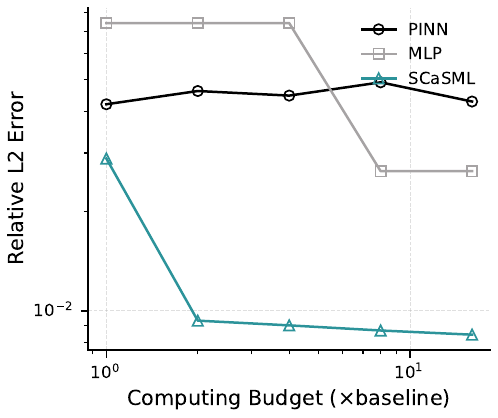}
        \caption{LCD ($d=10$)}
    \end{subfigure}
    \begin{subfigure}[b]{0.32\textwidth}
        \centering
        \includegraphics[width=\textwidth]{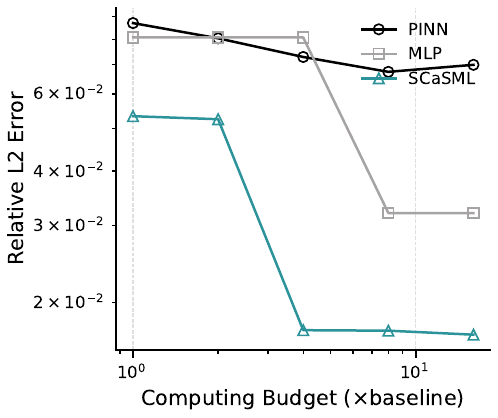}
        \caption{LCD ($d=20$)}
    \end{subfigure}
    \begin{subfigure}[b]{0.32\textwidth}
        \centering
        \includegraphics[width=\textwidth]{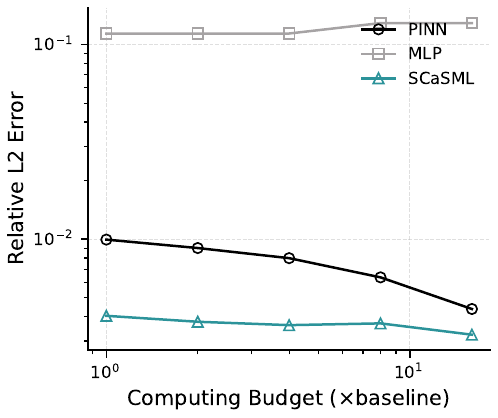}
        \caption{VB-PINN ($d=20$)}
    \end{subfigure}
    
    \caption{{\textbf{Error vs. Computational Budget.} The x-axis represents the total computational budget multiplier (log scale), and the y-axis represents the Relative $L^2$ Error (log scale). SCaSML (teal triangles) consistently achieves lower error than the Baseline PINN (black circles) and Naive MLP (gray squares) for the same total cost.}}
    \label{fig:computing_budget}
\end{figure}

{As shown in Figure \ref{fig:computing_budget}, the SCaSML error curve consistently lies below the PINN training curve. This confirms that allocating marginal compute to inference-time correction yields a higher return on investment (ROI) than allocating it to further training.}

{Specifically, for the Viscous Burgers equation ($d=20$), we observe that training the PINN for significantly longer (moving right on the x-axis) results in diminishing returns due to the optimization difficulty of high-frequency error components. In contrast, SCaSML leverages the rigorous convergence rate of the Monte Carlo correction to reduce error rapidly. This empirically validates our theoretical claim that the hybrid ML+MC scaling law ($O(m^{-\gamma - 1/2})$) is superior to the pure ML scaling law ($O(m^{-\gamma})$).}

\subsection{{Performance Comparison: Large PINN vs. SCaSML Correction}}
\label{appendix-section:aux-results-subsection:large-pinn-budget}

{A critical question in SciML is whether the computational budget is better spent on training a larger, more expressive neural network (increasing model capacity) or on post-hoc inference-time correction (SCaSML). To address this, we conducted a second Fixed Budget analysis where we scaled the \textbf{model architecture} while keeping the number of training iterations fixed.}

{We define a "unit budget" ($B=1$) corresponding to our standard PINN configuration: a fully connected network with width $W_{base}=50$ and depth $D_{base}=5$. As the budget $B$ increases by factors of $\times 1, \times 2, \times 4$, we scale the network architecture to increase its capacity. Specifically, the scaled width $W_B$ and depth $D_B$ are defined as:}
\begin{equation}
{W_B = \lfloor W_{base} \cdot \sqrt{B} \rfloor, \quad D_B = \max(D_{base}, \lfloor D_{base} + \log_2(B) \rfloor).}
\end{equation}
{This scaling strategy ensures that the network's parameter count and computational cost per iteration grow with the budget, allowing us to test the limits of model capacity.}

{We compare three strategies under these scaling rules using the Linear Convection-Diffusion ($d=10, 20$) and Viscous Burgers ($d=20$) equations:}

\begin{itemize}
    \item {\textbf{Large PINN (Model Scaling):} We train the scaled network architecture ($W_B, D_B$) for a fixed number of iterations ($N_{iter} = 2000$). The optimizer is Adam with a learning rate of $7 \times 10^{-4}$ and Glorot normal initialization. The increased computational cost arises entirely from the more expensive forward and backward passes of the larger model.}
    
    \item {\textbf{SCaSML (Inference Correction):} We employ the SCaSML framework where the surrogate backbone utilizes the available budget. Crucially, the method allocates resources to the inference-time Monte Carlo correction (using the full-history MLP solver with basis $M=10$ and levels $N=2$) rather than relying solely on the surrogate's capacity.}
    
    \item {\textbf{Naive MLP (Pure Simulation):} The entire time budget is allocated to generating Monte Carlo paths for the MLP solver, serving as a pure simulation baseline.}
\end{itemize}

{The results (Figure \ref{fig:large_pinn_budget}) demonstrate that simply increasing the PINN's capacity yields diminishing returns; the model hits a "data efficiency wall" where additional parameters do not translate to proportionally lower errors for high-frequency defects. In contrast, SCaSML consistently achieves lower error for the same total compute time, proving that inference-time correction is a more efficient user of marginal compute than model scaling for these high-dimensional problems.}

\begin{figure}[h!]
    \centering
    \begin{subfigure}[b]{0.32\textwidth}
        \centering
        \includegraphics[width=\textwidth]{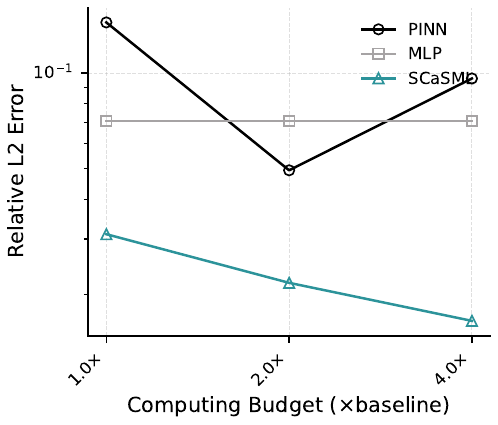}
        \caption{LCD ($d=10$)}
    \end{subfigure}
    \begin{subfigure}[b]{0.32\textwidth}
        \centering
        \includegraphics[width=\textwidth]{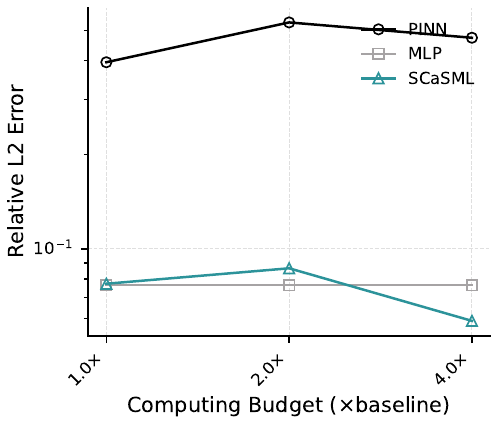}
        \caption{LCD ($d=20$)}
    \end{subfigure}
    \begin{subfigure}[b]{0.32\textwidth}
        \centering
        \includegraphics[width=\textwidth]{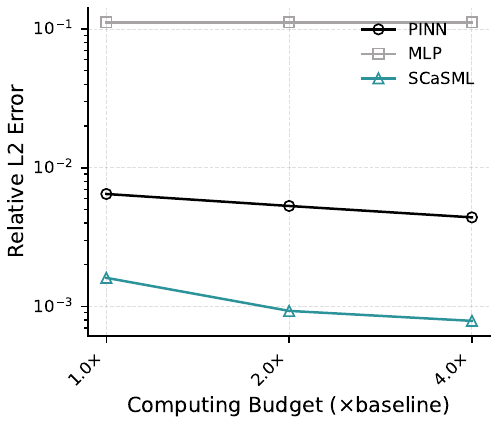}
        \caption{VB-PINN ($d=20$)}
    \end{subfigure}
    \caption{{\textbf{Large PINN vs. SCaSML.} Comparison of error rates when the computational budget is used to scale up the PINN architecture (black) versus performing inference-time correction (teal).}}
    \label{fig:large_pinn_budget}
\end{figure}
\end{document}